\documentclass{amsart}

\usepackage[utf8]{inputenc}
\usepackage[top=1.25in, bottom=1.25in, left=1.1in, right=1.1in]{geometry, mathtools}
\usepackage[all]{xy}
\usepackage[toc,page]{appendix}
\usepackage{amsthm, amsmath, amssymb, amscd, latexsym, multicol, verbatim, enumerate, graphicx,xy, color}
\usepackage{mathrsfs}
\usepackage{tikz,stackrel}
\usepackage{mathdots}
\usepackage[all]{xy}
\usetikzlibrary{calc}
\usetikzlibrary{matrix,arrows,decorations.pathmorphing}
\usetikzlibrary{intersections}
\usetikzlibrary{decorations.markings}
\usetikzlibrary{external}
\tikzexternalize[
    mode=graphics if exists,
    prefix=Pictures/
    ]
\usepackage{setspace}

\newcommand{%
  \tikzsetnextfilename{}%
  \input{Pictures/.tikz}%
}[1]{%
  \tikzsetnextfilename{#1}%
  \input{Pictures/#1.tikz}%
}
\usepackage{tikz-cd}
\usetikzlibrary{decorations.markings}
\makeatletter
\tikzcdset{
open/.code={\tikzcdset{hook, circled};},
closed/.code={\tikzcdset{hook, slashed};},
circled/.code={\tikzcdset{markwith={\draw (0,0) circle (.375ex);}};},
slashed/.code={\tikzcdset{markwith={\draw[-] (-.4ex,-.4ex) -- (.4ex,.4ex);}};},
markwith/.code={
\pgfutil@ifundefined{tikz@library@decorations.markings@loaded}%
{\pgfutil@packageerror{tikz-cd}{You need to say %
\string\usetikzlibrary{decorations.markings} to use arrow with markings}{}}{}%
\pgfkeysalso{/tikz/postaction={/tikz/decorate,
/tikz/decoration={
markings,
mark = at position 0.5 with
{#1}}}}},
}
\makeatother
\makeatletter
\tikzset{
circled/.code={\tikzset{markwith={\draw (0,0) circle (.375ex);}};},
slashed/.code={\tikzset{markwith={\draw[-] (-.4ex,-.4ex) -- (.4ex,.4ex);}};},
markwith/.code={
\pgfutil@ifundefined{tikz@library@decorations.markings@loaded}%
{\pgfutil@packageerror{tikz-cd}{You need to say %
\string\usetikzlibrary{decorations.markings} to use arrow with markings}{}}{}%
\pgfkeysalso{/tikz/postaction={/tikz/decorate,
/tikz/decoration={
markings,
mark = at position 0.5 with
{#1}}}}},
}
\makeatother

\makeatletter
\providecommand{\leftsquigarrow}{%
  \mathrel{\mathpalette\reflect@squig\relax}%
}
\newcommand{\reflect@squig}[2]{%
  \reflectbox{$\m@th#1\rightsquigarrow$}%
}
\makeatother

\usepackage[english]{babel}
\usepackage{stmaryrd}
\usepackage{mathbbol}
\usepackage{float}
\usepackage{comment}
\usepackage[colorlinks=true, pdfstartview=FitV, linkcolor=blue, citecolor=blue, urlcolor=blue, breaklinks=true]{hyperref}
\usepackage{theoremref}
\usepackage{caption}
\usepackage{youngtab}
\usepackage{ragged2e}
\usepackage{relsize}
\usepackage{anyfontsize}
\usepackage[utf8]{inputenc}
\usepackage{bm}
\usepackage{scalerel}
\usepackage{marvosym}
\DeclareFontFamily{U}{mathx}{\hyphenchar\font45}
\DeclareFontShape{U}{mathx}{m}{n}{
      <5> <6> <7> <8> <9> <10>
      <10.95> <12> <14.4> <17.28> <20.74> <24.88>
      mathx10
      }{}
\DeclareSymbolFont{mathx}{U}{mathx}{m}{n}
\DeclareFontSubstitution{U}{mathx}{m}{n}
\DeclareMathAccent{\widecheck}{0}{mathx}{"71}
\DeclareMathAlphabet{\mathbbm}{U}{bbm}{m}{n}
\usepackage{verbatim}
\usepackage{pdflscape}

\usepackage{array} 
\newcolumntype{C}{>{$}c<{$}}

\makeatother
\theoremstyle{remark}
\numberwithin{equation}{section}

\setlength{\marginparwidth}{2cm}

\theoremstyle{definition}
\newtheorem*{theorem*}{Theorem}
\newtheorem*{definition*}{Definition}
\newtheorem{theorem}{Theorem}[section]
\newtheorem{definition}[theorem]{Definition}
\newtheorem{proposition}[theorem]{Proposition}
\newtheorem*{proposition*}{Proposition}
\newtheorem{lemma}[theorem]{Lemma}
\newtheorem{corollary}[theorem]{Corollary}
\newtheorem{remark}[theorem]{Remark}
\newtheorem{notation}[theorem]{Notation}
\newtheorem{example}[theorem]{Example}
\newtheorem*{example*}{Example}

\newtheorem{condition}{Condition}

\DeclareMathOperator{\Sing}{Sing}
\newcommand{\scat}{\mathfrak{D}}
\newcommand{\mono}{m}
\DeclareMathOperator{\Supp}{Supp}

\newcommand{\lrp}[1]{\left(#1\right)}
\newcommand{\lrb}[1]{\left[#1\right]}

\newcommand{\lrc}[1]{\left\{#1\right\}}
\newcommand{\lra}[1]{\left\langle{#1}\right\rangle}

\newcommand{\Q}{\mathbb{Q} }
\newcommand{\R}{\mathbb{R} }
\newcommand{\C}{\mathbb{C} }

\newcommand{\Z}{\mathbb{Z} }

\newcommand{\wall}{\mathfrak{d}}

\newcommand{\Lp}{p^\vee}
\newcommand{\LA}{{}^{L}\cA}

\newcommand{\LX}{{}^{L}\cX}

\newcommand{\LGam}{{}^{L}\Gamma}

\newcommand{\cXe}{\cX_{\bf 1}}
\newcommand{\cXeH}{\cX_{{\bf 1}_{T_{H^*_{\cX}}}}}

\newcommand{\cAm}{\cA^{\vee}}
\newcommand{\cXm}{\cX^{\vee}}

\newcommand{\cAHAm}{(\cA/T_{H_{\cA}})^{\vee}}
\newcommand{\cXeHm}{\lrp{\cXeH}^{\vee}}

\newcommand{\cXem}{\lrp{\cX_{\bf 1}}^{\vee}}

\newcommand{\Xnet}{\cX}

\makeatletter
\newcommand{\oversetcustom}[3][0ex]{%
  \mathrel{\mathop{#3}\limits^{
    \vbox to#1{\kern-0.5\ex@
    \hbox{$\scriptstyle#2$}\vss}}}}
\makeatother

\newcommand{\T}{\mathbb{T}}
\newcommand{\orT}{\oversetcustom{\longrightarrow}{\mathbb{T}_r}}

\newcommand{\gv}{\mathbf{g} }
\newcommand{\cv}{\mathbf{c} }
\newcommand{\cval}{\nu^{\Phi}_{\seed}}
\newcommand{\vb}[1]{\mathbf{#1}}
\newcommand{\cA}{\mathcal{A} }
\newcommand{\cAp}{\mathcal{A}_{\mathrm{prin}}}
\newcommand{\cXp}{\mathcal{X}_{\mathrm{prin}}}

\newcommand{\prin}{{\mathrm{prin}} }
\newcommand{\cX}{\mathcal{X} }
\newcommand{\cV}{\mathcal{V} }

\newcommand{\lb}{\mathcal{L} }
\newcommand{\trop}{\mathrm{trop} }
\newcommand{\Trop}{\mathrm{Trop} }
\newcommand{\tf}{\vartheta }

\newcommand{\eq}[2]{\begin{equation}\label{#2} \begin{split} #1  \end{split} \end{equation}}
\newcommand{\eqn}[1]{\begin{equation*} \begin{split} #1 \end{split} \end{equation*}}

\newcommand{\Nuf}{N_{\text{uf}}}
\newcommand{\Iuf}{I_{\text{uf}}}

\newcommand{\seed}{\textbf{s}}

\DeclareMathOperator{\val}{val}



\newsavebox{\mybox}
\newcommand{\back}[1]{%
  \ThisStyle{\ifmmode%
    \savebox{\mybox}{$\SavedStyle#1$}%
    \reflectbox{\usebox{\mybox}}%
  \else%
    \savebox{\mybox}{#1}%
    \reflectbox{\usebox{\mybox}}%
  \fi%
}}

\DeclareMathOperator{\supp}{Supp}

\DeclareMathOperator{\Hom}{Hom}

\DeclareMathOperator{\Spec}{Spec}

\DeclareMathOperator{\Grass}{Gr}

\DeclareMathOperator{\ord}{ord}
\DeclareMathOperator{\cmid}{mid}
\DeclareMathOperator{\up}{up}

\DeclareMathOperator{\Cox}{Cox}
\DeclareMathOperator{\Pic}{Pic}
\newcommand{\UT}{\text{UT} }

\newcommand{\Yng}{\Yboxdim4pt \yng}
\newcommand{\Yngs}{\Yboxdim2pt \yng}

\DeclareMathOperator{\op}{op}

\newcommand{\uf}{\operatorname{uf}}

\DeclareMathOperator{\conv}{conv}
\DeclareMathOperator{\bconv}{conv_{BL}}
\DeclareMathOperator{\NewtT}{Newt_{\tf}}

\newcommand{\ie}{{\em i.e. }}
\newcommand{\cf}{{\em cf.}\ }

\DeclareFontFamily{U}{musix}{}
\DeclareFontShape{U}{musix}{m}{n}{<-> s*[1.01] musix11}{}

\newcommand{\Np}{N_{\prin}}

\newcommand{\Mpc}{M^\circ_{\prin}}

\tikzexternalize[prefix=Pictures/]

\title{Newton--Okounkov bodies and minimal models for cluster varieties}

\author{Lara Bossinger, Man-Wai Cheung, Timothy Magee and Alfredo N\'ajera Ch\'avez}
\date{\today}

\address{
Instituto de Matem\'aticas Unidad Oaxaca, 
Universidad Nacional Aut\'onoma de M\'exico,
Le\'on 2, altos, 
Centro Hist\'orico,
68000 Oaxaca,
Mexico}
\email{lara@im.unam.mx}

\address{
School of Mathematics, Kavli IPMU (WPI), UTIAS, The University of Tokyo, Kashiwa, Japan, 277-8583}
\email{manwai.cheung@ipmu.jp}

\address{Department of Mathematics, King's College London, Strand, London WC2R 2LS, UK}
\email{timothy.magee@kcl.ac.uk}

\address{
Consejo Nacional de Ciencia y Tecnología - Instituto de Matem\'aticas Unidad Oaxaca, 
Universidad Nacional Aut\'onoma de M\'exico,
Le\'on 2, altos, 
Centro Hist\'orico,
68000 Oaxaca,
Mexico}
\email{najera@im.unam.mx}

\begin{document}

\maketitle

\begin{abstract}
Let $Y$ be a (partial) minimal model of a scheme $V$ with a cluster structure (of type $\cA$, $\cX$ or of a quotient of $\cA$ or a fibre of $\cX$). Under natural assumptions, for every choice of seed  we associate a Newton--Okounkov body to every divisor on $Y$ supported on $Y \setminus V$ and show that these Newton--Okounkov bodies are positive sets in the sense of Gross, Hacking, Keel and Kontsevich \cite{GHKK}. This construction essentially reverses the procedure in loc. cit. that generalizes the polytope construction of a toric variety to the framework of cluster varieties.

In a closely related setting, we consider cases where $Y$ is a projective variety whose universal torsor $\UT_Y$ is a partial minimal model of a scheme with a cluster structure of type $\cA$. If the theta functions parametrized by the integral points of the associated superpotential cone form a basis of the ring of algebraic functions on $\UT_Y$ and the action of the torus $T_{\text{Pic}(Y)^*}$ on $\UT_Y$ is compatible with the cluster structure, then for every choice of seed we associate a Newton--Okounkov body to every line bundle on $Y$. We prove that any such Newton--Okounkov body is a positive set and that $Y$ is a minimal model of a quotient of a cluster $\cA$-variety by the action of a torus. 

Our constructions lead to the notion of the intrinsic Newton--Okounkov body associated to a boundary divisor in a partial minimal model of a scheme with a cluster structure. This notion is intrinsic as it relies only on the geometric input, making no reference to the auxiliary data of a valuation or a choice of seed.
The intrinsic Newton--Okounkov body lives in a real tropical space rather than a real vector space. 
A choice of seed gives an identification of this tropical space with a vector space, and in turn of the intrinsic Newton--Okounkov body
with a usual Newton--Okounkov body associated to the choice of seed.
In particular, the Newton--Okounkov bodies associated to seeds are related to each other by tropicalized cluster transformations providing a wide class of examples of Newton-Okoukov bodies exhibiting a wall-crossing phenomenon in the sense of Escobar--Harada \cite{EH20}.

This approach includes the partial flag varieties that arise as minimal models of cluster varieties (for example full flag varieties and Grassmannians). For the case of Grassmannians, our approach recovers, up to interesting unimodular equivalences, the Newton--Okounkov bodies constructed by Rietsch--Williams in \cite{RW}.

\end{abstract}

\tableofcontents

\section{Introduction}

\subsection{Overview} Cluster varieties are certain schemes constructed by gluing a (possibly infinite) collection of algebraic tori using distinguished birational maps called cluster transformations.
These schemes were introduced in \cite{FG_Teich,FG_cluster_ensembles} and can be studied from many different points of view. 
They are closely related to cluster algebras and $Y$-patterns defined by Fomin and Zelevinsky in \cite{FZ_clustersI, FZ_clustersIV}. 
In this paper we approach them from the perspectives of birational and toric geometry, mainly following \cite{GHK_birational,GHKK}.
In \cite{GHKK}, the authors show that certain sets called \emph{positive polytopes} can be used to produce compactifications of cluster varieties and toric degenerations of such compactifications. 
In the trivial case where the cluster variety in question is just a torus, a positive lattice polytope is simply a usual convex lattice polytope and this construction produces the toric variety associated to such a polytope.
One of the main goals of this paper is to reverse this construction in a systematic way and understand this process from the view-point of Newton--Okounkov bodies. 
We also study the wall-crossing phenomenon for Newton--Okounkov bodies arising from cluster structures.
We treat independently the case of the Grassmannians as, in this context, we compare the Newton--Okounkov bodies we construct with those constructed in \cite{RW} and explore some consequences.
Moreover, throughout the text we systematically consider not only cluster varieties but also quotients and fibres associated to them (see \S\ref{sec:quotients-fibres} for the precise definitions of these quotients and fibres). For simplicity, in this introduction our main focus is on cluster varieties. 
We fix once and for all an algebraically closed field $\Bbbk$ of characteristic zero. 
Unless otherwise stated, all the schemes we consider are over $\Bbbk$.

\subsection{The tropical spaces} Let $\cV $ be a cluster variety. By definition, $\cV$ is endowed with an atlas of algebraic tori of the form 
\[
\cV= \bigcup_{\seed} T_{L;\seed},
\]
where $L$ is a fixed lattice, $T_{L; \seed} $ is a copy of the algebraic torus $T_L= \Spec(\Bbbk[L^*])$ associated to $L$ (so $L^*=\text{Hom}(L, \Z)$) and the tori in the atlas are parametrized by \emph{seeds $\seed$ for} $\cV$. 
We will exploit the fact that $\cV$ is a log-Calabi--Yau variety. 
This property implies that $\cV$ is endowed with a canonical up-to-scaling volume form $\Omega$. 
Moreover, recall that a cluster variety is of one of the types: $\cA$ or $\cX$.

Just like in toric geometry where one can consider the dual torus $T_L^{\vee}:=T_{L^*}$, the \emph{dual} of $\cV$ is a cluster variety $\cV^{\vee}$ whose defining atlas consists of tori of the form $T^\vee_L$.
It is well known that the ring $H^{0} (T_L,\mathcal{O}_{T_{L}})$ of algebraic functions on $T_L$ has a distinguished basis --the set of characters of $T_L$-- parametrized by $L^*$. 
For nearly 10 years it was conjectured that this fact can be generalized for $\cV$ using this notion of duality.
In order to state such a generalization, we consider the integral tropicalization of $\cV^{\vee}$, which we denote by $\Trop_{\Z}(\cV^{\vee})$.
The precise definition of $\Trop_{\Z}(\cV^{\vee})$
 can be found in \S\ref{ss:tropicalization}.
 For this introduction the key fact that we need is that a prime divisor $D$ on a variety birational to $\cV^\vee $ determines a point of $\Trop_{\Z}(\cV^{\vee})$ if $\Omega $ has a pole along $D$.
In \cite{FG_cluster_ensembles} Fock--Goncharov conjectured that $H^{0}(\cV, \mathcal{O}_\cV)$ has a canonical vector space basis parametrized by $\Trop_{\Z}(\cV^{\vee})$.
Although false in general, this conjecture does hold in many of the cases of wide interest.
In \cite{GHKK} the authors linked this conjecture to the log Calabi--Yau mirror symmetry conjecture \cite[Cnjecture 0.6]{GHK_logCY}, suggesting that the canonical basis proposed by Fock--Goncharov is the \emph{theta basis}.
As we would like to be as close to toric geometry as possible we systematically assume that the full Fock--Goncharov conjecture holds for the cluster variety $ \cV$ under consideration.
So, under under the assumption that the full Fock--Goncharov conjecture holds for $\cV$, one may consider $\Trop_{\Z}(\cV^{\vee})$ as replacing $L^*$ and the characters of $T_L$ are replaced by the theta functions on $\cV$.
Moreover, the real vector space $L^*\otimes \R$ is replaced by the real tropicalization $ \Trop_{\R}(\cV^{\vee})$ and convex polyhedra inside $L^*\otimes \R$
are replaced by positive sets in the real tropical space $\Trop_{\R}(\cV^{\vee})\supset \Trop_{\Z}(\cV^{\vee})$ (see \S\ref{ss:tropicalization} and Definition \ref{def:positive_set} for the definitions of $\Trop_{\R}(\cV^{\vee})$ and of positive set, respectively).

Besides the trivial case where $\cV$ is just a torus (and hence $\cV^{\vee}$ is just the dual torus), the tropical spaces $\Trop_{\Z}(\cV^{\vee})$ and $\Trop_{\R}(\cV^{\vee})$ do not possess a linear structure (there is no natural notion of addition in these spaces and only multiplication by positive scalars makes sense).
However, in certain situations these tropical spaces do contain subsets where addition and scalar multiplication make sense, which we call \emph{linear subsets}. 
In any case, every choice of seed $\seed^\vee$ for $\cV^{\vee}$ gives rise to a bijection $\mathfrak{r}_{\seed^\vee}:\Trop_{\R}(\cV^{\vee}) \longrightarrow \R^d$ that restricts to a bijection $\Trop_{\Z}(\cV^{\vee}) \overset{\sim}{\longrightarrow} \Z^d$, where $d$ is the dimension of both $\cV$ and $\cV^\vee$. 
In general, different seeds lead to different bijections. 
When we fix one such identification $\mathfrak{r}_{\seed^\vee}$ and talk about linear subsets of $\Z^d$ and positive subsets of $\R^d$, what we mean is that the inverse image of such a set under $\mathfrak{r}_{\seed^\vee}$ has the given property.

\subsection{Positive Newton--Okounkov bodies and minimal models} Newton--Okounkov bodies are convex closed sets in real vector spaces. Their systematic study was developed by Lazarsfeld--Musta\c{t}\u{a} \cite{LM09} and Kaveh--Khovanskii \cite{KK12} based on the work of Okounkov \cite{Oko96,Oko03}. 
This concept is a far reaching generalization of both the Newton polytope of a Laurent polynomial and the polytope of a polarized projective toric variety. 
In \cite{KK12} the authors introduced Newton--Okounkov bodies for Cartier divisors on irreducible varieties.
In this paper we consider Newton--Okounkov bodies associated to Weil divisors in the setting of minimal models for cluster varieties.
More precisely, let $D$ be a Weil divisor on a $d$-dimensional normal variety $Y$ admitting a non-zero global section, that is, the space $H^0(Y, \mathcal{O}(D))$ is non-zero, where $\mathcal{O}(D)$ is the coherent sheaf associated to $D$. 
The section ring of $D$ is a graded ring 
\[
R(D)=\bigoplus_{k\in \Z_{\geq 0}}{R}_k(D) 
\] 
whose $k$-th homogeneous component is the vector space $R_k(D)=H^0(Y, \mathcal{O}(kD)) \subset \Bbbk (Y)$. 
Fix a non-zero element $\tau\in R_1(D)$, and suppose we are given a total order on $\Z^d$ and a valuation $\nu: \Bbbk(Y)^* \to \Z^d$. 
Then the Newton--Okounkov body associated to this data is:
\eqn{
\Delta_\nu(D,\tau) := \overline{\conv\Bigg( \bigcup_{k\geq 1}  \lrc{\frac{\nu\lrp{f/\tau^k}}{k} \mid f\in R_k(D)\setminus \{0\} } \Bigg) }\subseteq \R^d.
}

Given a cluster variety $\cV$, our first goal is to use its cluster structure to construct Newton--Okounkov bodies associated to divisors in compactifications of $\cV$, generalizing the construction of the polytope of a torus invariant divisors on a toric variety.
Hence, we need to establish the class of compactifications of $\cV $, the divisors therein and the valuations we consider.

We begin discussing valuations obtained from the cluster structure. 
In case $\cV$ is a cluster $\cA$-variety, this is closely related to the work of Fujita and Oya \cite{FO20}. 
However, our approach includes the cases where $\cV$ is a cluster $\cX$-variety, a quotient of a cluster $\cA$-variety, or a fibre of a cluster $\cX$-variety.
In order to be able to use the cluster structure of $\cV$ to construct a valuation on $ \Bbbk(\cV)$ certain conditions (depending on whether $\cV$ is of type $\cA$ or of type $\cX$) need to be fulfilled. 
For instances, if $\cV$ is of type $\cA$, a sufficient condition is that the rectangular matrix $\widetilde{B}$ determining the cluster structure of $\cV$ has full rank\footnote{A weaker condition is enough to construct a valuation for a given seed, see Remark \ref{rem:dom_order}.}; if $\cV$ is of type $\cX$ we need that the full Fock--Goncharov conjecture holds for $\cX$ (as we are assuming), see \S\ref{sec:cluster_valuations} for more details, including the cases of quotients of $\cA$ and fibres of $\cX$.
In case the necessary conditions are satisfied then for every $\seed$ for $\cV$ we have a cluster valuation
\[
\nu_\seed: \Bbbk(\cV) \setminus\{0\} \to (\Z^d, <_{\seed}).
\]
The total order $<_{\seed}$ on  $\Z^d$ depends also on the type of $\cV$.
Moreover, in case $\cV$ is of type $\cA$ in the literature this valuation is generally denoted by $\gv_{\seed} $ and called a $\gv$-\emph{vector valuation} as it is closely related to the $\gv$-vectors associated to cluster monomials introduced in \cite{FZ_clustersIV}.
In case $\cV$ is of type $\cX$ the associated cluster valuation has not been systematically defined yet in the literature to the best of our knowledge. 
In this case we also denote $\nu_{\seed}$ by $\cv_{\seed}$ and call it a $\cv$-\emph{vector valuation} since this valuation is closely related to the $\cv$-vectors associated to $Y$-variables introduced in \cite{NZ} and more generally to {\bf c}-vectors of theta functions on $\cX$ defined in \cite{BFMNC}, and currently investigated in \cite{ML23}. 
In any case, for every seed $\seed$ the theta basis of $H^0(\cV, \mathcal{O}_{\cV})$ is adapted for the cluster valuation $\nu_{\seed}$.
In particular, if $ Y$ is a variety birational to $\cV $ and $D$ is a divisor in $Y$, then, upon a choice of non-zero section $\tau \in R_1(D)$ and a seed $\seed$, we can construct a Newton--Okounkov body $\Delta_{\nu_\seed}(D,\tau) $. 
We are primarily interested in conditions ensuring that such a Newton--Okounkov body is a positive set.
On the one hand this is a condition that needs to be satisfied if one seeks to reverse Gross--Hacking--Keel--Kontsevich's construction of a compactification of a cluster variety from a positive set.
On the other hand, we are further interested in describing how the change of seed affects the Newton--Okounkov body and positivity plays the key role in understanding this. 
 If $\Delta_{\nu_\seed}(D,\tau)$ is positive then any other $\Delta_{\nu_{\seed'}}(D,\tau)$ is obtained from $\Delta_{\nu_\seed}(D,\tau)$ by a composition of tropicalized cluster transformations. 
 This will be discussed in more detail in the next subsection of the introduction.
In order to be able to show that $\Delta_{\nu_\seed}(D,\tau) $ is positive we restrict the class of compactifications of $\cV$, the divisors we consider, and the sections we choose.

One can define a partial minimal model for $\cV$\footnote{Throughout the text we consider more generally (partial) minimal models for schemes $V$ with a cluster structure given by a birational map $\cV\dashrightarrow V$.} is an inclusion $\cV \subset Y$ such that $Y$ is normal and $\Omega$ has a simple pole along every irreducible divisorial component of the boundary $D=Y \setminus \cV$, see \cite[Remark~1.3]{GHK_birational}. It is a minimal model if $Y$ is projective over $\Bbbk$.
These are the kind of (partial) compactifications of $\cV $ we consider.
The main reason for this is that any prime divisor supported on $D$ determines a primitive point of $\Trop_{\Z}(\cV)$.  
Let $D'$ be a divisor supported on $D$. 
We say that $R(D')$ has a {\it graded theta basis} if for each $k$ 
the set of theta functions on $\cV$ contained in $H^0(Y,\mathcal{O}(kD'))$ forms a basis (see Definition~\ref{def:graded_theta_basis}).
Then we can prove the following result.

\begin{theorem*}
(Theorem \ref{NO_bodies_are_positive})
Let $D'$ be a Weil divisor supported on the boundary $D$ of the minimal model $\cV \subset Y $ such that $R(D')$ has a graded theta basis. Let $\tau\in R_1(D')$ be such that $\nu_{\seed}(\tau) $ belongs to a linear subset of $ \Z^d$. Then the Newton--Okounkov body  $\Delta_{\nu_{\seed}}(D',\tau)$ is a positive polytope.
\end{theorem*}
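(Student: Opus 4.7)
The plan is to leverage the graded theta basis of $R(D')$ to rewrite the Newton--Okounkov body in tropical terms on $\Trop_\R(\cV^\vee)$ and then identify the result as a positive polytope via the dictionary between prime boundary divisors and tropical points. Concretely, for each $k\geq 1$ let $S_k \subset \Trop_\Z(\cV^\vee)$ denote the set of tropical points $q$ with $\vartheta_q \in R_k(D')$; by hypothesis $\{\vartheta_q\}_{q \in S_k}$ is a $\Bbbk$-basis of $R_k(D')$. Because the cluster valuation $\nu_\seed$ is adapted to the theta basis, the valuation of any non-zero element of $R_k(D')$ is attained on a single $\vartheta_q$ with $q \in S_k$. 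Writing $g_\seed(q) := \nu_\seed(\vartheta_q)$ and using $\nu_\seed(f/\tau^k) = \nu_\seed(f) - k\,\nu_\seed(\tau)$, this reformulates the Newton--Okounkov body as
\[
\Delta_{\nu_\seed}(D',\tau) \;=\; \overline{\conv\Bigg(\bigcup_{k\geq 1} \tfrac{1}{k}\lrp{g_\seed(S_k) - k\,\nu_\seed(\tau)}\Bigg)}.
\]

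The second step is to identify each $S_k$ as a positive subset of $\Trop_\Z(\cV^\vee)$. Write $D' = \sum_i a_i D_i$, where $D_i$ runs over the prime divisor components of the boundary $D = Y \setminus \cV$. Since $\cV \subset Y$ is a minimal model, each $D_i$ determines a primitive point $v_i \in \Trop_\Z(\cV)$. The condition $\vartheta_q \in H^0(Y, \mathcal{O}(kD'))$ is equivalent to $\ord_{D_i}(\vartheta_q) \geq -k a_i$ for every $i$, and $q \mapsto \ord_{D_i}(\vartheta_q)$ is a tropical linear function dual to $v_i$ in the sense of the GHKK dictionary. Hence $S_k$ is cut out by finitely many tropical linear inequalities and is therefore a positive set; taking the union of the rescaled sets $\tfrac{1}{k} S_k \subset \Trop_\R(\cV^\vee)$ and passing to closed convex hull (which is bounded since $Y$ is projective and $R(D')$ is finitely generated) produces a positive polytope.

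To conclude, I would invoke the hypothesis that $\nu_\seed(\tau)$ lies in a linear subset of $\Z^d$: translation by a linear tropical point preserves positivity under the identification $\mathfrak{r}_{\seed^\vee}$, since addition is only canonically defined on linear subsets of $\Trop_\R(\cV^\vee)$ and this particular translation commutes with all tropicalized cluster transformations relating seed identifications. The step I expect to require the most care is the second one, specifically the identification of $q \mapsto \ord_{D_i}(\vartheta_q)$ with the tropical linear function dual to $v_i$. This is the cluster analogue of the toric identity $\ord_{D_i}(\chi^m) = \langle m, v_i\rangle$ and lies at the heart of the positive-set framework of \cite{GHKK}; its verification combines the minimal model assumption (which guarantees that $v_i$ is a primitive integer tropical point) with the compatibility of theta functions with pole orders along boundary divisors and the full Fock--Goncharov conjecture for $\cV$.
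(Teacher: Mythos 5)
Your reformulation of the Newton--Okounkov body in terms of the sets $S_k$ (equivalently $P_k(D')$) and the translation by $\nu_{\seed}(\tau)$ matches the paper's first step, and the identification $\Delta = P_1(D') - \nu_{\seed}(\tau)$ (after verifying that the union $\bigcup_k \tfrac{1}{k}S_k$ is exactly $P_1(D')(\Q)$ and that $P_1(D')$ is already convex, so the convex hull and closure are trivial) is correct. However, the central positivity step has a genuine gap.

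You claim that $S_k$ ``is cut out by finitely many tropical linear inequalities and is therefore a positive set.'' This inference is not valid. First, the functions $q \mapsto \ord_{D_i}(\vartheta^{\cV}_q)$ are tropical piecewise-linear, not linear, and half-spaces cut out by tropicalized theta functions are not automatically positive. The relevant issue: positivity of $S = \{b : g^T(b) \geq -c\}$ requires that $\alpha(p,q,r) \neq 0$ implies $g^T(r) \geq g^T(p) + g^T(q)$ (superadditivity of $g^T$ along broken-line multiplication). From $\vartheta_p \vartheta_q = \sum_r \alpha(p,q,r)\vartheta_r$ and the valuation axioms you only get the inequality in the other direction on the \emph{minimum}: $g^T(p) + g^T(q) = \ord_{D_i}(\vartheta_p \vartheta_q) \geq \min_r \ord_{D_i}(\vartheta_r)$. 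Upgrading this to a bound on \emph{every} term $r$ with $\alpha(p,q,r)\neq 0$ is exactly what the graded theta basis hypothesis provides, and it is crucial: since $\vartheta_p\vartheta_q \in R_{d_1+d_2}(D')$ and $R_{d_1+d_2}(D')$ has a basis consisting precisely of the theta functions parametrized by $P_{d_1+d_2}(D')(\Z)$, each $\vartheta_r$ appearing in the expansion must lie in $R_{d_1+d_2}(D')$, hence $r\in P_{d_1+d_2}(D')(\Z)$. In your proposal the graded theta basis is used only for the reformulation, not for positivity; the positivity is asserted as a consequence of ``tropical linearity,'' which it is not.

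Two further points to complete the argument. First, the closed convex hull step needs a sentence: $\bigcup_k \tfrac{1}{k}S_k - \nu_{\seed}(\tau)$ is the set of rational points of the closed convex polyhedron $P_1(D') - \nu_{\seed}(\tau)$, hence the closed convex hull adds nothing; without this observation, ``passing to closed convex hull produces a positive set'' is not justified (usual convex hull does not preserve positivity, which is about broken-line convexity). Second, the assertion that ``translation by a linear tropical point preserves positivity'' needs a proof, not just a commutation heuristic: the paper supplies it concretely by constructing, from a balanced pair of broken lines $(\gamma_1, \gamma_2)$ with $I(\gamma_i) = p_i$ and $F(\gamma_1)+F(\gamma_2)=r$, a new balanced pair obtained by shifting all exponent vectors of $\gamma_i$ by $d_i\nu_{\seed}(\tau)$; this uses precisely the ``acts linearly'' part of the hypothesis on $\nu_{\seed}(\tau)$ to guarantee the shifted paths still bend consistently, giving $\alpha(p_1 + d_1\nu_{\seed}(\tau), p_2 + d_2\nu_{\seed}(\tau), r + (d_1+d_2)\nu_{\seed}(\tau)) \neq 0$.
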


In Lemma~\ref{lem:graded_theta_basis} we provide sufficient conditions ensuring that $R(D)$ has a graded theta basis. 
Moreover, the work of Mandel \cite{Man19} provides conditions ensuring that a line bundle on a cluster $\cX$-variety has a graded theta basis.

We further study another setting where we can use cluster structures to construct Newton--Okounkov bodies and show that they are positive polytopes:
suppose that $Y$ is a normal projective variety such that its Picard group is free and finitely generated.
The universal torsor of $Y$ is a scheme $\UT_Y$ whose ring of algebraic  functions is isomorphic to the direct sum of all the spaces of sections associated to all (isomorphism classes of) line bundles over $Y$.
We assume that $\UT_Y$ is a partial minimal model of a cluster $\cA$-variety, which we denote by $\cA \subset \UT_Y$.
For example, we encounter this situation frequently in the study of homogeneous spaces, where moreover the ring of global functions on $\UT_Y$ has a representation theoretic interpretation due to the Borel--Weil--Bott Theorem (Remark~\ref{rmk:borel weil bott}).
This fact is commonly used when constructing Newton--Okounkov bodies in Lie theory, see e.g. \cite{FFL15} and the references therein.

Let $D_1, \dots, D_s$ be the irreducible divisorial components of $D= \UT_Y \setminus \cV$ and let $\tf^{\cA^{\vee}}_{i}$ be the theta function on $\cA^{\vee}$ parametrized by the point in $\Trop_{\Z}(\cA)$ associated to $D_i$.
The (theta) superpotential\footnote{If $Y$ is Fano, then this should be considered as the superpotential used for mirror symmetry purposes.} associated to the inclusion $\cA \subset \UT_Y$ is
\[
W_{\UT_Y} = \sum_{i=1}^s \tf^{\cA^{\vee}}_i.
\]
The associated superpotential cone is the subset $\Xi_{\UT_Y}$ of $\Trop_{\R}(\cV^{\vee})$ where the tropicalized superpotential takes non-negative values.
Given a choice of seed $ \seed^\vee$ for $\cA^\vee$, $\Xi_{\UT_Y}$ is identified with a polyhedral cone $\Xi_{\UT_Y, \seed^\vee}\subset \R^d$.
As discussed in \cite{GHKK}, in many cases the integral points of $\Xi_{\UT_Y}$ parametrize the set of theta functions on $\cA$ that extend to $\UT_Y$. 
This happens for example if $\cA$ has \emph{theta reciprocity} (see Definition \ref{def:theta_reciprocity}), a condition that is conjectured to be true in situations more general than ours.
Even stronger, in many of the examples arising in nature the integral points of $\Xi_{\UT_Y}$ parametrize a basis of $H^0(\UT, \mathcal{O}_{\UT_Y})$.
In \cite{GHKK}, Gross--Hacking--Keel--Kontsevich give criteria ensuring that this is satisfied.
These conditions hold true in many cases of interest in representation theory, as was proven in several papers including \cite{BF, FO20,  GKS_polyhedral,GKS_typeA, GKS_string, Mag20, SW18} and \cite[\S9]{GHKK}.
Moreover, for special choices of seeds $\seed^{\vee} $, in these cases the cone $\Xi_{\UT_Y, \seed^\vee}$ agrees with known polyhedral cones such as the Gelfand--Tsetlin cone, string cones or the Knudson--Tao hive cone.
Much of the inspiration of this paper is due to the representation theoretic results that precede it.
In the case where the integral points of $\Xi_{\UT_Y}$ parametrize the set of theta functions on $\cA$ that extend to $\UT_Y$, we can restrict a {\bf g}-vector valuation $\gv_\seed $ from $\Bbbk(\cA)$ to $H^0(\UT_Y, \mathcal{O}_{\UT_Y})$. Therefore, given a line bundle $\lb$ on $Y$  we can construct a Newton--Okounkov body $\Delta_{\gv_\seed}(\lb)$ in a similar way as before.
In order to show that $\Delta_{\gv_{\seed}}(\lb)$ is a positve polytope we need to consider torus actions on $ \cA$ and fibrations of $\cA^{\vee}$ over a torus as we now explain. 

The universal torsor $\UT_Y$ is endowed with the action of the torus $T_{\text{Pic}(Y)^*}$ associated to the dual of the Picard group of $Y$. 
We first need this torus action to preserve $\cA$ 
and that the induced action on $\cA$ is cluster in the sense of \thref{k_and_pic} (roughly speaking this means that the restricted action can be identified with the action induced by the choice of a sublattice of the kernel of $\widetilde{B}$).
In such situations we have a cluster fibration 
\[
w:\cA^{\vee}\to T_{\text{Pic}(Y)}.
\]
Recall that the choice of seed gives rise to the identification  $\mathfrak{r}_{\seed^\vee}:\Trop_{\R}(\cA^{\vee}) \to \R^d$. 
The tropicalization of $w$ expressed using such an identification is a linear map $w^T:\R^d \to \text{Pic}(Y)\otimes \R$. 
Under the conditions above the Newton--Okounkov body $\Delta_{\gv_{\seed}}(\lb)$ can be described as a slicing of the superpotential cone. More precisely, we have the following result (see Definition \ref{def:quotient_fibre}).
\begin{theorem*}
(\thref{thm:k_and_pic})
Assume that the theta functions on $\cV$  parametrized  by the integral points of $\Xi_{\UT_Y}$ form a basis of $H^0(\UT_Y, \mathcal{O}_{\UT_Y})$. 
If the action of $ T_{\text{Pic}(Y)^*}$ restricts to a cluster action of $ T_{\text{Pic}(Y)^*}$ on $ \cA$ then for any class $[\lb]\in \text{Pic}(Y) $ the Newton--Okounkov body $\Delta_{{\bf g}_{\seed}}(\lb)$ can be describe as
\[
\Delta_{{\bf g}_{\seed}}(\lb)=\Trop_{\R}(w)^{-1}([ \lb ])\cap \Xi_{\UT_Y, \seed}.
\]
In particular, $\Delta_{{\bf g}_{\seed}}(\lb)$ is a positive subset of $\Trop_{\R}(\cV^{\vee})$ and $Y$ is a minimal model of the quotient of $\cA$ by the action of $T_{\text{Pic}(Y)^*}$.
\end{theorem*}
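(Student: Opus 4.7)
The strategy is to grade the coordinate ring of the universal torsor by $T_{\Pic(Y)^*}$-weight, identify the theta functions in each graded component via the tropicalised cluster fibration, and then compute the Newton--Okounkov body as a slicing of the superpotential cone.

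First, using that $\Pic(Y)$ is free and finitely generated, the coordinate ring of $\UT_Y$ decomposes as
\[
H^{0}(\UT_Y,\mathcal{O}_{\UT_Y})=\bigoplus_{[\lb]\in\Pic(Y)}H^{0}(Y,\lb),
\]
where the grading records the $T_{\Pic(Y)^*}$-weight. The crucial step is to show that the weight of a theta function $\tf^{\cA}_p$ on $\cA$ equals $\Trop_{\Z}(w)(p)\in\Pic(Y)$. This uses the hypothesis that the action of $T_{\Pic(Y)^*}$ on $\cA$ is cluster: by the duality between cluster sub-torus actions and cluster fibrations invoked from the preceding section, the dual fibration $w\colon\cA^{\vee}\to T_{\Pic(Y)}$ is characterised by the property that its tropicalisation computes $T_{\Pic(Y)^*}$-weights of theta functions on $\cA$. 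Combined with the hypothesis that the theta functions parametrised by $\Xi_{\UT_Y}\cap\Trop_{\Z}(\cA^{\vee})$ form a basis of $H^{0}(\UT_Y,\mathcal{O}_{\UT_Y})$, this implies that for every $k\geq 1$ the theta basis of $H^{0}(Y,k\lb)$ is indexed precisely by $\Xi_{\UT_Y}\cap\Trop_{\Z}(w)^{-1}(k[\lb])$.

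Next, the formula for the Newton--Okounkov body becomes a direct convex-geometric computation. Because $\gv_{\seed}$ is adapted to the theta basis and $\gv_{\seed}(\tf^{\cA}_p)=\mathfrak{r}_{\seed^{\vee}}(p)$, the image of $\gv_{\seed}$ on the non-zero elements of the $k$-th graded piece equals $\Xi_{\UT_Y,\seed}\cap\Trop_{\R}(w)^{-1}(k[\lb])\cap\Z^{d}$. Linearity of $\Trop_{\R}(w)$ together with the cone property of $\Xi_{\UT_Y,\seed}$ implies that rescaling by $1/k$ produces $(\Xi_{\UT_Y,\seed}\cap\Trop_{\R}(w)^{-1}([\lb]))\cap\tfrac{1}{k}\Z^{d}$. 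Taking the union over $k\geq 1$ and then the closed convex hull recovers the rational polyhedron $\Xi_{\UT_Y,\seed}\cap\Trop_{\R}(w)^{-1}([\lb])$, yielding the claimed formula.

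Positivity then follows because $\Xi_{\UT_Y,\seed}$ is a positive set, being cut out by the tropical non-negativity of a sum of theta functions, and $\Trop_{\R}(w)^{-1}([\lb])$ is an intersection of level sets of the coordinates of $w$, each of which corresponds, by the cluster hypothesis on the action, to a character attached to a point of a linear subset of $\Trop_{\Z}(\cA^{\vee})$; hence each such level set is positive and so is the common intersection with $\Xi_{\UT_Y,\seed}$. For the minimal-model claim, the cluster action of $T_{\Pic(Y)^*}$ on $\cA$ gives a well-defined quotient $\cA/T_{\Pic(Y)^*}$, and passing the partial minimal model inclusion $\cA\subset\UT_Y$ to this quotient yields an inclusion $\cA/T_{\Pic(Y)^*}\subset Y$ in which $Y$ is normal and projective and $\Omega$ descends with simple poles along the boundary divisors, so $Y$ is a minimal model of $\cA/T_{\Pic(Y)^*}$. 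The main obstacle is the first step: establishing that the $T_{\Pic(Y)^*}$-weight of $\tf^{\cA}_p$ is exactly $\Trop_{\Z}(w)(p)$, which requires unpacking the definition of ``cluster action'', identifying the cocharacter lattice of $T_{\Pic(Y)^*}$ with the kernel of $\Trop_{\Z}(w)$ in a seed-equivariant way, and invoking the compatibility of theta functions with such sub-torus actions; once this is in hand the remaining steps are essentially formal.
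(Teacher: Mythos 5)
Your proposal follows essentially the same route as the paper's proof: decompose $H^{0}(\UT_Y,\mathcal{O}_{\UT_Y})$ by $T_{\Pic(Y)^*}$-weight, use the fact that theta functions are $T_H$-eigenfunctions with weight computed by $\Trop_{\Z}(w)$ (the paper packages this as Proposition \ref{prop:dual_fibration} and Lemma \ref{lem:basis_of_tf}), exploit the adapted-basis property of the theta basis for $\gv_{\seed}$, and then compute $\Delta_{\gv_{\seed}}(\lb)$ as the slice of the superpotential cone by the fibre of $\Trop_{\R}(w)$ over $[\lb]$, checking both inclusions on rational points and using the cone and linearity properties to pass to the closed convex hull. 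The minimal-model statement is handled as in Remark \ref{rem:comparing_NO_bodies}.

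The one place where your reasoning is imprecise is the positivity of $\Trop_{\R}(w)^{-1}([\lb])$. You justify it by saying that each level set of a coordinate of $w$ ``corresponds to a character attached to a point of a linear subset of $\Trop_{\Z}(\cA^{\vee})$; hence each such level set is positive.'' However, the notion of a linear subset in Definition \ref{def:linear_action} is about multiplicative closure of theta functions, and the paper establishes no direct implication from ``linear subset'' to ``positive set.'' The actual argument, which is Lemma \ref{lem:positivity_of_q_slice}, is geometric: in a seed realization the fibre over $q$ is a translate of $H^{\perp}_{\R}$, and any broken line whose exponent lies in $H^{\perp}$ keeps its subsequent exponents in $H^{\perp}$ because bending adds a multiple of $p^{*}(n)$ and $H\subseteq K^{\circ}=p^{*}(\Nuf)^{\perp}$; so the fibre is broken-line convex, hence positive by Theorem \ref{thm:mainCMN}. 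Once you invoke that lemma in place of the ``linear subset'' heuristic, your proof matches the paper's line by line.
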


The case where $Y$ is the Grassmannian $\text{Gr}_{n-k}(\C^n)$ fits the framework above so it is possible to use the cluster $\cA$ structure to construct Newton--Okounkov bodies associated to arbitrary line bundles over $\text{Gr}_{n-k}(\C^n)$. 
We show that the Newton--Okounkov bodies we construct are unimodular to the Newton--Okounkov bodies constructed for $\text{Gr}_{n-k}(\C^n)$ by Rietsch and Williams in \cite{RW} using the cluster $\cX$ structure on Grassmannians (see Theorem~\ref{thm: val and gv}).
Moreover, the flow valuations of \cite{RW} are instances of $\bf c$-vector valuations.

This comparison result already has interesting consequences related to toric degenerations:
\begin{enumerate}
    \item Given a rational polytopal Newton--Okounkov body $\Delta$ for a (very ample) line bundle $\lb$ over $Y$ Anderson's main result in \cite{An13} applies and it yields a toric degeneration of $Y$ to a toric variety (whose normalization is) defined by $\Delta$. As the semigroup algebras of the {\bf g}-vector valuations are saturated, no normalization is necessary.
    \item The construction of Gross--Hacking--Keel--Kontsevich in \cite[\S8]{GHKK} associates to a positive polytope $P$ a minimal model $\cV\subset Y$ and moreover, using Fomin--Zelevinsky's principal coefficients, a toric degneration of $Y$ to the toric variety defined by $P$.
    As our Newton--Okounkov bodies are positive polytopes, this construction applies in our setting.
\end{enumerate}
The identification of the Newton--Okounkov bodies constructed by Rietsch--Williams and our Newton--Okounkov bodies constructed from {\bf g}-vectors implies the following result.

\begin{theorem*}(Theorem~\ref{thm: val and gv} and Remark~\ref{rmk:toric degen})
The toric degenerations of $\text{Gr}_{n-k}(\C^n)$ determined by the Newton--Okounkov polytopes constructed by Rietsch--Williams using Anderson's result coincide with the toric degenerations of $\text{Gr}_{n-k}(\C^n)$ given by Gross--Hacking--Keel--Kontsevich construction using principal coefficients.
\end{theorem*}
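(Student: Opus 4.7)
The plan is to deduce this coincidence from the comparison of Newton--Okounkov polytopes in Theorem~\ref{thm: val and gv}, together with a direct identification of the Gross--Hacking--Keel--Kontsevich principal coefficient family with the Anderson degeneration attached to a $\gv$-vector valuation on each line bundle.

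The first step is to use Theorem~\ref{thm: val and gv} to identify the Rietsch--Williams polytope with $\Delta_{\gv_\seed}(\lb)$ up to a unimodular equivalence of lattice polytopes, for each line bundle $\lb$ on $\Gnmkn$. Anderson's construction is compatible with such equivalences, and since the value semigroup of $\gv_\seed$ on $R(\lb)$ is saturated, no normalization of the toric special fibre is required. This reduces the claim to comparing the GHKK principal coefficient family, restricted to the affine slice of $\Xi_{\UT_Y,\seed}$ cut out by $[\lb] \in \Pic(Y)$, with Anderson's degeneration attached to $\gv_\seed$ on $R(\lb)$.

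The second step is to match these two families. The family $\cAp \to T_{\wN}$ is constructed so that each theta function $\tf_q$ on $\cA$ lifts to a theta function on $\cAp$ whose leading monomial in the principal coefficient variables is $z^{\gv_\seed(\tf_q)}$. The graded theta basis of $R(\lb)$, available in our setting since the integral points of the superpotential slice parametrize it, then identifies the coordinate ring of the sliced family with the Rees algebra of the $\gv_\seed$-filtration on $R(\lb)$, with the principal coefficient parameter playing the role of the Rees variable. Specializing this parameter to $0$ gives exactly the associated graded ring, which by definition is the special fibre in Anderson's construction.

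The main obstacle lies in this second step: one must verify that the superpotential slicing and subsequent specialization produce genuinely the same flat family as the Rees algebra of $\gv_\seed$, and not merely a family with an isomorphic special fibre. This hinges on the compatibility of the lift of theta functions from $\cA$ to $\cAp$ with the $\gv$-vector filtration, which follows from the construction of $\cAp$ in \cite{GHKK} together with the fact, recalled in the introduction, that $\gv_\seed$ is adapted to the theta basis of $R(\lb)$. Once this compatibility is in place, combining both steps yields the asserted coincidence of the two toric degenerations of $\Gnmkn$.
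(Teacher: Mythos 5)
Your proposal over-proves the statement, and the first step already contains the entire content of the paper's argument. What the paper actually establishes, via Theorem~\ref{thm: val and gv} together with Remark~\ref{rmk:toric degen}, is that the \emph{toric special fibres} coincide: Anderson's construction applied to the Rietsch--Williams valuation produces the normal toric variety of $\Delta_{\val_{\Yng(1)}}(D_{n-k},p_{(n-k)\times k})$ (no normalization needed, since the value semigroup of a $\gv$-vector valuation is saturated), GHKK's construction applied to the positive polytope $\Delta_{\bar{\gv}_{\Yng(1)^{\op}}}(D_{n-k},p_{(n-k)\times k})$ produces the toric variety of that polytope, and the two polytopes are unimodularly equivalent via $-\psi$ by Theorem~\ref{thm: val and gv}, hence define isomorphic toric varieties. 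That is all; the word ``coincide'' in the introductory theorem refers to this coincidence of toric limits, not to an isomorphism of one-parameter flat families.

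Your Step~2 is therefore aiming at a strictly stronger claim that the paper does not make and, read carefully, disavows: Remark~\ref{rmk:toric degen} explicitly remarks that the GHKK family lives over $\mathbb{A}^r$, contains ``various intermediate fibres that lie in between $\mathcal{A}=\cV$ and a toric variety,'' and is $\Bbbk^*$-equivariant only under full-rank assumptions, whereas Anderson's family is a one-parameter Rees-algebra degeneration and always $\Bbbk^*$-equivariant. So asserting that ``superpotential slicing and subsequent specialization produce genuinely the same flat family as the Rees algebra of $\gv_\seed$'' is not just unnecessary --- it is at odds with how the remark frames the comparison, and would require restricting the GHKK family to a particular line through $\mathbf 1$ and $0$ and then producing the isomorphism you gesture at. You correctly flag this as the main obstacle, but the resolution is not to fill the gap: it is to notice that the theorem, as stated and as used by the authors, only requires the isomorphism of special fibres, which your Step~1 already delivers. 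A small precision on Step~1: Anderson's input is a valuation, not a polytope, so ``compatible with unimodular equivalences'' should be read as the statement that for a saturated value semigroup the special fibre is the normal toric variety of the Newton--Okounkov body, which is invariant under lattice isomorphisms applied to the body.
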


\subsection{The intrinsic Newton--Okounkov body} 
Understanding how Newton--Okounkov bodies change upon changing the valuation is an interesting problem that has attracted the attention of several authors, see for example \cite{EH20, BMNC, FH21,CHM22,HN23}.
So let us return to the discussion on how the Newton--Okounkov bodies constructed above transform if we change the choice of seed.
Given any two seeds $\seed $ and $\seed'$ for $ \cV^\vee$ there is a piecewise linear bijection $\Trop_{\R}(\mu^{\cV^\vee}_{\seed,\seed'}):\R^d \to \R^d$ relating the identifications of $\Trop_{\R}(\cV^{\vee})$ with $\R^d$.
More precisely, we have a commutative diagram
\[
\xymatrix{
&\Trop_{\R}(\cV^{\vee})
\ar_{\mathfrak{r}_{\seed}}[dl] \ar^{\mathfrak{r}_{\seed'}}[dr] & \\
\R^d \ar^{\Trop_{\R}(\mu^{\cV^\vee}_{\seed,\seed'})}[rr]&   & \R^d.
}
\]
Every map $\Trop_{\R}(\mu^{\cV^\vee}_{\seed,\seed'})$ restricts to a piecewise linear bijection of $\Z^d$ and, 
by construction, the maps $\Trop_{\R}(\mu^{\cV^\vee}_{\seed,\seed'})$ are composition of tropicalized cluster transformations for $\cV^\vee$ (see \S\ref{sec:intrinsic_NOB} for a more concise description).
For a subset $P\subseteq \Trop_{\R}(\cV^{\vee})$ we let $P_{\seed}=\mathfrak{r}_{\seed}(P)$.
One of the main properties behind our interest in showing that the Newton--Okounkov bodies we have constructed are positive sets is the following:
if $P\subseteq \Trop_{\R}(\cV^{\vee})$ is a positive set then $\Trop_{\R}(\mu^{\cV^\vee}_{\seed,\seed'})(P_{\seed})=P_{\seed'}$ for any two seeds, $\seed$ and $\seed'$.
In particular, in this situation the entire collection of sets $\{P_\seed\}_{\seed}$ parametrized by the seed for $\cV^{\vee}$ may be replaced by $P$, a single intrinsic object that can be used to recover any $P_\seed$ in the family.

In the case where a Newton--Okounkov body $\Delta_{\nu_{\seed}}$ (associated to a line bundle $\lb$ or a pair $(D',\tau)$ as in the previous subsection) is positive, any other Newton--Okounkov body $\Delta_{\nu_{\seed'}}$ associated to the same data is also positive. 
In this situation there is a single intrinsic object $\Delta_{\mathrm{BL}} \subset\Trop_{\R}(\cV^{\vee})$ representing the entire collection $\{ \Delta_{\nu_{\seed}}\}_{\seed}$. 
We call $\Delta_{\mathrm{BL}}$ the \emph{intrinsic Newton--Okounkov body} (associate to the data we begin with).
The subindex $\mathrm{BL}$ in $\Delta_{\mathrm{BL}}$ stands for \emph{broken line}, the choice of this notation goes back to \cite{CMNcpt} where the last three authors of this paper introduce \emph{broken line convexity}-- a notion of convexity defined in a tropical space that ensures positivity. 
Broken lines are pieces of tropical curves in $\Trop_{\R}(\cV^{\vee})$ used to define theta functions on $\cV$ and describe their multiplication (see \S\ref{sec:tf_and_parametrizations}).
Straight line segments defining convexity in a linear space are replaced by broken line segments in the tropical space to define broken line convexity.
The main result of \cite{CMNcpt} is that a closed set is broken line convex if and  only if it is positive.

In the situations where we are able to show that $\Delta_{\nu_{\seed}}\subset \R^d$ is positive, it turns out that it is moreover polyhedral, a property that fails in general, see e.g. \cite{LM09,KLM_NObodies_spherical}. 
Since $\Delta_{\nu_{\seed'}}=\Trop_{\R}(\mu^{\cV^\vee}_{\seed,\seed'})(\Delta_{\nu_{\seed}})$ any other $\Delta_{\nu_{\seed'}}$ is also polyhedral.
The integral points of the convex bodies we consider are naturally associated to theta functions,
which suggests is the following  question: does there exist a finite set of theta functions such that $\Delta_{\nu_{\seed'}}$ is the convex hull of their images under $ \nu_{\seed'}$ for any seed $\seed'$?
Such a collection of points might vary as we change seeds as exhibited in the case of the Grassmannians in an example in \cite[\S9]{RW} and generalized to an infinite family of examples in \cite[Theorem~3]{bossinger2019full}.
Given the notion of broken line convexity, a slight reformulation of the question becomes more natural: 
does there exist a finite set of theta functions such that the broken line convex hull of their images under $\nu_{\seed'}$ is $\Delta_{\nu_{\seed'}}$ for some (and hence any) seed $\seed'$?
In fact, from the intrinsic Newton--Okounkov body perspective, the valuation is replaced by integral tropical points parametrizing theta functions and there is no reference to a seed at all.
Using this perspective, $\Delta_{\mathrm{BL}}$ becomes a broken line convex subset of $\Trop_{\R}(\cV^{\vee})$ whose integral points parametrize the theta basis of the first graded piece $R_1$ of the corresponding graded ring.
In \thref{taut} we give sufficient conditions ensuring that $\Delta_{\mathrm{BL}}$ can be described as the broken line convex hull of a finite collection of points and describe this collection.
Applying this result to the setting of Grassmannians we obtain that if $\lb_e$ is line bundle over $\Grass_{n-k}(\C^n)$ obtained by pullback of $\mathcal{O}(1)$ under the Plücker embedding $\Grass_{n-k}(\C^n)\hookrightarrow \mathbb P^{\binom{n}{k}-1}$ then the intrinsic Newton--Okounkov body $\Delta_{\mathrm{BL}}(\lb_e)$ is the broken line convex hull of the ${\bf g}$-vectors of the Pl\"ucker coordinates (Corollary~\ref{cor:intrinsicNO grassmannian}).

Broken line convexity also allows to generalize the Newton polytope of a Laurent polynomial to the the world of cluster varieties.
In particular, in \S\ref{sec:intrinsic_NOB} we introduce the \emph{theta function analog of the Newton polytope} of $f$, for any $f\in H^0(\cV, \mathcal{O}_\cV)$.
The intrinsic Newton--Okounkov bodies $\Delta_{\mathrm{BL}}$ can be described using this notion.
The key idea is exploiting the bijection between the theta basis (a special case of an \emph{adapted basis}) and integral tropical points parametrizing them.
This idea is explained for full rank valuations with finitely generated value semigroup in the survey \cite{B-toric}.
It is therefore interesting to continue studying this new class of objects.

\subsection{Organization of the paper}
In \S\ref{sec:background} we review background material on cluster varieties their quotients and their fibres (\S\ref{sec:back_ghkk}), and on tropicalization (\S\ref{ss:tropicalization}).
In \S\ref{sec:tf_and_parametrizations} we recall the construction of cluster scattering diagrams and the theta functions on (quotients and fibres of) cluster varieties.
In \S\ref{sec:minimal_models} we elaborate on the existence of a theta basis on the ring of regular functions on a partial minimal model of (a quotient or a fibre of) a cluster variety. This section largely follows \cite{GHKK}.
In \S\ref{sec:cluster_valuations} we recall the {\bf g}-vector valuations for (quotients) $\cA$-varieties. We introduce {\bf c}-vector valuations for (fibres of) $\cX$-varieties.
The main results of the paper are contained in \S\ref{sec:no}. The study of Newton--Okoukov bodies associated to  Weil divisors on minimal models is treated in \S\ref{sec:NO_bodies} while the Newton--Okoukov bodies for line bundles are treated in \S\ref{sec:universal_torsors}.
 The intrinsic Newton--Okounkov body and the wall-crossing phenomenon for these are addressed in \S\ref{sec:intrinsic_NOB}.
 Finally, in \S\ref{sec:NO_Grass} we apply the results of the previous section to Grassmannians. 
 One of the main technical conditions to be satisfied is verified in \S\ref{sec:Pic_property}.
 In \S \ref{sec:GHKK_and_RW} we prove a unimodular equivalence between the Newton--Okounkov bodies we construct and those constructed by Rietsch--Williams in \cite{RW}. 
 In \S\ref{sec:Grass_intrinsic} we describe the intrinsic Newton--Okounkov bodies for Grassmannians as the broken line convex hull
 of the {\bf g}-vectors of Pl\"ucker coordinates (in arbitrary seeds).

\subsubsection*{Acknowledgements} 
The authors L. Bossinger and A. Nájera Chávez were partially supported by PAPIIT project IA100122 dgapa UNAM 2022 and by CONACyT project CF-2023-G-106.
M. Cheung was supported by World Premier International Research Center Initiative (WPI Initiative), MEXT, Japan.
T. Magee was supported by EPSRC grant EP/V002546/1.

\section{Preliminaries}\label{sec:background}

\subsection{Cluster varieties, quotients and fibres}\label{sec:back_ghkk}
We briefly recall the construction of cluster varieties, their quotients and their fibres. The reader is invited to consult \cite{GHK_birational,GHKK} for the details we shall omit in this section. 

Unless otherwise stated, all tensor products are taken with respect to $\Z$. Moreover, given a lattice $L$ we denote by $L^*:= \Hom(L,\Z)$ its $\Z$-dual and let $ \langle \cdot , \cdot \rangle: L\times L^* \to \Z$ be the canonical pairing given by evaluation. 
We further denote by $L_\R:= L \otimes \R$ the real vector space associated to $L$. We fix an algebraically closed field $\Bbbk$ of characteristic $0$ and let $T_L:= \text{Spec}(\Bbbk [L^*])$ be the algebraic torus whose character lattice is $L^*$. 

\subsubsection{Cluster varieties and their dualities}
\label{sec:cluster_var}

The {\bf fixed data} $\Gamma$ consist of the following:
\begin{itemize}
    \item a finite set $I$ of {\bf directions} and a distinguished subset $\Iuf \subseteq I$ of {\bf mutable} (or {\bf unfrozen}) {\bf directions}. Elements of $I \setminus \Iuf $ are the {\bf frozen directions};
    \item a lattice $N$ of rank $|I|$ together with a saturated sublattice $N_{\text{uf}}\subseteq N$ of rank $|I_{\text{uf}}|$; 
    \item a skew-symmetric bilinear form $\{ \cdot , \cdot \} : N \times N \rightarrow \Q$;
    \item a finite index sublattice $N^\circ \subseteq N$ such that $\{ N, \Nuf \cap N^{\circ}\}\subset \Z$ and $\{ \Nuf, N^{\circ} \}\subset \Z$;
    \item a collection of positive integers $\{d_i\}_{i \in I}$ with greatest common divisor $1$;
    \item the dual lattices $M = \Hom (N, \Z)$ and $M^{\circ}=\Hom(N^{\circ},\Z)$.
\end{itemize}
A ${\bf seed}$ for $\Gamma$ is a tuple $\seed := ( e_i )_{i \in I}$ such that $\{ e_i \}_{i\in I}$ is a basis for $N$, $\{e_i\}_{i \in \Iuf}$ is a basis for $\Nuf$ and $\{d_i e_i \}_{i \in I } $ is a basis for $N^{\circ}$. 
We let $f_i := {d_i}^{-1} e_i^*$ and observe that $\{f_i\}_{i\in I}$ is a basis of $M^{\circ}$.
For $i,j\in I$ we write $\epsilon_{ij}:= \lbrace e_{i},d_j e_{j} \rbrace$ and define the matrix $\epsilon=(\epsilon_{ij})_{i,j\in I}$. 
When we work with various seeds at the same time we introduce labels of the form $e_{i;\seed}$, $f_{i;\seed}$, $\epsilon_{\seed}=(\epsilon_{ij;\seed})$, etc. to distinguish the data associated to $\seed $. We can {\bf mutate} a seed $\seed=(e_i)_{i\in I}$ in a mutable direction $k\in \Iuf$ to obtain a new seed $\mu_k(\seed)=(e'_i)_{i\in I}$ given by
\begin{equation}
\label{e_mutation}
e_i':=\begin{cases} e_i+[\epsilon_{ik}]_+e_k & i\neq k,\\
-e_k&i=k,
\end{cases}
\end{equation}
where $[x]_+:= \text{max}(0,x)$ for $x \in \R$. 

Let $r:=|\Iuf|$ and let $\mathbb{T}_r$ denote the $r$-regular tree whose edges are labeled by the elements of $\Iuf$.
We refer to $r$ as the {\bf rank} and fix it one and for all.
By a common abuse of notation, the set of vertices of this tree is also denoted by $\mathbb T_r$. 
We fix once and for all a distinguished vertex $v_0\in \T_r$ and let $\orT$ be the unique orientation of $ \T_r$ such that the $r$ edges incident to $v_0$ are oriented in outgoing direction from $v_0$, and every vertex different from $v_0$ has one incoming edge and $r-1$ outgoing edges.
We write $v\overset{k}{\longrightarrow}v'\in \orT$ to indicate that the edge in between the vertices $v,v'$ of $\orT$ is oriented from $v$ to $v'$ and is labeled by $k$.

Fix once and for all a seed $\seed_0=(e_i\mid i \in I)$ and call it the {\bf initial seed}.
To every vertex $v\in \T_r$ we attach a seed $\seed_v$ as follows: 
we let $\seed_{v_0}=\seed_0$, if $v\overset{k}{\longrightarrow}v'\in \orT$ then $\seed_{v'}=\mu_k(\seed_{v})$.
For simplicity we write $\seed\in \orT$ if $\seed=\seed_v$ for some $v\in \orT$.

For every seed $\seed=(e_{i;\seed}\mid i\in I)\in \orT$ we introduce the {\bf seed tori} $\cA_{\seed} = T_{N^{\circ}} $ and $ \cX_{\seed} = T_{M}$ which are endowed with the {\bf cluster coordinates} $\{A_{i;\seed} := z^{f_{i;\seed}}\}_{i \in I}$ and $\{X_{i;\seed} := z^{e_{i;\seed}}\}_{i \in I}$, respectively. The {\bf $\cA$-cluster transformation} associated to $\seed$ and $k \in \Iuf$ is the birational map
$
\mu^{\cA}_{k}:\cA_{\seed} \dashrightarrow \cA_{\mu_k(\seed)}
$
specified by the pullback formula
\begin{equation}
\label{A_mut}
(\mu^{\cA}_{k})^*(z^m):=z^{m} (1+z^{v_{k;\seed}})^{-\langle d_k e_{k;\seed},m\rangle} \ \ \text{ for }m\in M^{\circ},
\end{equation}
where $v_{k;\seed}:=\{e_{k;\seed}, \cdot \}\in M^{\circ}$. Similarly, the {\bf $\cX$-cluster transformation} associated to ${\vb s}$ and $k$ is the birational map
$
\mu^{\cX}_{k}:\cX_{\seed} \dashrightarrow \cX_{\mu_k(\seed)} 
$
specified by the pull-back formula
\begin{equation}
\label{X_mut}
(\mu^{\cX}_{k})^*(z^n):=z^{n} (1+z^{e_{k;\seed}})^{-[ n,e_{k;\seed} ]}\ \ \text{ for }n\in N,
\end{equation}
where $[\cdot, \cdot]:N\times N \to \Q$ is the bilinear form determined by setting $[e_i,e_j]=\lrc{e_i, d_je_j}$.

For seeds $\seed, \seed'\in \orT$ connected by iterated mutation in a sequence of directions $k_1, \dots, k_s\in \Iuf$, we let $\mu^{\cA}_{\seed, \seed'} $ (resp. $\mu^{\cX}_{\seed, \seed'} $) be the composition of cluster transformations in the same sequence of directions and in the same order. 
A birational transformation of the form $\mu^{\cA}_{\seed, \seed'}$ (or $\mu^{\cX}_{\seed, \seed'}$) can be used to glue its domain and range by identifying the largest open subschemes where the transformation is an isomorphism. 
We use this kind of gluing to define cluster varieties. 
More precisely, the cluster $ \cA$-variety associated to $\Gamma$ and $\seed_0$ is
\[
\cA_{\Gamma,\seed_0}:=\bigcup\limits_{\seed\in \orT} \cA_{\seed}/ \left( \text{gluing by } \mu^{\cA}_{\seed', \seed''} \right)_{\seed',\seed''\in\orT}.
\]
The cluster $ \cX$-variety associated to $\Gamma$ and $\seed_0$ is
\[
\cX_{\Gamma,\seed_0}:=\bigcup\limits_{\seed\in \orT} \cX_{\seed}/ \left( \text{gluing by } \mu^{\cX}_{\seed', \seed''} \right)_{\seed',\seed''\in\orT}.
\]

From now on an element $\seed\in \orT$ will be referred to as a seed for $\cA$ (or $\cX$).
It is important to recall that declaring another $\seed \in \orT$ as an initial seed gives rise to isomorphic cluster varieties.
We fix the pair $(\Gamma,\seed) $ once and for all and denote $\cA_{\Gamma, \seed_0}$ (resp. $\cX_{\Gamma, \seed_0}$) simply by $\cA$ (resp. $\cX$).

\subsubsection{Quotients of $\cA$-varieties and fibres of $\cX $-varieties}\label{sec:quotients-fibres} 

Let $N^{\perp}_{\text{uf}}:= \{ m\in M \mid \langle n, m \rangle=0 \ \forall \  n\in N_{\text{uf}} \} $.
In particular, $ M/ N^{\perp}_{\uf}\cong (N_{\text{uf}})^*$. 
By a slight abuse of notation we also write $ M^{\circ}/ \Nuf^{\perp}$. Here $\Nuf^\perp$ is taken in $M^\circ$ rather than $M$, so $M^{\circ}/ \Nuf^{\perp}$ is torsion free.
Since $\{ N_{\text{uf}},N \}\subseteq \Z$ the following homomorphisms are well defined
\begin{align} \label{eq:p12star}
  \begin{matrix}
    p_1^*: & N_{\uf} & \rightarrow & M^\circ &\qquad \phantom{aaaaa} \qquad \qquad & p_2^* : & N & \rightarrow& M^{\circ}/ N^{\perp}_{\uf}. \\
    & n &\mapsto & \{ n, \cdot \}
  & \qquad  & &  n &\mapsto &  \{ n, \cdot \} + \Nuf^{\perp}
  \end{matrix}
\end{align}
The matrix representing $p_2^*$ with respect to a seed $\seed\in \orT$ is the \emph{extended exchange matrix} $\widetilde{B}_{\seed}$ of \cite{FZ_clustersIV}.

\begin{definition}\thlabel{def:p-star}
A {\bf cluster ensemble lattice map} for $\Gamma$ is a homomorphism $p^*: N \to M^\circ$ such that $p^*|_{N_{\text{uf}}} = p^*_1$ and the composition $N \overset{p^*}{\longrightarrow} M^\circ \twoheadrightarrow M^{\circ}/ N^{\perp}_{\uf}$ agrees with $p_2^*$, where $ M^\circ \twoheadrightarrow M^{\circ}/ N^{\perp}_{\uf}$ denotes the canonical projection. Note that different choices of $p^*$ differ by a homomorphism $N/ N_{\uf} \rightarrow  N^{\perp}_{\uf} $.
\end{definition}

In other words, given a seed $\vb s $, the $|I|\times|I| $ square matrix $B_{p^*;\vb{s}}$ associated to a cluster ensemble lattice map $p^*$ with respect to the bases $(e_i)_{i\in I}$ and $(f_i)_{i\in I}$ satisfies 
\begin{equation}\label{eq:Mp*}
B_{p^*;\vb{s}} - \epsilon^{\rm{tr}}_\seed=
\lrb{\begin{matrix}
0 & 0 \\
0 & \ast
\end{matrix}},
\end{equation}
where the $0$ entries represent the blocks $\Iuf\times\Iuf$, $\Iuf\times (I\setminus \Iuf)$, and $(I\setminus \Iuf)\times \Iuf$,
and the $\ast$ entry indicates that the $(I\setminus \Iuf)\times(I\setminus \Iuf)$ block has no constraints.
Every cluster ensemble lattice map $p^*:N\to M^{\circ}$ commutes with mutation. Therefore, $p^*$ gives rise to a {\bf cluster ensemble map}
\[
p:\cA \to \cX.
\]

The map $p:\cA \to \cX$ yields both, torus actions on $\cA$ and fibrations of $\cX$ over a torus, as we explain subsequently.
Let
\begin{equation}\label{eq:define K}
K=\ker(p_2^*)=\lrc{k\in N\mid \{k,n\}=0\,\forall\, n\in N_{\rm uf}^\circ} \quad \text{and} \quad K^{\circ}=K \cap N^{\circ}. 
\end{equation}
To obtain an action on $\cA$ we consider  a saturated sublattice 
\[H_{\cA} \subseteq K^\circ.
\]
The inclusion $ H_{\cA} \hookrightarrow N^\circ$ gives rise to an inclusion $T_{H_{\cA}}\hookrightarrow T_{N^{\circ}}$
as a subgroup.
Since $p^*$ commutes with mutation and $H_{\cA}\subseteq K $ we have a non-canonical inclusion 
\[
T_{H_{\cA}}\hookrightarrow \cA.
\]
The action of $ T_{H_{\cA}} $ on $T_{N^\circ}$ given by multiplication extends to a free action of $T_{H_{\cA}} $ on $\cA$ and gives rise to  
a geometric quotient $\cA \to \cA/T_{H_{\cA}}$.
The scheme $\cA/T_{H_{\cA}}$ is obtained by gluing tori of the form  $T_{N^{\circ}/H_{\cA}}\cong T_{N^{\circ}}/T_{H_{\cA}}$; the gluing is induced by the $\cA$-mutations used to glue the seed tori for $\cA$.
More precisely, for every seed $\seed$ for $\cA$ we let $(\cA/T_{H_{\cA}})_{\seed}$ be a copy of the torus $ T_{N^{\circ}/H_{\cA}} $. 
For $k\in \Iuf$ the mutation   $\mu^{\cA/T_{H_\cA}}_{k}: (\cA/T_{H_{\cA}})_{\seed}  \dashrightarrow  (\cA/T_{H_{\cA}})_{\mu_k(\seed)}$ is given by
\begin{equation}
\label{A/T_mut}
\lrp{\mu^{\cA/T_{H_{\cA}}}_{k}}^*(z^m):=z^{m} (1+z^{v_{k;\seed}})^{-\langle d_k e_{k;\seed},m\rangle} \ \ \text{ for }m\in H_{\cA}^{\perp}.
\end{equation}
Let $\mu^{\cA/T_{H_{\cA}}}_{\seed, \seed'}$ denote the composition of mutations determined by the path in $\orT$ connecting $\seed, \seed'\in \orT$. Then 
\[
\cA/T_{H_{\cA}}:=\bigcup\limits_{\seed\in \orT} (\cA/T_{H_{\cA}})_{\seed}/ \left( \text{gluing by } \mu^{\cA/T_{H_{\cA}}}_{\seed', \seed''} \right)_{\seed', \seed'' \in \orT}.
\]

To obtain the fibration of $\cX$ over a torus we consider a saturated sublattice 
\[
H_{\cX} \subseteq K.
\]
The inclusion $H_{\cX} \hookrightarrow N$ induces a surjection $T_M:= \Spec(\Bbbk[N]) \to \Spec(\Bbbk[H_{\cX}])=:T_{H_{\cX}^*}$. This extends to a globally defined map 
\begin{equation}
\label{eq:weight_map}
    w_{H_{\cX}}:\cX\to T_{H^*_\cX}.
\end{equation}

\begin{remark}
The subindex $\cV $ in the lattice $H_{\cV}$ stands for the cluster variety $\cV$ for which the choice of sublattice is relevant. 
When there is no risk of confusion, we drop the subindex $\cV$ from $H_{\cV}$ (see the end of \S\ref{sec:FG_dual}).
\end{remark}

We let $\cX_{\phi}$ be the fibre of the map \eqref{eq:weight_map} over a closed point $\phi\in T_{H^*_{\cX}}$.
In this work we mainly focus on the fibre $\cXeH$, where ${\bf 1}_{T_{H^*_{\cX}}}\in T_{H^*_{\cX}}$ is the identity element. When there is no risk of confusion on the fibration we are considering we will denote this scheme simply by $\cXe$.

The fibre $\cXe$ is obtained by gluing tori isomorphic to $T_{H^\perp_{\cX}}$ via the restrictions of the $\cX$-mutations used to glue the seed tori for $\cX$ (see \cite[\S4]{GHK_birational} for a detailed treatment of this construction).
As in the previous situations, we have a description of the form 
\[
\cXe:=\bigcup\limits_{\seed\in \orT} (\cXe)_{\seed}/ \left( \text{gluing by } \mu^{\cXe}_{\seed', \seed''} \right)_{\seed', \seed'' \in \orT},
\]
where $(\cXe)_{\seed}$ is a torus isomorphic to $T_{H_{\cX}^\perp}$, $\mu^{\cXe}_{k}: (\cXe)_{\seed}  \dashrightarrow  (\cXe)_{\mu_k(\seed)}$ is given by  
\begin{equation}
\label{X_phi_mut}
\lrp{\mu^{\cXe}_{k}}^*(z^{n+H_{\cX}}):=z^{n+H_{\cX}}(1+z^{e_{k;\seed}+H_{\cX}})^{-[ n,e_{k;\seed} ]}\ \ \text{ for } n+H_{\cX} \in N/H_{\cX}
\end{equation}
and $\mu^{\cXe}_{\seed,\seed'}$ is defined as for the other varieties we have introduced so far.

\begin{definition}
\label{def:quotient_fibre}
A variety of the form $\cA/T_{H_{\cA}} $ is referred to as a {\bf quotient of $\cA$}. A variety of the form $\cXe$ is referred to as a {\bf fibre of $\cX$}. 
A {\bf cluster action} on $\cA$ is the action of a torus of the form $T_{H_{\cA}}$.
\end{definition}

Let $T$ be an algebraic torus endowed with a set of coordinates $z_1, \dots , z_r$ and let $\omega_T$ be its canonical bundle. 
A {\bf volume form} on $T$ is a nowhere vanishing form in $H^0(T, \omega_T) $.
The {\bf standard volume form} on $T$ is (any non-zero scalar multiple of) 
\[
\Omega_T= \frac{dz_1 \wedge \dots \wedge dz_r}{z_1 \cdots z_r}.
\]

\begin{definition}
	A {\bf log Calabi–Yau pair} $(Y, D)$ is a smooth complex projective variety $Y$ together with a reduced normal crossing divisor\footnote{This is not the most general definition, but it is sufficient for the current discussion.  See \cite[Section~1]{GHK_birational} for a more in depth discussion.} $D\subset Y$ such that $K_X+D=0$. 
We say a scheme $V$ is log Calabi–Yau if there exists a log Calabi–Yau pair $(Y,D)$ such that $V$ is $Y \setminus D$ up to codimension 2.
\end{definition}

It follows from \cite{Iitaka} that any log Calabi--Yau variety $V$ is endowed with a unique up to scaling holomorphic volume form (\ie a nowhere vanishing holomorphic top form) $\Omega_V$ which has at worst a simple pole along each component of $D$ for any such $(Y,D)$.  See \cite{GHK_birational} for further details.

As explained in \cite[\S1]{GHK_birational} both $ \cA$ and $\cX$ are log Calabi--Yau, the key point being that these schemes are obtained by gluing tori via birational maps that preserve the standard volume form on each seed torus (endowed with cluster coordinates). 
For the same reason, the schemes of the form $\cA/T_{H_{\cA}}$ and $\cX_{\phi}$ are also log Calabi--Yau.
The canonical volume form on $\cA/T_{H_{\cA}}$ (resp. $\cX_{\phi}$) is induced by (resp. the restriction of) the canonical volume form of $\cA$ (resp. $\cX$).

\subsubsection{Principal coefficients, $\cX$ as a quotient of $\cAp $ and $\cA$ as a fibre of $\cXp$}
\label{sec:principal_coefficients}
For the fixed data $\Gamma=\lrp{I, \Iuf, N,N^{\circ}, M, M^{\circ}, \{ \cdot, \cdot \}, \{d_i\}_{i\in I} }$, we consider its principal counterpart 
\[
\Gamma_{\prin}=\lrp{I_{\prin}, (I_{\prin})_{\text{uf}}, N_{\prin}, N_{\prin}^{\circ}, M_{\prin}, M^{\circ}_{\prin}, \{ \cdot, \cdot \}_{\prin}, \{d_i\}_{i\in I_{\prin}} },
\]
where the index set $I_\prin$ is the disjoint union of two copies of $I$, its subset $(I_\prin)_{\text{uf}}$ is the set $\Iuf$ thought of as a subset of the first copy of $I$, 
\[
       N_{\prin} = N \oplus M^\circ, \quad  N_{\prin}^{\circ}= N^{\circ}\oplus M, \quad (N_{\prin})_{\text{uf}}=\Nuf \oplus 0, \quad M_{\prin} = M \oplus N^\circ, \quad M_{\prin}^{\circ}=M^{\circ}\oplus N.
\]
For $i \in I_{\prin}$ belonging to either the first or second copy of $I$, the corresponding integer in the tuple $\{d_i \mid i\in I_{\prin}\}$ is equal to integer indexed by $i$ for $\Gamma$, and 
\[
\{(n_1,m_1),(n_2,m_2)\}_{\prin}= \{n_1, n_2\} + \langle n_1,m_2 \rangle - \langle n_2,m_1 \rangle.
\]
Recall that $\seed_0=(e_i)_{i \in I} $ is the initial seed for $\Gamma$. Then the initial seed for $\Gamma_{\prin}$ is ${\seed_0}_{\prin}=\lrp{(e_i,0),(0,f_i)}_{i\in I}$.
Since $\Gamma$ and $\seed_0$ were already fixed, we denote the cluster variety $\cA_{\Gamma_{\prin},{{\seed{_0}}_{\prin}}}$ (resp. $\cX_{\Gamma_{\prin},{{\seed{_0}}_{\prin}}}$) simply by $\cAp$ (resp. $\cX_{\prin}$). It is moreover worth pointing out that $\cAp$ is in fact independent of the choice of initial seed $\seed_0$ as explained in \cite[Remark B.8]{GHKK}.

In \cite{GHK_birational} the authors show that the scheme $\cX$ can be described as a quotient of $\cAp$ in the sense of Definition \ref{def:quotient_fibre}. 
To obtain such a description we need to choose a cluster ensemble lattice map $p^*:N \to M^{\circ}$ for $\Gamma$. 
This choice determines the cluster ensemble map 
\begin{equation}
\label{eq:def_p_prin}
p_{\prin}: \cAp \to \cXp.
\end{equation}
The map $p_{\prin}$ is induced by the cluster ensemble lattice map
\begin{align*}
p_{\prin}^*:N_{\prin} &\to \Mpc\\
(n,m) &\mapsto \lrp{p^*(n)-m,n}
\end{align*}
for $\Gamma_\prin$.
Set $K_{\prin}:=\ker(p_{\prin,2}^*)$ and $ K_{\prin}^\circ:= K_{\prin}\cap N^\circ_\prin$, where $p_{\prin,2}^*$ corresponds to the map $p_2^*$ in \eqref{eq:p12star} for $\Gamma_{\prin}$.
We let
\begin{equation}
\label{eq:H_Aprin} 
  H_{\cAp}:= \lrc{\left.\lrp{n,-(p^*)^*(n)}\in N^\circ_\prin \, \right| \, n \in N^\circ}.  
\end{equation}

It is straightforward to verify that $H_{\cAp}$ is a saturated sublattice of $K^\circ_\prin$ that is isomorphic to $N^\circ$.
In particular, we have a quotient $\cAp/ T_{H_{\cAp}}$ endowed with an atlas of seed tori isomorphic to $T_M $ (indeed, $T_{N^\circ_{\prin}}/T_{H_{\cAp}}\cong T_{N^\circ \oplus M}/T_{N^\circ}\cong T_M$). 
There is an isomorphism 
\begin{equation}
    \label{eq:def_chi}
    \chi : \cAp/T_{H_{\cAp}}\overset{\sim}{\longrightarrow} \cX
\end{equation}
respecting the cluster tori of domain and range.
The restriction of $\chi $ to a seed torus is a monomial map whose pullback is given by
\eqn{
\chi^*: N &\to (H_{\cAp})^\perp 
\\
n &\mapsto (p^*(n),n).
}
There is also a surjective map 
\begin{equation}
\label{eq:def_tilde_p}
    \tilde{p}:\cAp \to \cX.
\end{equation}
respecting seed tori.
The restriction of $\tilde{p} $ to a seed torus is a monomial map whose pullback is given by
\begin{align*}
    \tilde{p}^*:  N &\to \Mpc\\
       \ \ n &\mapsto  (p^*(n),n).
\end{align*}
In particular, we have $ \tilde{p}= \chi\circ \varpi$, where 
\begin{equation}
    \label{eq:def_varpi}
\varpi: \cAp \to \cAp/T_{H_{\cAp}} 
\end{equation}
is the canonical projection. 

It is also possible to describe $\cA$ as a fibre of $\cXp$. There is an injective map
\begin{equation}
\label{eq:def_xi}
\xi:\cA \to \cXp
\end{equation}
respecting seed tori.
The restriction of $\xi $ to a seed torus is a monomial map whose pullback is given by
\begin{align*}
\xi^*: N_{\prin} &\to M^\circ
\\
(n,m) &\mapsto p^*(n)-m.
\end{align*}
Let
\begin{equation}
    \label{eq:H_Xprin} 
H_{\cXp}:= \lrc{\lrp{n,p^*(n)}\in N_\prin \mid n \in N}.
\end{equation}
It is routine to check that $H_{\cXp}$ is a valid choice to construct a fibration of $\cXp $ over the torus $T_{H^*_{\cXp}}$. 
Hence, we can consider the fibre $(\cXp)_{\bf 1}=(\cXp)_{{\bf 1}_{T_{H^*_{ \cXp}}}} $ associated to this fibration. 
There is an isomorphism 
\begin{equation}
\label{eq:def_delta}
\delta:
\cA \overset{\sim}{\longrightarrow} (\cXp)_{\bf 1}     
\end{equation}
respecting seed tori.
The restriction of $\delta $ to a seed torus is a monomial map whose pullback is given by
\begin{align*}
\delta^*: N_{\prin} /H_{\cXp} &\to M^\circ
\\
(n,m) + H_{\cXp} &\mapsto p^*(n)-m.
\end{align*}
In particular, we have that
\[
\xi=\iota \circ \delta,
\]
where $\iota: (\cXp)_{\bf 1}\hookrightarrow \cXp$ is the canonical inclusion.
For later reference we also introduce the map
\begin{equation}
\label{eq:def_rho}
\rho: \cXp \to \cX.
\end{equation}
respecting seed tori.
The restriction of $\rho $ to a seed torus is a monomial map whose pullback is given by
\begin{align*}
    \rho^*:  N &\to \Np \\
       \ \ n & \mapsto  (n,p^*(n)).
\end{align*}
In particular, $\rho \circ p_{\prin}= \tilde{p} $.
The maps we have considered so far fit into the following commutative diagram \begin{equation*}
\xymatrix{
(\cXp)_{\bf 1} \ar@{^{(}->}^{\ \iota}[r] & \cXp \ar_{\rho}[d] & \cAp \ar_{p_{\prin}}[l] \ar@{->>}^{\varpi}[d] \ar_{\tilde{p}}[dl] \\
\cA \ar^{\delta}_{\cong}[u] \ar_{p}[r] \ar^{\xi}[ru] & \cX & \cAp/T_{H_{\cAp}.} \ar_{\cong \ \ }^{\chi \ \ }[l]
}
\end{equation*}

\begin{remark}
    \label{rem:labels}
The maps introduced in this section are associated with $\Gamma$, hence, we label the maps with the subindex $\Gamma$ to stress the fixed data $\Gamma$ they are associated with.
\end{remark}

\subsection{Tropicalization}
\label{ss:tropicalization}
In this section we discuss  tropicalizations of cluster varieties. We mainly follow \cite[\S1]{GHK_birational}, \cite[\S2]{GHKK} and \cite[\S1.1]{FG_cluster_ensembles}. 

Let $T_L$ be the torus associated to a lattice $L$. A rational function $f$ on $T_L$ is called positive if it can be written as a fraction $f=f_1/f_2$, where both $f_1$ and $f_2$ are a linear combination of characters of $T_L$ with coefficients in $\Z_{>0} $.
The collection of positive rational functions on $T_L$ forms a semifield inside $ \Bbbk(T_L)$ denoted by $Q_{\rm sf}(L)$.
A rational map $f:T_L\dashrightarrow T_{L'}$ between two tori is a {\bf positive rational map} if the pullback $f^*:\Bbbk(T') \to \Bbbk(T)$ restricts to an isomorphism $f^*:Q_{\rm sf}(L') \to Q_{\rm sf}(L)$.
If $P$ is a semifield, then the $P$ valued points of $T_L$ form the set
\begin{equation}
\label{eq:FG_tropicalization}
T_L(P):=\Hom_{\rm sf}(Q_{\rm sf} (L), P)
\end{equation}
of semifield homomorphisms from $Q_{\rm sf} (L)$ to $ P$.
In particular, a positive birational isomorphism $\mu:T\dashrightarrow T'$ induces a bijection
\begin{align*}
\mu_*: T(P) & \to T'(P)\\
 h \ & \mapsto \ h \circ f^*.    
\end{align*}
By a slight but common abuse of notation the sublattice of monomials of $Q_{\rm sf}(L)$ is denoted by $L^*$. 
Considering $P$ just as an abelian group the restriction of an element of $Q_{\rm sf}(L)$ to $L^*$
determines a canonical bijection $T_L(P) \overset{\sim}{\longrightarrow} \Hom_{\rm groups} (L^*, P) $. 

\begin{remark}
\label{rem:identification}
We systematically identify $T_L(P)$ with $L\otimes P$ by composing the canonical bijection $T_L(P) \overset{\sim}{\longrightarrow} \Hom_{\rm groups} (L^*, P) $ with the canonical isomorphism $\Hom_{\rm groups}(L^*, P) \cong L \otimes P$.
\end{remark}

Let $ \cV$ be a (quotient or a fibre of a) cluster variety.
For every $\seed, \seed'\in \orT$ the gluing map $\mu^{\cV}_{\seed , \seed'}: \cV_\seed\dashrightarrow  \cV_\seed'$ is a positive rational map.
So we can glue $\cV_{\seed}(P) $ and $ \cV_{\seed'} (P)$ using $(\mu^{\cV}_{\seed, \seed'})_*$ and define 
\[
\cV(P):= \coprod_{\seed \in \orT} \cV_{\seed}(P) / \left(\text{gluing by } (\mu^{\cV}_{\seed, \seed'})_*\right)_{\seed, \seed'\in \orT}.
\]
Every point ${\bf a}\in \cV(P)$ can be represented as a tuple $(a_{\seed})_{\seed\in \orT}$ such that $(\mu^{\cV}_{\seed, \seed'})_*(a_\seed)=(a_{\seed'}) $ for all $\seed,\seed'\in \orT$.
Since all of the maps $(\mu^\cV_{\seed,\seed'})_*$ are bijections, the assignment 
\eq{
   \mathfrak{r}_{\seed}:\cV(P)&\to \cV_{\seed}(P)\quad \text{given by} \quad {\bf a}=(a_{\seed})_{\seed \in \orT} \mapsto a_{\seed}.
}{not:tropical_space}
determines an identification of $\cV(P) $ with $ \cV_{\seed}(P)$. 
If $S\subset \cV(P)$ we let 
\begin{equation}
\label{eq:identification}
S_{\seed}(P):=\mathfrak{r}_{\seed} (S) \subset \cV_\seed(P)
\end{equation}
and write $S_{\seed}$ instead of $S_{\seed}(P)$ when the semifield $P$ is clear from the context.

The semifields we consider in this note are the integers, the rationals and the real numbers with their additive structure together with the semifield operation determined by taking the maximum (respectively, minimum).
We denote these semifields by $\Z^T$, $\Q^T$ and $\R^T$ (respectively, $\Z^t$, $\Q^t$ and $\R^t$).
The canonical inclusions $\Z \hookrightarrow \Q \hookrightarrow \R$ give rise to canonical inclusions
\[
\cV(\Z^T) \hookrightarrow \cV(\Q^T) \hookrightarrow \cV(\R^T) \quad \quad \text{ and } \quad \quad  \cV(\Z^t) \hookrightarrow \cV(\Q^t) \hookrightarrow \cV(\R^t).
\]
For a set $S\subseteq \cV(\R^T)$ (resp. $S\subseteq \cV(\R^t)$) we let $
S(\Z):= S\cap \cV(\Z^T)$ (resp. $S(\Z):= S\cap \cV(\Z^t)$). Moreover, for $G=\Z, \Q$ or $\R$, there is an isomorphism of semifields $G^T\to G^t$ given by $x \mapsto -x$ induces a canonical bijection
\begin{align} \label{eq:imap}
    i: \cV(G^T) \rightarrow \cV(G^t). 
\end{align}
Since $i$ amounts to a sign change (see Remark \ref{rem:i_map} below), we think of $i$ as an involution and denote its inverse again by $i$.

\begin{remark}\label{rmk:geometric trop}
The set $\cV(\Z^t)$ can be identified with the {\bf geometric tropicalization} of $\cV$, 
defined as
\begin{equation*}
    \cV^{\trop}(\Z) 
     \coloneqq \{ \text{divisorial discrete valuations } \nu: \Bbbk(\cV) \setminus \{ 0\} \rightarrow \mathbb Z \mid \nu (\Omega_{\cV}) <0 \} \cup \{ 0\}, 
\end{equation*}
where a discrete valuation is divisorial if it is given by the order of vanishing of a $\Z_{>0}$-multiple of a prime divisor on some variety birational to $\cV$. 
\end{remark}

\begin{remark}
\label{rem:i_map}
Let $G=\Z, \Q$ or $\R$. Identifying $\cV(G^T)$ with $\cV_{\seed}(G^T)$ via the bijection $\mathfrak{r}_\seed$ the map $i$ in \eqref{eq:imap} can be thought of as the multiplication by $-1$ (\cf Remark \ref{rem:identification}).
\end{remark}

A positive rational function $g$ on $\cV $ is a rational function on $\cV$ such that the restriction of $g$ to every seed torus $\cV_{\seed}$ is a positive rational function.

\begin{definition}
\label{def:trop_functions}
The {\bf tropicalization} of a positive rational function $g: \cV \dashrightarrow \Bbbk$ with respect to $\R^T $ is the function $g^T:\cV(\R^T)\to \R$ given by 
\begin{equation}
\label{eq:restriction}
{\bf a}\mapsto a_{\seed}(g),
\end{equation}
where $ {\bf a}=(a_{\seed})_{\seed \in \orT}$. The tropicalization of $g$ with respect to $\R^t$ is the function $g^t:\cV(\R^t)\to \R$ defined as
\[
{\bf v} \mapsto -v_{\seed}(g),
\]
\end{definition}
where $ {\bf v}=(v_{\seed})_{\seed \in \orT}$. A direct computation shows that both $g^T$ and $g^t$ are well defined. Namely, one checks that for $\seed,\seed'\in \orT$
\[
a_{\seed} (g)=a_{\seed'}(g),
\]
where in the left (resp. right) side of the equality we think of $g$ as a rational function on $\cV_\seed$ (resp. $\cV_{\seed'}$).
Moreover, we have that
\begin{equation}
\label{eq:comparing_tropicalizations}
    g^T({\bf a})=g^t(i({\bf a})),
\end{equation}
for all ${\bf a} \in \cV(\R^T)$.
\begin{remark}
In order to keep notation lighter we adopt the following conventions: 
\begin{itemize}
\item given a positive  rational function $g\in \Bbbk (\cV)=\Bbbk(\cV_\seed)$ the tropicalizations of $g$ with domains $\cV(\R^T)$ and $\cV_{\seed}(\R^T)$ are denoted by the same symbol $g^T$ for all $\seed\in \orT$;

\item the restriction of $g^T$ (resp. $g^t$) to $\cV(\Z^T)$ (resp. $\cV(\Z^t)$) is also denoted by $g^T$ (resp. $g^t$);
\item when $P$ is one of $\Z^T, \Q^T$ or $\R^T$ (resp. $\Z^t, \Q^t$ or $\R^t$) the map $(\mu^{\cV}_{\seed, \seed'})_*$ is denoted by $(\mu^{\cV}_{\seed, \seed'})^T$ (resp. $(\mu^{\cV}_{\seed, \seed'})^t$).
\end{itemize}
\end{remark}

\begin{remark}
    Later we will need to systematically consider $\cV(\R^t)$ when $\cV$ is a variety of the form $\cA$ or $\cA/T_H$ and $ \cV(\R^T)$ when $\cV$ is a variety of the form $\cX$ or $\cXe$.
    In particular, from \S\ref{sec:FG_conj} on we use the notation $\Trop_{\R}(\cV)$ that takes into account the different kinds of tropicalizations that we use for different kinds of varieties, see equation \eqref{eq:unif}. 
\end{remark}

For latter use we record the following formulae associated to the mutations determined by $\Gamma$:
\begin{equation}
    \label{eq:tropical_A_mutation}
    \lrp{\mu^{\cA}_{k}}^T(n)=n+[\langle v_k,n\rangle]_+(-d_ke_k)
\end{equation}
and
\begin{equation}
    \label{eq:tropical_X_mutation}
    \lrp{\mu^{\cX}_{k}}^T(m)=m+[\langle d_ke_k,m \rangle]_+v_k.
\end{equation}
In case we tropicalize these mutations with respect to $\R^t$ we replace $[\ \cdot\ ]_+$ by $[\ \cdot\ ]_-$.

Finally, if we think of $T_L (\R^T)$ (resp. $T_L(\R^t)$) as a vector space (see Remark \ref{rem:identification}), the tropicalization of a positive Laurent polynomial $g= \sum_{\ell\in L^*}c_{\ell} z^{\ell} \in Q_{\rm sf}(L)$  with respect to $\R^T$ (resp. $\R^t$) is the function $g^T:  T_L(\R^T) \to \R$ (resp. $g^t:  T_L(\R^t) \to \R$) given by 
\begin{eqnarray*}
x &\mapsto& - \max \{ \langle \ell , x  \rangle \mid \ell\in L^* \text{ such that } c_{\ell} \neq 0 \}\\
 (\text{resp. } x &\mapsto& \min \{ \langle \ell , x  \rangle \mid \ell\in L^* \text{ such that } c_\ell \neq 0 \}).
\end{eqnarray*}

\section{Theta functions and their labeling by tropical points}
\label{sec:tf_and_parametrizations}

\subsection{Fock--Goncharov duality}\label{sec:FG_dual}
For $\Gamma=(I, \Iuf, N,N^{\circ}, M, M^{\circ}, \{ \cdot, \cdot \}, \{d_i\}_{i\in I} )$ the Langlands dual fixed data
is $\Gamma^\vee=(I, \Iuf, N^\vee, (N^\vee)^{\circ}, M^\vee, (M^\vee)^{\circ}, \{ \cdot, \cdot \}^\vee, \{d^\vee_i\}_{i\in I} )$, where $d:=\text{lcm}(d_i)_{i\in I}$,
\[
     N^\vee = N^\circ, \quad  (N^\vee)^{\circ}= d\cdot N, \quad  M^\vee = M^\circ, \quad (M^\vee)^{\circ}=d^{-1}\cdot M, \quad \{\cdot, \cdot \}^\vee= d^{-1}\{\cdot, \cdot \} \quad \text{and} \quad d^\vee_i:=d\,d_i^{-1}.
\]
If $ \seed=(e_i)_{ i\in I}$ is a seed for $\Gamma$ then the Langlands dual seed is $\seed^\vee:=(e_i^\vee)_{i\in I}$, where $e_i^\vee:=d_ie_i$. We also set $v^\vee_i:=\{e^\vee_i, \cdot \}^\vee$ These constructions give rise to {\bf Langlands dual cluster varieties} which we denote as follows
\begin{align*}
	\begin{array}{l l l l}
	{}^L(\cA_{\Gamma;\seed_0}) := \cA_{\Gamma^\vee;\seed_0^\vee} \qquad \qquad & 	 \text{and} \qquad \qquad & {}^L(\cX_{\Gamma; \seed_0}) := \cX_{\Gamma^\vee; \seed_0^\vee}.
\end{array}
\end{align*}
Since $\Gamma $ and $\seed_0$ were already fixed, we denote ${}^L(\cA_{\Gamma;\seed_0})$ (resp. ${}^L(\cX_{\Gamma;\seed_0})$) simply by $\LA$ (resp. $\LX$).

\begin{definition}
The {\bf Fock--Goncharov dual} of $\cA$ (resp. $\cX$) is the cluster variety $\cA^{\vee}$ (resp. $\cX^{\vee}$) given by
\begin{equation*}
    \cA^{\vee} := {}^L\cX \qquad \qquad  	 \text{and} \qquad \qquad  \cX^{\vee} := {}^L\cA.
\end{equation*}
\end{definition}

In particular, we have that
\[
 \cAp^\vee = {}^L(\cX_\prin)=\cX_{(\Gamma_\prin)^\vee} \qquad \qquad  	 \cXp^{\vee}= {}^L(\cAp)=\cA_{(\Gamma_\prin)^\vee}.
\]

\begin{remark}
    \label{rem:Lprin}
Notice that $\cA_{(\Gamma_{\prin})^\vee}$ (resp. $\cX_{(\Gamma_{\prin})^\vee}$) is canonically isomorphic to  $\cA_{(\Gamma^\vee)_{\prin}}$ (resp. $\cX_{(\Gamma^\vee)_{\prin}}$). Hence, we frequently identify these schemes without making reference to the canonical isomorphisms between them.
\end{remark}

It is not hard to see that the map 
\begin{eqnarray}\label{eq:L p}
    {(\Lp)^*:= -d^{-1}(p^*)^*:N^\vee \to (M^\vee)^{\circ}}
\end{eqnarray}
is well defined and is a cluster ensemble lattice map for the Langlands dual data $\LGam$, where $(p^*)^*$ is the lattice map dual to $p^*$. 
Indeed, in the bases for $ N^\vee $ and $ (M^\vee)^{\circ}$ determined by $\seed^\vee $, and in comparison with the matrix $B_{p^*;\vb{s}}$ in \eqref{eq:Mp*}, the matrix of $(\Lp)^*$ is of the form
\begin{equation*}
B_{(\Lp)^*;\seed^\vee}= -B_{p^*;\seed}^{\rm{tr}}.
\end{equation*}
In particular, we have an associated dual cluster ensemble map
\[p^\vee:\cA^\vee \to \cX^\vee.\]

We proceed to introduce the Fock--Goncharov dual for a quotient of $\cA$. 
So consider a cluster ensemble lattice map $p^*:N\to M^{\circ}$ for $\Gamma$ and the cluster ensemble lattice map $(p^\vee)^*:N^\vee\to (M^\vee)^\circ$ for $\Gamma^\vee$.
Recall from \eqref{eq:define K} that $K=\ker(p_2^*)$.
Similarly, we set

\[
K^\vee=\ker((p^\vee)_2^*)=\{k\in N^\circ\mid \{k,n\}=0 \text{ for all } n\in d\cdot N_{\rm uf}\},
\]
where $(p^\vee)_2^*$ is the map $p^*_2$ of \eqref{eq:p12star} for $\Gamma^\vee$.
Let $H_{\cA}\subseteq K^\circ$ be a saturated sublattice and consider the quotient $\cA/T_{H_\cA}$. 
Recall from \S\ref{sec:quotients-fibres} that $\cA/T_{H_\cA}$ is obtained by gluing tori of the form $T_{N^{\circ}/H_{\cA}}$. 
Since $N^{\circ}/H_{\cA} $ and $H_{\cA}^{\perp} \subset M^\circ$ are dual lattices the Fock--Goncharov dual of $\cA/T_{H_{\cA}}$ should be a fibre of $\cA^\vee$ obtained by gluing tori of the form $T_{H_{\cA}^{\perp}}$. In order to construct it notice that for $n$ in $\Nuf$ we have $\langle k,p^*(n)\rangle = -{d}^{-1}\{k,dn\}=\langle dk,-(p^\vee)^*(n)\rangle$. This implies that
\[
K^\circ=p^*(N_{\rm uf})^\perp=K^\vee.
\]
In particular, $H_{\cA}$ is a saturated sublattice of $K^\vee$ as it is saturated in $K^\circ$.
It is therefore possible to find $T_{H_{\cA}^*}$ as the base of a fibration of the form \eqref{eq:weight_map} for $\cA^{\vee}$ as we are allowed to set
\[
H_{\cA^{\vee}}=H_{\cA}\subseteq K^\vee.
\]
So consider the fibration
\[
w_{H_{\cA}}:\cAm \to T_{H_\cA^*}.
\]
Notice that the fibre $(\cAm)_{{\bf 1}_{T_{H_\cA^*}}}$ is obtained gluing tori of the form $T_{H_{\cA}^\perp}$ as desired. 
Therefore, we define the Fock--Goncharov dual of the quotient $\cA/T_{H_\cA}$ as
\[
\cAHAm := (\cAm)_{{\bf 1}_{T_{H_\cA^*}}}=\lrp{{}^L\cX}_{{\bf 1}_{T_{H_\cA^*}}}.
\]

Similarly, let $H_{\cX}\subseteq K$ be a saturated sublattice and let $w_{H_{\cX}}:\cX \to T_{H^*_\cX}$ be the associated fibration. 
Recall that $\cX_{{\bf 1}_{T_{H^*_\cX}}}$ is obtained by gluing tori of the form $T_{H_{\cX}^{\perp}}$. 
Its Fock--Goncharov dual is a quotient of $\cXm$ glued from tori of the form $T_{(H^\perp_\cX)^*}$ which we construct next.
A direct computation shows that $d\cdot H_{\cX}$ is a saturated sublattice of $(K^\vee)^\circ$. 
In particular, we are allowed to choose
\[
H_{\cXm}= d\cdot H_{\cX}\subseteq (K^\vee)^\circ 
\]
as a sublattice giving rise to a quotient $ \LA/T_{d\cdot H_{\cX}}$. This quotient is obtained by gluing tori of the form $T_{d\cdot N}/T_{d\cdot H_\cX}\cong T_{N/H_\cX}\cong T_{(H^{\perp}_\cX)^*}$. 
Therefore, we define the Fock--Goncharov dual of $\cX_{{\bf 1}_{T_{H^*_{\cX}}}}$ as 
\[
\cXeHm := \cXm/T_{ H_{\cX^{\vee}}}={}^L\cA/T_{d\cdot H_\cX}.
\]

In what follows, when we consider a saturated  sublattice $H$ of $K^\circ$ and write expressions such as $\cA/T_{H}$ or $w_{H}:\cAm\to T_{H^*}$ we will be implicitly assuming that we have set 
\[
H_{\cA}= H = H_{\cAm}.
\]
Similarly, when $H$ is a saturated sublattice of $K$ and we write expressions such as $w_{H}:\cX \to T_{H^*}$, $\cXe$ or $\cXem$ we will be implicitly assuming that we have set
\[
H_{\cX}= H = d^{-1}\cdot H_{\cXm},
\]
\[
\quad  \cXe = \cXeH \quad \quad \text{and} \quad \quad \cXem= \cXm /T_{H_{\cXm}}.
\]

\begin{remark}
Let $\cV$ be (a quotient of) $\cA$ or  (a fibre of) $\cX$. In the skew-symmetric case Arg\"uz and Bousseau \cite{AB22} showed that $\cV$ and $\cV^{\vee}$ are mirror dual schemes from the point of view of \cite{GS22}.
A similar result is proven for the skew-symmetrizable case when $\cV$ has dimension $2$ in \cite{Mandy_rank2_MS} with arguments that may be generalized to arbitrary dimension.
\end{remark}

\subsection{Scattering diagrams and theta functions}
\label{sec:scat}
Theta functions are a particular class of global function on (quotients and fibres of) cluster varieties introduced in \cite{GHKK}.
In this subsection we outline their construction.
The main case to consider is the one of $\cAp$ since scattering diagrams and theta functions for (quotients of) $\cA$ and (fibres of) $\cX$ can be constructed from this case.

\begin{remark}
\label{rem:full_rank_assumption}
From now on, whenever we consider  the variety $\cA=\cA_{\Gamma,\seed_0}$ we will assume $\Gamma$ is of {\bf full-rank}. 
By definition this means that the map $p_1^*:\Nuf \to M^\circ$ given by $n \mapsto \{ n , \cdot \}$ is injective. 
There are various results of this article for $\cA$ that are valid even if $\Gamma$ is not of full-rank. 
However, various key results we shall use do need the full-rank condition (\cf Remark \ref{rem:all_from_cAp}). 
Even though we are imposing full-rank assumption we will frequently recall that we are assuming it to insist on the necessity of the assumption.
\end{remark}

\subsubsection{Theta functions on full-rank $\cA$}
\label{sec:tf_A}
Throughout this section we systematically identify $\cA^{\vee}_{\seed^\vee}(\R^T)$ with $M^\circ_{\R}$, see \S\ref{ss:tropicalization}.
A {\bf wall} in $M^{\circ}_\R$ is a pair $(\wall, f_{\wall})$ where
 $\wall \subseteq M^{\circ}_\R$ is a convex rational polyhedral cone of codimension one, contained in $n^{\perp}$ for some $n \in N_{\uf, \seed}^+$, and 
 $f_{\wall}  = 1+ \sum_{k \geq 1} c_k z^{kp^*_1(n)}$ is called a {\bf scattering function}, where $c_k \in \Bbbk$. 
A {\bf scattering diagram} $\scat $ in $M^{\circ}_\R$ is a (possibly infinite) collection of walls satisfying a certain finiteness condition (see \cite[\S1.1]{GHKK}). 
The {\bf support} and the {\bf singular locus} of $\scat $ are defined as
\[
\Supp(\scat):= \bigcup_{\wall \in \scat} \wall \ \ \ \text{and} \ \ \ \Sing(\scat):= \bigcup_{\wall \in \scat} \partial\wall \ \cup \bigcup_{\overset{\wall_1,\wall_2 \in \scat}{\text{dim}(\wall_1 \cap \wall_2) = |I|-2}} \wall_1 \cap \wall_2.
\]

A wall $(\wall, f_{\wall})$ defines a {\bf wall-crossing automorphism} $\mathfrak{p}_{\wall}$ of $\Bbbk (M)$  
given in a generator $ z^m$ by $
\mathfrak{p}_{\wall}(z^m)=z^m f_{\wall}^{\langle n_{\wall}, m \rangle }$,
where $n_{\wall}$ is the primitive normal vector of the wall $\wall$ with a choice of direction going against the flow of the path $\gamma$. 
If we fix a scattering diagram $\scat$ and a piecewise linear proper map $ \gamma:[0,1]\to M^\circ_{\R}\setminus \Sing(\scat)$ intersecting $\text{Supp}( \mathfrak{D})$ transversely 
then the {\bf path ordered product} $ \mathfrak{p}_{\gamma , \scat}$ is defined as the composition of automorphisms of the form $\mathfrak{p}_{\wall}$, where we consider the walls $\wall$ that are transversely crossed by $\gamma $. However, observe that $\gamma$ might cross an infinite number of walls, therefore, we would be potentially composing an infinite number of automorphisms and such infinite composition is well defined.  
Again, the reader is referred to \cite[\S 1.1]{GHKK} for a detailed discussion.

\begin{definition}
A scattering diagram $\scat$ is {\bf consistent} if for all $\gamma$ as above $\mathfrak{p}_{\gamma, \scat}$ only depends on the endpoints of $\gamma$. Two scattering diagrams $\scat$ and $\scat'$ are {\bf equivalent} if $\mathfrak{p}_{\gamma, \scat}= \mathfrak{p}_{\gamma, \scat'}$ for all $\gamma$.
\end{definition}

To define cluster scattering diagrams for  $\cA$ one first considers
\[
\scat_{{\rm in}, \seed}^{\cA} := \lrc{\left.\left( e_i^{\perp} , 1+z^{ p_{1}^*\left( e_i \right)}\right) \right| \  i \in \Iuf }.
\]
A {\bf cluster scattering diagram} for $\cA$ is a consistent scattering diagram in $M^{\circ}_{ \R}$ containing $\scat_{{\rm in}, \seed}^{\cA}$. By the following theorem, cluster scattering diagrams for $\cA$ do exist (provided $\Gamma$ is of full-rank). 

\begin{theorem} \cite[Theorem 1.12 and 1.13]{GHKK}
\label{thm:consistent_scattering_diagrams}
Assume $\Gamma$ is of full-rank. Then for every seed $\seed$ there is a consistent scattering diagram $\scat_{\seed}^{\cA} $ such that $\scat_{{\rm in}, \seed}^{\cA} \subset \scat_{\seed}^{\cA}$.
Furthermore $\scat_{\seed}^{\cA}$ is equivalent to a scattering diagram all of whose scattering functions are of the form ${ f_{\wall} = (1+ z^{p_{1}^*(n)})^c}$, for some $n \in N$, and $c$ a positive integer. 
\end{theorem}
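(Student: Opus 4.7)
The plan is to follow the order-by-order construction introduced by Kontsevich--Soibelman and refined by Gross--Pandharipande--Siebert and Gross--Siebert. The object produced by the theorem is an equivalence class of scattering diagrams, so the strategy is to start from $\scat_{{\rm in}, \seed}^{\cA}$ (which is generally not consistent) and inductively add walls whose scattering functions live in deeper and deeper pieces of a filtration, in such a way that every path-ordered product around a codimension-two stratum of $\Sing(\scat)$ becomes trivial in the limit.

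The first step is an algebraic setup. Let $N^+_{\uf,\seed} := \bigoplus_{i \in \Iuf} \Z_{\geq 0} e_i \setminus \{0\}$. The full-rank hypothesis on $\Gamma$ says that $p_{1}^*:\Nuf \to M^\circ$ is injective, so the monomials $z^{p_1^*(n)}$ for $n \in N^+_{\uf,\seed}$ are $N^+_{\uf,\seed}$-graded in a non-degenerate way. One then organizes the possible wall-crossing automorphisms as the time-$1$ flows of elements of a pro-nilpotent Lie algebra $\mathfrak{g} = \prod_{n \in N^+_{\uf,\seed}} \mathfrak{g}_n$, with $\mathfrak{g}_n$ one-dimensional on a generator acting as a derivation $z^m \mapsto \{n,m\} z^{m+p_1^*(n)}$ and bracket $[\mathfrak{g}_n,\mathfrak{g}_{n'}] \subseteq \mathfrak{g}_{n+n'}$ controlled by $\{\cdot,\cdot\}$. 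Fixing a strictly positive grading $\deg:N^+_{\uf,\seed} \to \Z_{>0}$ (possible thanks to full rank) gives a descending filtration $\mathfrak{g}_{>k}$.

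The second step is the inductive construction. Working modulo $\mathfrak{g}_{>k}$, $\scat_{{\rm in}, \seed}^{\cA}$ is consistent at order one since its walls are the coordinate hyperplanes $e_i^\perp$ and pairwise commute at that order. Assume consistency is achieved at order $k-1$. The Kontsevich--Soibelman lemma identifies the failure of consistency at order $k$ with finitely many obstruction classes, one at each codimension-two stratum of $\Sing(\scat)$, taking values in the degree-$k$ piece of $\mathfrak{g}$ localized there; each such class can be cancelled uniquely (up to equivalence) by adding walls whose scattering functions contribute only in degree $\geq k$. Passing to the inverse limit over $k$ produces the desired $\scat_{\seed}^{\cA}$ in an essentially unique way.

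The third and hardest step is the positivity statement: the consistent scattering diagram is equivalent to one all of whose scattering functions are of the form $(1+z^{p_1^*(n)})^c$ with $c \in \Z_{>0}$. There seem to be two reasonable routes, both non-trivial. The combinatorial route is to develop broken lines and theta functions directly from $\scat_{\seed}^{\cA}$, show that the resulting structure constants for multiplication are nonnegative integers, and then extract from that positivity the multiplicative shape of the wall functions. The representation-theoretic route, available in the skew-symmetric case, is to identify $\scat_{\seed}^{\cA}$ with the stability scattering diagram of the quiver with potential attached to $\seed$, where each wall function arises as a generating series of Euler characteristics of moduli of semistable representations and is automatically a product of binomials of this form; one then reduces the general skew-symmetrizable case by folding. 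The main obstacle I expect is precisely this third step: verifying positivity forces one either to build the whole broken-line/theta-function machinery before one can close the argument, or to import deep input from Donaldson--Thomas theory, whereas the existence in the first two steps is purely formal.
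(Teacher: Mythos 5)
This theorem is stated in the paper purely as a citation of Theorems~1.12 and~1.13 of \cite{GHKK}; the paper gives no proof of its own, so the only meaningful comparison is against the argument in \cite{GHKK}.

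Your first two steps (the pro-nilpotent Lie algebra setup, the strictly positive grading made possible by full rank, and the order-by-order Kontsevich--Soibelman correction of obstruction classes at codimension-two joints) are exactly the mechanism of the proof of \cite[Theorem~1.12]{GHKK}, so the existence and uniqueness-up-to-equivalence part is on solid ground.

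Where the proposal runs into trouble is the positivity statement, i.e.\ \cite[Theorem~1.13]{GHKK}. The ``combinatorial route'' you sketch appears to have the logical dependence reversed: in \cite{GHKK}, the fact that broken lines have nonnegative contributions and that theta functions have positive structure constants is \emph{deduced from} the positivity of the wall functions, not the other way around. Trying to first define theta functions from an arbitrary consistent $\scat_{\seed}^{\cA}$, prove positivity of structure constants by other means, and then back out the multiplicative shape of $f_{\wall}$ would require an independent positivity input that you have not identified; as stated this step is circular. The ``representation-theoretic route'' via DT invariants / stability scattering diagrams does exist in the skew-symmetric case (this is the GPS/Reineke picture, later systematized by Bridgeland), but it is not what \cite{GHKK} actually does, and the reduction of the skew-symmetrizable case to the skew-symmetric one by ``folding'' is not something they (or, to my knowledge, anyone) carries out for this purpose. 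What \cite{GHKK} actually does in their Appendix~C is an elementary, self-contained induction: a change-of-lattice trick to normalize the multipliers $d_i$, a GPS-style perturbation so that at each order only two walls meet at a generic joint, and a direct combinatorial computation in the resulting rank-two scattering showing that the new wall functions remain of the form $(1+z^{p_1^*(n)})^c$ with $c\in\Z_{>0}$. So the existence part of your sketch matches the source, but the positivity part either proposes a circular argument or imports machinery the cited proof does not use and whose applicability in the general skew-symmetrizable setting you have not justified.
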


\begin{definition} \thlabel{def:genbroken}
Fix a cluster scattering diagram $\scat^{\cA}_{\seed}$.
Let $\mono \in M^{\circ} \setminus \{0\}$ and $x_0 \in M^{\circ}_{\R} \setminus \text{Supp}(\scat)$. 
A (generic) {\bf broken line} for $\scat^{\cA}_{\seed}$ with initial exponent $\mono$ and endpoint $x_0$ is a piecewise linear continuous proper path $\gamma : ( - \infty , 0 ] \rightarrow M^\circ_{\R} \setminus \Sing (\scat^{\cA}_{\seed})$ bending only at walls, with a finite number of domains of linearity $L$ and a monomial $c_L z^{\mono_L} \in \Bbbk[M^\circ]$ for each of these domains. The path $\gamma$ and the monomials $c_L z^{\mono_L}$ are required to satisfy the following conditions:
\begin{itemize}
\setlength\itemsep{0em}
    \item $\gamma(0) = x_0$.
    \item If $L$ is the unique unbounded domain of linearity of $\gamma$, then $c_L z^{\mono_L} = z^{\mono}$.
    \item For $t$ in a domain of linearity $L$, $\gamma'(t) = -\mono_L$.
    \item If $\gamma$ bends at a time $t$, passing from the domain of linearity $L$ to $ L'$ then $c_{L'}z^{\mono_{L'}}$ is a term in $\mathfrak{p}_{{\gamma}|_{(t-\epsilon,t+\epsilon)},\scat_t} (c_L z^{m_L}) $, where ${\scat_t = \lrc{\left.(\wall, f_{\wall}) \in \scat^{\cA}_{\seed}\right| \gamma (t) \in \wall }}$.
\end{itemize}
We refer to $\mono_L$ as the {\bf slope} or {\bf exponent vector} of $\gamma$ at $L $ and set 
\begin{itemize}
    \item $I(\gamma) = \mono$;
    \item $\text{Mono} (\gamma) = c(\gamma)z^{F(\gamma)}$
to be the monomial attached to the unique domain of linearity of $\gamma$ having $x_0 $ as an endpoint. 
\end{itemize}
\end{definition}

\begin{definition}\thlabel{def:theta}
Choose a point $x_0$ in the interior of $\mathcal{C}_\seed^+:=\{m\in M^{\circ}_{\R}\mid \langle e_i, m \rangle \geq 0 \text{ for all  } i \in \Iuf\}$ and let $\mono\in \cA^\vee_{\seed^\vee}(\Z^T)=M^\circ$. 
The {\bf theta function} on $\cA$ associated to $\mono$ is 
\begin{equation}
\label{eq:tf}
    \tf^{\cA}_{ \mono}:= \sum_{\gamma} \text{Mono} (\gamma),
\end{equation}
where the sum is over all broken lines $\gamma$ with $I(\gamma)=\mono$ and $\gamma(0)=x_0$. 
For $\mono = 0$ we define  $\vartheta^{\cA}_{0} =1$. We say $\tf^{\cA}_{ \mono}$ is {\bf polynomial} if the sum in \eqref{eq:tf} is finite.
\end{definition}

\begin{remark}
\label{rem:on_tf}
It is a nontrivial fact that $\tf^{\cA}_{\mono}$ is independent of the point $x_0\in \mathcal{C}_{\seed}^+$ we have chosen, see \cite[\S3]{GHKK}.
Moreover, in general $\tf^\cA_{\mono}$ can be an infinite sum and in order to think of $\tf^{\cA}_{\mono} $ as a function on a space one needs to work formally an consider a degeneration of $\cA$, see \cite[Proposition 3.4 and \S6]{GHKK} for the details.
However, in case $\tf^{\cA}_{\mono}$ is polynomial then $\tf^{\cA}_{\mono}\in H^0(\cA,\mathcal{O}_{\cA})$, that is, $\tf^{\cA}_{\mono}$ is an algebraic function on $\cA$. 
The definition of $\tf^{\cA}_{\mono}$ in \eqref{eq:tf} corresponds to the expression of such function written in the coordinates of the seed torus $\cA_{\seed}$.
\end{remark}

\subsubsection{Labeling by tropical points}

Recall that we are identifying $\cA^{\vee}_{\seed^\vee}(\R^T)$ and $ M^{\circ}_{\R}$. 
By construction, a theta function on $\cA$ is labeled by a point $\mono \in \cA^\vee_{\seed^\vee}(\Z^T)=  M^{\circ}$. 
By \cite[Proposition 3.6]{GHKK}, this labeling upgrades to a labeling by a point in $\cA^\vee(\Z^T)$. 
The main point being that if we let $m'=(\mu^{\cA^\vee}_{k})^T(m)\in \cA^{\vee}_{\mu_k(\seed^\vee)}(\Z^T)$ for $k \in \Iuf$ then $\tf^{\cA}_\mono$ and $\tf^{\cA}_{m'}$ correspond to the same function (see Remark \ref{rem:on_tf}) expressed, however, in different cluster coordinates.
This fact is of great importance for this paper so we would like to highlight it:

\begin{center}
    \emph{every theta function on $\cA$ is naturally labeled by a point of $\cA^\vee(\Z^T)$}.
\end{center}

In light of the discussion just above, from now on we label theta functions on $\cA$ either by elements of $\cA^{\vee}(\Z^T)$ or of $\cA^{\vee}_{\seed^\vee}(\Z^T)$. 
For sake of clarity, tropical points are denoted in bold font and as tuples. 
That is, ${\bf m}=(m_{\seed^\vee})_{\seed^\vee\in \orT}$ denotes an element of $\cA^\vee(\Z^T)$ and $m_{\seed^\vee}=\mathfrak{r}_{\seed^\vee}({\bf m})$. 
 With this notation we have the following identity
 \[
\tf^{\cA}_{\bf m}=\tf^{\cA}_{m_{\seed^\vee}}.
 \]

Even further, we can think of $\Supp(\scat^{\cA}_{\seed})$ as a subset of $\cA^{\vee}_{\seed^\vee}(\R^T) $. By \cite[Theroem 1.24]{GHKK} we have that for every $k \in \Iuf $, $\mu^{\cA^\vee}_{\seed^\vee,\mu_k(\seed^\vee)}\lrp{\Supp(\scat^{\cA}_{\seed})}=\Supp(\scat^{\cA}_{\mu_k(\seed)})$ and that $\scat^{\cA}_{\seed}$ and $\scat^{\cA}_{\mu_k(\seed)}$ are equivalent. Hence there is a well defined subset $\Supp(\scat^\cA) \subset \cA^{\vee}(\R^T)$ such that
\[
\mathfrak{r}_{\seed^\vee}\lrp{\Supp(\scat^\cA) }= \Supp(\scat^\cA_{\seed})
\]
for every $\seed\in \orT$. 
The point here is that $\Supp(\scat^\cA)$ is seed independent.
Similarly, the map $\mu^{\cA^\vee}_{\seed^\vee,\mu_k(\seed^\vee)} $ determines a bijection between the set of broken lines for $\scat^{\cA}_{\seed}$ and the set of broken lines for $\scat^{\cA}_{\mu_k(\seed)}$ (see \cite[Proposition 3.6]{GHKK}). 
In particular, supports of broken lines make sense in $\cA^\vee(\R^T)$.

\begin{remark}
    It is possible to upgrade $\Supp(\scat^\cA)$ to a scattering diagram inside $\cA^{\vee}(\R^T)$. In this generality scattering functions are described using log Gromov--Witten invariants. 
    See \cite{KY19} for details.
\end{remark}

\subsubsection{The middle cluster algebra}

Let us recall now that broken lines also encode the multiplication of theta functions. 
That is, given a product of arbitrary theta functions $\tf^{\cA}_p \tf^{\cA}_q$ with $p,q \in \cA^\vee_{\seed^\vee}(\Z^T)$,
we can use broken lines to express the structure constants $\alpha\lrp{p,q,r}$ in the expansion 
\begin{align} \label{eq:product}
    \vartheta^{\cA}_p \vartheta^{\cA}_q = \sum_{r\in \cA^\vee_{\seed^\vee}(\Z^T)} \alpha(p,q,r) \vartheta^{\cA}_r.
\end{align}
We review the construction here.
First, pick a general endpoint $z$ near $r$.
Then define (\cite[Definition-Lemma~6.2]{GHKK})
\eq{
    \alpha_z (p, q, r) := \sum_{\substack{\lrp{\gamma^{(1)}, \gamma^{(2)}} \\ I(\gamma^{(1)})= p,\ I(\gamma^{(2)})= q\\ \gamma^{(1)}(0) = \gamma^{(2)}(0) = z\\
    F(\gamma^{(1)}) + F(\gamma^{(2)}) = r   }}   c(\gamma^{(1)})\ c(\gamma^{(2)}), }{eq:multibrokenline}
where the sum is over all pairs of broken lines $\lrp{\gamma^{(1)}, \gamma^{(2)}}$ ending at $z$ with initial slopes $I(\gamma^{(1)}) = p$, $I(\gamma^{(2)}) = q$ and final slopes satisfying $F(\gamma^{(1)})+F(\gamma^{(2)}) =r$.
Gross--Hacking--Keel--Kontsevich show that for $z$ sufficiently close to $r$, $\alpha_z (p, q, r)$ is independent of $z$ and gives the structure constant $\alpha (p, q, r)$ (see \cite[Proposition~6.4]{GHKK}). 

\begin{definition}
\thlabel{def:cmid}
Let $\Theta(\cA):= \{ {\bf m} \in \cA^{\vee}(\Z^T) \mid \vartheta^{\cA}_{{\bf m}} \text{ is polynomial}\}$. The {\bf middle cluster algebra} $\text{mid}(\cA)$ is the $\Bbbk$-algebra whose underlying vector space is $\{ \tf^{\cA}_{{\bf m}} \mid {\bf m} \in \Theta(\cA) \}$, the multiplication of the basis elements is given by \eqref{eq:product} and extended linearly to all $\cmid(\cA)$.
\end{definition}

\subsubsection{Theta functions on $\cAp$}
The data $\Gamma_{\prin}$ is of full-rank. Therefore, this case is a particular case of \S\ref{sec:tf_A}. So we can talk about scattering diagrams, broken lines and theta functions for $\cAp $. The following result follows from Theorem \ref{thm:consistent_scattering_diagrams} and the definition of theta functions.

\begin{lemma}
Fix a seed $\widetilde{\seed}$ for $\cAp$ and express theta functions on the cluster coordinates determined by $\widetilde{\seed}$.
For $(m,n)\in \mathfrak{r}_{\widetilde{\seed}}(\Theta(\cAp))$ we have that $\tf^{\cAp}_{(m,n)}=\tf^{\cAp}_{(m,0)}\tf^{\cAp}_{(0,n)}$ and $\tf^{\cAp}_{(0,n)}$ is the Laurent monomial on the coefficients given by $n$.
\end{lemma}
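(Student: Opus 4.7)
The plan is to exploit the ``cylindrical'' structure of $\scat^{\cAp}_{\widetilde{\seed}}$ in the principal coefficient direction. First I would invoke Theorem~\ref{thm:consistent_scattering_diagrams} to arrange that every wall of $\scat^{\cAp}_{\widetilde{\seed}}$ has a scattering function of the form $(1+z^{(p_1^*(n),\,n)})^{c}$ for some $n\in \Nuf$ and $c\in \Z_{>0}$ (using that $p^*_{1,\prin}(n,0)=(p_1^*(n),n)\in M^\circ\oplus N = \Mpc$) and support contained in $(n,0)^\perp$. A direct computation with the natural pairing $N_\prin\times \Mpc\to \Q$ yields $(n,0)^\perp = n^\perp\oplus N_\R$ inside $M^\circ_\R\oplus N_\R$, so every wall is translation-invariant along the $N_\R$-factor.

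Next I would compute $\tf^{\cAp}_{(0,n)}$. Fix a generic endpoint $x_0=(\tilde{x}_0,y_0)$ in the interior of the positive chamber, with $\tilde{x}_0\in M^\circ_\R$ not lying on any of the hyperplanes appearing in $\scat^{\cA}_\seed$. Any broken line $\gamma$ with initial exponent $(0,n)$ has direction $-\gamma'(t)=(0,n)$ on its unbounded ray by \thref{def:genbroken}, hence is contained in the affine subspace $\{\tilde{x}_0\}\times N_\R$. By genericity this subspace is disjoint from $\Supp(\scat^{\cAp}_{\widetilde{\seed}})$, so $\gamma$ never bends, the unique broken line contributes the monomial $z^{(0,n)}$, and $\tf^{\cAp}_{(0,n)}=z^{(0,n)}$. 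This is a character of the ``coefficient subtorus'' $T_M\subset T_{N^\circ_\prin}=T_{N^\circ}\times T_M$, proving the second assertion.

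For the first assertion I would establish a coefficient-preserving bijection between broken lines of $\scat^{\cAp}_{\widetilde{\seed}}$ with initial exponent $(m,n)$ ending at $x_0$ and broken lines of $\scat^{\cA}_\seed$ with initial exponent $m$ ending at $\tilde{x}_0$, via the first-factor projection $\pi$. The key observation is that the wall-crossing automorphism of a wall of $\scat^{\cAp}_{\widetilde{\seed}}$ with normal $(n_\wall,0)$ sends $z^{(m_L,n_L)}$ to $z^{(m_L,n_L)}f_\wall^{\langle(n_\wall,0),(m_L,n_L)\rangle}=z^{(m_L,n_L)}f_\wall^{\langle n_\wall, m_L\rangle}$, so the choice of term at each bend and its scalar coefficient depend only on the first factor of the current slope. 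Each bend by $k\,p^*_{1,\prin}(n',0)=k(p_1^*(n'),n')$ shifts the first factor of the slope by $k\,p_1^*(n')$ and the second by $kn'$, so a broken line for $\cAp$ with initial exponent $(m,n)$ has final exponent $(F(\pi(\gamma)),\,n+\Delta_\gamma)$, where $\Delta_\gamma$ is the accumulated second-factor shift of $\pi(\gamma)$ --- crucially independent of $n$. Summing over $\gamma$ yields $\tf^{\cAp}_{(m,n)}=z^{(0,n)}\cdot\tf^{\cAp}_{(m,0)}$, which combined with the previous paragraph gives the desired factorization.

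The main obstacle will be verifying cleanly that a generic endpoint in $M^\circ_{\prin,\R}$ induces a generic endpoint in $M^\circ_\R$ under $\pi$, so that the broken-line counts on both sides are well-posed, and that the scalar coefficients match bend-by-bend under the bijection. Both are routine consequences of unwinding \thref{def:genbroken} and \thref{def:theta} once the cylindrical wall structure is in place, but they require care to state precisely.
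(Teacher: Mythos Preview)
Your argument is correct and matches the paper's indicated approach: the paper offers no proof beyond asserting that the lemma ``follows from Theorem~\ref{thm:consistent_scattering_diagrams} and the definition of theta functions,'' and your proposal is precisely a careful unpacking of that claim via the cylindrical wall structure and a direct broken-line comparison.

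One minor point: routing the bijection through $\scat^{\cA}_{\seed}$ is unnecessary and potentially awkward, since the ambient $\Gamma$ need not be of full rank here (cf.\ Remark~\ref{rem:all_from_cAp}), so $\scat^{\cA}_{\seed}$ may not be defined a priori. Your actual computation already avoids this issue: what you really use is a bijection between broken lines in $\scat^{\cAp}_{\widetilde{\seed}}$ with initial exponent $(m,n)$ and those with initial exponent $(m,0)$, both ending at $x_0$, given by matching their identical $M^\circ_\R$-projections and bend sequences. That internal comparison suffices, and you can drop the reference to $\scat^{\cA}_{\seed}$ entirely without changing anything else.
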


Note that, for $(m_1,n_1),(m_2,n_2) \in M^{\circ}_{\rm prin}$, in general we have that $\tf^{\cAp}_{(m_1+m_2,n_1+n_2)} \neq \tf^{\cAp}_{(m_1,n_1)} \tf^{\cAp}_{(m_2,n_2)}$. 
The above lemma holds because the decomposition is only separating the unfrozen and frozen parts (\cf \thref{g_is_val} below). 

\begin{remark}
\label{rem:all_from_cAp}
Scattering diagrams for $\cAp $ can be used to define scattering diagrams, broken lines therein and theta functions on a variety $\cV $ of form $\cA$ (even if $\Gamma$ is not of full-rank), $\cX$, $\cA/T_{H}$ and $\cX_{{\bf 1}}$. 
Further, in each one of these cases we can define the associated middle cluster algebra $\cmid(\cV)$ and the set $\Theta(\cV)$ parametrizing its theta basis. 
In the following subsections we explain the cases of $\cA/T_{H}$, $\cX$, and $\cX_{{\bf 1}}$ individually. 
We do not treat the case of $ \cA $ for $\Gamma$ when $\Gamma$ is not of full-rank since the results of \S\ref{sec:cluster_valuations} do not apply to this case.
\end{remark}

\subsubsection{Theta functions on $\cA/T_{H}$}
\label{tf_quotient}
Suppose that $\Gamma $ is of full-rank (\cf Remark \ref{rem:all_from_cAp}). Let $H \subset K^\circ$ be a saturated sublattice and consider the quotient $\cA/T_{H}$ and the fibration $w_H: \cAm \to H^*$ (see the end of \S\ref{sec:FG_dual}).
The next result shows that theta functions on $\cA$ have a well defined $T_{H}$-weight.

\begin{proposition}
\thlabel{prop:dual_fibration}
Every polynomial theta function on $\cA$ is an eigenfunction with respect to the $T_{H}$-action. For every ${\bf q}\in \Theta(\cA)$ the $T_H$-weight of $\tf^{\cA}_{\bf q}$ is the image of ${\bf q} \in \cA^{\vee}(\Z^T)$ under the tropicalized map $w^{T}_{H}:\cAm(\Z^T) \to H^*$. Under the isomorphism $ H^* \cong M^\circ/H^\perp$ and in the lattice identification of $ \cA^\vee_{\seed^\vee}(\Z^T)$ of $\cA^\vee (\Z^T)$ the map $w^T_{H}$ is given by
\begin{align*}
   w^{T}_{H} : \ & \cA^{\vee}_{\seed^\vee}(\Z^T) \to M^\circ/H^{\perp},\\
     & \ \ \  \ \ \  q \longmapsto q + H^{\perp}.
\end{align*}
\end{proposition}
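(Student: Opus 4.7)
The plan has three pieces: first identify $w_H^T$ in one seed, then show that all broken-line monomials contributing to a single theta function share a common $T_H$-weight, and finally observe that this common weight matches $w_H^T(\bf q)$.

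\textbf{Step 1 (the tropicalized weight map).} The seed torus of $\cAm = {}^L\cX$ is $T_{M^\circ}$, with cocharacter lattice $M^\circ$ (and therefore tropical space $\cAm_{\seed^\vee}(\R^T) = M^\circ_\R$). The fibration $w_H\colon \cAm \to T_{H^*}$ comes from the inclusion $H \hookrightarrow N^\circ$ on character lattices. Tropicalizing this monomial map yields the dual projection on cocharacter lattices, $M^\circ \twoheadrightarrow H^*$, which under the identification $H^* \cong M^\circ/H^\perp$ is exactly $q \mapsto q + H^\perp$. This formula is defined seed-by-seed, but because $w_H$ is a globally defined morphism, its tropicalization makes sense on the seed-independent set $\cAm(\Z^T)$; the seed identification merely writes this intrinsic map in coordinates.

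\textbf{Step 2 (all broken-line monomials share a $T_H$-weight).} Fix a seed $\seed$ and write
\[
\vartheta^{\cA}_{\bf q}\big|_{\seed} = \sum_{\gamma} c(\gamma)\, z^{F(\gamma)},
\]
the sum over broken lines $\gamma$ in $\scat^{\cA}_{\seed}$ with $I(\gamma) = q_{\seed^\vee}$. The action of $T_H$ on the seed torus $\cA_{\seed} = T_{N^\circ}$ factors through $H \hookrightarrow N^\circ$, so a monomial $z^m$ (for $m \in M^\circ$) is a $T_H$-eigenvector with weight $m + H^\perp \in M^\circ/H^\perp$. Thus it suffices to show that $F(\gamma) \equiv I(\gamma) \pmod{H^\perp}$ for every contributing broken line. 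Each bend of $\gamma$ at a wall $(\wall, f_{\wall})$ replaces the current exponent $m_L$ by a term in $m_L f_{\wall}^{\langle n_{\wall},m_L\rangle}$, so the exponent changes by a non-negative integer multiple of $p^*_1(n)$ for some $n \in N_{\rm uf}$ (by the form of scattering functions recorded after \thref{def:genbroken} and in \thref{thm:consistent_scattering_diagrams}). The key observation is that $p^*_1(N_{\rm uf}) \subseteq H^\perp$: for $h \in H \subseteq K$ and $n \in N_{\rm uf}$,
\[
\langle h, p^*_1(n)\rangle = \langle h, \{n,\cdot\}\rangle = \{n,h\} = -\{h,n\} = 0,
\]
using the very definition $K = \{k \in N \mid \{k,n\}=0\ \forall n \in N_{\rm uf}\}$. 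Telescoping over the finitely many bends of $\gamma$ gives $F(\gamma) - I(\gamma) \in H^\perp$, so the sum defining $\vartheta^{\cA}_{\bf q}\big|_\seed$ is a sum of monomials all having the common weight $q_{\seed^\vee} + H^\perp$. This proves that $\vartheta^{\cA}_{\bf q}$ is a $T_H$-eigenfunction.

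\textbf{Step 3 (matching the weight to $w_H^T$).} By Step 1 the weight $q_{\seed^\vee} + H^\perp$ obtained in Step 2 equals $w_H^T({\bf q})$ in the seed identification. Consistency across seeds is automatic: the $T_H$-weight of $\vartheta^{\cA}_{\bf q}$ is an intrinsic invariant of the global function and the action, and $w_H^T$ is an intrinsic map; the two descriptions agree in every seed. The only genuine obstacle in the argument is the bookkeeping of Step 2 — confirming that every scattering function exponent lands in $H^\perp$ — and this is resolved cleanly by the skew-symmetry identity above together with the hypothesis $H \subseteq K^\circ$. No further input is needed.
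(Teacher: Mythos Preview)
Your proof is correct and follows essentially the same approach as the paper: both arguments compute the $T_H$-weight of each monomial as its class in $M^\circ/H^\perp$, then use the key fact that every broken-line bend shifts the exponent by an element of $p_1^*(N_{\rm uf}) \subseteq H^\perp$ (which both you and the paper deduce from $H \subseteq K$ via the skew-symmetry of $\{\cdot,\cdot\}$). Your seed-independence argument in Step~3 is slightly cleaner than the paper's explicit check that tropical mutation also shifts by elements of $p^*(N_{\rm uf})$, but the substance is the same.
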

The claims are essentially contained in the literature already
(see for instance \cite[Proposition 7.7]{GHKK}). 
The differences are that we are acting by a potentially smaller torus (Gross--Hacking--Keel--Kontsevich act by $T_{K^\circ}$ rather than $T_{H}$) and, regarding the map $w_{H}: \cA^\vee \to T_{H^*}$, 
we are including $\Bbbk[H]$ into $\Bbbk[N^\vee]=\Bbbk[N^\circ]$ rather than including $\Bbbk[K^\circ]$ into $\Bbbk[N^\circ]$. 
For the convenience of the reader we give a proof of the statement.

\begin{proof}[Proof of Proposition~\ref{prop:dual_fibration}]
By \cite[Theorem~1.13]{GHKK} all scattering functions may be taken to be of the form $\lrp{1+z^{p^*(n)}}^c$ for some $n \in \Nuf$ and some positive integer $c$.\footnote{That is, the equivalence class of consistent scattering diagrams for $\cA$ contains a representative whose scattering functions are of this form.}
For $q\in \Theta(\cA)_{\seed^\vee}$ we have that $\tf^{\cA}_q$ is as a Laurent polynomial in $\Bbbk[M^\circ]$.
All monomial summands of $\tf^{\cA}_{q}$ have the form $c_m z^{q + m}$ for some $m \in p^*(\Nuf)$ and $c_m \in \Z_{>0}$.
The $T_{H}$-weight of this monomial is obtained from the map
\eqn{
T_{H} \to T_{\Z}=\Bbbk^*\quad \text{given by } \quad z^{h} \mapsto z^{\lra{q+m , h}} \quad \text{for $h \in {H}$}.
}
Since $H \subset p^*\lrp{\Nuf}^\perp$ we have that
$z^{\lra{q+m , h}} = z^{\lra{q, h}}$. 
That is, the $T_{H}$-weight of each monomial $z^{m'}$, $m'\in M^\circ$, is the character of $T_{H}$ given by $m' + H^\perp  \in M^\circ/H^\perp \cong H^*$. 
Moreover, all monomial summands of $\tf^{\cA}_q$ have the $T_{H}$-weight $q + H^\perp \in H^*$.
Next, the piecewise linear map $(\mu_k^{\cA^\vee})^T:M^\circ_\seed \to M^\circ_{\mu_k(\seed)}$ sends $m$ to $m+m'$ for some $m'\in p^*(\Nuf)$.
So, the choice of torus does not affect the $T_{H}$-weight.
Therefore, $\tf^{\cA}_q$ is an eigenfunction whose weight is $q + H^\perp$.
Furthermore, the projection 
\eqn{M^\circ &\to M^\circ/H^\perp\quad \text{given by} \quad q \mapsto q + H^\perp } 
dualizes the inclusion $H \hookrightarrow N^\circ$.
So, restricting to seed tori, this is precisely the tropicalization of the map ${T_{M^\circ} \rightarrow T_{H^*}}$ whose pullback is the inclusion $H \hookrightarrow N^\circ$.
Since $p^*$ commutes with mutation,
we see that the $T_{H}$-weight of $\tf^{\cA}_{\bf q}$ is the image of ${\bf q}$ under the tropicalization of 
$ w_{H}: \cA^\vee \rightarrow T_{H^*}$.
\end{proof}

Every weight $0$ eigenfunction on $ \cA$ induces a well defined function on $\cA/T_{H}$. 
So in order to construct a scattering-diagram-like structure $\scat^{\cA/T_H}$ defining theta functions on $\cA/T_{H}$ we consider the {\bf weight zero slice} inside $\cA^{\vee}(\R^T)$ defined as 
$(w^T_H)^{-1}(0)$. 
Observe that identifying $ \cA^\vee$ with $M^\vee$ via a choice of seed, then $(w^T_H)^{-1}(0)$ corresponds to $H^{\perp}_{\R}$.
With this in mind, we define $\supp(\scat^{\cA/T_{H}})$ as
\[
\supp(\scat^{\cA/T_{H}}):=\supp (\scat^{\cA})\cap (w^T_H)^{-1}(0).
\]
The scattering functions attached to the walls of $\mathfrak{r}_{\seed^\vee}(\supp(\scat^{\cA/T_{H}}))$ are the same as the corresponding functions attached to the walls of $\scat^{\cA}_\seed$. 
This gives rise to a scattering diagram $\scat^{\cA/T_{H}}_{\seed}$ inside $(\cA/T_{H})^\vee_{\seed^\vee}(\R^T)$ for every $\seed \in \orT$.
The broken lines for $\scat^{\cA/T_{H}}_\seed$ are the broken lines for $\scat^{\cA}_\seed $ entirely contained in $\mathfrak{r}_{\seed^\vee}(w^{-1}_{H}(0))$.

In order to label a theta function on $\cA/T_{H}$ with an element of $(\cA/T_{H})^{\vee}(\Z^T)$ it suffices to consider a bijection $
(\cA/T_{H})^{\vee}(\R^T) \overset{\sim}{\longrightarrow} (w_H^T)^{-1}(0)$.
Such a bijection can be obtained tropicalizing the inclusion $\mathfrak{i}_H:(\cA/T_{H})^{\vee} \hookrightarrow \cA^\vee $. 
Indeed, in lattice identifications of the tropical spaces given by a seed $\seed$, the map
$ \mathfrak{i}_H^T:(\cA/T_{H})^{\vee}_{\seed}(\Z^T)\hookrightarrow \cA^\vee_{\seed}(\Z^T)$ correspond to the inclusion $H^\perp \hookrightarrow M^\vee$ and $w^{-1}_{H}(0)(\Z) $ corresponds to $H^\perp$.

In particular, we obtain (as one should have expected) that the theta functions on $\cA/T_{H}$ are precisely the functions on $\cA/T_{H}$ induced by the $T_H$-weight zero theta functions on $\cA$. 
So we let
$\Theta(\cA/T_H)\subset (\cA/T_H)^{\vee}(\Z^T)$ be the preimage of $\Theta(\cA)\cap (w_H^T)^{-1}(0)$ under $\mathfrak{i}^T_H$ and define the middle cluster algebra $\cmid(\cA/T_H)$ as in the case of $\cA$ (see \thref{def:cmid}).
In particular, for ${\bf m}\in \Theta (\cA/T_H)$ the theta function $\tf^{\cA/T_H}_{\bf m}$ is the function on $\cA/T_H$ induced by $ \tf^{\cA}_{\mathfrak{i}^T_H({\bf m})}$. So,
\begin{center}
    \emph{every theta function on $\cA/T_H$ is naturally labeled by a point of $(\cA/T_H)^\vee(\Z^T)$}.
\end{center}

\subsubsection{Theta functions on $\cX$}
\label{sec:tf_X}
Recall from \S\ref{sec:principal_coefficients} that there is an isomorphism $\chi: \cAp/T_{H_{\cAp}}\to \cX$, where \[
H_{\cAp}=\lrc{\lrp{n,-(p^*)^*(n)}\in N^\circ_\prin \mid n \in N^\circ} \subset K^{\circ}_{\prin}. 
\]
Hence, the construction of theta functions on $\cX$ is already covered in the previous subsection. 
However, there is a very subtle difference created by treating $ \cAp/T_{H_{\cAp}}$ as a cluster $\cX$-variety as opposed to a quotient of $\cAp$: 
\begin{center}
\emph{every theta function on $ \cX$ is naturally labeled by a point of $\cX^\vee(\Z^t)$ as opposed to $\cX^\vee(\Z^T)$.}
\end{center} 
If we would proceed as in the previous subsection we would label theta functions on $ \cAp/T_{H_{\cAp}}$ by points of $(\cAp/T_{H_{\cAp}})^\vee(\R^T)$.
The origin of the difference is made explicit by the following lemma.

\begin{lemma}
\label{lem:right_tropical_space}
There is a canonical bijection between $\cX^{\vee}(\R^t)$ and $\lrp{w^T_{H_{\cAp}}}^{-1}(0)\subset \cAp^\vee(\R^T)$.
\end{lemma}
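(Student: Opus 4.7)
The plan is to realize the stated identification at the scheme level and then tropicalize carefully, accounting for the fact that $\cX^\vee$ is an $\cA$-cluster variety (naturally tropicalized with $\R^t$) while $\cAp^\vee$ is an $\cX$-cluster variety (naturally tropicalized with $\R^T$).

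The key input will be the construction of the isomorphism $\delta : \cA \xrightarrow{\sim} (\cXp)_{\mathbf{1}}$ from \eqref{eq:def_delta}, applied to the Langlands dual data $\Gamma^\vee$. This will produce an isomorphism
\[
{}^L\delta : \cX^\vee = \cA_{\Gamma^\vee} \xrightarrow{\sim} \bigl((\cX_{\Gamma^\vee})_\prin\bigr)_{\mathbf{1}}
\]
onto the fibre defined by the sublattice $H_{(\cX_{\Gamma^\vee})_\prin} = \lrc{(n,{}^Lp^*(n)) \mid n \in N^\vee}$. I would then invoke the canonical identification $(\cX_{\Gamma^\vee})_\prin \cong \cAp^\vee$ of Remark~\ref{rem:Lprin} and verify, using ${}^Lp^* = -d^{-1}(p^*)^*$, that the fibration determined by $H_{(\cX_{\Gamma^\vee})_\prin}$ corresponds to the fibration $w_{H_{\cAp}} : \cAp^\vee \to T_{H_{\cAp}^*}$; the $d$-rescaling relating the two realizations $(\Gamma^\vee)_\prin$ and $(\Gamma_\prin)^\vee$ of the principal data gets absorbed into the explicit form of $H_{\cAp} = \lrc{(n,-(p^*)^*(n)) \mid n \in N^\circ}$. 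This upgrades ${}^L\delta$ to a scheme isomorphism $\cX^\vee \xrightarrow{\sim} (\cAp^\vee)_{\mathbf{1}_{T_{H_{\cAp}^*}}}$.

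Tropicalizing this isomorphism with respect to $\R^T$ and using the general description of the tropicalization of a fibre of a cluster fibration (as the zero set of the tropicalized fibration map) will yield a bijection $\cX^\vee(\R^T) \xrightarrow{\sim} (w^T_{H_{\cAp}})^{-1}(0) \subset \cAp^\vee(\R^T)$. Pre-composing with the involution $i : \cX^\vee(\R^t) \xrightarrow{\sim} \cX^\vee(\R^T)$ from \eqref{eq:imap} will produce the desired canonical bijection. Unraveling the formulas, I expect that in any seed this bijection takes the transparent form $n \mapsto (p^*(n), n) \in H_{\cAp}^\perp$, agreeing with the tropicalization of $\chi^*$ from \eqref{eq:def_chi}; the minus signs coming from ${}^Lp^*$ and from the $i$-map will cancel.

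The main obstacle is the lattice bookkeeping in the second paragraph. The two realizations $(\Gamma_\prin)^\vee$ and $(\Gamma^\vee)_\prin$ of the Langlands dual of the principal data live on lattices that differ by a $d$-scaling in the frozen factor, and the sublattices defining the corresponding fibrations are given a priori by different formulas; showing that after the canonical identification they define the same fibration (equivalently, that the two fibres coincide as sub-cluster-varieties of the ambient schemes) is the core computation. Once this bookkeeping is settled, the tropicalization step is a routine application of the formalism of \S\ref{ss:tropicalization}.
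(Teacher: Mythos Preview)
Your proposal is correct and is essentially the paper's approach: the paper produces the bijection as $\xi^T_{\Gamma^\vee}\circ i$, and since $\xi=\iota\circ\delta$ your use of ${}^L\delta$ followed by the inclusion of the fibre is the same map. The paper additionally unwinds the seed-by-seed mutation formula to exhibit explicitly why the $\R^t$/$\R^T$ discrepancy is forced (your pre-composition with $i$ handles this in one stroke), and records the inverse as $(p^*(n),n)\mapsto dn$, matching the formula you anticipated up to the $d$-normalization of $(N^\vee)^\circ$.
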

\begin{proof}
One can verify directly that the composition $\xi^T_{\Gamma^\vee}\circ i$ gives rise to the desired bijection, where \[
i:\cX^\vee(\R^t) \to \cX^\vee(\R^T)
\]
is the bijection discussed in \S\ref{ss:tropicalization} and \[
\xi_{\Gamma^\vee}^T:  \cX^\vee(\R^T) \to \cAp^{\vee}(\R^T) 
\]
is the tropicalization of the map $\xi_{\Gamma^\vee}:\cX^\vee=\cA_{\Gamma^\vee} \to  \cX_{(\Gamma^\vee)_{\prin}}\cong \cX_{(\Gamma_{\prin})^\vee}=\cAp^{\vee} $ described in \eqref{eq:def_xi}, see Remarks \ref{rem:labels} and \ref{rem:Lprin}. 
However, for the convenience of the reader we include computations that show in a rather explicit way the necessity to consider 
$\cX^\vee(\Z^t)$ as opposed to $\cX^\vee(\Z^T)$. For simplicity throughout this proof we denote $w_{H_{\cAp}}$ simply by $w$.

Pick a seed $\seed=(e_i)_{i \in I}\in \orT$ for $\Gamma$ and consider the seed $\seed^\vee$ for $\Gamma^\vee$. Denote by $\widetilde{\seed}^\vee$ the seed for $(\Gamma_{\prin})^\vee$ obtained mutating $ \seed_{0_{\prin}}$ in the same sequence of directions needed to obtain $\seed$ from $\seed_0$. Then
\[
\lrp{(w^T)^{-1}(0)}_{\widetilde{\seed}^\vee}(\R^T)=H^\perp_{\cAp}=\{(p^*(n),n)\in M^\circ_{\prin,\R} \mid n\in N_{\R}\}\subset M^\circ_{\prin, \R}=M^\circ_{\R}\oplus N_{\R}
\]
(see \eqref{eq:identification} to recall the meaning of $\lrp{(w^T)^{-1}(0)}_{\widetilde{\seed}^\vee}(\R^T)$). We now verify that for every $k\in \Iuf$ there is a commutative diagram
\[
\xymatrix{
\lrp{(w^T)^{-1}(0)}_{\widetilde{\seed}^{\vee}}(\R^T) \ar^{\lrp{\mu^{\cAp^\vee}_{k}}^T}[rr]  \ar_{\pi^{\cX^\vee}_1}[d] & & \lrp{(w^T)^{-1}  (0)}_{\mu_k(\widetilde{\seed}^{\vee})}(\R^T) \ar^{\pi^{\cX^\vee}_2}[d]
\\
\cX^{\vee}_{\seed^{\vee}}(\R^t)\ar^{\lrp{\mu^{\cX^\vee}_{k}}^t}[rr] & & \cX^{\vee}_{\mu_k(\seed^{\vee})}(\R^t),
}
\]
where the vertical maps $\pi^{\cX^\vee}_1$ and $\pi^{\cX^\vee}_2$ are both given by $(p^*(n),n)\mapsto dn$ (recall that $\cX^{\vee}_{\seed^{\vee}}(\R^t)=(N^\vee)^\circ_{\R}= (d\cdot N)_{\R} =\cX^{\vee}_{\mu_k(\seed^{\vee})} (\R^t)$). By definition we have that
\begin{eqnarray*}
    \lrp{\mu^{\cAp^\vee}_{k}}^T(p^*(n),n)& \overset{\eqref{eq:tropical_X_mutation}}{=} & (p^*(n),n)+[\langle (d e_k,0),(p^*(n),n)\rangle]_+\{d_ke_k, \cdot \}^{\vee}_{\prin} \\
    &=&  (p^*(n),n)+ [p^*(n)(de_k)]_+(\{d_ke_k, \cdot\}^\vee,d_ke_k)\\
    &=&(p^*(n) + [\{n, de_k\}]_+\{d_ke_k, \cdot\}^\vee,n+ [\{n, de_k\}]_+d_ke_k).
\end{eqnarray*}
Using the facts that $d, d_k>0$ and 
 that $d\max(a,b)=\max(da,db)$ and $\max(a,b)=-\min(-a,-b)$ for all $a,b \in \R$, we compute that
\begin{eqnarray*}
\pi^{\cX^\vee}_2\lrp{\lrp{\mu^{\cAp^\vee}_{k}}^T(p^*(n),n)} &=&  dn+ d[\{n, de_k\}^\vee]_+d_ke_k\\
&=&  dn+ [\{dn, de_k\}^\vee]_+d_ke_k\\
 &=&  dn+[-\{de_k,dn\}^{\vee}]_+d_ke_k\\
 &=&  dn+[-\{d_ke_k,dn\}^{\vee}]_+de_k\\
 &=&  dn-[\{d_ke_k,dn\}^{\vee}]_-de_k\\
  &=&  dn-[\langle v_k^\vee,dn\rangle]_-de_k\\
&=& dn+[\langle v_k^\vee,dn\rangle]_-(-d_k^\vee e^\vee_k)\\
&\overset{\eqref{eq:tropical_A_mutation}}{=}& \lrp{\mu^{\cX^\vee}_{k}}^t (dn)\\
&=& \lrp{\mu^{\cX^\vee}_{k}}^t \lrp{\pi^{\cX^\vee}_1(p^*(n),n)}.
\end{eqnarray*}
This gives the commutativity of the diagram. 
Notice moreover that $\pi^{\cX^\vee}_1$ and $\pi^{\cX^\vee}_2$ are canonical bijections. 
These two facts together imply that we have a well defined bijection
\[
\pi^{\cX^\vee}: (w^T)^{-1}(0)(\R^T) \overset{\sim}{\longrightarrow} \cX^{\vee}(\R^t).
\]
The fact that $\xi^T_{\Gamma^\vee}\circ i$ is the inverse of $\pi^{\cX^\vee} $ follows from noticing that, in lattice identifications of the domain and codomian of $\xi^T_{\Gamma^\vee}$ given by a choice of seed, we have that
\[
\xi^T_{\Gamma^\vee}(dn)=(-p^*(n),-n).
\]
\end{proof}
We can now define cluster scattering diagrams for $ \cX$ using cluster scattering diagrams for $\cAp$ and the quotient map $\tilde{p}:\cAp \to \cX$ described in \eqref{eq:def_tilde_p} and the content of Lemma \ref{lem:right_tropical_space}.
We define $\supp(\scat^{\cX})$ as 
\[
\supp(\scat^{\cX}):=\pi^{\cX^\vee}\lrp{\supp (\scat^{\cAp})\cap (w^T_H)^{-1}(0)}\subset \cX^\vee(\Z^t).
\]
By definition the support of the scattering diagram $ \scat^{\cX}_{\seed}$ is $\mathfrak{r}_{\seed^\vee}\lrp{\supp(\scat^{\cX})}$.
The scattering functions attached to the walls of $\supp(\scat^{\cX}_\seed)$ are obtained by applying $\tilde{p}^*$ to the scattering functions of the corresponding walls of $\scat^{\cAp}_\seed$. 
We proceed in an analogous way to define broken lines for $\scat^{\cX}_\seed$.
As in the previous cases, supports of broken lines are well defined inside $\cX^\vee(\Z^t)$.

The labeling of a theta function on $\cX$ with an element of $\cX^{\vee}(\Z^t)$ is obtained using the bijection of Lemma \ref{lem:right_tropical_space}. More precisely,
for ${\bf n} \in \cX^\vee(\Z^t)$ with ${\bf n}\in \Theta(\cX)$ we have
\[
\tilde{p}^*(\tf^\cX_{\bf n})=\tf^{\cAp}_{\xi^T_{\Gamma^\vee}\circ i({\bf n})}.
\]
Explicitly, in lattice identifications of the tropical spaces, we have that for $dn \in \cX^\vee_{\seed^\vee}(\Z^t) $
\[
\tilde{p}^*\lrp{\tf^{\cX}_{dn}}:= 
\tf^{\cAp}_{(p^*(n),n)}.
\]

\begin{example}
\label{running_example_1}
Let $\epsilon
=
\lrp{\begin{matrix}
0 & 2 \\
-1 & 0
\end{matrix}}$
and $d_1=1, d_2=2$. Using the above parametrization we compute
\[
\tf^{\cX}_{2(-1,-2)}=X_1^{-1}X_2^{-2}+2X_1^{-1}X_2^{-1}+X_1^{-1}.
\]
Indeed, we have that $\xi^T_{\Gamma^\vee}\circ i(2(-1,-2))=(2,-2)$ and
\[
\tf^{\cAp}_{(2,-2),(-1,-2)}= \lrp{\tf^{\cAp}_{(1,-1),(0,0)}}^2 \tf^{\cAp}_{(0,0),(-1,-2)} = \lrp{\dfrac{A_1+t_2}{A_2}}^2t_1^{-1}t_2^{-2}= \tilde{p}^*(X_1^{-1}X_2^{-2}+2X_1^{-1}X_2^{-1}+X_1^{-1}).
\]
\end{example}

\subsubsection{Theta functions on $\cXe$}
\label{tf_fibre}
As in the previous subsections we would like to highlight that 
\begin{center}
\emph{every theta function on $ \cXe$ is naturally labeled by a point of $(\cXe)^\vee(\Z^t)$}
\end{center} 
as we now explain.
The tropical space $ (\cXe)^{\vee}(\R^t)  $ is the quotient of $ \cX^{\vee} (\R^t)$ by the tropicalization of the action of $T_H$ on $\cX^{\vee}$. 
In other words, since the variety $(\cXe)^{\vee}$ is a quotient of $\cX^\vee$, we can consider the quotient map by $\varpi_{H}: \cX^\vee \to (\cXe)^\vee$
to obtain a surjection
\[
\varpi_H^t:  \cX^{\vee}(\R^t) \to (\cXe)^{\vee} (\R^t).
\]
Then, given $\overline{\bf n}\in (\cXe)^{\vee} (\R^t)$ and ${\bf n}\in (\varpi_H^t)^{-1}(\overline{\bf n})$ we define 
\[
\tf^{\cXe}_{\overline{\bf n}}=\tf^{\cX}_{\bf n}|_{\cXe}.
\]
More concretely, working in lattice identifications of the tropical spaces, we have that $\cX^{\vee}(\R^t)_{\seed^\vee} = N_\R$ and $ (\cXe)^{\vee}_{\seed^\vee}(\R^t) {\cong}N_\R/H_{\R}$. 
Then for every $n \in N$
\[
\tf^{\cXe}_{d n + H}=\tf^{\cX}_{dn}|_{\cXe}.
\]
One can proceed in an analogous way as in the previous cases to construct a scattering diagram like structure $\scat^{\cXe}_{\seed}$ inside $(\cXe)^\vee_{\seed}(\Z^t)$. In turn we obtain a description of  $ \tf^{\cXe}_{\overline{\bf n}}$ using broken lines and use these to define $\cmid (\cXe)$ and $\Theta(\cXe)$.

\subsubsection{The full Fock--Goncharov conjecture}
\label{sec:FG_conj}

Let $\cV$ be a scheme of the form $ \cA$, $\cX$, $\cA/T_{H}$ or $\cX_{{\bf 1}}$. 
The {\bf upper cluster algebra} of $\cV$ is defined as 
\[
\text{up}(\cV):=H^0(\cV,\mathcal{O}_{\cV}).
\]
Every polynomial theta function on $\cV$ belongs to $\text{up}(\cV)$, therefore, we have a natural $\Bbbk$-linear map $\cmid(\cV)\to \text{up}(\cV)$.
If $\cV$ is one of $\cA$ (see Remark \ref{rem:full_rank_assumption}) or $\cX$ it was proved in \cite[Theorem 7.5, Corollary 7.13, Theorem 7.16]{GHKK} that this map is in fact an injective homomorphism of algebras.
These cases already imply that the same is true is $\cV$ if of the form $\cA/T_H$ or $\cXe$.

\begin{remark}
\label{rem:integral_domain}
    If $\cV= \cA$, $\cX$, $\cA/T_{H}$ or $\cX_{{\bf 1}}$ then $\cmid(\cV)$ is an integral domain. Indeed, $\cmid(\cV)$ is a subalgebra of $ \up(\cV)=H^0(\cV,\mathcal{O}_{\cV})$ which is a domain as $\cV$ is irreducible.
\end{remark}

As we have seen in the previous subsections theta functions on varieties of the form $ \cA$ or $\cA/T_H$ are naturally labeled by the $\Z^T$-points of its Fock--Goncharov dual, whereas theta functions on varieties of the form $ \cX$ or $\cXe$ are naturally labeled by the $\Z^t$-points of its Fock--Goncharov dual. 
Since we would like to consider all these cases simultaneously we introduce the following notation. For $G= \Z, \Q$ or $\R$ we set

\begin{equation}
\label{eq:unif}
    \Trop_G(\cV):=
    \begin{cases}
        \cV(G^t) &\text{ if } \cV=\cA \text{ or } \cV=\cA/T_H\vspace{1mm}\\
        \cV(G^T) \ & \text{ if } \cV=\cX \text{ or } \cV=\cXe.
    \end{cases}
\end{equation}

Similarly, for a positive rational function $g: \cV \dashrightarrow \Bbbk $  we let
\begin{equation}
\label{eq:unif_function}
    \Trop_G(g):=
    \begin{cases}
        g^t &\text{ if } \cV=\cA \text{ or } \cV=\cA/T_H\vspace{1mm}\\
        g^T \ &\text{ if } \cV=\cX \text{ or } \cV=\cXe.
    \end{cases}
\end{equation}

In particular, if we think of the seed torus $\cV_\seed$ as a cluster variety with only frozen directions then $\Trop_G(\cV_\seed)=\mathfrak{r}_{\seed}(\Trop_G(\cV))=\cV_{\seed}(G^t)$, if $\cV$ is of the form $\cA$ or $\cA/T_H$ and $\Trop_G(\cV_\seed)=\mathfrak{r}_{\seed}(\Trop_G(\cV))=\cV_{\seed}(G^T)$, if $\cV$ is of the form $\cX$ or $\cXe$. For later use we also set
\begin{equation}
    \label{eq:Theta_seed}
\Theta(\cV)_{\seed^\vee}:=\mathfrak{r}_{\seed^\vee}(\Theta(\cV))\subset \Trop_{\Z}(\cV^\vee),
\end{equation}
see the line just below equation \eqref{eq:identification}. 
Following \cite{GHKK} we introduce the following definition.
\begin{definition}
\label{def:full_FG}
Let $\cV$ be a scheme of the form $ \cA$, $\cX$, $\cA/T_{H}$ or $\cX_{{\bf 1}}$. We  say that {\bf the full Fock--Goncharov conjecture} holds for $\cV$ if
\begin{itemize}
    \item $\Theta(\cV)=\Trop_{\Z}(\cV^{\vee})$, and
    \item the natural map $\cmid(\cV) \to \text{up}(\cV)$ is an isomorphism.
\end{itemize}
\end{definition}

\section{Bases of theta functions for partial minimal models}
\label{sec:minimal_models}

In \cite{GHKK}, the authors obtained nearly optimal conditions ensuring that the full Fock--Goncharov conjecture holds for a cluster variety. 
However, they were able to prove that the ring of regular functions of a partial compactifications of a cluster varieties has a basis of theta functions under much stronger conditions. 
In this section we outline this framework, including quotients and fibres of cluster varieties, and refer to \cite[\S9]{GHKK} for a detailed treatment. 
The main class of (partial) compactifications we shall consider are the (partial) minimal models defined below.

\begin{definition}{\cite{GHK_birational}}
\label{def:cv_minimal_model}
Let $\cV$ be a scheme of the form $\cA, \cX, \cA/T_H$ or $\cXe$. An inclusion $\cV \subset Y$ as an open subscheme of a normal variety $Y$ is a {\bf partial minimal model} of $ \cV$ if the canonical volume form on $\cV$ has a simple pole along every irreducible divisor of $Y$ contained in $ Y \setminus \cV$. It is a {\bf minimal model} if $Y$ is, in addition, projective. We call $ Y \setminus \cV$ the {\bf boundary} of $\cV  \subset Y$.
\end{definition}

For example, if $\cV$ is a cluster $\cA$-variety with frozen variables we can let these variables vanish to obtain a partial minimal model of $\cV$ as in \cite[Construction B.9]{GHKK}. 
Similarly, if we consider a torus as a cluster variety (by letting $\Iuf = \emptyset$) then a partial minimal model is simply a normal toric variety.

Given a partial minimal model $\cV\subset Y$, where $\cV$ is a scheme of the form $\cA, \cX, \cA/T_H$ or $\cXe$, we would like to describe the set of theta functions on $\cV$ (resp. $\cV^\vee$) that extend to $Y$ in a similar way as the ring of algebraic functions on a normal toric variety is described in toric geometry using polyhedral fans. 
In order to be able to do so we need that the pair $(\cV, \cV^\vee)$ satisfies a technical condition --\emph{theta reciprocity}-- that we will introduce shortly. 
For this, we need to discuss first the \emph{tropical pairings} associated to the pair $(\cV,\cV^{\vee})$.
 
In order to define the tropical pairings we temporarily assume that $\cV$ is a variety of the form $\cA$ or $\cA/T_{H}$ so that $\cV^\vee$ is a cluster $\cX$-variety or a fibre of a cluster $\cX$-variety, respectively. 
In particular, $\Theta(\cV)\subset \cV^\vee(\Z^T)= \Trop_{\Z}(\cV^\vee)$ and $\Theta(\cV^\vee)\subset \cV(\Z^t)=\Trop_{\Z}(\cV)$, see \eqref{eq:unif}.  
Recall from Remark~\ref{rmk:geometric trop} that the set $\cV(\Z^t)$ (resp. $\cV^\vee(\Z^t)$) is canonically identified with the geometric tropicalization
$\cV^\trop(\Z)$ (resp. $(\cV^\vee)^\trop(\Z)$). 
Therefore, we systematically think of the elements of $\cV(\Z^t)$ (resp. $\cV^\vee(\Z^t)$) as divisorial discrete
valuations on $\Bbbk(\cV)$ (resp. $\Bbbk(\cV^\vee)$).
We also consider the bijection $i : \cV^\vee(\Z^T) \to \cV^\vee(\Z^t )$ introduced in \S\ref{ss:tropicalization} (see the comment bellow \eqref{eq:imap}).
The {\bf tropical pairings} associated to the pair $(\cV,\cV^\vee) $ are the functions $
    \langle \cdot , \cdot \rangle : \Theta(\cV^{\vee})  \times \Theta (\cV)  \to \Z   $ and $ \langle \cdot , \cdot \rangle^{\vee} : \Theta(\cV^{\vee})  \times \Theta (\cV)  \to \Z$ given by
\[
    \langle {\bf v} , {\bf b} \rangle = {\bf v}(\tf^{\cV}_{\bf b}) \ \ \ \ \ \ \ \text{and} \ \ \ \ \ \ \ \langle {\bf v} , {\bf b} \rangle^{\vee} = i({\bf b}) (\tf^{\cV^{\vee}}_{\bf v}),
\]

\begin{definition}
\label{def:theta_reciprocity}
 Let $\cV$ be a scheme of the form $\cA, \cX, \cA/T_H$ or $\cXe$. The pair $(\cV,\cV^\vee)$ has {\bf theta reciprocity} if $\Theta(\cV)=\Trop_{\Z}(\cV^\vee)$, $\Theta(\cV^{\vee})=\Trop_{\Z}(\cV)$, and $ \langle {\bf v} , {\bf b} \rangle = \langle {\bf v} , {\bf b} \rangle^{\vee} $ for all $({\bf v},{\bf b})\in \Trop_{\Z}(\cV) \times \Trop_{\Z}(\cV^\vee)$.
\end{definition}
\begin{remark}
    Definition \ref{def:theta_reciprocity} shall not be considered artificial. In fact, an analogous conjecture for affine log Calabi--Yau varieties with maximal boundary is expected to hold true, see \cite[Remark 9.11]{GHKK}.
\end{remark}

\begin{lemma}
\label{lem:tf_that_extend}
    Let $\cV$ be a scheme of the form $\cA, \cX, \cA/T_H$ or $\cXe$ and let $\cV\subset Y$ be a (partial) minimal model. Suppose that the pair $(\cV,\cV^\vee)$ has theta reciprocity.
    Then for every seed $\seed\in \orT$ the set of theta functions on $\cV$ that extend to $Y$ can be described as the intersection of $\Theta(\cV^\vee)_{\seed^\vee}$ (see \eqref{eq:Theta_seed}) with a polyhedral cone of the vector space $\Trop_{\R}(\cV^{\vee}_{\seed^\vee})$ (see the sentence bellow equation \eqref{eq:unif}).
\end{lemma}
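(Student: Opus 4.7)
The plan is to realize the polyhedral cone explicitly through the boundary divisors of $Y$. Let $D_1,\dots,D_s$ denote the prime divisors of $Y$ contained in $Y\setminus\cV$. Since $\cV\subset Y$ is a partial minimal model, $\Omega_\cV$ has a simple pole along each $D_i$, so by Remark~\ref{rmk:geometric trop} each $D_i$ determines a divisorial point $\nu_{D_i}\in\Trop_{\Z}(\cV)$. As $Y$ is normal, a function $f\in H^0(\cV,\mathcal{O}_{\cV})$ extends to $Y$ if and only if $\nu_{D_i}(f)\geq 0$ for every $i$. In particular, for ${\bf b}\in\Theta(\cV)$ the theta function $\tf^{\cV}_{\bf b}$ extends to $Y$ precisely when $\nu_{D_i}(\tf^{\cV}_{\bf b})\geq 0$ for all $i=1,\dots,s$.

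Next I would invoke theta reciprocity to transfer these conditions to $\cV^\vee$. Since $\Theta(\cV^\vee)=\Trop_{\Z}(\cV)$, each $\nu_{D_i}$ labels a theta function $\tf^{\cV^\vee}_{\nu_{D_i}}$ on $\cV^\vee$, and reciprocity yields
\[
\nu_{D_i}\bigl(\tf^{\cV}_{\bf b}\bigr) \;=\; \langle\nu_{D_i},{\bf b}\rangle \;=\; \langle\nu_{D_i},{\bf b}\rangle^\vee \;=\; i({\bf b})\bigl(\tf^{\cV^\vee}_{\nu_{D_i}}\bigr).
\]
Fixing the seed $\seed$ and expanding $\tf^{\cV^\vee}_{\nu_{D_i}}=\sum_{m}c_m z^m$ in the cluster coordinates of $\seed^\vee$ with $c_m\in\Z_{>0}$, the right-hand side at $x=\mathfrak{r}_{\seed^\vee}(i({\bf b}))$ is the tropicalization of this positive expression, \ie the minimum (or negative maximum, according to the type of $\cV^\vee$) of the linear forms $\langle m,x\rangle$ over the exponents of $\tf^{\cV^\vee}_{\nu_{D_i}}$. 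The condition that this be $\geq 0$ therefore cuts out an intersection of closed linear half-spaces in $\Trop_{\R}(\cV^\vee_{\seed^\vee})$, and intersecting these over $i=1,\dots,s$ produces a closed convex cone $C_{\seed^\vee}$ whose integer points $C_{\seed^\vee}\cap\Theta(\cV^\vee)_{\seed^\vee}$ label precisely the theta functions on $\cV$ extending to $Y$.

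The step I expect to require the most care is showing that $C_{\seed^\vee}$ is polyhedral in the strict sense, namely cut out by only finitely many half-spaces, since each $\tf^{\cV^\vee}_{\nu_{D_i}}$ may well be a genuinely infinite positive Laurent series. To handle this I would exploit the structure supplied by Theorem~\ref{thm:consistent_scattering_diagrams}: the scattering functions can be taken of the form $(1+z^{p_1^*(n)})^c$ with $n\in\Nuf$ positive, so a straightforward induction on the number of wall-crossings shows that every exponent of $\tf^{\cV^\vee}_{\nu_{D_i}}$ lies in a translate $\nu_{D_i}+\sigma$ of a fixed rational polyhedral cone $\sigma$. Since the extreme rays of $\sigma$ are realized by iterated crossings of a single wall, the resulting infinite family of half-space conditions collapses to the finitely many inequalities indexed by $\nu_{D_i}$ and the generators of $\sigma$, establishing the required polyhedrality.
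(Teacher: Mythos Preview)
Your overall strategy matches the paper's: reduce extension to $Y$ to non-negativity of $\ord_{D_i}$ on $\tf^{\cV}_{\bf b}$, then invoke theta reciprocity to rewrite these conditions as the tropicalizations of the theta functions $\tf^{\cV^\vee}_{\ord_{D_i}}$ on the dual side, obtaining a cone cut out by piecewise-linear inequalities. Up to this point the two arguments are essentially identical.

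Where you diverge is in your treatment of polyhedrality. You raise the possibility that $\tf^{\cV^\vee}_{\nu_{D_i}}$ could be an infinite positive Laurent series, and then attempt a scattering-diagram argument to collapse infinitely many half-space conditions to finitely many. This concern is unfounded and the workaround is both unnecessary and, as written, not rigorous. The point you are missing is that theta reciprocity includes the hypothesis $\Theta(\cV^\vee)=\Trop_{\Z}(\cV)$; by the very definition of $\Theta(\cV^\vee)$ (\thref{def:cmid}), this says that \emph{every} theta function on $\cV^\vee$ is polynomial. In particular each $\tf^{\cV^\vee}_{\ord_{D_i}}$ is a finite positive Laurent polynomial, so its tropicalization is a minimum (or maximum) of finitely many linear forms, and the level set $\{\Trop(\tf^{\cV^\vee}_{\ord_{D_i}})\geq 0\}$ is already a rational polyhedral cone. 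Intersecting finitely many of these yields the desired polyhedral cone with no further argument needed. Your scattering-diagram sketch --- in particular the assertion that ``the extreme rays of $\sigma$ are realized by iterated crossings of a single wall'' so that infinitely many inequalities collapse --- does not stand on its own and should simply be deleted in favor of the one-line observation above.
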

\begin{proof}
We treat the cases $\cV= \cA$ or $\cA/T_H$ as the proof is completely analogous for the cases $\cV= \cX$ or $\cXe$.
Let $D_1, \dots, D_s$ be the irreducible divisors of $Y$ contained in the boundary of $\cV \subset Y $. 
Since $Y$ is normal, to describe the theta functions on $\cV$ that extend to $Y$ it is enough to describe the set of theta functions that extend to $D_1, \dots , D_s$ since $Y\setminus (\cV \cup D_1, \dots , D_s)$ has co-dimension greater or equal to $2$ in $Y$.
Let $\ord_{D_j}$ be the discrete valuation on $ \Bbbk(\cV)\setminus \{ 0 \}$ associated to the irreducible divisor $D_j$.
Since $\cV \subset Y$ is a partial minimal model, $\ord_{D_j}$ determines a point of $  \cV(\Z^t) $. Since $\Theta(\cV^{\vee})= \cV(\Z^t)$ we have $\ord_{D_j} \in \Theta (\cV^{\vee})$. Therefore, $
\tf^{\cV^{\vee}}_{\ord_{D_j}}$ is a polynomial theta function and its tropicalization is the function
\[
(\tf_{\ord_{D_j}}^{\cV^\vee})^t:\cV^{\vee}( \Z^t)\to \Z\quad \text{given by} \quad  v \mapsto v (\tf^{\cV^{\vee}}_{\ord_{D_j}}).
\]
In other words, $(\tf_{\ord_{D_j}}^{\cV^{\vee}})^t(v)=\langle \ord_{D_j}, i(v) \rangle$. 
Since $\Theta(\cV)= \cV^\vee(\Z^T)$ we have that $i(v)\in \Theta(\cV)$ and, therefore, $\tf^\cV_{i(v)}$ is a polynomial theta function.
The assumption $ \langle{\bf v} , {\bf b} \rangle = \langle {\bf v} , {\bf b} \rangle^{\vee} $ for all ${\bf v}$ and ${\bf b}$ implies that
\[
(\tf_{\ord_{D_j}}^{\cV^{\vee}})^t(v)= (\tf^{\cV}_{i(v)})^t(\ord_{D_j}),
\]
since
\[
(\tf_{\ord_{D_j}}^{\cV^{\vee}})^t(v) =
\langle \ord_{D_j}, i(v) \rangle =
\langle \ord_{D_j}, i(v)\rangle^{\vee} =
\ord_{D_j}(\tf^{\cV}_{i(v)}) =
(\tf^{\cV}_{i(v)})^t(\ord_{D_j}).
\]
Thus a theta function $\tf^{\cV}_{i(v)} \in \cmid(\cV)$ extends to $D_j$ if and only if $0\leq (\tf^{\cV^{\vee}}_{\ord_{D_j}})^t(v)$. 
In particular, a theta function $\tf^\cV_{i(v)}$ extends to $Y$ if and only if
\[
i(v)\in \bigcap_{i=1}^s\{b\in\cV^{\vee}_{\seed^\vee}(\R^T)\mid 0\leq (\tf^{\cV^{\vee}}_{\ord_{D_j}})^T(b)\}
\]
since $g^T(b)=g^t(i(b))$ for every positive function $g$ on $\cV$, see \eqref{eq:comparing_tropicalizations}. By definition of tropicalization, the set  $\bigcap_{i=1}^s\{b\in\cV^{\vee}_{\seed^\vee}(\R^T)\mid 0\leq (\tf^{\cV^{\vee}}_{\ord_{D_j}})^T(b)\}$ is a polyhedral cone of $\cV^{\vee}_{\seed^\vee}(\R^T)=\Trop_{\R}(\cV^{\vee}_{\seed^\vee})$. 
\end{proof}

We now turn to the problem of understanding when the theta functions on $\cV$ that extend to a (partial) minimal model $\cV \subset Y$ form a basis of $H^0(Y, \mathcal{O}_Y)$. 
The following notion is central.
\begin{definition}
    \label{def:respect_order}
    Let $\cV$ be a scheme of the form $\cA, \cX, \cA/T_H$ or $\cXe$. We say that the theta functions on $\cV$ {\bf respect the order of vanishing} if
    for all ${\bf v}\in \cV(\Z^t)$ and $\displaystyle \sum_{{\bf q}\in \Theta(\cV)} \alpha_{\bf q} \tf^{\cV}_{\bf q}\in \cmid(\cV)$ then 
\[
{\bf v}\lrp{\sum_{{\bf q}\in \Theta(\cV)} \alpha_{\bf q} \tf^{\cV}_{\bf q}} \geq 0 \ \ \text{ if and only if }\ \  {\bf v}(\tf_{\bf q})\geq 0  \text{ for all } {\bf q} \text{ such that } \alpha_{\bf q}\neq 0.
\]
\end{definition}

Notice that in \cite[Conjecture 9.8]{GHKK} the authors conjecture that the theta functions on $\cAp$ respect the order of vanishing.
The {\bf superpotential} associated to a partial minimal model $\cV \subset Y $ is the function on $\cV^\vee$ defined as
\begin{equation}\label{eq:def superpotential}
    W_{Y}:=\sum_{j=1}^n \tf^{\cV^{\vee}}_{j},
\end{equation}
where 
\begin{equation}
\label{eq:def superpotential_summands}
   \tf^{\cV^{\vee}}_{j}=\begin{cases}
       \tf^{\cV^{\vee}}_{\ord_{D_j}} &\text{ if } \cV=\cA \text{ or } \cV=\cA/T_H\vspace{1mm}\\
       \tf^{\cV^{\vee}}_{i(\ord_{D_j})} \ &\text{ if } \cV=\cX \text{ or } \cV=\cXe.
   \end{cases}
\end{equation}
The {\bf superpotential cone} associated to $W_Y$ is
\begin{equation}\label{eq:def Xi}
    \Xi_Y:= \{ {\bf v} \in \Trop_{\R}(\cV^\vee) \mid \Trop_{\R}(W_Y)({\bf v})\geq0 \},
\end{equation} 
see equation \eqref{eq:unif_function}.

We further set $\Xi_{Y;\seed^\vee}:= \mathfrak{r}_{\seed^\vee}(\Xi_Y)\subset \Trop_{\R}(\cV^{\vee}_{\seed^\vee})$.
Notice that if the theta functions on $\cV$ respect the order of vanishing then $\Xi_{Y;\seed}$ is precisely the polyhedral subset of Lemma \ref{lem:tf_that_extend}.
The next results follows at once from the definitions.

\begin{lemma}
\label{lem:basis_for_pmm}
    Let $\cV$ be a scheme of the form $\cA, \cX, \cA/T_H$ or $\cXe$ and let $\cV\subset Y$ be a (partial) minimal model. Suppose that the full Fock--Goncharov conjecture holds for $\cV$, that the pair $(\cV, \cV^\vee) $ has theta reciprocity and that the theta functions on $\cV$ respect the order of vanishing. 
    Then the set of theta functions on $\cV$ parametrized by the points of $\Xi_Y(\Z)$ is a basis of $H^0(Y, \mathcal{O}(Y))$.
\end{lemma}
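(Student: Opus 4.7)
The plan is to combine the three hypotheses as follows. First I would invoke the full Fock--Goncharov conjecture to identify $H^0(\cV,\mathcal{O}_\cV)=\up(\cV)$ with $\cmid(\cV)$, so that the theta functions $\{\vartheta^\cV_{\bf q}\}_{{\bf q}\in\Trop_{\Z}(\cV^\vee)}$ form a $\Bbbk$-basis of $H^0(\cV,\mathcal{O}_\cV)$. Since $\cV$ is open in the normal variety $Y$, the restriction $H^0(Y,\mathcal{O}_Y)\hookrightarrow H^0(\cV,\mathcal{O}_\cV)$ is injective, so every $f\in H^0(Y,\mathcal{O}_Y)$ admits a unique finite expansion $f=\sum_{{\bf q}}\alpha_{\bf q}\vartheta^\cV_{\bf q}$.

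Next I would identify the cone of extendible theta functions with $\Xi_Y(\Z)$. Let $D_1,\dots,D_s$ denote the irreducible divisorial components of $Y\setminus\cV$; since $Y$ is normal and $Y\setminus(\cV\cup\bigcup_jD_j)$ has codimension at least $2$, a function on $\cV$ extends to a global regular function on $Y$ iff $\ord_{D_j}\geq 0$ for every $j$. Combining Lemma~\ref{lem:tf_that_extend} (whose proof, under theta reciprocity, computes $\ord_{D_j}(\vartheta^\cV_{\bf q})$ via the tropical pairing with $\vartheta^{\cV^\vee}_j$) with the elementary identity $\Trop_{\R}(f_1+\cdots+f_n)=\min_j \Trop_{\R}(f_j)$ applied to $W_Y=\sum_j\vartheta^{\cV^\vee}_j$, one obtains that the set of ${\bf q}\in\Theta(\cV)$ for which $\ord_{D_j}(\vartheta^\cV_{\bf q})\geq 0$ for every $j$ is exactly $\Xi_Y(\Z)$. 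Hence $\{\vartheta^\cV_{\bf q}\}_{{\bf q}\in\Xi_Y(\Z)}\subseteq H^0(Y,\mathcal{O}_Y)$, and linear independence of this family is inherited from the basis in the previous paragraph.

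For spanning I would use the respect-order-of-vanishing property. Take $f=\sum_{\bf q}\alpha_{\bf q}\vartheta^\cV_{\bf q}\in H^0(Y,\mathcal{O}_Y)$ and fix $j\in\{1,\dots,s\}$. Because $\cV\subset Y$ is a partial minimal model, $\ord_{D_j}$ defines a point of $\cV(\Z^t)$ via Remark~\ref{rmk:geometric trop}. Since $\ord_{D_j}(f)\geq 0$, Definition~\ref{def:respect_order} forces $\ord_{D_j}(\vartheta^\cV_{\bf q})\geq 0$ for every ${\bf q}$ with $\alpha_{\bf q}\neq 0$. Letting $j$ vary over $1,\dots,s$ and invoking the previous paragraph, every such ${\bf q}$ lies in $\Xi_Y(\Z)$; this proves spanning and hence the lemma.

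The principal subtlety is bookkeeping around the two tropicalization conventions $\Z^T$ and $\Z^t$ uniformly across the four cases $\cV=\cA,\cX,\cA/T_H,\cXe$: one must verify that the occurrences of the sign-flipping involution $i$ in \eqref{eq:def superpotential_summands} and in the tropical pairing of Definition~\ref{def:theta_reciprocity} combine so that the condition ``$\ord_{D_j}(\vartheta^\cV_{\bf q})\geq 0$ for all $j$'' translates literally to the defining inequality of $\Xi_Y$ in \eqref{eq:def Xi}. This is essentially a repackaging of the calculation already carried out in the proof of Lemma~\ref{lem:tf_that_extend}; once settled, the rest of the argument is formal.
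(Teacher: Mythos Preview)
Your proposal is correct and matches the paper's approach: the paper simply states that the lemma ``follows at once from the definitions'' after having set up Lemma~\ref{lem:tf_that_extend}, Definition~\ref{def:respect_order}, and the observation that $\Xi_{Y;\seed}$ is precisely the polyhedral cone appearing in Lemma~\ref{lem:tf_that_extend}. Your write-up is a faithful and accurate expansion of that one-line justification, including the correct use of normality for the extension criterion, theta reciprocity (via Lemma~\ref{lem:tf_that_extend}) to identify the extendible theta functions with $\Xi_Y(\Z)$, and the respect-order-of-vanishing hypothesis for spanning.
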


\begin{lemma}
Suppose there is a cluster ensemble map $p:\cA \to \cX$ that is an isomorphism. Then theta functions on $\cA$ respect the order of vanishing if and only theta functions on $\cX$ respect the order of vanishing.
\end{lemma}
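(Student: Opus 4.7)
The plan is to upgrade the hypothesis that $p:\cA\to\cX$ is an isomorphism into explicit compatibilities between the data appearing in Definition~\ref{def:respect_order} on the two sides. First I would observe that if $p$ is an isomorphism, then on each pair of seed tori the defining cluster ensemble lattice map $p^*:N\to M^\circ$ is an isomorphism of lattices; so by \eqref{eq:L p} the Langlands dual lattice map $(\Lp)^*=-d^{-1}(p^*)^*$ is also an isomorphism and hence the dual cluster ensemble map $p^\vee:\cA^\vee\to\cX^\vee$ is an isomorphism as well. Tropicalising, I obtain bijections
\[
p^t:\cA(\Z^t)\xrightarrow{\sim}\cX(\Z^t),\qquad \Phi:=i\circ(p^\vee)^T:\cA^\vee(\Z^T)\xrightarrow{\sim}\cX^\vee(\Z^t),
\]
between the spaces of divisorial valuations and between the indexing sets of theta functions, respectively; inserting the involution $i$ of \eqref{eq:imap} accounts for the asymmetry between the labeling conventions for $\cA$ and for $\cX$ recorded in \eqref{eq:unif}.

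The key step would be to show that $p^*$ carries the theta basis of $\cmid(\cX)$ bijectively onto the theta basis of $\cmid(\cA)$ with matching labels, i.e.\ for every $\mathbf{q}\in\cA^\vee(\Z^T)$ one has $\mathbf{q}\in\Theta(\cA)$ if and only if $\Phi(\mathbf{q})\in\Theta(\cX)$, and in that case
\[
p^*\!\left(\tf^{\cX}_{\Phi(\mathbf{q})}\right)=\tf^{\cA}_{\mathbf{q}}.
\]
This reduces to checking the identification at the level of scattering data on a single initial seed: since $p^*$ sends the initial walls $\scat^{\cA}_{\mathrm{in},\seed}$ to the initial data used to build $\scat^{\cX}_{\seed}$ via $\tilde p$ (\S\ref{sec:tf_X}), uniqueness of consistent cluster scattering diagrams together with the functoriality of broken lines transfers the match to all walls, all broken lines and all structure constants \eqref{eq:multibrokenline}. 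In particular $p^*$ becomes an isomorphism of $\Bbbk$-algebras $\cmid(\cX)\xrightarrow{\sim}\cmid(\cA)$ implementing the bijection $\Phi$ on theta bases.

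With these ingredients in place, the proof collapses to a tautology. For any $\mathbf{v}\in\cA(\Z^t)$ and any $f=\sum_{\mathbf{q}}\alpha_{\mathbf{q}}\tf^{\cA}_{\mathbf{q}}\in\cmid(\cA)$, let $g=(p^*)^{-1}(f)=\sum_{\mathbf{q}}\alpha_{\mathbf{q}}\tf^{\cX}_{\Phi(\mathbf{q})}\in\cmid(\cX)$. Since divisorial valuations transform as $\mathbf{v}(h)=p^t(\mathbf{v})\bigl((p^*)^{-1}(h)\bigr)$ for any regular $h$, the condition
\[
\mathbf{v}(f)\geq 0 \iff \mathbf{v}\bigl(\tf^{\cA}_{\mathbf{q}}\bigr)\geq 0\ \text{for all}\ \mathbf{q}\ \text{with}\ \alpha_{\mathbf{q}}\neq 0
\]
becomes, under the simultaneous substitutions $\mathbf{v}\mapsto p^t(\mathbf{v})$ and $f\mapsto g$, the analogous statement for $\cX$; as $(\mathbf{v},f)$ runs over $\cA(\Z^t)\times\cmid(\cA)$ the pair $(p^t(\mathbf{v}),g)$ runs over $\cX(\Z^t)\times\cmid(\cX)$, yielding both directions of the desired equivalence. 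The only non-formal point, and therefore the main obstacle, is the middle step: carefully unwinding the Langlands duality and the $\R^T$ versus $\R^t$ conventions to confirm that $p^*$ really takes theta functions to theta functions via the label map $\Phi$.
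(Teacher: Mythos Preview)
Your proposal is correct and follows essentially the same approach as the paper, whose entire proof is a one-liner citing the identity $p^*(\tf^{\cX}_{\bf n})= \tf^{\cA}_{(p^{\vee})^T\circ i ({\bf n})}$—precisely your key step, with your $\Phi$ being the inverse of the paper's label map $(p^{\vee})^T\circ i$. You elaborate more on why this identity holds (via scattering diagrams) and spell out the transfer of Definition~\ref{def:respect_order} along $p^t$ and $p^*$, but the substance is identical.
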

\begin{proof}
    The result follows at once from the fact that $p^*(\tf^{\cX}_{\bf n})= \tf^{\cA}_{(p^{\vee})^T\circ i ({\bf n})}$. 
\end{proof}

We propose the following definition that allows to have the benefits of Lemma \ref{lem:basis_for_pmm} without having to verify all its assumptions. 
We apply this in \S\ref{sec:NO_Grass}.

\begin{definition}
\label{def:enough_tf}
We say that $ \cV \subset Y$ has {\bf enough theta functions} if the full Fock--Goncharov conjecture holds for $\cV$ and the theta functions on $\cV$ parametrized  by $\Xi_{Y} (\Z)$ form a basis of $H^0(Y, \mathcal{O}_Y)$.
\end{definition}

We now recall an important notion introduced in \cite[Definition 9.1]{GHKK} that can be used to verify in a combinatorial way that a partial minimal model $\cA\subset Y$ has enough theta functions provided $Y$ is obtained by letting the frozen variables vanish.

\begin{definition}
We say that a seed $\seed=(e_i)_{ i \in I}$ is {\bf optimized} for a point $ {\bf n} \in \cA(\Z^t) $ if under the identification of $\cA(\Z^t)$ with $N^\circ$ afforded by $\seed$ we have that $\{ e_k, n_{\seed} \}\geq 0 $ for all $k \in \Iuf$.
\end{definition}

\begin{lemma}
\thlabel{lemm:enough_tf}
Assume that $\cA$ satisfies the full Fock--Goncharov conjecture. Let $\cA\subset Y$ be a partial minimal model of $\cA$ and let $D_1, \dots , D_s$ be the irreducible divisors of $Y$ contained in $Y\setminus \cA$.  Assume that $p^*_2|_{N^{\circ}}: N^{\circ}\to \Nuf^*$ is surjective and that the point $\ord_{D_j}\in \cA^{\vee}(\Z^t)$ has an optimized seed for every $1 \leq j \leq s$. Then the partial minimal model $\cA \subset Y$ has enough theta functions.
\end{lemma}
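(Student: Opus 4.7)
My plan is to mimic the strategy of \cite[Proposition~9.7]{GHKK}, proceeding in three stages, the central one being a pointwise formula for $\ord_{D_j}$ on theta functions in an optimized seed. Using the full Fock--Goncharov conjecture for $\cA$, I would identify $H^0(\cA,\mathcal{O}_\cA)=\cmid(\cA)$ with the $\Bbbk$-vector space freely spanned by $\{\tf^{\cA}_{\bf q}\mid {\bf q}\in \Theta(\cA)=\Trop_\Z(\cA^\vee)\}$. Since $Y$ is normal and $Y\setminus(\cA\cup D_1\cup\cdots\cup D_s)$ has codimension at least two, a section $f\in H^0(\cA,\mathcal{O}_\cA)$ extends to $Y$ if and only if $\ord_{D_j}(f)\geq 0$ for every $j$. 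It therefore suffices to establish (a) that $\{{\bf q}\in\Theta(\cA) \mid \ord_{D_j}(\tf^{\cA}_{\bf q})\geq 0 \text{ for all } j\}=\Xi_Y(\Z)$, together with (b) that theta functions on $\cA$ respect the order of vanishing along each $D_j$, so that vanishing behaviour of a finite linear combination is controlled by its individual summands.

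The core of the argument is the pointwise formula
\[
\ord_{D_j}(\tf^{\cA}_{\bf q})\;=\;\langle n_{j;\seed_j},\,q_{\seed_j^\vee}\rangle\qquad\text{for every }{\bf q}\in \Theta(\cA),
\]
where $\seed_j$ is an optimized seed for ${\bf n}_j:=\ord_{D_j}\in\cA(\Z^t)$ with representative $n_{j;\seed_j}\in N^\circ$. By \thref{thm:consistent_scattering_diagrams}, one may assume every scattering function of $\scat^{\cA}_{\seed_j}$ has the form $(1+z^{p_1^*(n)})^c$ with $n\in N^+_{\uf,\seed_j}$. Expressed in the cluster coordinates of $\seed_j$, the theta function $\tf^{\cA}_{\bf q}$ is a sum over broken lines with initial exponent $q_{\seed_j^\vee}$ and endpoint deep in the positive chamber, where each bend adds a non-negative multiple of some $p_1^*(n)$ to the current exponent. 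The optimized condition $\{e_k,n_{j;\seed_j}\}\geq 0$ for $k\in\Iuf$ translates into $\langle n_{j;\seed_j},p_1^*(n)\rangle\geq 0$ for every such $n$, so the pairing $\langle n_{j;\seed_j},-\rangle$ is non-decreasing along broken lines and attains its minimum $\langle n_{j;\seed_j}, q_{\seed_j^\vee}\rangle$ on the straight broken line, which contributes $z^{q_{\seed_j^\vee}}$ with coefficient one. Injectivity of the map ${\bf q}\mapsto q_{\seed_j^\vee}$ on $\Theta(\cA)$ prevents cancellation of this distinguished monomial in any linear combination of theta functions, yielding both the formula and claim (b). The surjectivity of $p^*_2|_{N^\circ}$ enters to guarantee that pairing with $n_{j;\seed_j}\in N^\circ$ actually computes the integer-valued valuation $\ord_{D_j}$ (rather than a fractional multiple of it) and is compatible across mutations.

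To match with the superpotential cone, recall that $\Xi_Y = \{\Trop_\R(W_Y)\geq 0\}$ with $W_Y=\sum_j \tf^{\cA^\vee}_{{\bf n}_j}$, so ${\bf q}\in \Xi_Y(\Z)$ if and only if $\Trop_\R(\tf^{\cA^\vee}_{{\bf n}_j})({\bf q})\geq 0$ for every $j$. Dualising the optimized-seed formula to $\cA^\vee$ in the Langlands dual seed $\seed_j^\vee$, this tropicalisation evaluates to the same pairing $\langle n_{j;\seed_j}, q_{\seed_j^\vee}\rangle$. Hence ${\bf q}\in \Xi_Y(\Z)\iff \ord_{D_j}(\tf^{\cA}_{\bf q})\geq 0\text{ for all }j\iff \tf^{\cA}_{\bf q}\in H^0(Y,\mathcal{O}_Y)$, which is (a). Combined with (b) and the linear independence of the theta basis, the theta functions indexed by $\Xi_Y(\Z)$ form a basis of $H^0(Y,\mathcal{O}_Y)$. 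The technical heart, and the main obstacle, is the broken-line analysis of the middle paragraph: one must simultaneously bound every monomial exponent of $\tf^{\cA}_{\bf q}$ from below by $q_{\seed_j^\vee}$ under pairing with $n_{j;\seed_j}$, and forbid cancellation of its straight-line term in sums of distinct theta functions; this is precisely where both the existence of optimized seeds and the surjectivity of $p^*_2|_{N^\circ}$ are indispensable.
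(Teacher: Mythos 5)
Your route is genuinely different from the paper's: rather than reducing to principal coefficients, you try to redo the broken-line analysis of \cite[\S 9]{GHKK} directly on $\cA$. The lower-bound half of your key formula is fine: optimization gives $\langle n_{j;\seed_j},p_1^*(n)\rangle=\{n,n_{j;\seed_j}\}\geq 0$ for $n\in N^+_{\uf,\seed_j}$, so every exponent $m$ of $\tf^{\cA}_{\bf q}$ written in the seed $\seed_j$ satisfies $\langle n_{j;\seed_j},m\rangle\geq\langle n_{j;\seed_j},q_{\seed_j^\vee}\rangle$, and the valuation axioms give $\ord_{D_j}(\tf^{\cA}_{\bf q})\geq\min_m\langle n_{j;\seed_j},m\rangle$. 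The gap is the opposite inequality, which your argument never addresses: a divisorial valuation on $\Bbbk(\cA)$ that agrees with the pairing $\langle n_{j;\seed_j},\cdot\rangle$ on monomials of the chart need not compute the minimum of the monomial values on a general Laurent polynomial (it can jump strictly above it), and ``theta functions respect the order of vanishing'' is exactly this missing upper bound --- it is \cite[Conjecture 9.8]{GHKK} in general, and it does not follow from the injectivity of ${\bf q}\mapsto q_{\seed_j^\vee}$ nor from positivity of the theta coefficients. Your no-cancellation argument at best shows that some monomial of minimal pairing value survives in a combination $\sum_{\bf q}\alpha_{\bf q}\tf^{\cA}_{\bf q}$, which again only yields the lower bound, not the implication $\ord_{D_j}(f)\geq 0\Rightarrow\ord_{D_j}(\tf^{\cA}_{\bf q})\geq 0$ for every summand. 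Similarly, your final identification of $\Xi_Y(\Z)$ with $\{{\bf q}\mid\langle n_{j;\seed_j},q_{\seed_j^\vee}\rangle\geq 0\ \forall j\}$ ``by dualising'' is essentially \cite[Lemma 9.10(3)]{GHKK} and is asserted rather than proved.

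This is also where your use of the hypothesis that $p_2^*|_{N^{\circ}}$ is surjective goes astray: it has nothing to do with integrality of the pairing or compatibility across mutations. In the paper it is used, via \cite[Lemma B.7]{GHKK}, to split $\cAp\cong\cA\times T_M$; the partial minimal model then lifts to $\cAp\subset Y\times T_M$ with boundary divisors $\widetilde D_j=D_j\times T_M$, the optimized seeds lift to seeds optimized for $\ord_{\widetilde D_j}=(\ord_{D_j},0)$, and one invokes \cite[Proposition 9.7 and Lemma 9.10(2),(3)]{GHKK} --- results proved for $\cA_{\prin}$, which supply precisely the order-of-vanishing statement and the description of $\Xi_{Y\times T_M}(\Z)$ that your sketch is missing --- before descending to $\cA\subset Y$ by specializing coefficients to $1$ and using $\Xi_Y\cap\Trop_{\Z}(\cA^{\vee})=\Xi_{Y\times T_M}\cap\Trop_{\Z}(\cA^{\vee})$. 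Unless you supply an independent proof of the upper bound on $\ord_{D_j}$ for $\cA$ itself, your argument does not close; the reduction to $\cAp$ (or an equivalent input replacing it) is the essential missing idea.
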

\begin{proof}
Since $p^*_2|_{N^{\circ}}$ is surjective we have that $\cAp$ is isomorphic to $\cA\times T_M $ (see \cite[Lemma B.7]{GHKK}). 
Consider the partial compactification $\cAp \subset Y \times T_M$. Its boundary is isomorphic to $D\times T_M$ and the irreducible components of the boundary are the divisors $\widetilde{D}_1, \dots, \widetilde{D}_s$, where $\widetilde{D}_j:=D_j \times T_M $. 
By hypothesis $\ord_{D_j} $ is optimized for some seed $\seed_j$. 
Let $\widetilde{\seed}_j$ be the seed for $\Gamma_{\prin}$ obtained mutating $ \seed_{0_{\prin}}$ in the same sequence of directions needed to obtain $\seed_j$ from $\seed_0$.
Observe that for every $1\leq j \leq s$, under the identifications 
\[
\cA_{\prin,\widetilde{\seed}_j}(\Z^t) = N_\prin^{\circ} = \cA_{\seed_j}(\Z^t) \oplus  T_M(\Z^t), 
\]
the point $ \ord_{\widetilde{D}_j}$ of $\cAp(\Z^t)$ corresponds to the point $ (\ord_{D_j},0)$ of $\cA(\Z^t)\times T_M(\Z^t)$.

Recall that the index set of unfrozen indices for $\cAp$ is $\Iuf$. 
In particular, for every $k \in \Iuf$ we have that the $k^{\text{th}}$ element of $\widetilde{\seed}_{j}$ is of the form $( e_{k;j},0)$, where $e_{k;j}$ is the $k^{\text{th}}$ element of $\seed_j$. Then for each $1\leq j\leq s$ we compute
\begin{align*}
\{ (e_{k;j},0),  \ord_{\widetilde{D}_j}\} & = \{ (e_{k;j},0), (\ord_{D_j},0)\} \\
& = \{e_{k;j}, \ord_{D_j} \} \geq 0.
\end{align*}

This tells us that $\ord_{\widetilde{D}_j}$ is optimized for $\widetilde{\seed}_j$.
Let $W_{Y\times T_M}=\sum_{j}^{s}\tf^{\cAp^\vee}_{\ord{\widetilde{D}_j}}$ be the superpotential associated to $\cAp \subset Y \times T_M$.
By Proposition 9.7 and Lemma 9.10 (3) of \cite{GHKK} the integral points of $\Xi_{Y \times T_M}$ can be described as
\[
\Xi_{Y \times T_M}\cap (\Z) = \{ b \in \Theta(\cAp) \mid \ord_{i(b)} (\tf^{\cAp^{\vee}}_j)\geq 0 \text{ for all } j\}.
\] 
We define $\cmid(Y\times T_M)$ to be the vector subspace of $\cmid (\cAp)$ spanned by the theta functions parametrized by $\Xi_{Y \times T_M}(\Z^T)$. 
For the convenience of the reader we point out that in the notation of \cite[\S9]{GHKK} the partial compactification $Y\times T_M$ of $\cAp$ would be denoted by $\overline{\cA}_{\text{prin}}^{S}$ and $\Xi_{Y \times T_M}(\Z)$ by $\Theta(\overline{\cA}_{\text{prin}}^{S})$, where  $S:=\{ i(\ord_{\widetilde{D}_1}),\dots , i(\ord_{\widetilde{D}_s})\}$.
By \cite[Lemma 9.10(2)]{GHKK} we have
\[\cmid(Y\times T_M)=H^0(Y\times T_M, \mathcal{O}_{Y\times T_M}) \cong H^0(Y, \mathcal{O}_{Y})\otimes_{\Bbbk} H^0( T_M, \mathcal{O}_{ T_M}).
\]
In particular, $H^0(Y\times T_M, \mathcal{O}_{Y\times T_M})$ has a theta basis parametrized by $\Theta(Y\times T_M)$.
The theta function $ \tf^{\cA}_{\ord_{D_j}}$ is obtained from $\tf^{\cAp}_{\widetilde{D}_j}$ by specializing the coefficients to $1$. This implies that
\[
\Xi_{Y}\cap \Trop_{\Z}(\cA^{\vee})= \Xi_{Y \times T_M} \cap \Trop_{\Z}(\cA^{\vee}).
\]
We conclude that  $ H^0(Y, \mathcal{O}_Y)$ has a theta basis parametrized by the integral point of $\Xi_{Y}$.

\end{proof}

\section{Valuations on middle cluster algebras and adapted bases}
\label{sec:cluster_valuations}
 In \cite{FO20} the authors noticed that the so-called {\bf g}-vectors associated to cluster variables can be used to construct valuations on $\Bbbk(\cA)$ provided $\Gamma$ is of full-rank. In this section we study some properties of these valuations. We extend this approach for quotients of $\cA$ and (fibres of) $\cX$.

 Let $\cV$ be a scheme of the form $\cA, \cX, \cA/T_H$ or $\cXe$. 
 Recall from \S\ref{sec:tf_and_parametrizations} that every theta function on $\cV$ is labeled with a point of $\Trop_{\Z}(\cV^\vee)$, see \eqref{eq:unif}. 

\begin{definition}
\label{def:dom_order}
Suppose $\Gamma$ is of full-rank and let $ \seed \in \orT$ be a seed for $\Gamma$. 
The {\bf opposite dominance order} on $M^\circ$ defined by $\seed$ is the partial order $\preceq_{\seed}$ on $M^\circ$ determined by the following condition:
\begin{equation}
\label{eq:dom_order}
m_1 \prec_{\seed} m_2 \ \Leftrightarrow \ m_2=  m_1 + p^{\ast}_1(n) \text{ for some }n\in N^+_{\uf, \seed}.
\end{equation}
\end{definition}

\begin{remark}
\label{rem:dom_order}
In Definition \ref{def:dom_order}, $m_1\preceq_{\seed} m_2$ means that either $m_1 \prec_\seed m_2 $ or $m_1=m_2$. We will also adopt this notation for other orders we consider.
The dominance order was originally considered in \cite[Proof of Proposition 4.3]{Labardini_et_al_CC-alg} and it is the opposite order to the one given in Definition \ref{def:dom_order}. 
This order was exploited by \cite{Qin17,Qintropical} in his work on bases for cluster algebras.
The full-rank condition is needed so that $\preceq_{\seed}$ is reflexive. However, observe that for every seed $\seed$ such that $\text{ker}(p_1^*)\cap N^+_{\uf , \seed} = \emptyset$, equation \eqref{eq:dom_order} still determines a partial order on $M^\circ$ even if $\Gamma$ is not of full-rank. 
Nonetheless, whenever we talk about an (opposite) dominance order in this paper we will be tacitly assuming that $\Gamma$ is of full-rank.
\end{remark}

It is straightforward to verify that $\preceq_{\seed}$ is {\bf linear}. That is, $ m_1 \preceq_\seed m_2 $ implies that $m_1 + m \preceq_\seed m_2 + m$ for all $m \in M^\circ$.

\begin{definition}
\label{def:val}
Let $A$ be an integral domain with a $\Bbbk$-algebra structure, $L$ a lattice isomorphic to $\Z^r$ and $\leq$ a total order on $ L$. A {\bf valuation} on $A$ with values in $L$ is a function $\nu : A\setminus \{0 \} \to (L,<)$ such that 
\begin{itemize}
    \item[(1)] $\nu(f+g) \geq  \min\{\nu(f), \nu(g)\}$, unless $f+g=0$,
    \item[(2)] $\nu(fg)= \nu(f) + \nu(g)$,
    \item[(3)] $\nu(cf)=\nu(f)$ for all $c \in \Bbbk^* $.
\end{itemize}
For $l \in L$ we define the subspace
$
A_{\nu \geq l}:= \{ x\in A \setminus \{0\} \mid \nu(x)\geq l\} \cup \{ 0 \}
$
of $A$. The subspace $
A_{\nu > l}$ is defined analogously.
We say that $\nu $ has {\bf 1-dimensional leaves} if the dimension of the quotient
\begin{equation}
\label{eq:graded_piece}
A_l:=A_{\nu \geq l} \big{/} A_{\nu > l}    
\end{equation}
is either $0$ or $1$ for all $l\in L$. A basis $B$ of $A$ is {\bf adapted} for $ \nu $ if for all $l\in L$ the set $B\cap A_{\nu \geq l}$ is a basis of $A_{\nu\geq l}$.
\end{definition}

\begin{lemma}
\thlabel{product_and_order}

Assume $\Gamma $ is of full-rank. 
Let $\vartheta^{\cA}_{m_1},\vartheta^{\cA}_{m_2}\in \text{mid}(\cA)$ with $m_1,m_2\in =\Trop_{\Z}(\cA^{\vee}_{\seed^\vee})=M^\circ$. 
Then the product $\vartheta^{\cA}_{m_1}\vartheta^{\cA}_{m_2}$ expressed in the theta basis of $\text{mid}(\cA)$ has the following form
\[
\vartheta^{\cA}_{m_1}\vartheta^{\cA}_{m_2}= \vartheta^{\cA}_{m_1+m_2}+ \sum_{m_1+m_2 \prec_{\seed}  m}c_{m}\vartheta^{\cA}_{m}.
\]
\end{lemma}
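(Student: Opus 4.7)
The plan is to combine the broken-line description of the multiplication of theta functions (equation \eqref{eq:multibrokenline}) with the positive form of the scattering functions in $\scat^\cA_\seed$ guaranteed by Theorem~\ref{thm:consistent_scattering_diagrams}, and then extract the leading coefficient by matching Laurent expansions in the cluster coordinates of $\seed$.

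The structural observation I would establish first is that for any broken line $\gamma$ for $\scat^\cA_\seed$ one has $F(\gamma) = I(\gamma) + p_1^*(n_\gamma)$ for some $n_\gamma \in N^+_{\uf,\seed}\cup\{0\}$, with $n_\gamma=0$ exactly when $\gamma$ has no bends. Indeed, by Theorem~\ref{thm:consistent_scattering_diagrams} we may assume every scattering function has the form $(1+z^{p_1^*(n)})^c$ with $n\in N^+_{\uf,\seed}$, and the bending rule in \thref{def:genbroken} then shows that each bend augments the exponent by a positive multiple of $p_1^*(n)$ for some such $n$; iterating over all bends and using that $N^+_{\uf,\seed}\cup\{0\}$ is additively closed yields the claim. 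Plugging this into \eqref{eq:multibrokenline}, any contributing pair $(\gamma^{(1)},\gamma^{(2)})$ to $\alpha(m_1,m_2,r)$ produces $r = m_1+m_2 + p_1^*(n_1+n_2)$ with $n_1+n_2\in N^+_{\uf,\seed}\cup\{0\}$. Hence $\alpha(m_1,m_2,r)\neq 0$ forces either $r=m_1+m_2$ or $m_1+m_2\prec_\seed r$, which gives the support statement of the lemma.

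It remains to check that $\alpha(m_1,m_2,m_1+m_2)=1$. The same structural observation implies that the Laurent expansion of each polynomial theta function in the cluster coordinates of $\seed$ has the form $\vartheta^\cA_m = z^m + \sum_{m \prec_\seed m'} c_{m,m'}\, z^{m'}$, with leading coefficient $1$ (contributed by the unique straight broken line issued from a generic basepoint in the interior of $\mathcal{C}^+_\seed$). Multiplying out, $\vartheta^\cA_{m_1}\vartheta^\cA_{m_2} = z^{m_1+m_2} + (\text{terms } z^{m'} \text{ with } m_1+m_2\prec_\seed m')$. On the other hand, by the previous paragraph the theta-basis expansion of $\vartheta^\cA_{m_1}\vartheta^\cA_{m_2}$ only involves $\vartheta^\cA_r$ with $r=m_1+m_2$ or $r\succ_\seed m_1+m_2$, and such a $\vartheta^\cA_r$ contains the monomial $z^{m_1+m_2}$ only when $r=m_1+m_2$, and then with coefficient $1$. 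Comparing the $z^{m_1+m_2}$-coefficient on both sides yields $\alpha(m_1,m_2,m_1+m_2)=1$.

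The main obstacle is to secure the positive form of the scattering functions and the resulting leading-term property for $\vartheta^\cA_m$; both rely crucially on the full-rank hypothesis on $\Gamma$ (which is also needed for $\preceq_\seed$ to be a partial order, \cf \thref{rem:dom_order}) via Theorem~\ref{thm:consistent_scattering_diagrams}. Once these are in place, the argument reduces to a tracking of exponents along broken lines and a matching of Laurent coefficients.
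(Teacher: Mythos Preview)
Your proof is correct and follows essentially the same approach as the paper's: both begin with the observation (from Theorem~\ref{thm:consistent_scattering_diagrams} and the bending rule) that $F(\gamma)-I(\gamma)\in p_1^*(N^+_{\uf,\seed}\cup\{0\})$ for any broken line $\gamma$, and deduce the support statement from the broken-line formula \eqref{eq:multibrokenline}. The only minor difference is in how you extract the leading coefficient $\alpha(m_1,m_2,m_1+m_2)=1$: the paper argues directly that equality $F(\gamma_1)+F(\gamma_2)=m_1+m_2$ forces both $\gamma_1,\gamma_2$ to be straight and that this pair is realized uniquely, whereas you pass through the pointed Laurent expansion $\vartheta^\cA_m = z^m + (\text{higher})$ and compare $z^{m_1+m_2}$-coefficients. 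Your route is slightly more indirect but perfectly valid; the paper's direct count is a bit cleaner since it avoids invoking the Laurent form of $\vartheta^\cA_r$ for all the $r$ appearing in the expansion.
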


\begin{proof}
First notice that for any broken line $\gamma $ we have that
\begin{equation*}
F(\gamma)=I(\gamma) +a_{1}p^*_1(n_{1})+ \dots + a_{r}p^*_1(n_{r}),  
\end{equation*}
where $a_1, \dots , a_r $ are non-negative integers and $n_1, \dots , n_r \in N^+_{\uf, \seed}$. 
This follows from \cite[Theorem 1.13]{GHKK} and the bending rule of broken lines (\ie \thref{def:genbroken}(4)). 
In particular, we have that $a_{1}n_{1}+ \dots + a_{r}n_{r}\in N^+_{\uf, \seed} \cup \{ 0 \} $. 
Moreover, 
$a_{1}p^*_1(n_{1})+ \dots + a_{r}p^*_1(n_{r}) = 0$ if and only if $a_1=\cdots = a_r =0$.
Therefore, $I(\gamma) \preceq_{\seed} F(\gamma)$ and $I(\gamma)=F(\gamma)$ if and only if $\gamma$ does not bend at all.

The statement we want to prove already follows from the observations made above.
Indeed, by \cite[Definition-Lemma 6.2]{GHKK} we know that $ \alpha(m_1,m_2,m)\neq 0$ if and only if there exist broken lines $\gamma_1$ and $\gamma_2$ such that 
$I(\gamma_i)=m_i$ for $i \in \{1,2\}$ and $F(\gamma_1)+F(\gamma_2)=m=\gamma_1(0)=\gamma_2(0)$.
Therefore, if $ \alpha(m_1,m_2,m)\neq 0$ then $ m_1 + m_2=I(\gamma_1) + I(\gamma_2) \preceq_{\seed} m $. 
Moreover, the equality
$ m_1+ m_2=m$
holds if and only if both $\gamma_1$ and $\gamma_2$ do not bend at all. 
This latter case can be realized in a unique way, therefore, $\alpha(m_1,m_2,m_1+m_2)=1$. 
\end{proof}

From now on the symbol $\leq_{\seed} $ is used to denote a total order on $M^\circ$ refining $\preceq_{\seed}$.

\begin{definition}
Let ${\bf m}=(m_{\seed^\vee})\in \Trop_{\Z}(\cA^{\vee})$. The {\bf g-vector of} $ \tf^{\cA}_{\bf m}$ {\bf with respect to} $\seed $ is
\begin{equation}
\label{eq:red-g-val-A}
{\bf g}_{\seed}\left(\tf^{\cA}_{\bf m}\right):= m_{\seed^\vee}
\in \Trop_{\Z}(\cA^{\vee}_{\seed^\vee}).
\end{equation}
\end{definition}

\begin{definition}
\thlabel{g_valuation_A}
Assume $\Gamma$ is of full-rank and think of $M^{\circ}$ as $\Trop_{\Z}(\cA^{\vee}_{\seed^\vee})$. Let $\gv_{\seed}:\cmid(\cA) \setminus \{ 0\} \to (M^{\circ},\leq_{\seed})$ be the map given by 
\begin{equation}
\label{eq:g_val}
    \gv_{\seed}(f):= \min{}_{\leq_{\seed}}\{m_1, \dots , m_t\},
\end{equation}
where $f=c_1\vartheta^{\cA}_{m_1} + \dots + c_t\vartheta^{\cA}_{m_t}$, $m_j\in M^\circ$ and $c_j\not=0$ for all $j=1,\dots,t$ is the expression of $f$ in the theta basis of $\text{mid}(\cA)$.
\end{definition}

\begin{lemma}
\thlabel{g_is_val}
For every seed $\seed$ the map $\gv_{\seed} $ is a valuation on $\cmid(\cA)$ with 1-dimensional leaves and the theta basis $\{ \tf_{m} \mid m\in \Theta (\cA) \}$ is adapted for $\gv_{\seed} $.
\end{lemma}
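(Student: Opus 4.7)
The plan is to verify the three valuation axioms directly by working in the theta basis, then read off the adapted-basis and 1-dimensional leaves properties essentially from the definition of $\gv_\seed$.

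First I would tacitly upgrade the hypothesis that $\leq_\seed$ refines $\preceq_\seed$ by also taking $\leq_\seed$ to be linear (equivalently, a monomial order on $M^\circ$); any lex refinement of $\preceq_\seed$ works, and this is essentially forced since $\preceq_\seed$ is itself translation invariant. Axiom (3) is immediate because scaling by $c \in \Bbbk^*$ does not change which theta functions appear in the expansion of $f$. Axiom (1) is also routine: writing $f = \sum c_i \vartheta^\cA_{m_i}$ and $g = \sum d_j \vartheta^\cA_{n_j}$ in the theta basis, every exponent appearing in $f+g$ is already an exponent of $f$ or of $g$, so its $\leq_\seed$-minimum is at least $\min\{\gv_\seed(f), \gv_\seed(g)\}$, with strict inequality exactly when the leading terms cancel.

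The central step is the multiplicativity axiom (2). Order the theta expansions of $f$ and $g$ so that $m_1 <_\seed m_2 <_\seed \cdots$ and $n_1 <_\seed n_2 <_\seed \cdots$, so $\gv_\seed(f) = m_1$ and $\gv_\seed(g) = n_1$. Expanding
\eqn{
fg = \sum_{i,j} c_i d_j\, \vartheta^\cA_{m_i}\vartheta^\cA_{n_j}
}
and applying \thref{product_and_order} to each factor, every theta function $\vartheta^\cA_{k}$ appearing in $\vartheta^\cA_{m_i}\vartheta^\cA_{n_j}$ satisfies $k \succeq_\seed m_i + n_j$, with equality only for $k = m_i+n_j$ and coefficient equal to $1$. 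Since $\leq_\seed$ refines $\preceq_\seed$ and is translation invariant, we obtain $k \geq_\seed m_i + n_j \geq_\seed m_1 + n_1$, with both equalities holding simultaneously only when $(i,j) = (1,1)$ and $k = m_1 + n_1$. Therefore the coefficient of $\vartheta^\cA_{m_1+n_1}$ in $fg$ is exactly $c_1 d_1 \neq 0$, while every other theta component of $fg$ is indexed strictly above $m_1 + n_1$ in $\leq_\seed$. Thus $fg \neq 0$ (consistent with Remark \ref{rem:integral_domain}) and $\gv_\seed(fg) = m_1 + n_1 = \gv_\seed(f) + \gv_\seed(g)$.

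Finally, the adapted and 1-dimensional leaves properties are formal consequences of the construction. For any $m \in M^\circ$ the subspace $\cmid(\cA)_{\gv_\seed \geq m}$ is the $\Bbbk$-span of $\{\vartheta^\cA_{m'} : m' \in \Theta(\cA),\ m' \geq_\seed m\}$ and $\cmid(\cA)_{\gv_\seed > m}$ is the span of the same set with the strict inequality; this is immediate from the definition of $\gv_\seed$ as the $\leq_\seed$-minimum of the exponents appearing in the theta expansion. Hence $B \cap \cmid(\cA)_{\gv_\seed \geq m}$ is a basis of $\cmid(\cA)_{\gv_\seed \geq m}$, and the quotient $\cmid(\cA)_{\gv_\seed \geq m}/\cmid(\cA)_{\gv_\seed > m}$ is spanned by the image of $\vartheta^\cA_{m}$ when $m \in \Theta(\cA)$ and is zero otherwise, giving $1$-dimensional leaves. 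The only real subtlety is axiom (2), and the only ingredient beyond bookkeeping is the linearity of $\leq_\seed$, which matches the linearity of $\preceq_\seed$ that \thref{product_and_order} provides.
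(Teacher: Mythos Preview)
Your proof is correct and follows essentially the same approach as the paper's: both invoke \thref{product_and_order} and translation invariance of the order to show that the leading theta term $\vartheta^{\cA}_{m_1+n_1}$ in $fg$ occurs with coefficient exactly $c_1 d_1$, then read off the adapted-basis and one-dimensional-leaves properties directly from the definition of $\gv_\seed$. Your explicit observation that the total refinement $\leq_\seed$ should itself be taken linear is well-placed; the paper's argument uses this as well (and records the assumption later in \thref{not:g-val}).
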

\begin{proof}
This statement follows from \cite[Remark 2.30]{KM_Khovanskii_bases} but for the convenience of the reader we give a proof here. Items (1) and (3) of Definition~\ref{def:val} follow directly from the definition of $\gv_{\seed}$. 
For item (2) consider the expressions $f=\sum_{i=1}^r c_i\vartheta^{\cA}_{m_i}$ and $g=\sum_{j=1}^s c'_j\vartheta^{\cA}_{m'_j}$ where all $c_i$ and $c'_j$ are non-zero.
Then by \thref{product_and_order}
\begin{eqnarray}\label{eq:fg in basis}
fg=\sum_{i,j} c_ic'_j\left(\vartheta^{\cA}_{m_i+m'_j} + \sum_{m_i+m'_j\prec_{\seed} m}c_{m}\vartheta^{\cA}_{m}\right).
\end{eqnarray}
By definition of $\gv_{\seed}$ we have $m_\mu:=\gv_{\seed}(f)\prec_{\seed} m_i$ for all $i\in \{1,\dots, r\} \setminus \{ \mu \}$ and $m'_\nu:=\gv_{\seed}(g)\prec_{\seed} m'_j$ for all $j\in \{1,\dots,s\}\setminus \{\nu\}$. 
We need to show that the term $\vartheta_{m_\mu+m'_\nu}$ appears with non-zero coefficient in $fg$.
Assume there exist $i\not =\mu$ and $j\not=\nu$ such that $m_\mu+m'_\nu=m_i+m'_j$. 
Then as $\prec_{\seed}$ is linear we have
\[
m_\mu +m'_\nu \prec_{\seed} m_\mu + m'_j \prec_{\seed} m_i + m'_j, 
\]
a contradiction. 
Hence, the term $\vartheta_{m_\mu+m'_\nu}$ appears in the expression \eqref{eq:fg in basis} of $fg$ with coefficient $c_\mu c'_\nu\not =0$ and $\gv_{\seed}(fg)=m_\mu+m'_\nu=\gv_{\seed}(f)+\gv_{\seed}(g)$.

The fact that ${\bf g}_{\seed}$ has one dimension leaves follows directly from (\ref{eq:g_val}). It is also clear from the definitions that for $m\in M^{\circ}$ the subspace $\cmid(\cA)_{m} $ as in (\ref{eq:graded_piece}) is isomorphic to $\Bbbk\cdot \tf^{\cA}_{m}$ if $m \in \Theta(\cA)$ and $0$-dimensional otherwise. 
In particular, the fact that we have a bijection between the set of values of $\gv_\seed$ and the elements of the theta basis is equivalent to the theta basis being an adapted basis, see \cite[Remark 2.30]{KM_Khovanskii_bases} .
\end{proof}

\begin{corollary}
The image of the valuation ${\bf g}_{\seed}$ is independent of the linear refinement $\leq_{\seed}$ of $\preceq_{\seed}$.
\end{corollary}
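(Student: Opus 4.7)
The plan is to identify the image of ${\bf g}_\seed$ explicitly and observe that the description does not involve the linear refinement $\leq_\seed$. Specifically, I would show that
\[
\Img({\bf g}_\seed) = \Theta(\cA)_{\seed^\vee},
\]
where $\Theta(\cA)_{\seed^\vee} = \mathfrak{r}_{\seed^\vee}(\Theta(\cA)) \subset M^\circ$ is the set of exponents parametrising polynomial theta functions on $\cA$ expressed in the seed $\seed$. Since this set depends only on $\seed$ and on $\cA$, not on the choice of total order refining $\preceq_\seed$, the claim follows.

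The inclusion $\Theta(\cA)_{\seed^\vee} \subseteq \Img({\bf g}_\seed)$ is immediate from the definition in \thref{g_valuation_A}: for $m \in \Theta(\cA)_{\seed^\vee}$, the theta function $\tf^\cA_m$ belongs to $\cmid(\cA)\setminus\{0\}$, and its expansion in the theta basis is the single term $\tf^\cA_m$ itself, so ${\bf g}_\seed(\tf^\cA_m) = m$ regardless of the refinement $\leq_\seed$. Note that this uses only that the theta basis indexed by $\Theta(\cA)_{\seed^\vee}$ is a basis of $\cmid(\cA)$, which is guaranteed by \thref{def:cmid}.

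Conversely, for $f \in \cmid(\cA)\setminus\{0\}$ with expansion $f = c_1 \tf^\cA_{m_1} + \dots + c_t \tf^\cA_{m_t}$ in the theta basis (with all $c_j \neq 0$ and $m_j \in \Theta(\cA)_{\seed^\vee}$), we have ${\bf g}_\seed(f) = \min_{\leq_\seed}\{m_1, \dots, m_t\}$, and this minimum is one of the $m_j$ and therefore lies in $\Theta(\cA)_{\seed^\vee}$. Hence $\Img({\bf g}_\seed) \subseteq \Theta(\cA)_{\seed^\vee}$.

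There is no substantive obstacle in this argument; the only point worth stressing is that although the value ${\bf g}_\seed(f)$ of a fixed element $f$ can genuinely depend on the linear refinement $\leq_\seed$ (different refinements may select different minimal elements among the $m_j$), the collection of all such minima, as $f$ ranges over $\cmid(\cA)\setminus\{0\}$, is always the full set $\Theta(\cA)_{\seed^\vee}$. This is an instance of the adapted-basis property established in \thref{g_is_val}.
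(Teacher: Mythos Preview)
Your proof is correct and essentially the same as the paper's. The paper's argument is slightly more compressed: it invokes the adapted-basis property from \thref{g_is_val} to conclude in one line that ${\bf g}_{\seed}\lrp{\cmid(\cA)\setminus \{0\}}= {\bf g}_{\seed}\lrp{\Theta(\cA)}$, whereas you spell out both inclusions explicitly; but the content is identical.
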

\begin{proof}
Since the theta basis is adapted for ${\bf g}_{\seed}$ we have 
\[
{\bf g}_{\seed}\lrp{\cmid(\cA)\setminus \{0\}}= {\bf g}_{\seed}\lrp{\Theta(\cA)}.
\]
The result follows.
\end{proof}

\begin{remark}
\thlabel{g-val-field}
Since $\cmid(\cA)$ is a domain (see Remark \ref{rem:integral_domain}) whose associated field of fractions is isomorphic to $\Bbbk(A_i :i \in I)$, we can extend the valuation $ {\bf g}_{\vb s} $ on $\text{mid}(\cA)$ to a valuation on $\Bbbk(A_i :i \in I)$ by declaring ${\bf g}_{\vb s} (f/g):={\bf g}_{\vb s} (f)- {\bf g}_{\vb s} (g) $.
\end{remark}

The valuation ${\bf g}_{\seed} $ is called the {\bf {\bf g}-vector valuation associated to $\seed$}. 

We now turn our attention to quotients of $\cA $. We keep the assumption that $\Gamma$ is of full-rank and consider a saturated sublattice $H=H_{\cA}$ of $K^\circ$. 
Recall from \S\ref{tf_quotient} that 
\[
\Trop_{\Z}((\cA/T_H)^{\vee}_{\seed^\vee})= H^{\perp}.
\]
Since $ \Theta(\cA/T_{H})_{\seed^\vee}\subset  H^ \perp$, we can restrict restrict the total order $\leq_{\seed}$ on $M^{\circ}$ to $H^{\perp}$ to obtain a {\bf g}-vector valuation on $\cmid(\cA/T_{H})$ associated to $\seed$ as in the previous cases:
\[
{\bf g}_{\seed}: \cmid(\cA/T_H)\setminus\{0\} \to \Trop_{\Z}((\cA/T_H)^{\vee}_{\seed^\vee}).
\]

\begin{remark}
\label{rem:g_val_quotient}
    As opposed to the case of $\cA$, in general the field of fractions of $\cmid(\cA/T_H)$ might not be isomorphic to $\Bbbk(\cA/T_H)$. 
    This fails for example if the smallest cone in $\Trop_{\R}((\cA/T_H)^{\vee}_{\seed^\vee})$ containing $\Theta(\cA/T_H)_{\seed^\vee}$ is not full-dimensional.
    However, the field of fractions of $\cmid(\cA/T_H)$ is isomorphic to $\Bbbk(\cA/T_H)$ provided $\cA/T_H$ satisfies the full Fock--Goncharov conjecture. 
    In such a case, a {\bf g}-vector valuation on $\cmid(\cA/T_H)$ can be extended to $\Bbbk(\cA/T_H)$ as in \thref{g-val-field}.
\end{remark}

We now treat the case of $\cX$. So fix a cluster ensemble lattice map $p^*:N \to M^{\circ} $ and a seed $\seed $. 
Consider the identifications $\Trop_{\Z}(\cX^\vee_{\seed})= d\cdot N$ and  $\Trop_{\Z}(\cA^{\vee}_{\prin,\widetilde{\seed}^\vee}) = M^{\circ}_{\prin}=M^{\circ}\oplus N$ where $\widetilde{\seed}$ is the seed for $\Gamma_\prin$ obtained mutating $\seed_{0_\prin}$ in the same sequence of directions needed to obtain $\seed$ from $\seed_0$. 
Recall from \S\ref{sec:tf_X} that we have an inclusion $\Trop_{\Z}(\cX^\vee_{\seed})\to \Trop_{\Z}(\cA^{\vee}_{\prin,\widetilde{\seed}^\vee})$ given by $dn \mapsto (p^*(n),n)$.

\begin{definition}
Let ${\bf n}=(dn_{\seed^\vee})\in \Trop_{\Z}(\cX^{\vee})$. The {\bf c-vector of} $ \tf^{\cX}_{\bf n}$ with respect to $\seed $ is
\begin{equation}
\label{eq:red-g-val-X}
{\bf c}_{\seed}\left(\tf^{\cX}_{\bf n}\right):= dn_{\seed^\vee}
\in \Trop_{\Z}(\cX^{\vee}_{\seed^\vee}).
\end{equation}
\end{definition}

\begin{remark}
Observe that $\cv_\seed (\tf^{\cX}_{\bf n})$ is an element of $d\cdot N$. In practice we could work with the lattice $N$ as opposed to $d\cdot N$ as they are canonically isomorphic. 
The lattice $N$ is the set where the ${\bf c}$-vectors (in the sense of \cite{NZ}) live. 
\end{remark}

\begin{definition}
The {\bf divisibility order} on $ N$ determined by $\seed$ is the partial order $\preceq_{\seed, \text{div}}$ given by
\[
n_1 \preceq_{\seed, \text{div}} n_2 \text{ if and only if } 
n_2- n_1 \in N_{\seed}^+.
\]
\end{definition}

\begin{lemma}
\thlabel{lem:restriction}
The restriction of $\preceq_{\widetilde{\seed}^\vee}$ to the $N$ component of $ M^\circ_{\prin}$ coincides with the divisibility order $\prec_{\seed,\text{div}}$ on $N$.
\end{lemma}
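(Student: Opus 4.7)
The plan is to unpack both sides of the asserted equality via a direct computation. I would first identify the explicit shape of the seed $\widetilde{\seed}$: because mutations in $\cAp$ occur only in directions belonging to the first copy of $\Iuf$ inside $I_{\prin}$, and because the relevant block of the exchange matrix $\epsilon_{\widetilde{\seed}}$ (row index in the second copy of $I$, column index in the first-copy $\Iuf$) is easily computed from $\{\cdot,\cdot\}_{\prin}$ to be non-positive, the second-copy basis vectors $(0,f_i)$ are invariant under every mutation carrying $\seed_{0_{\prin}}$ to $\widetilde{\seed}$. Consequently,
\[
\widetilde{\seed}=\bigl\{(e_{i;\seed},0):i\in I\bigr\}\cup\bigl\{(0,f_i):i\in I\bigr\},\qquad (N_{\prin})^+_{\uf,\widetilde{\seed}}=N^+_{\uf,\seed}\oplus\{0\}.
\]

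Next I would compute $p_{\prin,1}^*$ explicitly on this cone. For $(n,0)\in\Nuf\oplus 0$ and $(n',m')\in N^\circ_{\prin}=N^\circ\oplus M$,
\[
\{(n,0),(n',m')\}_{\prin}=\{n,n'\}+\langle n,m'\rangle,
\]
so matching against the canonical pairing between $N^\circ_{\prin}$ and $M^\circ_{\prin}=M^\circ\oplus N$ identifies $p_{\prin,1}^*((n,0))=(p_1^*(n),n)$. Substituting into Definition~\ref{def:dom_order} yields
\[
(m_1,n_1)\prec_{\widetilde{\seed}^\vee}(m_2,n_2)\iff m_2-m_1=p_1^*(n)\text{ and }n_2-n_1=n\text{ for some }n\in N^+_{\uf,\seed}.
\]

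Finally, I would restrict this order to the $N$-component of $M^\circ_{\prin}$, which I read as the image of the tropicalized inclusion $\tilde p^*\colon N\hookrightarrow M^\circ_{\prin}$, $n\mapsto(p^*(n),n)$ --- the embedding through which theta functions on $\cX$ are pulled back to theta functions on $\cAp$. Using the compatibility $p^*|_{\Nuf}=p_1^*$, the $M^\circ$-coordinate equation becomes automatic once the $N$-coordinate equation is enforced, so the restricted condition reduces to $n_2-n_1\in N^+_{\uf,\seed}$, which is exactly $n_1\prec_{\seed,\text{div}}n_2$.

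The main obstacle is clean bookkeeping in the first step --- establishing the invariance of the $(0,f_i)$ throughout the mutation sequence from the sign of the relevant block of $\epsilon_{\widetilde{\seed}}$ --- together with the pairing identification in the second step. Once these are in place the claimed equality follows immediately by a direct comparison of positive cones.
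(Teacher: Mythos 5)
Your Steps 2 and 3 are correct and are essentially the paper's own argument: the identity $p^*_{\prin,1}(n,0)=(p^*_1(n),n)$, the restriction of the order to the image of $n\mapsto (p^*(n),n)$, and the use of $p^*|_{\Nuf}=p^*_1$ are exactly how the paper concludes. The genuine problem is Step 1, which you single out as the main obstacle: the claim that the second-copy vectors $(0,f_i)$ are invariant along the whole mutation sequence, justified by asserting that the block of $\epsilon_{\widetilde{\seed}}$ with rows in the second copy and columns in $\Iuf$ is non-positive at every intermediate seed, is false. That block is essentially a $C$-matrix and its entries change sign under mutation. Concretely, take $I=\Iuf=\{1,2\}$, all $d_i=1$, $\{e_1,e_2\}=1$, and mutate $\seed_{0_{\prin}}$ along the directions $1,2,1$: after the first two mutations the seed is $((-e_1,0),(-e_2,0),(0,f_1),(0,f_2))$, and at the third mutation $\{(0,f_1),(-e_1,0)\}_{\prin}=\langle e_1,f_1\rangle=1>0$, so the second-copy vector in direction $1$ becomes $(-e_1,f_1)\neq(0,f_1)$. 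Hence the asserted description $\widetilde{\seed}=\{(e_{i;\seed},0)\}\cup\{(0,f_i)\}$ is wrong in general.

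Fortunately the only consequence you actually use is $(N_{\prin})^+_{\uf,\widetilde{\seed}}=N^+_{\uf,\seed}\oplus\{0\}$, and this holds for a simpler reason that never touches the frozen block: for $i,k$ both in the first copy, $\{(e_i,0),d_k(e_k,0)\}_{\prin}=\{e_i,d_ke_k\}=\epsilon_{ik}$, and mutation at an unfrozen $k$ only adds multiples of the (first-copy) direction-$k$ vector to the other basis vectors, so by induction the unfrozen part of $\widetilde{\seed}$ is exactly $\{(e_{i;\seed},0)\mid i\in \Iuf\}$, regardless of what happens to the second copy. With that repair your computation of the positive cone, and hence the whole argument, goes through and coincides with the paper's proof, which simply records $p^*_{\prin,1}(n,0)=(p^*_1(n),n)$ and checks that $(p^*(n_2),n_2)-(p^*(n_1),n_1)=p^*_{\prin,1}(n_2-n_1,0)$ when $n_2-n_1\in N^+_{\seed}$. (Like the paper, you implicitly read $N^+_{\seed}$ in the definition of the divisibility order as the positive span of the unfrozen $e_{i;\seed}$; that identification is needed for the final comparison of cones.)
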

\begin{proof}
Let $p^*_{\prin,1}:N_{\uf, \prin}\to M^\circ_\prin$ be the given by $(n,m)\mapsto \{ (n,m), \cdot \}_{\prin}$ (in other words, $p^*_{\prin,1}$ corresponds to the map $p_1^*$ in \eqref{eq:p12star} for $\Gamma_{\prin}$). 
In particular, $p^*_{\prin,1} (n,0) = (p^*_1(n), n) $.
Let $n_1,n_2 \in N$ be distinct elements such that $n_2-n_1 \in N^+_\seed$. Let $ \widetilde{m}_i=(p_1(n_i),n_i)$ for $i = 1,2$. Then $\widetilde{m}_2 -\widetilde{m}_1= (p^*_1(n_2-n_1), n_2 -n_1)$. The result follows.
\end{proof}

The next result follows at once from \thref{lem:restriction} and \thref{product_and_order}.

\begin{lemma}
\label{lem:tf_X_pointedness}
Let $ \tf^{\cX}_{dn_1},\tf^{\cX}_{dn_2} \in \cmid(\cX)$ with $d_1n_1, d_2n_2 \in \Trop_{\Z}(\cX^\vee_{\seed^\vee})=d\cdot N$.
 Then the product $\vartheta^{\cX}_{dn_1}\vartheta^{\cX}_{dn_2}$ expressed in the theta basis of $\cmid(\cX)$ is of the following form
\[
\vartheta^{\cX}_{dn_1}\vartheta^{\cX}_{dn_2}= \vartheta^{\cX}_{dn_1+dn_2}+ \sum_{n_1+n_2 \ \prec_{\seed, \text{div}} \ n} c_{n}\vartheta^{\cX}_{dn}.
\]
\end{lemma}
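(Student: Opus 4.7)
The plan is to lift the identity to principal coefficients via the surjection $\tilde{p}:\cAp\to\cX$ from \eqref{eq:def_tilde_p}, apply Lemma \ref{product_and_order} on $\cAp$, and then translate the result back using Lemma \ref{lem:restriction}. The main tools are the identity $\tilde{p}^*(\tf^{\cX}_{dn})=\tf^{\cAp}_{(p^*(n),n)}$ from the end of \S\ref{sec:tf_X} together with the injectivity of $\tilde{p}^*$ on middle cluster algebras, which follows from the factorization $\tilde{p}=\chi\circ\varpi$ with $\chi$ an isomorphism and $\varpi:\cAp\twoheadrightarrow\cAp/T_{H_{\cAp}}$ a geometric quotient by a free torus action.

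First I will apply Lemma \ref{product_and_order} in $\cmid(\cAp)$ (valid since $\Gamma_\prin$ is automatically of full-rank) to expand
\[
\tf^{\cAp}_{(p^*(n_1),n_1)}\cdot\tf^{\cAp}_{(p^*(n_2),n_2)}= \tf^{\cAp}_{(p^*(n_1+n_2),n_1+n_2)}+\sum_{\widetilde{m}\,\succ_{\widetilde{\seed}^\vee}\,(p^*(n_1+n_2),n_1+n_2)} c_{\widetilde{m}}\,\tf^{\cAp}_{\widetilde{m}},
\]
where $\widetilde{\seed}$ is the seed for $\Gamma_\prin$ obtained from $\seed$ by the same mutation sequence. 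Next I will restrict this expansion to the $T_{H_{\cAp}}$-weight-zero slice: by Proposition \ref{prop:dual_fibration} applied to the quotient realization $\cX\cong\cAp/T_{H_{\cAp}}$, every polynomial theta function on $\cAp$ is a $T_{H_{\cAp}}$-eigenfunction with weight given by the class of its tropical label in $M^\circ_\prin/H_{\cAp}^\perp$. Both factors lie in the image of $\tilde{p}^*$ and are therefore weight zero, so their product is weight zero as well, and weight-homogeneity of the theta basis forces every $\widetilde{m}$ with $c_{\widetilde{m}}\neq 0$ to lie in $H_{\cAp}^\perp=\{(p^*(n),n)\mid n\in N\}$; hence $\widetilde m=(p^*(n),n)$ for some $n\in N$.

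Finally, for each such $\widetilde m$, Lemma \ref{lem:restriction} converts the dominance inequality $(p^*(n_1+n_2),n_1+n_2)\prec_{\widetilde{\seed}^\vee}(p^*(n),n)$ into the divisibility inequality $n_1+n_2\prec_{\seed,\text{div}}n$; rewriting each $\tf^{\cAp}_{(p^*(n),n)}$ as $\tilde{p}^*(\tf^{\cX}_{dn})$ and invoking the injectivity of $\tilde{p}^*$ then reads off the claimed expansion in $\cmid(\cX)$. The only step needing any real care is the weight-zero reduction --- everything else is bookkeeping --- and that step is handled cleanly by Proposition \ref{prop:dual_fibration}, so I do not expect any substantive obstacle beyond verifying that the eigenweight computation matches the sublattice $H_{\cAp}^\perp$ as described.
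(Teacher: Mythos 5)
Your proof is correct and follows essentially the same route as the paper, which deduces the lemma directly from Lemma \ref{product_and_order} and Lemma \ref{lem:restriction} via the $\cAp$ picture used to define theta functions on $\cX$. The only extra content in your write-up is the explicit weight-zero reduction via Proposition \ref{prop:dual_fibration} and the injectivity of $\tilde{p}^*$, which in the paper's setup are built into the definition of $\scat^{\cX}$ and its broken lines as the slice of the $\cAp$ data.
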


From now on we let $\leq_{\seed,\text{div}}$ be any total order refining $\preceq_{\seed, \text{div}}$. 

\begin{corollary}\label{cor:gv on midX}
Let ${\bf c}_{\seed}:\cmid(\cX) \setminus \{ 0\} \to (d \cdot N,\leq_{\seed,\text{div}})$ be the map defined by 
\[
{\bf c }_{\seed}(f):= \min{}_{\leq_{\seed,\text{div}}}\{n_1, \dots , n_t\},
\]
where $f=c_1\tf^{\cX}_{d n_1} + \dots + c_t\tf^{\cX}_{d n_t}$ is the expression of $f$ in the theta basis of $\text{mid}(\cX)$. Then ${\bf c }_{\seed}$ is a valuation with 1-dimensional leaves and the theta basis for $\cmid(\cX) $ is adapted for ${\bf c}_\seed$.
\end{corollary}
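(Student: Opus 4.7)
The proof plan is to mirror the argument of \thref{g_is_val} from the $\cA$ case, with \thref{lem:tf_X_pointedness} playing the role of \thref{product_and_order} and $\preceq_{\seed,\text{div}}$ playing the role of $\preceq_{\seed}$. Items (1) and (3) of \thref{def:val} are immediate from the definition of $\mathbf{c}_{\seed}$ via the $\leq_{\seed,\text{div}}$-minimum over the theta expansion (note that items (1) and (3) do not even require $\leq_{\seed,\text{div}}$ to be total, only reflexive). The nontrivial content is multiplicativity, item (2).

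For multiplicativity, I would write $f=\sum_i c_i\tf^{\cX}_{dn_i}$ and $g=\sum_j c'_j\tf^{\cX}_{dn'_j}$ with $\mathbf{c}_{\seed}(f)=dn_\mu$ and $\mathbf{c}_{\seed}(g)=dn'_\nu$, expand the product using \thref{lem:tf_X_pointedness}
\[
fg=\sum_{i,j} c_ic'_j\left(\tf^{\cX}_{dn_i+dn'_j}+\sum_{n_i+n'_j\,\prec_{\seed,\text{div}}\,n} c_n\tf^{\cX}_{dn}\right),
\]
and argue that the term $\tf^{\cX}_{dn_\mu+dn'_\nu}$ appears with nonzero coefficient $c_\mu c'_\nu$. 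The key point, as in \thref{g_is_val}, is that $\preceq_{\seed,\text{div}}$ is a \emph{linear} partial order on $N$ (it is defined by membership in $N^+_\seed$, which is closed under translation), so if $n_\mu+n'_\nu=n_i+n'_j$ with $i\ne\mu$ and $j\ne\nu$, then $n_\mu\prec_{\seed,\text{div}}n_i$ and $n'_\nu\prec_{\seed,\text{div}}n'_j$ would give $n_\mu+n'_\nu\prec_{\seed,\text{div}}n_i+n'_j$, a contradiction. Hence no cancellation can occur on the minimal term, and $\mathbf{c}_{\seed}(fg)=dn_\mu+dn'_\nu=\mathbf{c}_{\seed}(f)+\mathbf{c}_{\seed}(g)$.

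For the 1-dimensional leaves property and adaptedness, I would note that by definition the value $\mathbf{c}_{\seed}(f)$ equals $dn$ exactly when the coefficient of $\tf^{\cX}_{dn}$ in the theta expansion of $f$ is nonzero and $dn$ is the $\leq_{\seed,\text{div}}$-minimum among such indices. Thus for every $dn\in d\cdot N$, the quotient $\cmid(\cX)_{\mathbf{c}_{\seed}\geq dn}/\cmid(\cX)_{\mathbf{c}_{\seed}>dn}$ is either zero (if $dn\notin \mathbf{c}_{\seed}(\Theta(\cX))$) or spanned by the class of $\tf^{\cX}_{dn}$ (if $dn\in \mathbf{c}_{\seed}(\Theta(\cX))$), giving 1-dimensional leaves. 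The adaptedness of the theta basis then follows from the bijection between values of $\mathbf{c}_{\seed}$ and the basis elements achieving those values, exactly as invoked at the end of \thref{g_is_val} via \cite[Remark~2.30]{KM_Khovanskii_bases}.

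The only real subtlety is the linearity of $\preceq_{\seed,\text{div}}$ used in the non-cancellation step; everything else is a direct transcription. I do not expect any genuine obstacle since \thref{lem:tf_X_pointedness} provides exactly the pointed multiplication needed, and $\leq_{\seed,\text{div}}$ is a total refinement of a translation-invariant partial order.
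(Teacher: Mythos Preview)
Your proposal is correct and is exactly the approach the paper intends: the corollary is stated without proof, as it follows from \thref{lem:tf_X_pointedness} by transcribing the argument of \thref{g_is_val} verbatim with $\preceq_{\seed,\text{div}}$ in place of $\preceq_{\seed}$. You have identified all the ingredients (linearity of the divisibility order for the non-cancellation step, the pointed product formula for $\cX$ theta functions, and the standard adaptedness argument via \cite[Remark~2.30]{KM_Khovanskii_bases}).
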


We now let $\cX_{\bf 1}$ be the fibre of $\cX$ associated to a sublattice $H:= H_{\cX} \subset K$. In order to define a {\bf c}-vector valuation on $\cmid(\cX_{\bf 1})$ we need that 
\[
H\cap N^+_{\seed}= \emptyset.
\]
Since, if this condition holds,  $\preceq_{\seed, \text{div}}$ induces a well partial order on $N/H =\mathcal X_{\bf 1,\seed}$ defined as
\[
n_1 + H \preceq_{\seed, \text{div}} n_2+H \quad \text{ if and only if } \quad n_2 - n_1 \in N^+_{\seed}+ H.
\]
The rest of the construction follows from the cases already treated.

\begin{lemma}\label{lem:cval_gval}
Suppose $\Gamma$ is of full-rank and let $p: \cA \to \cX$ be a cluster ensemble map. Then we have a commutative diagram
\[
\xymatrix{
\cmid(\cX) \setminus \{0\} \ar^{p^*}[r] \ar_{{\bf c}_{\seed}}[d] &  \cmid(\cA) \setminus \{0\} \ar^{{\bf g}_{\seed}}[d] \\
\Trop_{\Z}(\cX^{\vee}_{\seed^\vee}) \ar_{(p^\vee)^T\circ i} [r] & \Trop_{\Z}(\cA^{\vee}_{\seed^\vee}) 
}
\]
\end{lemma}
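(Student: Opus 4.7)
The plan is to verify the commutativity on theta basis elements of $\cmid(\cX)$, since the theta basis is adapted for $\cv_\seed$ (Corollary~\ref{cor:gv on midX}) and hence determines the valuation uniquely. Fix ${\bf n} \in \Theta(\cX)$ with seed representative $n_{\seed^\vee} \in N$, so that $\cv_\seed(\tf^\cX_{\bf n}) = dn_{\seed^\vee}$ by definition. A direct tropical computation, starting from $(p^\vee)^* = -d^{-1}(p^*)^*$ and accounting for the sign flip implemented by $i$, shows that $(p^\vee)^T \circ i(dn_{\seed^\vee}) = p^*(n_{\seed^\vee}) \in M^\circ$. It thus remains to prove that $\gv_\seed(p^*(\tf^\cX_{\bf n})) = p^*(n_{\seed^\vee})$.

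The key geometric input is the factorization $p = \tilde p \circ s$, where $s : \cA \hookrightarrow \cAp$ is the section specializing all principal coefficients to $1$; both composites pull back a cluster coordinate $z^n$ on $\cX_\seed$ to $z^{p^*(n)}$ on $\cA_\seed$. Combined with the formula $\tilde p^*(\tf^\cX_{dn}) = \tf^{\cAp}_{(p^*(n), n)}$ from Section~\ref{sec:tf_X}, this yields
\[
p^*(\tf^\cX_{\bf n}) \;=\; s^*\bigl(\tf^{\cAp}_{(p^*(n_{\seed^\vee}),\, n_{\seed^\vee})}\bigr).
\]
Expanding the theta function on the right as a sum of broken-line monomials on the initial $\cAp$ seed torus: by Theorem~\ref{thm:consistent_scattering_diagrams} the walls of $\scat^{\cAp}_{\seed_\prin}$ carry scattering functions whose nontrivial exponents lie in $\Z_{\geq 1} \cdot (p^*_1(n'), n')$ for $n' \in N^+_{\uf,\seed}$, so each broken-line contribution has exponent $(p^*(n_{\seed^\vee}) + \sum_i c_i\, p^*_1(n'_i),\; n_{\seed^\vee} + \sum_i c_i\, n'_i)$ for some $c_i \in \Z_{\geq 0}$ and $n'_i \in N^+_{\uf,\seed}$. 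Applying $s^*$ forgets the second coordinate and produces a Laurent polynomial on $\cA_\seed$ whose exponents are exactly $p^*(n_{\seed^\vee}) + \sum_i c_i\, p^*_1(n'_i)$, all $\succeq_\seed p^*(n_{\seed^\vee})$ in the opposite dominance order, with the trivial broken line contributing $z^{p^*(n_{\seed^\vee})}$ with coefficient $1$.

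Finally, I expand $p^*(\tf^\cX_{\bf n})$ in the theta basis of $\cmid(\cA)$: the pointedness just established---combined with \thref{product_and_order}, which asserts that each $\tf^\cA_m$ is Laurent-pointed at $z^m$ with trailing exponents dominating $m$ in $\preceq_\seed$---forces every $\tf^\cA_m$ appearing to satisfy $m \succeq_\seed p^*(n_{\seed^\vee})$ and the coefficient of $\tf^\cA_{p^*(n_{\seed^\vee})}$ to equal $1$. By the definition of $\gv_\seed$ (\thref{g_valuation_A}), this gives $\gv_\seed(p^*(\tf^\cX_{\bf n})) = p^*(n_{\seed^\vee})$, matching the tropical computation. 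The main obstacle is ensuring that $p^*(\tf^\cX_{\bf n})$ actually lies in $\cmid(\cA)$, so that $\gv_\seed$ applies in the last paragraph; this rests on the compatibility of cluster ensemble maps with theta bases (compare \cite[Proposition~7.7]{GHKK}), which under our standing full-rank hypothesis on $\Gamma$ translates the pointedness of the Laurent polynomial expression into a well-defined expansion in the theta basis of $\cmid(\cA)$ with the claimed leading term.
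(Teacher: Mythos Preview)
Your proof is correct and follows the same approach as the paper: reduce to theta basis elements, verify $\gv_\seed(p^*(\tf^\cX_{dn})) = p^*(n)$ via the pointed Laurent expansion, and match this with the tropical computation $(p^\vee)^T\circ i(dn) = p^*(n)$. The paper's version is slightly more direct---it simply applies $p^*$ to the Laurent expansion $\tf^\cX_{dn} = z^n + \sum_{n \prec_{\seed,\mathrm{div}} n'} a_{n'} z^{n'}$ from Lemma~\ref{lem:tf_X_pointedness} rather than passing through $\cAp$ and specialization---but your detour buys explicit justification for the conversion from Laurent expansion to theta basis and for membership in $\cmid(\cA)$, points the paper leaves implicit.
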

\begin{proof}
It is enough to show that for ${\bf n} \in \Theta(\cX) $ we have
\[
\gv_{\seed}(p^*(\tf^\cX_{\bf n}))=(p^\vee)^T\circ i({\bf c}_{\seed} (\tf^\cX_{\bf n}))
\]
Let $dn=\mathfrak{r}_{\seed^\vee}({\bf n})$. We have that
\[
\tf^{\cX}_{dn}=z^n + \sum_{n\prec_{\seed}n'}a_{n'}z^{n'}.
\]
Therefore,
\[
p^*(\tf^{\cX}_{dn})=z^{p^*(n)} + \sum_{n<_{\seed, \text{div}}n'}a_{n'}z^{p^*(n')}.
\]
We conclude that $\gv_{\seed}(p^*(\tf^\cX_{\bf n}))=p^*(n)$. On the other hand we have that $ {\bf c}_{\seed} (\tf^\cX_{\bf n})=dn$. We compute
\begin{align*}
    (\Lp)^T\circ i (dn)= ((\Lp)^*)^*(-dn)=\lrp{-\frac{1}{d}(p^*)^*)}^*(-dn)=p^*(n).
\end{align*}
The claim follows.
\end{proof}

We would like to treat {\bf g}-vector valuations for varieties of the form $\cA$ and $\cA/T_H$ and {\bf c}-vector valuations on $\cX$ and $\cXe$ in a uniform way.
With this in mind we introduce the following notation.

\begin{notation}
\thlabel{not:g-val}
Let $\cV$ be a cluster variety and $ \cV^{\vee}$ its Fock--Goncharov dual. The cluster valuation on $\cmid(\cV)$ associated to a seed $\seed\in \orT$ is 
\[
\nu_{\seed}:\cmid (\cV)\setminus\{0\} \to (\Trop_{\Z}(\cV^\vee_{\seed^\vee}), <_{\seed}),
\]
where $\Trop_{\Z}(\cV^\vee_{\seed^\vee})$ is as in \eqref{eq:unif} and $<_{\seed}$ is a linear order on $\Trop_{\Z}(\cV^\vee_{\seed^\vee})$ refining $\prec_\seed$ in case $\cV=\cA$ or $\cA/T_H$ and it refines $\prec_{\seed,\text{div}}$ if $\cV=\cX$ or $\cXe$. 
\end{notation}

\section{Newton--Okounkov bodies} \label{sec:no}

In this section we provide a general approach to construct Newton--Okounkov bodies associated to certain partial minimal models of varieties with a cluster structure. 
In particular, we treat a situation that often arises in representation theory where the universal torsor of a projective variety has a cluster structure of type $\cA$.  
The Newton--Okounkov bodies we construct depend on the choice of an initial seed. Hence we discuss how the bodies associated to different choices of initial seed are related and introduce  the intrinsic Newton--Okounkov body which is seed independent.

\subsection{Schemes and ensembles with cluster structure}

\begin{definition}\thlabel{def:cluster-structure}
We say a smooth 
scheme (over $\Bbbk$) $V$ {\bf can be endowed with cluster structure of type} $\cV$ if there is a birational map $ \Phi: \cV \dashrightarrow V$ which is an isomorphism outside a codimension two subscheme of the domain and range.
In this setting, we say that the pair $(V,\Phi)$ is {\bf{a scheme with cluster structure of type}} $\cV$.
\end{definition}

\begin{remark}
We are straying slightly from \cite{CMNcpt} in \thref{def:cluster-structure}.
Specifically, we are now including $\Phi$ as part of the data defining a scheme with cluster structure.
So, given two different birational maps $\Phi_1:\cV_1 \dashrightarrow V$ and $\Phi_2: \cV_2 \dashrightarrow V$ as in \thref{def:cluster-structure}, we now consider $(V,\Phi_1)$ and $(V,\Phi_2)$ different as schemes with cluster structure (as is the case, for example, for open positroid varieties, see Remark~\ref{rmk:open positroid}). 
Nevertheless, when the map $\Phi$ is clear from the context or we are just dealing with a single birational map $\cV \dashrightarrow V$, we will simply say that $V$ has a cluster structure of type $\cV$.
\end{remark}

Let $V=(V,\Phi)$ be a scheme with a cluster structure of type $\cV$. Since $V$ is normal and isomorphic to $\cV$ up to co-dimension $2$ then $V$ and $\cV$ have isomorphic rings of regular functions. In turn, we can talk about polynomial theta functions on $V$ which we denote by $\tf^V_{\bf v}$ for ${\bf v}\in \Theta (\cV)$.
Moreover, recall that $\cV$ is log Calabi--Yau. By \cite[Lemma~1.4]{GHK_birational} $V$ is also log Calabi--Yau. Hence, $V$ has a canonical volume form whose pullback by $\Phi $ coincides with the canonical volume form on $\cV$. Moreover, a (partial) minimal model $V\subset Y$ and its boundary can be defined as in Definition \ref{def:cv_minimal_model}.

\begin{definition}{\cite{GHK_birational}}
 An inclusion $V \subset Y$ as an open subscheme of a normal variety $Y$ is a {\bf partial minimal model} of $ V$ if the canonical volume form on $V$ has a simple pole along every irreducible divisor of $Y$ contained in $ Y \setminus V$. It is a {\bf minimal model} if $Y$ is, in addition, projective. We call $ Y \setminus V$ the {\bf boundary} of $V  \subset Y$.
\end{definition}

\begin{definition}
    Suppose $\Phi:\cV \dashrightarrow V$ endows $V$ with a cluster structure of type $\cV$ and that the cluster valuation $\nu_{\seed}$ extends to $\Bbbk(\cV) $. 
    Then the {\bf cluster valuation} $\nu^{\Phi}_{\seed}:\Bbbk(V)^*\to \Trop_{\Z}(\cV^\vee)$ is given by
    \[
    \nu^{\Phi}_{\seed}(f)=  \nu_{\seed}(\Phi^*(f)).
    \]
\end{definition}

\begin{definition}
    Suppose $\Phi_{\cA}:\cA \dashrightarrow V_1$ and $\Phi_{\cX}:\cX \dashrightarrow V_2$ endow $V_1$ (resp. $V_2$) with cluster structures of type $ \cA$ (resp. $\cX$). We say that $V_1 \overset{\tau}{\to} V_2$ is a cluster ensemble structure if there exists a cluster ensemble map $p:\cA \to \cX$ such that the following diagram commutes
    \[
    \xymatrix{
    V_1 \ar^{\tau}[r] & V_2 \\
    \cA \ar@{-->}^{\Phi_{\cA}}[u] \ar_p[r] & \cX \ar@{-->}_{\Phi_{\cX}}[u].
    }
    \]
\end{definition}

\subsection{Newton--Okounkov bodies for Weil divisors supported on the boundary}
\label{sec:NO_bodies}
Throughout this section we let $\cV$ be a scheme of the form $ \cA$, $\cX$, $\cA/T_{H}$ or $\cX_{{\bf 1}}$. Whenever we talk about a cluster valuation on $\cmid(\cV)$ we are implicitly assuming we are in a setting where such valuation exist, see \S\ref{sec:cluster_valuations}.

\begin{definition}{\cite{GHKK}}
\label{def:positive_set}
A closed subset $S\subseteq \Trop_{\R}(\cV^{\vee})$ is {\bf positive} if for any positive integers $d_1, d_2$, any $p_1\in d_1\cdot S(\Z)$, $p_2\in d_2\cdot S(\Z)$ and any $r \in \Trop_{\Z}(\cV^{\vee})$ such that $\alpha (p_1,p_2,r)\neq 0$, we have that $r \in (d_1 +d_2)\cdot S(\Z)$. 
\end{definition}

\begin{remark}
\label{rem:positive_sets_in_vs}
  We can also define positive sets inside $\Trop_{\R}(\cV^{\vee})_{\seed^\vee}$ in exactly the same way they are defined in Definition \ref{def:positive_set}. 
  In particular we have that  $S\subset \Trop_{\R}(\cV^{\vee})$ is positive if and only if $\mathfrak{r}_{\seed^\vee}(S)\subset \Trop_{\R}(\cV^\vee_{\seed})$ is positive.
\end{remark}

In \cite[\S8]{GHKK} the authors discuss how positive sets give rise to both, partial minimal models of cluster varieties and toric degenerations of such. In this section we study the inverse problem. 
Namely, we let $(V,\Phi)$ be a scheme with a cluster structure of type $\cV$ and construct Newton--Okounkov bodies associated to a partial minimal model $V \subset Y$ (see \S\ref{sec:minimal_models}). 
Then we show that under suitable hypotheses these Newton--Okounkov bodies are positive sets. 
We let $D_1, \dots , D_s $ be the irreducible divisors of $Y$ contained in the boundary of $V\subset Y$ and let $D:=\bigcup_{j=1}^s D_j$.

Given a Weil divisor $D'$ on $Y$ we denote by $R(D')$ the associated {\bf section ring}. 
Recall that $R(D')$ can be described as the $\Z_{\geq 0}$-graded ring whose $k^{\mathrm{th}}$ homogeneous component is
\begin{equation*}
    R_k(D') := H^0(Y, \mathcal{O}(kD'))= \lrc{  f\in \Bbbk(Y)^* \mid \text{div}(f)+kD'\geq 0 }\cup \{ 0\},
\end{equation*}
where $\text{div}(f)$ is the principal divisor associated to $f$.
Even more concretely, if $D'=c_1  D'_1 + \cdots + c_{s'}D'_{s'}$, where $D'_1, \dots , D'_{s'}$ are distinct prime divisors of $Y$ and $c_1, \dots , c_{s'}$ are non-negative integers, then $ R_k(D')$ is the vector space consisting of the rational functions on $Y$ that are regular on the complement of $\bigcup_{j=1}^{s'} D'_j$ and whose order of vanishing along every prime divisor $D'_j$ is bounded below by $-kc_j$. The multiplication of $R(D')$ is induced by the multiplication on $ \Bbbk(Y)$. 

\begin{definition}
\thlabel{def:NOlb}
Let $\nu:\Bbbk(Y)\setminus \{ 0 \} \to L$ be a valuation, where $(L, < )$ is a linearly ordered lattice. Let $D'$ be a Weil divisor on $Y $ having a non-zero global section. For a choice of non-zero section $\tau \in R_1 (D')$ the associated {\bf Newton--Okounkov body} is
\eqn{
\Delta_\nu(D',\tau) := \overline{\conv\Bigg( \bigcup_{k\geq 1}  \lrc{\frac{\nu\lrp{f/\tau^k}}{k} \mid f\in R_k(D')\setminus \{0\} } \Bigg) }\subseteq L\otimes \R,
}
where $\conv $ denotes the convex hull and the closure is taken with respect to the standard topology of $L\otimes \R$.
\end{definition}

From now on we assume that $D'$ has a non-zero global section.
We would like to use a cluster valuation $\cval: \Bbbk(V)\setminus \{ 0\} \to (\Trop_{\Z}(\cV^{\vee}_{\seed^\vee}),<_{\seed})$ to construct Newton--Okounkov bodies. Notice that if $\cV$ satisfies the full Fock--Goncharov conjecture, then it is possible to do so as we can extend $\nu_{\seed}$ from $\cmid(\cV)=\up(\cV)$ to $\Bbbk(\cV) = \Bbbk(Y)$. 
Observe, moreover, that if $D'$ is supported on $D$ (that is $D'=\sum_{j=1}^s c_jD_j$ for some integers $c_1,\dots , c_s$) then every graded piece $R_k(D') $ is contained in $H^{0}(V,\mathcal{O}_V)\cong H^{0}(\cV,\mathcal{O}_{\cV})$, so elements of $R_k(D')$ can be described using the theta basis for $H^0(\cV,\mathcal{O}_{\cV})$. 
Moreover, $\ord_{D_j}\in \cV(\Z^t)$, so we can define $\tf^{\cV}_j$ as in \eqref{eq:def superpotential_summands}.

\begin{definition}
\label{def:graded_theta_basis}
Assume $\cV$ satisfies the full Fock--Goncharov conjecture and that $D'$ is of the form $D'=\sum_{j=1}^s c_jD_j$. We say that $R(D')$ {\bf has a graded theta basis} if for every integer $k\geq 0$ the set of theta functions on $\cV$ parametrized by the integral points of
\[
P_k(D'):= \bigcap_{j=1}^s \lrc{b\in \Trop_{\R}(\cV^{\vee}) \mid  \Trop_{\R}(\tf^{\cV^\vee}_j)(b) \geq -kc_j}
\] 
is a basis for $R_k(D')$.
\end{definition}

The reader should notice that in case $\cV$ has theta reciprocity (see Definition \ref{def:theta_reciprocity}), then the definition of $P_k(D')$ becomes very natural from the perspective of toric geometry, see \S\ref{sec:minimal_models}. We now introduce a notion that allows us to make a good choice for the section $\tau$.

 \begin{definition}
 \label{def:linear_action}
 A subset $L\subset \Theta(\cV)$ is {\bf linear} if 
 \begin{itemize}
 \item for any $a,b\in L$ there exists a unique $r\in\Theta(\cV)$ such that $\alpha(a,b,r)\neq 0$ and moreover, $r\in L$,
 \item for each $a\in L$ there exists a unique $b\in L$ such that $\tf^{\cV}_a \tf^{\cV}_b=1 $. 
 \end{itemize}
We further say that a linear subset $L$ {\bf acts linearly} on $\Theta(\cV)$ if for any $a\in L$ and $ b \in \Theta(\cV)$ there exists a unique $r\in \Trop_{\Z}(\cV^{\vee})$ such that $\alpha(a,b,r)\neq 0$. 
 \end{definition}

For example, if $\cV=\cA$  then $\mathfrak{r}_{\seed}^{-1}(\Nuf^\perp)$ is linear and acts linearly on $\Theta(\cV)$. If $\cV =\cX$
then $\mathfrak{r}_{\seed}^{-1}(\ker(p_2^*))$ is linear and acts linearly on $\Theta(\cV)$.

\begin{theorem}
\label{NO_bodies_are_positive}
Let $V\subset Y$ be a partial minimal model. Assume the full Fock--Goncharov conjecture holds for $ \cV$. Let $D'=\sum_{j=1}^s c_j D_j$ be a Weil divisor on $Y$ supported on $D$ such that $R(D')$ has a graded theta basis. Let $\tau\in R_1(D')$ be such that $\nu^{\Phi}_{\seed}(\tau) $ belongs to a linear subset of $ \Trop_{\Z}(\cV^{\vee}) $ acting linearly on $\Trop_{\Z}(\cV^{\vee}) $. Then the Newton--Okounkov body  $\Delta_{\nu^{\Phi}_{\seed}}(D',\tau)\subset \Trop_{\R}(\cV^{\vee}_{\seed^\vee})$ is a positive set.
\end{theorem}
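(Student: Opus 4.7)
The plan is to identify $\Delta_{\nu^{\Phi}_{\seed}}(D',\tau)$ explicitly as a translate of $P_1(D')_{\seed^\vee}$ in the vector space $\Trop_{\R}(\cV^{\vee}_{\seed^\vee})$, show that $P_1(D')_{\seed^\vee}$ is already positive, and then check that the translation preserves positivity.

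First, I would apply the graded theta basis hypothesis to conclude that $R_k(D')$ admits the basis $\lrc{\tf^{V}_{p} \mid p \in P_k(D')(\Z)}$, and, since each $\Trop_{\R}(\tf^{V^\vee}_j)$ is a positive Laurent polynomial in seed coordinates (hence positively homogeneous of degree one), the dilations satisfy $P_k(D')_{\seed^\vee} = k \cdot P_1(D')_{\seed^\vee}$. Setting $a := \nu^{\Phi}_{\seed}(\tau) \in \Trop_{\Z}(\cV^{\vee}_{\seed^\vee})$ and invoking the adaptedness of the theta basis for $\nu_{\seed}$ (Lemma \ref{g_is_val} and Corollary \ref{cor:gv on midX}), for any nonzero $f = \sum_p c_p \tf^{V}_p \in R_k(D')$ I obtain
\[
\nu^{\Phi}_{\seed}(f/\tau^k) \;=\; \min_{<_{\seed}}\lrc{p \mid c_p \neq 0} \;-\; ka \;\in\; P_k(D')(\Z) - ka.
\]
Because $P_1(D')_{\seed^\vee}$ is a rational convex polyhedron (an intersection of super-level sets of the concave piecewise-linear functions $\Trop_{\R}(\tf^{V^\vee}_j)$), its rational points are dense and exactly of the form $p/k$ with $p \in P_k(D')(\Z)$; taking the closure of the convex hull of these valuations therefore yields $\Delta_{\nu^{\Phi}_{\seed}}(D',\tau) = P_1(D')_{\seed^\vee} - a$.

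Next I would verify that $P_1(D')_{\seed^\vee}$ is itself a positive set as a direct consequence of the graded theta basis hypothesis: given $d_1,d_2 \in \Z_{>0}$ and $p_i \in d_i P_1(D')(\Z) = P_{d_i}(D')(\Z)$, the product $\tf^{V}_{p_1} \tf^{V}_{p_2} \in R_{d_1+d_2}(D')$ expands in the basis $\lrc{\tf^V_r \mid r \in P_{d_1+d_2}(D')(\Z)}$, so every $r$ with $\alpha(p_1,p_2,r) \neq 0$ lies in $P_{d_1+d_2}(D')(\Z) = (d_1+d_2) P_1(D')(\Z)$.

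Finally, I would show that the translate $P_1(D')_{\seed^\vee} - a$ inherits positivity. The linear-action hypothesis on $L \ni a$ and the uniqueness clause of Definition \ref{def:linear_action}, together with Lemma \ref{product_and_order} (and its analogues for $\cX$, quotients, and fibres), force $\tf^V_a \tf^V_b$ to be a nonzero scalar multiple of $\tf^V_{a+b}$ for every $b \in \Theta(\cV)$, where $a+b$ denotes ordinary vector addition in $\Trop_{\R}(\cV^{\vee}_{\seed^\vee})$. Writing $p_i = q_i - d_i a$ with $q_i \in P_{d_i}(D')(\Z)$ and iterating, $\tf^{V}_{p_i}$ is a scalar multiple of $\tf^V_{q_i}(\tf^V_a)^{-d_i}$; consequently the structure constants $\alpha(p_1,p_2,\cdot)$ are obtained from $\alpha(q_1,q_2,\cdot)$ by the rigid shift $r \mapsto r - (d_1+d_2)a$, which combined with the previous step delivers positivity of $P_1(D')_{\seed^\vee} - a$. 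The main obstacle is precisely this last step: verifying across all four ambient types ($\cA$, $\cA/T_H$, $\cX$, $\cXe$) that the linear $L$-action on $\Theta(\cV)$ implements genuine vector-space translation on labels, since the underlying lattice and dominance order differ in each case; the uniqueness clause in Definition \ref{def:linear_action} is what collapses the potential sum over broken lines to a single term and makes the shift exact.
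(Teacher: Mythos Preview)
Your proposal is correct and follows essentially the same approach as the paper's proof. Both arguments hinge on the identity $k\Delta_{\nu^{\Phi}_{\seed}}(D',\tau)=P_k(D')_{\seed^\vee}-k\nu^{\Phi}_{\seed}(\tau)$ (which you establish the same way, via $P_k=kP_1$ and density of rational points) and on the linear action of $L$ to transport structure constants under the shift by $\nu^{\Phi}_{\seed}(\tau)$. The only cosmetic difference is organizational: you first prove $P_1(D')_{\seed^\vee}$ is positive and then show the translate is positive via the algebraic identity $\tf^V_a\tf^V_b=\tf^V_{a+b}$ forced by uniqueness in Definition~\ref{def:linear_action} combined with \thref{product_and_order}, whereas the paper verifies positivity of $\Delta$ directly by shifting the underlying pairs of broken lines (adding $d_i\nu_{\seed}(\tau)$ to every exponent vector of $\gamma_i$). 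These two mechanisms are equivalent, so the ``main obstacle'' you flag is not a genuine gap---the broken line shift used in the paper is precisely what justifies, uniformly across all four types, the translation identity on labels you need.
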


\begin{proof}
To make notation lighter, throughout this proof we denote $\Delta_{\nu_{\seed}}(D',\tau) $ simply by $ \Delta $, $P_k(D')_\seed$ by $P_k$ and $\nu^{\Phi}_{\seed}$ by $\nu_{\seed}$.
We work in the lattice identification $ \Trop_{\Z}(\cV^{\vee}_{\seed^\vee})$ of $\Trop_{\Z}(\cV^{\vee})$.
The linear subset of the statement corresponds to a sublattice $L \subseteq \Trop_{\Z}(\cV^{\vee}_{\seed^\vee})$.

Consider $d_1, d_2 \in \Z_{>0}$ and $p_1\in d_1\Delta(\Z)$, $p_2\in d_2\Delta(\Z)$. We have to show that for any $r \in \Trop_{\Z}(\cV^{\vee}_{\seed^\vee})$ with $\alpha (p_1,p_2,r)\neq 0$ then $r \in (d_1 +d_2)\Delta(\Z)$. 
For this it is enough to show that $k\Delta = P_k - k\nu_{\seed}(\tau)$ for all $k \in \Z_{>0}$ as we now explain.\footnote{In fact, it is enough that the equality holds at the level of integral points, namely, $k\Delta(\Z)= P_k(\Z) - k\nu_{\seed}(\tau)$. However, we are able to show the stronger condition $k\Delta= P_k - k\nu_{\seed}(\tau)$.}
If this is the case then for $i=1,2$, the point $p_i+d_i\nu_\seed(\tau)$ belongs to $P_{d_i}(\Z)$.
By hypothesis $\tf^V_{p_i+d_i \nu_{\seed}(\tau)}\in R_{d_i}(D')$.
In particular, the product $\tf^V_{p_1+d_1 \nu_{\seed}(\tau)}\tf^V_{p_2+d_2 \nu_{\seed}(\tau)} $ must belong to $R_{d_1+d_2}(D')$ and this product must be expressed as a linear combination of theta functions that belong to  $R_{d_1+d_2}(D')$.  
To finish we just need to convince ourselves that 
\[
\alpha(p_1+d_1\nu_\seed(\tau),p_2+d_2\nu_\seed(\tau), r+(d_1+d_2)\nu_\seed(\tau))\neq 0
\]
as this would imply 
\[
r+(d_1+d_2)\nu_\seed(\tau)\in P_{d_1+d_2}(\Z)=(d_1+d_2)\Delta(\Z)+ (d_1+d_2)\nu_\seed(\tau) .
\]
However, this follows at once from the fact that $\nu_\seed(\tau)$ belongs to the linear subset $L$. 
Indeed, the condition $\alpha(p_1,p_2,r)\neq 0$ implies the existence of a pair of broken lines $\gamma_1, \gamma_2$ such that $I(\gamma_i)=p_i$ and $ F(\gamma_1)+F(\gamma_2)=r$. 
Since $\nu_\seed(\tau)\in L$ we can construct new broken lines $\gamma'_1$ and $\gamma'_2$ such that $I(\gamma'_i)=p_i+d_i\nu_\seed(\tau)$ and $ F(\gamma'_1)+F(\gamma'_2)=r+(d_1+d_2)\nu_\seed(\tau)$ by changing the direction of all the domains of linearity of $\gamma_i$ by  $d_i\nu_\seed(\tau)$. 

We now proceed to show that $k\Delta= P_k-k\nu_\seed(\tau) $ for all $k \in \Z_{>0}$. 
First notice that $aP_1= P_a$ for all $a\in \R_{\geq 0}$ (if $g$ is a positive Laurent polynomial then $g^T(ax)=ag^T(x)$ provided $a$ is non-negative). 
Since $P_k$ is closed and convex in order to show that $k \Delta  \subset P_k- k\nu_\seed(\tau)$ it is enough to show that $\frac{k}{k'}\ \nu_\seed(f/\tau^{k'})=\frac{k}{k'}\ \nu_\seed(f)-k \nu_\seed(\tau)$ belongs to $P_k-k\nu_\seed(\tau)$ for all $k'\geq 1$ and all $f\in R_{k'}(D')\setminus \{0\}$. This follows at once from the fact that $\frac{k}{k'}\nu_\seed(f)\in P_k$ as $\frac{k}{k'}P_{k'}=P_k$.
To obtain the reverse inclusion it is enough to show that the inclusion holds at the level of rational points, namely, $P_k(\Q)-k\nu_\seed(\tau)\subset k\Delta(\Q)$. 
Indeed, since $P_k$ is a finite intersection of rational hyperplanes in $\Trop_{\R}(\cV^{\vee}_{\seed^\vee})$ it can be described as the convex hull of its rational points. 
If $x\in P_k(\Q)$ then $\frac{x}{k}\in \frac{1}{k}P_k(\Q)=P_1(\Q)$. 
Let $d\in \Z_{>0}$ be such that $x':=\frac{dx}{k} \in \Trop_{\Z}(\cV^{\vee}_{\seed^\vee})$. 
In particular, $x'\in P_{d}(\Z)_{\seed}$ which gives that $d^{-1}\nu_\seed(\frac{\tf_{x'}}{\tau^{d}})\in \Delta$. Finally, notice that $d^{-1}\nu_\seed(\frac{\tf_{x'}}{\tau^{d}})=d^{-1}(\nu_\seed(\tf_{x'})-d\nu_\seed(\tau))=d^{-1}x'-\nu_\seed(\tau)$ which implies $x-k\nu_\seed(\tau) \in k\Delta$. 
\end{proof}

In Theorem \ref{NO_bodies_are_positive} the assumption that $R(D')$ has a graded theta basis might seem rather strong. We now provide a situation in which this hypothesis holds and in the next subsection we treat a more robust framework in which this condition follows directly from the equivariant nature of theta functions.

\begin{lemma}
\label{lem:graded_theta_basis}
Let $V\subset Y$ be a minimal model. Assume $D=\sum_{j=1}^n D_j$ is ample with $D'=cD$ very ample for some $c\in \Z_{>0}$. Assume further that the image of the embedding of $Y$ into a projective space given by $D'$ is projectively normal. 
If $\cV$ has theta reciprocity and the theta functions on $\cV$ respect the order of vanishing (see Definition~\ref{def:respect_order}), then $R(D')$ has a graded theta basis.
\end{lemma}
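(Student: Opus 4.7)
The plan is to realize $R(D')$ as the ring of global functions on a partial minimal model of an enlarged cluster variety and apply Lemma \ref{lem:basis_for_pmm}. Very ampleness of $D'$ together with projective normality of the embedding $Y\hookrightarrow \PP(H^0(Y,\mathcal{O}(D'))^*)$ identifies $R(D')$ with the coordinate ring of the affine cone $\hat Y := \Spec R(D')$; since the apex $\hat 0$ has codimension at least $2$, we have $R(D')=H^0(\hat Y^\circ,\mathcal{O})$ where $\hat Y^\circ := \hat Y\setminus\{\hat 0\}$ is the total space of $\mathcal{O}(-D')$ with its zero section removed---a $\Gm$-bundle over $Y$.

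Since $D'=cD$ is supported on $D$, the line bundle $\mathcal{O}(-D')|_V$ is trivial, so $\hat V := \hat Y^\circ|_V \cong V\times \Gm$ with $\Gm$-coordinate $t$. One endows $\hat V$ with the cluster structure of type $\cV\times \Gm$, obtained by adjoining a frozen direction to $\cV$ with trivial coupling and cluster variable $t$. A local computation near a generic point of the preimage $\hat D_j$ of $D_j$, using the transition function $t=\pi_j^c\tau$ between the trivialization of $\mathcal{O}(-D')$ over $V$ and a local trivialization near $D_j$ (with $\pi_j$ locally cutting out $D_j$ and $\tau$ the fiber coordinate in the second trivialization), shows that the canonical volume form $\Omega_V\wedge\tfrac{dt}{t}$ on $\hat V$ has only a simple pole along each $\hat D_j$. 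Hence $\hat V \subset \hat Y^\circ$ is a partial minimal model of $\cV\times\Gm$.

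Theta functions on $\cV\times \Gm$ factor as $\tf^V_{\mathbf q}\cdot t^k$ for $(\mathbf q,k)\in \Theta(\cV)\times\Z$, and the hypotheses of Lemma \ref{lem:basis_for_pmm}---full Fock--Goncharov, theta reciprocity, and respect of order of vanishing---transfer from $\cV$ to $\cV\times \Gm$, the non-trivial point being the compatibility of $\ord_{\hat D_j}$ with the $t$-decomposition of $\cmid(\cV\times\Gm)$. Lemma \ref{lem:basis_for_pmm} then produces a theta basis of $R(D')$ parametrized by $\Xi_{\hat Y^\circ}(\Z)$.

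Finally, the $\Gm$-grading on $R(D')$ coincides with the $\Z$-weight grading on $\Theta(\cV\times\Gm)$. Using theta reciprocity, the transition-function computation yields $\ord_{\hat D_j}(\tf^V_{\mathbf q}\cdot t^k) = \Trop(\tf^{\cV^\vee}_j)(\mathbf q)+kc$, so the weight-$k$ slice of $\Xi_{\hat Y^\circ}(\Z)$ is precisely $P_k(D')(\Z)\times\{k\}$. Hence $R_k(D')$ has the theta basis $\{\tf^V_{\mathbf q}\mid \mathbf q\in P_k(D')(\Z)\}$, the sought-after graded theta basis. The main obstacle will be verifying the transfer of respect of order of vanishing to $\cV\times \Gm$; this reduces to the compatibility of $\ord_{\hat D_j}$ with the $t$-decomposition combined with the hypothesis on $\cV$ applied at each $k$-graded piece.
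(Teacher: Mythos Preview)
Your approach is essentially the paper's: pass to the affine cone over $Y$, endow the preimage of $V$ with the cluster structure $\cV\times\Gm$, transfer theta reciprocity and respect of order of vanishing to this product, and read off the graded theta basis from the resulting description of $H^0(\hat Y^\circ,\mathcal{O})$. The only difference is packaging---you invoke Lemma~\ref{lem:basis_for_pmm} for $\cV\times\Gm$ and then slice $\Xi_{\hat Y^\circ}$ by the $\Gm$-grading, whereas the paper argues directly via an $x$-shift that converts the condition $\ord_{D_j}(\,\cdot\,)\geq -kc$ on $\cV$ into $\ord_{\widetilde D_j}(\,\cdot\,)\geq 0$ on $\cV'$; your transition-function computation giving $\ord_{\hat D_j}(t)=c$ is in fact the more careful version of the paper's assertion that $\ord_{\widetilde D_j}(x^k)=k$.
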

\begin{proof}
It is enough to treat the case $\cV=V$.
Consider the affine cone $\widetilde{Y}$ of the embedding of $Y$ into a projective space given by $D'$. We consider the canonical projection $\widetilde{Y}\setminus \{ 0\} \overset{\pi}{\to } Y $ and let $ \cV':= \pi^{-1}(\cV) $.
Observe that $\cV'\cong \cV \times \C^*$. 
We may think of $\cV'$ as the cluster variety obtained from $\cV$ by adding a frozen index and extending trivially the bilinear form in the fixed data defining $\cV$. 
In particular, $\text{up}(\cV')= \text{up}(\cV)[x^{\pm 1}]$, where $x$ is the coordinate for the $\C^*$ component. Notice that the theta functions on $\cV'$ are of the form $\tf^{\cV'}_{(p,h)}=\tf^{\cV'}_{(0,h)}\tf^{\cV'}_{(p,0)} =x^h\tf^{\cV}_p$, where $\tf^{\cV}_p$ is a theta function on $\cV$ and $h \in \Z=\Trop_{\Z}(\C^*) $.
An analogous description holds for the theta functions on $(\cV')^\vee \cong \cV^\vee \times \C^* $. Namely, these theta functions are of the form $x^h\tf_q^{\cV^\vee}$ for some $h\in \Z$.
We consider the inclusion $R(D')\hookrightarrow \text{up}(\cV')$ given by sending a homogeneous element $f\in R_k(D')$ to $x^kf$. The map is well defined since $f$ is regular on $ \cV $. 
Moreover, if we let $\widetilde{D}_j:= \pi^{-1}(D_j)$ then for all $j$ we have $\ord_{\widetilde{D}_j}\lrp{x^{k}}=k$ and $\ord_{\widetilde{D}_j}\lrp{\tf^{\cV'}_{(p,0)}}=\ord_{D_j}\lrp{\tf^V_p}$. 
In particular, thinking of $\Trop_{\Z}(\cV')$ as $\Trop_{\Z}(\cV)\times \Z$ we have  $\ord_{\widetilde{D}_k}=(\ord_{D_k},1)$. 
Since theta functions on $\cV$ respect the order of vanishing, the same holds for the theta functions on $\cV'$.
This implies that for every $a \in \Z$ and every $j$, $\ord_{D_j}\lrp{\sum_q \alpha_q \tf_q^{\cV}}\geq a$ if and only if $\ord_{D_j}(\tf_q^{\cV})\geq a$ for all $q$ such that $\alpha_q \neq 0$. 
To see this there is only one implication to be checked (the other follows from the axioms of valuations). 
So assume $\ord_{D_j}\lrp{\sum_q \alpha_q \tf_q^{\cV}}\geq a$.
Since $\ord_{D_j}\lrp{\sum_q \alpha_q \tf_q^{\cV}}=\ord_{\widetilde{D}_j}\lrp{\sum_q \alpha_q \tf_q^{\cV}}$ and $x^{-a}\tf_q^{\cV}$ is a theta function on $\cV'$ for all $q$  we have the following
\begin{align*}
    \ord_{D_j}\lrp{\sum_q \alpha_q \tf_q^{\cV}}\geq a & \Longleftrightarrow   \ord_{\widetilde{D}_j}\lrp{\sum_q \alpha_q \tf_q^{\cV}}\geq a \\
    & \Longleftrightarrow   \ord_{\widetilde{D}_j}\lrp{x^{-a}\sum_q \alpha_q \tf_q^{\cV}} \geq 0 \\
    & \Longleftrightarrow   \ord_{\widetilde{D}_j}(x^{-a} \tf_q^{\cV})\geq 0  \text{ for all } q \text{ such that } \alpha_q\neq 0 \\
     & \Longleftrightarrow   \ord_{\widetilde{D}_j}( \tf_q^{\cV})\geq a  \text{ for all } q \text{ such that } \alpha_q\neq 0 \\
     & \Longleftrightarrow   \ord_{D_j}( \tf_q^{\cV})\geq a  \text{ for all } q \text{ such that } \alpha_q\neq 0.
\end{align*}
Since $D'$ is very ample and $Y$ is projectively normal in its embedding given by $D'$ we have that $H^0(\widetilde{Y}, \mathcal{O}_{\widetilde{Y}}) \cong R(D') \hookrightarrow \text{up}(\cV')$.
In particular, if we express $f \in R_k(D')$ as $f= \sum_q \alpha_q \tf^{\cV}_q$, we have that $\ord_{D_j}\lrp{\tf^\cV}\geq -kc$ for all $j$ and all $q $ such that $\alpha_q \neq 0$. This means that $\tf_q^{\cV} \in R_k(D')$ for all such $q$. 
In particular, the theta functions of $\cV$ that lie in $R_k(D')$ have to be a basis a of $R_k(D')$. By theta reciprocity, such theta functions are precisely those parametrized by $P_k(D')$.
\end{proof}

\begin{remark}\label{rmk:toric degen}
If $R(D')$ is finitely generated and the semigroup generated by the image of $\nu^\cV_{\seed}$ is of full-rank and finitely generated then there is a one parameter toric degeneration of $Y$ to the toric variety associated to $ \Delta_{\nu^\cV_{\seed}}(D',\tau)$ \cite{An13}\footnote{That is, there is a scheme $\mathcal Y$ and a flat morphism $ \mathcal{Y}\to \mathbb A^1$ whose generic fibre is isomorphic to $Y$ and special fibre isomorphic to the toric variety associated to $ \Delta_{\nu_{\seed}}(D',\tau)$.}.
As explained in \cite[\S8.5]{GHKK} for cluster varieties of type $\cA$ (regardless of the full-rank assumption) a polyhedral positive set defines a partial compactification $\cA_{\text{prin}} \subset \overline{\cA}_{\text{prin}}$. 
This compactification comes with a flat morphism $\overline{\cA}_{\text{prin}}\to \mathbb A^r$ having $\overline{\cA}=Y$ as fibre over ${\bf 1}=(1, \dots , 1)$ and whose fibre over $0$ is the toric variety associated to the positive set.
Therefore, both constructions can be used to degenerate varieties with a cluster structure to the same toric variety. However, the variety given by the latter construction contains various intermediate fibres that lie in between $\mathcal A=\cV$ and a toric variety. Moreover, while Anderson's degenerations produces a $(\Bbbk^*)$-equivariant family, for the latter degeneration this is the case if and only if $\Gamma$ is of full-rank.
\end{remark}

\subsection{Newton--Okounkov bodies for line bundles via universal torsors}
\label{sec:universal_torsors}

In this section we consider a particularly nice geometric situation that arises often in representation theory. We let $Y$ be an irreducible normal projective scheme whose Picard group $\text{Pic}(Y) $ is free of finite rank $\rho \in \Z_{>0}$ (recall that $\text{Pic}(Y) $ is always abelian).
Following \cite[\S2]{Hau02} (see also \cite[\S3]{BH03}, \cite[Chapter 1]{ADHL}, \cite[\S4]{GHK_birational} or \cite[\S2]{HK00}), we consider the universal torsor of $Y$ and the associated Cox ring (\cf Remark \ref{rem:Cox}). For the convenience of the reader we recall these concepts. We begin by considering the quasi-coherent sheaf of $\mathcal{O}_Y$-modules
\[
\bigoplus_{[\lb] \in \text{Pic}(Y)} \lb. 
\]
In essence, the universal torsor of $Y$ is obtained by applying a relative spectrum construction (also denoted by {\bf Spec}) to this sheaf. 
However, the choice of the representative $\lb $ in the class $[\lb]$ prevents this sheaf from having a natural $\mathcal{O}_Y$-algebra structure. 
To address this situation one can proceed as in \cite[\S2]{HK00} and consider line bundles $\lb_1, \dots, \lb_{\rho} $ whose isomorphism classes form a basis of $\text{Pic}(Y)$. For $v=(v_{1},\dots, v_{\rho})\in \Z^{\rho}$ we let $\lb^{v}= \lb_1^{\otimes v_1}\otimes \cdots \otimes \lb_{\rho}^{\otimes v_{\rho}}$ and consider the quasi-coherent sheaf
\[
\bigoplus_{v \in \Z^{\rho}}\lb^{v}.
\]
This sheaf has a natural structure of a reduced $\mathcal{O}_Y$-algebra that is locally of finite type over $\mathcal{O}_Y$ (the component associated to the zero element of $\text{Pic}(Y)$).
This means that for sufficiently small
affine open subsets $U$ of $Y$, the space $\bigoplus_{v \in \Z^{\rho}}\lb^{v}(U)$ is a finitely generated $\mathcal{O}_Y(U)$-algebra.
The universal torsor of $Y$ is obtained by gluing the affine schemes $\text{Spec}\lrp{\bigoplus_{v \in \Z^{\rho}}\lb^{v}(U)}$.

\begin{definition}
The {\bf universal torsor} of $ Y$ is 
\[
\UT_Y= \textbf{Spec}\lrp{\bigoplus_{v \in \Z^{\rho}}\lb^{v} }.
\]
The {\bf Cox ring} of $Y$ is
\[
\text{Cox}(Y)= H^0 (\UT_Y,\mathcal{O}_{\UT_Y}).
\]
\end{definition}

Universal torsors can be used to generalize the construction of a projective variety from its affine cone as follows.
Observe that the inclusion of $ \mathcal{O}_Y  $ as the degree $0$ part of $\bigoplus_{v \in \Z^{\rho}}\lb^{v} $ gives rise to an affine regular map $\UT_Y\to Y$.
Since $\text{Cox}(Y)$ is $\text{Pic}(Y)$-graded there is an action of $T_{\text{Pic}(Y)^*}= \text{Spec}(\C[\text{Pic}(Y)])$ on $\UT_Y$. 
This action is free and the map $\UT_Y\to Y$ is the associated quotient map (see \cite[Remark 1.4]{Hau02}).

\begin{remark}
\label{rem:Cox}
The notion of a Cox ring associated to a projective variety (satisfying some technical assumptions) was first introduced in \cite[Definition 2.6]{HK00}.
This notion was generalized in \cite{BH03} for any divisorial variety with only constant globally invertible functions, in particular, for any quasi-projective variety (over very general ground fields). However, in \cite{BH03} the term \emph{Cox ring} was not used.
The importance of considering universal torsors and Cox rings in the context of cluster varieties was pointed out in \cite[\S4]{GHK_birational} (see also \cite{Man19}) and satisfactorily pursued in representation theoretic contexts where Cox rings arise naturally, see for example \cite{Mag20}.
\end{remark}

\begin{remark}
For simplicity we are assuming that $\text{Pic}(Y)$ is free. In case it has torsion we can still construct a universal torsor which might not be unique as it depends on the choice of a \emph{shifting family} as in \cite[\S3]{BH03} (see \cite[\S3]{Man19} for a related discussion). Generalizations of the results of this section to the torsion case shall be treated elsewhere. 
\end{remark}

\begin{remark}
If $Y$ is smooth we can construct the Cox ring of $Y$ and the universal torsor (still assuming that $\text{Pic(Y)}$ is torsion free) in an equivalent way. The Cox ring can be defined as $\text{Cox}(Y)=\bigoplus_{v\in \Z^{\rho}} H^0 (Y, \lb^v)$. If $\text{Cox}(Y)$ is finitely generated over $\mathcal{O}_Y$-algebra then the universal torsor $\UT_Y $ is obtained from $\text{Spec}(\text{Cox}(Y))$ by removing the unstable locus of the natural $T_{\text{Pic(Y)}^*}$-action on $\text{Spec}(\text{Cox}(Y))$.
\end{remark}

From now on we assume $ V\subset \UT_Y$ is a partial minimal model where $(V,\Phi)$ is a scheme with a cluster structure of type $\cA$. 
In most of the result of this section we assume that $ V\subset \UT_Y$ has enough theta functions.
Under certain conditions that we discuss next, it is possible to show that $Y$ is a minimal model for a scheme with a cluster structure given by a quotient of $\cA$ and construct Newton--Okounkov bodies for elements of $\text{Pic}(Y)$.
The key point is to relate the action of $T_{\text{Pic}(Y)^*}$ on $\UT_Y$ with the torus actions on $\cA$ arising from cluster ensemble maps.

\begin{lemma}
\label{lem:positivity_of_q_slice}
Let $p:\cA \to \cX$ be a cluster ensemble map and $H\subset K^{\circ}$ be a saturated sublattice. Consider the quotient $\cA/T_H$ and the fibration $w_H:\cA^\vee \to T_{H^*}$ (see \S\ref{sec:FG_dual}).
Then the set
\[
\lrc{ \tf^{\cA}_{\bf m} \in \cmid(\cA)  \mid {\bf m} \in \lrp{\Trop_{\Z}(w_H)}^{-1}(q) \cap \Theta(\cA)}
\]
consists precisely of the polynomial theta functions on $\cA$ whose $T_H$-weight is $q$. Moreover, 
for every $q \in H^*$ the set $\lrp{\Trop_{\R}(w_H)}^{-1}(q)\subset \Trop_{\R}(\cA^{\vee})$ is positive.
\end{lemma}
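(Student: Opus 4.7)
The first assertion is a direct reformulation of \thref{prop:dual_fibration}: that proposition identifies the $T_H$-weight of a polynomial theta function $\tf^\cA_{\bf m}$ with the image of ${\bf m}$ under the tropicalized fibration $\Trop_\Z(w_H)$, so the set of polynomial theta functions of weight $q$ is precisely $\{\tf^\cA_{\bf m} : {\bf m}\in\Trop_\Z(w_H)^{-1}(q)\cap\Theta(\cA)\}$.

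For the second assertion, I will fix a seed $\seed$ and work in the chart $\seed^\vee$, identifying $\Trop_\R(\cA^\vee_{\seed^\vee})$ with $M^\circ_\R$; by Remark~\ref{rem:positive_sets_in_vs} this suffices. In this chart $\Trop_\R(w_H)$ is the $\R$-linear map $m\mapsto m+H^\perp$, under the identification $H^*_\R\cong M^\circ_\R/H^\perp_\R$, so $\Trop_\R(w_H)^{-1}(q)$ is an affine subspace and, by linearity, $d\cdot\Trop_\R(w_H)^{-1}(q)=\Trop_\R(w_H)^{-1}(dq)$ for every $d\in\Z_{>0}$. The key step is to show that the $T_H$-weight $m\mapsto m+H^\perp$ is preserved along broken lines. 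Because $H\subseteq K^\circ\subseteq K=\ker(p_2^*)$, every $h\in H$ satisfies $\{h,n\}=0$ for all $n\in N_{\uf}^\circ$, and since $N_{\uf}^\circ$ has finite index in $N_{\uf}$ and $\{\cdot,\cdot\}$ is $\Q$-valued, the same holds for all $n\in N_{\uf}$; by skew-symmetry $\langle h,p_1^*(n)\rangle=\{n,h\}=0$, so $p_1^*(N_{\uf})\subseteq H^\perp$. By Theorem~\ref{thm:consistent_scattering_diagrams} every scattering function in $\scat^\cA_\seed$ may be taken of the form $(1+z^{p_1^*(n)})^c$ with $n\in N_{\uf}$, so the bending rule \thref{def:genbroken}(4) forces the slope of any broken line $\gamma$ to change only by vectors in $p_1^*(N_{\uf})\subseteq H^\perp$ at each bend. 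Consequently $F(\gamma)-I(\gamma)\in H^\perp$ for every broken line $\gamma$.

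With this invariance in hand, verifying Definition~\ref{def:positive_set} is immediate. Given $d_1,d_2\in\Z_{>0}$ and integer points $p_i\in d_i\cdot\Trop_\R(w_H)^{-1}(q)$, the linearity of $\Trop_\R(w_H)$ yields $\Trop_\R(w_H)(p_i)=d_iq$. If $r\in\Trop_\Z(\cA^\vee_{\seed^\vee})$ satisfies $\alpha(p_1,p_2,r)\neq 0$, then \eqref{eq:multibrokenline} produces broken lines $\gamma_1,\gamma_2$ with $I(\gamma_i)=p_i$ and $F(\gamma_1)+F(\gamma_2)=r$. The weight-invariance just established gives $F(\gamma_i)+H^\perp=p_i+H^\perp=d_iq$, and summing yields $r+H^\perp=(d_1+d_2)q$, i.e., $r\in\Trop_\R(w_H)^{-1}((d_1+d_2)q)=(d_1+d_2)\cdot\Trop_\R(w_H)^{-1}(q)$. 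Since $r$ is already integral, this exhibits $r$ as an element of $(d_1+d_2)\cdot\Trop_\R(w_H)^{-1}(q)(\Z)$, as required.

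The only substantive point is the weight-invariance of broken lines, which reduces to the linear-algebraic containment $p_1^*(N_{\uf})\subseteq H^\perp$; I do not anticipate this being a real obstacle, since it follows from $H\subseteq K^\circ$ by the skew-symmetry of $\{\cdot,\cdot\}$ and a straightforward divisibility argument, and the rest of the argument is a mechanical unwinding of the broken-line definition of the structure constants.
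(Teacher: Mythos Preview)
Your proof is correct, and the core observation---that every bend of a broken line in $\scat^{\cA}_{\seed}$ changes the exponent vector by an element of $p_1^*(N_{\uf})\subseteq H^\perp$, so that the $T_H$-weight is invariant along broken lines---is exactly the same as in the paper. The difference is in how this observation is deployed. The paper invokes the equivalence of positivity and broken line convexity (\thref{thm:mainCMN}, the main result of \cite{CMNcpt}): it first shows the weight-zero slice $\lrp{\Trop_{\R}(w_H)}^{-1}(0)$ is broken line convex, then argues that any translate $x+\lrp{\Trop_{\R}(w_H)}^{-1}(0)$ remains broken line convex because tangent directions of broken line segments in the translate still lie in the weight-zero slice. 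You instead verify Definition~\ref{def:positive_set} directly from the broken-line description \eqref{eq:multibrokenline} of the structure constants, bypassing \cite{CMNcpt} entirely. Your route is more elementary and self-contained; the paper's route has the mild advantage of making explicit that the slices are broken line convex, a notion used again later (e.g.\ in \S\ref{sec:intrinsic_NOB}), but for the purpose of this lemma your direct argument is cleaner.
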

\begin{proof}
The first claim follows from \thref{prop:dual_fibration}. 
So we only need to show that $\lrp{\Trop_{\R}(w_H)}^{-1}(q)$ is positive. 
In order to show this it is convenient to work with a condition equivalent to positivity called broken line convexity, see \S\ref{sec:intrinsic_NOB}.
We work in the lattice identification $ \Trop_{\R}(\cA^{\vee}_{\seed^\vee})$ of $\Trop_{\R}(\cA^{\vee})$.
We first argue that the set $ \lrp{\Trop_{\R}(w_H)}^{-1}(0)_{\seed^\vee}$ is positive.
First notice that any linear segment $L$ of a broken line segment contained in $ \lrp{\Trop_{\R}(w_H)}^{-1}(0)_{\seed^\vee}$ has itself tangent direction in $ \lrp{\Trop_{\Z}(w_H)}^{-1}(0)_{\seed^\vee}$. 
Let $m\in \lrp{\Trop_{\Z}(w_H)}^{-1}(0)_{\seed^\vee}$ be the tangent direction of $L$. The tangent direction of the following linear segment is of form $m+cp^*(n)$ for some $n\in N^+_{\seed}$ and $c\in  \Z_{\geq 0}$.
For any $h\in H^\circ$ we have
\[
\langle m+cp^*(n),h\rangle =\langle m,h\rangle + c\{n,h\}=0,
\]
as $H^\circ\subset K^\circ$. So the next tangent direction also belongs to $ \lrp{\Trop_{\Z}(w_H)}^{-1}(0)_{\seed^\vee}$.
We conclude that the set  $\lrp{\Trop_{\R}(w_H)}^{-1}(0)_{\seed^\vee}$ is broken line convex and by the main result of \cite{CMNcpt} (see Theorem \ref{thm:mainCMN} below) the set $\lrp{\Trop_{\Z}(w_H)}^{-1}(0)_{\seed^\vee}$ is positive.
This already implies that for any $ x\in \lrp{\Trop_{\R}(w_H)}^{-1}(q)_{\seed^\vee}$ the set $x+ \lrp{\Trop_{\R}(w_H)}^{-1}(0)_{\seed^\vee}$ remains positive. 
Indeed, let $y, z \in x+ \lrp{\Trop_{\R}(w_H)}^{-1}(0)_{\seed^\vee}$. Then $y- z \in \lrp{\Trop_{\R}(w_H)}^{-1}(0)_{\seed^\vee} $. 
In other words, any line segment within the set $ x+\lrp{\Trop_{\R}(w_H)}^{-1}(0)_{\seed^\vee}$ has tangent direction in $\lrp{\Trop_{\R}(w_H)}^{-1}(0)_{\seed^\vee}$. 
Therefore, after bending it will remain in the set $x+\lrp{\Trop_{\R}(w_H)}^{-1}(0)_{\seed^\vee}$. 
Finally, observe that $x+\lrp{\Trop_{\R}(w_H)}^{-1}(0)_{\seed^\vee}=\lrp{\Trop_{\R}(w_H)}^{-1}(q)_{\seed^\vee}$. 
\end{proof}

Having in mind Proposition~\ref{prop:dual_fibration} and the action of the $T_{\text{Pic}(Y)^*}$ on $\UT_Y$ we introduce the following notion.

\begin{definition}
\thlabel{k_and_pic}
The pair $(p,H) $ has the {\bf Picard property} with respect to $V\subset \UT_Y$ if
\begin{itemize}
   \item $H$ and $\text{Pic}(Y)^*$ have the same rank, and
    \item the action of $T_{H}$ on $\cA$ coincides with the action of $T_{\text{Pic}(Y)^*}$ on $\UT_Y$ restricted to the image of $\Phi:\cA \dashrightarrow V $.
\end{itemize}
\end{definition}

Recall the definitions of the superpotential and its associated cone of tropical points from \eqref{eq:def superpotential} and \eqref{eq:def Xi} in \S\ref{sec:minimal_models}.
The following result adapts the content of Proposition \ref{prop:dual_fibration} to this framework.

\begin{lemma}
\label{lem:basis_of_tf}
Suppose that $ V \subset \UT_Y$ is a partial minimal model with enough theta functions and that  $(p,H)$ has the Picard property with respect to this model.
Then for every class $[\lb]\in \text{Pic}(Y)\cong H^*$ we have that the theta functions parametrized by the integral points of the set $ \lrp{\Trop_{\R}(w_H)}^{-1}\lrp{[\lb]}\cap \Xi_{\UT_Y}$ is a basis for $H^0(Y, \lb)$.
In particular, $\Cox(Y)$ has a basis of theta functions which are $T_{\text{Pic}(Y)^*}$-eigenfunctions.  
\end{lemma}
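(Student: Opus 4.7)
The plan is to combine the hypothesis that $V \subset \UT_Y$ has enough theta functions with the equivariance of theta functions under the relevant torus action established in \thref{prop:dual_fibration}. Unpacking the definition of \emph{enough theta functions} (\thref{def:enough_tf}), the set $\{\tf^V_{\bf m} \mid {\bf m} \in \Xi_{\UT_Y}(\Z)\}$ is a basis of $H^0(\UT_Y,\mathcal{O}_{\UT_Y}) = \Cox(Y)$. On the other hand, the Cox ring carries the Picard grading
\[
\Cox(Y) = \bigoplus_{[\lb] \in \text{Pic}(Y)} H^0(Y,\lb),
\]
which is precisely the eigenspace decomposition of the canonical $T_{\text{Pic}(Y)^*}$-action on $\UT_Y$. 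So it suffices to identify, inside the theta basis of $\Cox(Y)$, the subset spanning each eigenspace.

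First I would show that every theta function $\tf^V_{\bf m}$ is a $T_{\text{Pic}(Y)^*}$-eigenfunction and compute its weight. Pulling back by $\Phi:\cA\dashrightarrow V$, \thref{prop:dual_fibration} (which applies because $\Gamma$ is of full-rank and $H \subseteq K^\circ$) says that $\tf^\cA_{\bf m} = \Phi^*\tf^V_{\bf m}$ is a $T_H$-eigenfunction whose weight is $w_H^T({\bf m}) \in H^*$. By the Picard property, the $T_H$-action on $\Phi(\cA)$ coincides with the restriction of the $T_{\text{Pic}(Y)^*}$-action on $\UT_Y$. Since both actions are defined globally on $\UT_Y$ and $\Phi(\cA)$ is open and dense, they agree globally, and the ranks being equal forces the induced map $H \to \text{Pic}(Y)^*$ to be an isomorphism of tori, dually identifying $H^* \cong \text{Pic}(Y)$. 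Under this identification, the $T_{\text{Pic}(Y)^*}$-weight of $\tf^V_{\bf m}$ is exactly $w_H^T({\bf m})$.

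Consequently, a basis element $\tf^V_{\bf m}$ with ${\bf m} \in \Xi_{\UT_Y}(\Z)$ lies in $H^0(Y,\lb) \subset \Cox(Y)$ if and only if $w_H^T({\bf m}) = [\lb]$, i.e.\ if and only if
\[
{\bf m} \in \bigl(\Trop_{\R}(w_H)\bigr)^{-1}\!\bigl([\lb]\bigr) \cap \Xi_{\UT_Y}(\Z).
\]
Because $\Cox(Y)$ is the direct sum of its graded pieces and each basis vector is homogeneous, the subset of the theta basis contained in $H^0(Y,\lb)$ must be a basis of that graded piece. The concluding assertion that $\Cox(Y)$ admits a theta basis consisting of $T_{\text{Pic}(Y)^*}$-eigenfunctions is then immediate from the eigenfunction computation above.

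The main obstacle is the bookkeeping around the identification $H^* \cong \text{Pic}(Y)$: one must check that the Picard property really forces the weight map $w_H^T$ to match the Picard grading of $\Cox(Y)$ up to this canonical isomorphism, rather than some automorphism of $\text{Pic}(Y)$. The equality of ranks plus the agreement of actions on a dense open handles this, but it is the one place where the Picard hypothesis is used in an essential (and not merely formal) way.
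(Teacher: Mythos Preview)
Your argument is correct and is precisely the unpacking the paper intends: the paper does not give an explicit proof of this lemma but only remarks that it ``adapts the content of \thref{prop:dual_fibration} to this framework,'' and your proposal does exactly that---combining the theta basis of $\Cox(Y)$ from the enough-theta-functions hypothesis with the weight computation of \thref{prop:dual_fibration} and the identification of torus actions from the Picard property.
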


We consider the section ring $R(\lb)=\bigoplus_{k\geq 0} R_k(\lb)  $. The  $k^{\mathrm{th}}$ homogeneous component is defined as $R_k(\lb )=H^0(Y, \lb^{\otimes k})$. The product of $R(\lb)$ is given by the tensor product of sections. 
Fix a seed $\seed\in \orT$, a linear dominance order $<_{\seed}$ on $ \Trop_{\Z}(\cA^{\vee}_{\seed^\vee})$ and consider the valuation
$
\gv^{\Phi}_{\seed}:\Bbbk(V)\setminus \{ 0 \} \to (\Trop_{\Z}(\cA^{\vee}_{\seed^\vee}), <_{\seed}). 
$
Observe that $R_k(\lb)\subset \Cox(Y)$ for all $k$. Hence we can define the Newton--Okounkov body
\eqn{
\Delta_{\gv^{\Phi}_{\seed}}(\lb) := \overline{\conv\Bigg( \bigcup_{k\geq 1}\lrc{ \frac{1}{k}\gv^{\Phi}_{\seed} (f) \mid f\in R_k(\mathcal L)\setminus \{0\} } \Bigg) }\subseteq \Trop_{\Z}(\cA^\vee_{\seed^\vee})=M^{\circ}_{\R}.
}

\begin{theorem}\thlabel{thm:k_and_pic}
Suppose that $ V \subset \UT_Y$ is a partial minimal model with enough theta functions and that  $(p,H)$ has the Picard property with respect to this model.
Then for any line bundle $\lb $ on $Y$
\[
\Delta_{{\bf g}^{\Phi}_{\seed}}(\lb)=\lrp{\Trop_{\R}(w_H)}^{-1}\lrp{[\lb]}_{\seed^\vee}\cap \Xi_{\UT_Y, \seed^\vee}.
\]
In particular, $\Delta_{{\bf g}^{\Phi}_{\seed}}(\lb)$ is a positive subset of $\Trop_{\R}(\cA^{\vee}_{\seed^\vee})$.
\end{theorem}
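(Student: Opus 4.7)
Write $P := \lrp{\Trop_{\R}(w_H)}^{-1}\lrp{[\lb]}_{\seed^\vee}\cap \Xi_{\UT_Y, \seed^\vee}$ and, for each $k\geq 1$, $P_k := \lrp{\Trop_{\R}(w_H)}^{-1}\lrp{k[\lb]}_{\seed^\vee}\cap \Xi_{\UT_Y, \seed^\vee}$. The first observation is the homogeneity $P_k=kP$: indeed $w_H$ is a monomial map, so $\Trop_{\R}(w_H)$ is linear in the coordinates afforded by $\seed^\vee$, and $\Xi_{\UT_Y,\seed^\vee}$ is a cone (being the locus where the tropicalization of the superpotential, a positive function, is non-negative). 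Under the Picard property one can identify $\text{Pic}(Y)\cong H^*$, so $[\lb^{\otimes k}]$ corresponds to $k[\lb]$, and \thref{lem:basis_of_tf} gives that the theta functions $\{\tf^V_{\bf q}\mid {\bf q}\in P_k(\Z)\}$ form a basis of $R_k(\lb)=H^0(Y,\lb^{\otimes k})$.

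Next I would prove the two inclusions defining $\Delta_{{\bf g}^{\Phi}_{\seed}}(\lb)=P$. For ``$\subseteq$'', pick any $f\in R_k(\lb)\setminus\{0\}$ and expand $f=\sum_{{\bf q}\in P_k(\Z)} \alpha_{\bf q}\tf^V_{\bf q}$. By \thref{g_is_val} the valuation ${\bf g}^{\Phi}_{\seed}(f)$ equals one of the labels ${\bf q}$ with $\alpha_{\bf q}\neq 0$, hence lies in $P_k(\Z)\subset P_k = kP$, and $\frac{1}{k}{\bf g}^{\Phi}_{\seed}(f)\in P$. Taking convex hull and closure yields $\Delta_{{\bf g}^{\Phi}_{\seed}}(\lb)\subseteq P$. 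For ``$\supseteq$'', note $P$ is a rational polyhedron (intersection of rational half-spaces with a rational linear subspace), hence equals the closure of the convex hull of its rational points; for $x\in P\cap \Q^d$ choose $k>0$ so that $kx\in P_k(\Z)$, and then $\tf^V_{kx}\in R_k(\lb)$ satisfies ${\bf g}^{\Phi}_{\seed}(\tf^V_{kx})=kx$, so $x=\frac{1}{k}{\bf g}^{\Phi}_{\seed}(\tf^V_{kx})\in \Delta_{{\bf g}^{\Phi}_{\seed}}(\lb)$.

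Finally, for positivity of $P=\Delta_{{\bf g}^{\Phi}_{\seed}}(\lb)$, given positive integers $d_1,d_2$ and $p_i\in d_iP(\Z)=P_{d_i}(\Z)$, the theta functions $\tf^V_{p_i}$ lie in $R_{d_i}(\lb)$, so $\tf^V_{p_1}\tf^V_{p_2}\in R_{d_1+d_2}(\lb)$; expanding this product in the theta basis and using the basis statement of \thref{lem:basis_of_tf} at level $d_1+d_2$, every ${\bf r}$ with $\alpha(p_1,p_2,{\bf r})\neq 0$ must lie in $P_{d_1+d_2}(\Z)=(d_1+d_2)P(\Z)$, which is exactly the positivity condition of Definition~\ref{def:positive_set}. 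Alternatively one can combine \thref{lem:positivity_of_q_slice} (positivity of the weight slice) with positivity of the superpotential cone, the intersection of two positive sets being positive.

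The main thing to double-check is the homogeneity $P_k=kP$ together with the identification $[\lb^{\otimes k}]\leftrightarrow k[\lb]$ under $\text{Pic}(Y)\cong H^*$; once this is in place, the proof is a straightforward combination of the adaptedness of the theta basis to ${\bf g}^{\Phi}_{\seed}$ (\thref{g_is_val}) and the parametrization of sections given by \thref{lem:basis_of_tf}.
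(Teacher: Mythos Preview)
Your proof is correct and follows essentially the same route as the paper's: both identify the theta basis of each $R_k(\lb)$ with the integral points of $P_k=kP$ via \thref{lem:basis_of_tf}, use adaptedness of the theta basis for the inclusion $\Delta\subseteq P$ and a rational-points argument for $P\subseteq \Delta$, and deduce positivity as the intersection of the positive sets $\lrp{\Trop_{\R}(w_H)}^{-1}\lrp{[\lb]}_{\seed^\vee}$ and $\Xi_{\UT_Y,\seed^\vee}$. The only cosmetic difference is that you also offer a direct positivity argument via the product expansion in $R_{d_1+d_2}(\lb)$, which is a valid alternative to invoking \thref{lem:positivity_of_q_slice}.
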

\begin{proof}
To make notation lighter, throughout this proof we let $S=\lrp{\Trop_{\R}(w_H)}^{-1}\lrp{[\lb]}_{\seed^\vee}\cap \Xi_{\UT_Y,\seed^\vee}$ and denote $\gv^{\Phi}_{\seed}$ simply by $\gv_{\seed}$. Observe that $[\lb^{\otimes k}]=k[\lb]$ in $\text{Pic}(Y)$. 
Therefore, by Lemma \ref{lem:basis_of_tf} we have that ${\bf g}_{\seed}(R_k(\lb))\subseteq \lrp{\Trop_{\R}(w_H)}^{-1}\lrp{k[\lb]}_{\seed^\vee}$ for all $k\geq 1$. 
In particular, $\dfrac{1}{k}{\bf g}_{\seed}(R_k(\lb))\subseteq \lrp{\Trop_{\R}(w_H)}^{-1}\lrp{[\lb]}_{\seed^\vee}$ for all $k \geq 1$.
Since $\lrp{\Trop_{\R}(w_H)}^{-1}\lrp{[\lb]}_{\seed^\vee} $ is closed in $\Trop_{\R}(\cA^{\vee}_{\seed^\vee})$ and convex we have that $\Delta_{{\bf g}_{\seed}}(\lb)\subseteq \lrp{\Trop_{\R}(w_H)}^{-1}\lrp{[\lb]}_{\seed^\vee}$.
Let $ \mathbb B_k$ be the theta basis of $R_k(\lb)$, see \thref{g_is_val}. Since the theta basis is adapted for ${\bf g}_{\seed}$ we have that ${\bf g}_{\seed}(R_k(\lb))={\bf g}_{\seed}(\mathbb B_k)$. 
Since $\cA \subseteq \UT_Y$ has enough theta functions, every theta function $\tf \in \mathbb B_k$ is a global function on $\UT_Y$, therefore, we have that ${\bf g}_{\seed}(\tf) \in \Xi_{\UT_Y}$.
Since $\Xi_{\UT_Y}$ is closed in $\Trop_{\R}(\cA^{\vee}_{\seed^\vee})$, convex and closed under positive scaling then $\Delta_{{\bf g}_{\seed}}(\lb)\subseteq \Xi_{\UT_Y,\seed^\vee}$.
Hence, $\Delta_{{\bf g}_{\seed}}(\lb)\subseteq S$. 
To see the reverse inclusion we notice that the set of rational points of $S$ coincide with the set $ \bigcup_{k\geq 1} \frac{1}{k}  \gv_{\seed }(\mathbb B_k)=  \bigcup_{k\geq 1} \frac{1}{k}  \gv_{\seed }\lrp{R_k(\lb)}$.
Since $S$ can be expressed as the closure of its set of rational points we have that $S\subseteq \Delta_{\gv_{\seed}}(\lb)$.
Finally, since $\lrp{\Trop_{\R}(w_H)}^{-1}\lrp{[\lb]}_{\seed^\vee}$ and $\Xi_{\UT_Y,\seed^\vee}$ are positive sets then $S=\Delta_{{\bf g}_{\seed}}(\lb)$ is an intersection of positive sets. Hence, it is positive.
\end{proof}

\begin{remark}
\label{rem:comparing_NO_bodies}
Under the assumptions of \thref{thm:k_and_pic} we have that $Y$ is a minimal model with enough theta functions for an open subscheme $V'\subset Y$ with a cluster structure given by a birational map $ \Phi':\cA/T_{H}\dashrightarrow V'$ induced by $\Phi$.
To relate the Newton--Okounkov bodies constructed in this section with those constructed in the former we let $\lb $ be isomorphic to $\mathcal{O}(D')$ for some Weil divisor $D'$ on $Y$ satisfying the framework of \S\ref{sec:NO_bodies}.
Under the identification  $ \Trop_{\R}(\cA^{\vee}_{\seed^\vee}) = M^\circ_\R$ we realize $ \Trop_{\R}((\cA/T_H)^{\vee}_{\seed^\vee})$ as the subset of $M^\circ_\R$ orthogonal to $H$ (see \S\ref{tf_quotient}). 
For any $\tau \in R_1(D')$ we have $\Delta_{\gv^{\Phi'}_\seed}(D',\tau)\subset M_{\R}^\circ\cap \lrp{\Trop_{\R}(w_H)}^{-1}(0)_{\seed^\vee}$ and by construction
\[
\Delta_{\gv^{\Phi'}_\seed}(D',\tau) =\Delta_{\gv^{\Phi}_\seed}(\lb)- \gv^{\Phi}_{\seed}(\tau).
\]
\end{remark}

\begin{example}\thlabel{exp:full_flag}
An important class of examples is provided by the base affine spaces. 
Consider $G=SL_{n+1}(\Bbbk)$ and $B\subset G$ a Borel subgroup with unipotent subgroup $U\subset B$.
Then $G/U$ is a universal torsor for $G/B$. 
Moreover, $G/U$ carries a cluster structure induced by the double Bruhat cell $G^{e,w_0}:=B_-\cap Bw_0B$, where $B_-\subset G$ is the Borel subgroup opposite to $B$ (i.e. $B\cap B^-=:T$ is a maximal torus) and $w_0$ the longest element in this Weyl group $S_n$ is identified with a matrix representative in $N_G(T)/C_G(T)$ (the normalizer of $T$ modulo the centralizer of $T$).
The cluster structure on $G^{e,w_0}$ was introduced by Berenstein--Fomin--Zelevinsky in \cite{BFZ05} and it follows that (up to co-dimension 2) $G^{e,w_0}$ agrees with the corresponding $\mathcal A$-cluster variety. 
By \cite[Proposition 23]{Mag15} there is an embedding $G^{e,w_0}\hookrightarrow G/U$ compatible with the cluster structure. 
In particular, $G/U$ is a partial compactification of the $\mathcal A$-cluster variety $G^{e,w_0}$ obtained by adding the locus where frozen variables are allowed to vanish.
Magee further proved in \cite{Mag20} that the full Fock--Goncharov conjecture holds and a cluster ensemble map satisfying \thref{k_and_pic} is provided in \cite{Mag20}.
Similar results were also shown in \cite{Fei17} and \cite{Fei21} from the perspective of quivers with potential.
Hence, we obtain a ${\bf g}$-vector valuation ${\bf g}_{\seed}$ on $H^0(G/U,\mathcal O_{G/U})$ for every choice of seed $\seed$.

In particular, \thref{thm:k_and_pic} applies: recall that the Picard group of $G/B$ is isomorphic to the lattice spanned by the fundamental weights $\omega_1,\dots,\omega_{n}$. 
Let $\Lambda$ denote the dominant weights, \ie its elements are $\lambda=a_1\omega_1+\dots+a_n\omega_n$ with $a_i\in \mathbb Z_{\ge 0}$ and let $\mathcal L_\lambda\to G/B$ be the associated line bundle.
The ring of regular functions on the quasi-affine variety $G/U$ coincides with the Cox ring of the flag variety:
\[
H^0(G/U,\mathcal O_{G/U})\cong \bigoplus_{\lambda \in \Lambda} H^0 (G/B,\mathcal L_\lambda).
\]
Hence, we may restrict the ${\bf g}$-vector valuations ${\bf g}_{\seed}$ for all seeds $\seed$ to the section ring of any line bundle on $G/B$.
The resulting Newton--Okounkov polytopes coincide with slices of the tropicalization of the superpotential corresponding to the compactification. It has been shown in \cite{BF,GKS_typeA} that for certain choices of seeds these polytopes are unimodularly equivalent to Littelmann's string polytopes (see \cite{Lit98,BZ01}).
\end{example}

\begin{example}
Grassmannians also form a distinguished class of examples fitting this framework. We treat this class separately in \S\ref{sec:NO_Grass}.
\end{example}

\subsection{The intrinsic Newton--Okounkov body}\label{sec:intrinsic_NOB} 
In the situation of \S\ref{sec:NO_bodies} or \S\ref{sec:universal_torsors}, we can choose two seeds $\seed, \seed'\orT$ to obtain two Newton--Okounkov bodies, say $\Delta_{\nu_\seed}$ and $\Delta_{\nu_\seed'}$ (these are associated to a line bundle $\lb$ in case we are in a framework as in  \S\ref{sec:universal_torsors} or to a divisor $D'$ and a section $\tau$ in case our framework is as in \S\ref{sec:NO_bodies}).
In the same spirit as in \cite{EH20,FH21} (see also \cite[\S4]{BMNC} and \cite{HN23,CHM22}), in this section we show that if one of $\Delta_{\nu_{\seed}}$ or $\Delta_{\nu_{\seed'}}$ (equivalently both) is a positive set then these Newton--Okounkov bodies are related to each other by a distinguished piecewise linear transformation and, moreover, any such Newton--Okounkov body can be intrinsically described as a \emph{broken line convex hull} (see Theorems \ref{thm:intrinsic} and \ref{thm:intrinsic_lb} below).
In order to obtain the last assertion we rely on \cite{CMNcpt}. Along the way we introduce a theta function analog of the Newton polytope associated to a regular function on a torus.

We start by considering Newton--Okounkov bodies associated to Weil divisors as in \S\ref{sec:NO_bodies}. Let $\cV$ be a scheme of the form $\cA$, $\cX$, $\cA/T_{H}$ or $\cX_{\bf 1}$ and $(V, \Phi)$ a scheme with a cluster structure of type $\cV$.
Denote by $\mathbb{B}_{\tf}(\cV)=\{\tf^{\cV}_{\bf v}\mid {\bf v}\in \Theta(\cV)\}$ the theta basis of $\cmid(\cV)$.
We begin by observing that a cluster valuation $\nu_{\seed}$ on $\cmid(\cV)$ can be thought of as an extension of the composition of the seed-independent map 
\begin{eqnarray}
    \label{eq:nu_seed_free}
    \nu: \mathbb{B}_{\tf}(\cV)  &\to&   \Trop_{\Z}(\cV^\vee)\\
    \nonumber
\tf^{\cV}_{\bf v} &\mapsto &{\bf v},
\end{eqnarray}
with the identification $\mathfrak{r}_{\seed^\vee}:\Trop_{\Z}(\cV^\vee) \to  \Trop_{\Z}(\cV^\vee_{\seed^\vee})$. 
If $ \mathbb B_{\tf}(V)$ denotes the set of polynomial theta functions on $V$ then we can define $\nu^{\Phi} : \mathbb B_{\tf}(V) \to \Trop_{\Z}(\cV^\vee)$ analogously.
Moreover, even though $\Trop_{\Z}(\cV^{\vee})$ may not have a linear structure, if $\Theta (\cV)= \Trop_{\Z}(\cV^{\vee})$ and $L\subseteq \Trop_{\Z}(\cV^{\vee})$ is a linear subset acting linearly on $\Trop_{\Z}(\cV^{\vee})$ (see Definition \ref{def:linear_action}) then for every $y\in L$ we have a well defined ``subtraction" function
\eqn{ (\ \cdot \ )-y: \Trop_{\Z}(\cV^{\vee})  &\to   \Trop_{\Z}(\cV^{\vee})\\
x &\mapsto x-y, }
where $-y $ is the unique point of $ \Trop_{\Z}(\cV^{\vee})$ such that $\tf_y\tf_{-y}=1$ and $x-y$ is the unique point of $\Trop_{\Z}(\cV^{\vee})$ such that $\tf_{x}\tf_{-y}=\tf_{x-y}$. 

We now define our notion of convexity. Recall from  \S\ref{sec:tf_A} that we might think of supports of broken lines as seed independent objects. In light of this we consider the following.

\begin{definition}\label{def:blc_intro} \cite{CMNcpt}
A closed subset $S$ of $\Trop_{\R}(\cV)$ is {\bf{broken line convex}} 
if for every pair of rational points $s_1, s_2$ in $S(\Q)$,
every segment of a broken line with endpoints $s_1$ and $s_2$ is entirely contained in $S$.
\end{definition}

\begin{remark}
\label{rem:non-generic_bl}
The broken lines considered in  Definition \ref{def:blc_intro} include those that are \emph{non-generic}. Namely, broken lines that are obtained as limits of the generic broken lines introduced in \thref{def:genbroken}. See \cite[Definition~3.3]{CMNcpt} for details. 
\end{remark}

The main result of \cite{CMNcpt} asserts that positivity of a set is equivalent to its broken line convexity:

\begin{theorem}
\thlabel{thm:mainCMN}
\cite[Theorem 6.1]{CMNcpt}
Let $\cV$ be a variety of the form $\cA $, $\cX$, $\cA/T_{H}$ or $\cX_{\bf 1}$. Then a closed subset $S$ of $\Trop_{\R}(\cV)$ is is broken line convex if and only if it is positive.
\end{theorem}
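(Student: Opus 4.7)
This statement is the main result of \cite{CMNcpt}, so my proposal essentially traces that argument. The bridge between the two notions is equation \eqref{eq:multibrokenline}: the structure constants $\alpha(p_1, p_2, r)$ governing positivity are computed by pairs of broken lines meeting at a common generic point near $r$ with final slopes summing to $r$, while broken line convexity is governed by individual broken line segments connecting two finite points. The key observation is that a pair of broken lines $(\gamma_1, \gamma_2)$ meeting at a point $z$ can, after reversing one and splicing at $z$, be reinterpreted as (a piece of) a single broken line; conversely, any broken line segment through an intermediate point $x$ can be split at $x$ into two halves that form a witnessing pair for a nonzero structure constant.

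For the forward implication ``broken line convex $\Rightarrow$ positive'': let $p_i \in d_i S(\Z)$ and $r \in \Trop_\Z(\cV^\vee)$ with $\alpha(p_1,p_2,r) \neq 0$. By \eqref{eq:multibrokenline} there exist broken lines $\gamma_1, \gamma_2$ with $I(\gamma_i) = p_i$, meeting at a common endpoint $z$ generic near $r$, with $F(\gamma_1)+F(\gamma_2) = r$. Reversing $\gamma_2$ and splicing at $z$ produces a path $\widetilde{\gamma}$ whose two asymptotic ends travel in directions $p_1$ and $p_2$; after rescaling and reparametrization the rational points $s_i := p_i/d_i \in S$ lie on $\widetilde{\gamma}$, and the sum condition $F(\gamma_1)+F(\gamma_2)=r$ forces the point $r/(d_1+d_2)$ to lie on the bounded subsegment of $\widetilde{\gamma}$ between $s_1$ and $s_2$. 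Broken line convexity of $S$ then gives $r/(d_1+d_2) \in S$, so $r \in (d_1+d_2) S(\Z)$.

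For the reverse implication ``positive $\Rightarrow$ broken line convex'': let $s_1, s_2 \in S(\Q)$, let $\gamma$ be a broken line segment between them, and let $x \in \gamma$ be a rational interior point. Choose an integer $N$ clearing denominators of $s_1, s_2, x$, and choose integers $d_1, d_2$ encoding the ratios in which $x$ interpolates between $s_1$ and $s_2$ along the segment. Splitting $\gamma$ at $x$ and reading the two halves backwards from $x$ gives broken lines with initial slopes $d_1 N s_1$ and $d_2 N s_2$ ending near $(d_1+d_2)Nx$ whose final slopes sum to $(d_1+d_2)Nx$. This is precisely a witnessing pair for $\alpha(d_1 N s_1, d_2 N s_2, (d_1+d_2)Nx) \neq 0$, so positivity of $S$ gives $Nx \in (d_1+d_2)N \cdot S(\Z)$, hence $x \in S$. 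Since $x$ was an arbitrary rational interior point and $S$ is closed, $\gamma \subset S$.

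The main obstacle is making both the ``reverse-and-splice'' and the inverse ``split-at-$x$'' operations rigorous. The bending rule of \thref{def:genbroken}(4) is formulated via wall-crossing automorphisms $\mathfrak{p}_\wall$, whereas the slope-sum condition $F(\gamma_1)+F(\gamma_2)=r$ from \eqref{eq:multibrokenline} is only a linear relation. Reconciling the two requires exploiting the consistency of the cluster scattering diagram and, crucially, allowing non-generic broken lines (Remark \ref{rem:non-generic_bl}) which arise as limits of generic ones along the splice locus. A further subtlety is that $\alpha(p_1,p_2,r)$ is generally a sum over multiple broken line pairs, so both directions of the argument must match entire multi-broken-line contributions against slope-bookkeeping for individual segments; this is the technical heart of \cite{CMNcpt}.
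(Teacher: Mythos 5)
First, a point of comparison: this paper does not prove the statement at all --- it is imported verbatim from \cite[Theorem 6.1]{CMNcpt} --- so your proposal has to be measured against the proof given there. Your forward direction (broken line convex $\Rightarrow$ positive) is a fair sketch of that argument: from a pair $(\gamma_1,\gamma_2)$ witnessing $\alpha(p_1,p_2,r)\neq 0$ one produces, via the rescaling construction of \cite[\S 4]{CMNcpt}, a broken line segment from $p_1/d_1$ to $p_2/d_2$ passing through $r/(d_1+d_2)$ (exactly the statement invoked in the proof of \thref{taut}), and broken line convexity of $S$ finishes. One of your stated ``subtleties'' is not one: since the scattering functions may be taken of the form $(1+z^{p_1^*(n)})^c$ with $c>0$ (Theorem \ref{thm:consistent_scattering_diagrams}), all broken line contributions to $\alpha(p_1,p_2,r)$ are positive, so there is no cancellation and a single witnessing pair suffices in both directions.

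The reverse direction, as you wrote it, has a genuine gap. You assert that splitting the given broken line segment at $x$ and reading the two halves backwards from $x$ ``gives broken lines with initial slopes $d_1Ns_1$ and $d_2Ns_2$''. But the initial slope of a broken line is its asymptotic exponent, i.e.\ the decoration of its unbounded domain of linearity; if you merely split the segment at $x$, reverse one half (negating decorations) and extend both halves straight off to infinity, the resulting initial exponents are the decorations carried by the original broken line at the two ends of the segment, which are unrelated to the positions $s_1,s_2$. Positivity of $S$ only constrains products $\tf_{p_1}\tf_{p_2}$ with $p_i\in d_iS(\Z)$, so with these exponents it cannot be invoked, and the argument stalls precisely where it needs $S$. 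The missing idea --- the actual content of the correspondence in \cite{CMNcpt}, and the inverse of the rescaling you implicitly used in the forward direction --- is a position-dependent (radial) rescaling, exploiting that walls are cones, which trades the finite endpoints $s_1,s_2$ for asymptotic directions proportional to $s_1,s_2$ while simultaneously modifying the decorations by multiples of position vectors; the computation in \S\ref{sec:Gr36} illustrates this, where the decoration along the constructed segment near the endpoint $\val_{G}(p_{356})$ is $2\,\val_{G}(p_{356})-\val_{G}(p_{123}^2 f)$, not a multiple of the exponent of the corresponding broken line. That rescaling, together with clearing denominators and passing to the non-generic broken lines of Remark \ref{rem:non-generic_bl} in limits, is what turns ``split at $x$'' into a legitimate pair contributing to $\alpha(d_1Ns_1,d_2Ns_2,(d_1+d_2)Nx)$; splicing and reversing alone does not.
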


Morally, this means that broken line convexity in $\Trop_{\R}(\cV^\vee)$ play the same role in describing partial minimal models of $\cV$ that usual convexity in $M_\R$ plays in describing normal toric varieties $T_N \subset X$.
One appealing feature of the broken line convexity notion is that it makes no reference to any auxiliary data-- given $\cV$, we can talk about broken line convexity in $\Trop_{\R}(\cV^{\vee})$.
In contrast, the Newton--Okounkov bodies we discussed in \S\ref{sec:NO_bodies}
and \S\ref{sec:universal_torsors}
 are convex bodies whose construction depends upon a choice of seed $\seed$.
More generally, a usual Newton--Okounkov body depends not only on the geometric data of a projective variety together with a divisor but also on the auxiliary data of a choice of valuation.
Broken line convexity makes no reference to any such auxiliary data and will lead us to an intrinsic version of a Newton--Okounkov body. 

\begin{definition}
\label{def:bl_convex_hull}
Let $S \subset\Trop_{\R}(\cV^{\vee})$ be a set. The {\bf{broken line convex hull of $S$}}, denoted by $\bconv(S)$, is the intersection of all broken line convex sets containing $S$. 
\end{definition}

\begin{remark}
  We can also define broken line convexity 
 and broken line convex hulls inside $\Trop_{\R}(\cV^{\vee}_{\seed^\vee})$ in exactly the same way they are defined in Definitions \ref{def:blc_intro}  and \ref{def:bl_convex_hull}. 
  In particular, we have that  $S\subset \Trop_{\R}(\cV^{\vee})$ is broken line convex if and only if $\mathfrak{r}_{\seed^\vee}(S)\subset \Trop_{\R}(\cV^\vee_{\seed})$ is broken line convex.
\end{remark}

Using this convexity notion, we describe a set analogous to the Newton polytope of a function on a torus.
\begin{definition}
	Given a regular function ${f= \sum_{{\bf v} \in \Trop_{\Z}(\cV^{\vee})}} a_{\bf v} \tf^{V}_{\bf v}$ on $V$, we define the {\bf{$\tf$-function analogue of the Newton polytope of $f$}} to be 
	\eqn{ \NewtT(f) := \bconv\lrc{ {\bf v} \in \Trop_{\Z}(\cV^{\vee}) \mid a_{\bf v} \neq 0 }. }
\end{definition}

This leads to an intrinsic version of the Newton--Okounkov bodies we have constructed. So consider a partial minimal model $ V \subset Y$ and let $D'$ be a  divisor on $Y$ supported on the boundary of $V\subset Y$.  

\begin{definition}
Assume that $R(D')$ has a graded theta basis (see Definition \ref{def:graded_theta_basis}). Then the associated {\bf{intrinsic Newton--Okounkov body}} is
\eqn{
\Delta_{\mathrm{BL}}(D'):= \bconv\Bigg( \bigcup_{k\geq 1} \Bigg(\bigcup_{f \in R_k(D')} \frac{1}{k} \NewtT(f) \Bigg)  \Bigg)\subseteq \Trop_{\R}(\cV^\vee).
}
\end{definition}

In order to describe how the different realizations of intrinsic Newton--Okounkov bodies are related we record the tropicalization of the gluing map  $\mu^{\cV^\vee}_k:\cV^{\vee}_\seed \dashrightarrow \cV^{\vee}_{\seed'}$ in terms of the fixed data $\Gamma$ and inital seed $\seed_0=(e_i)_{i\in I}$ defining $\cV$. 
\begin{equation*}
\Trop_{\R}\lrp{\mu^{\cA^\vee}_{k}}(m)=\begin{cases} m + \langle d_ke_k, m \rangle  v_k & \text{if } \langle e_k, m \rangle \geq 0,\\
m & \text{if } \langle e_k, m \rangle \leq 0,
\end{cases}
\end{equation*}
for $m \in M^{\circ}$. 
\[
\Trop_{\R}\lrp{\mu^{\cX^\vee}_{k}}(n)=\begin{cases} n + \{n,d_ke_k \} e_k & \text{if } \{  n,e_K \}\geq 0,\\
n & \text{if } \{ n,e_K\} \leq 0,
\end{cases}
\]
for $n \in N$. 
\[
\Trop_{\R}\lrp{\mu^{(\cXe)^\vee}_{k}}(n+H)=\begin{cases} n + \{n,d_ke_k \}e_k + H & \text{if } \{ n, e_k \}\geq 0,\\
n + H& \text{if } \{ n, e_k \} \leq 0,
\end{cases}
\]
for $n + H \in N/H$. 
\[
\Trop_{\R}\lrp{\mu^{(\cA/T_H)^\vee}_{k}} =  \Trop_{\R}\lrp{\mu^{\cA^\vee}_{k}} \mid_{H^\perp}.
\]

\begin{theorem}
\thlabel{thm:intrinsic}
Let $(V,\Phi)$ be a scheme with a cluster structure of type $\cV$ and let $V \subset Y$ be a partial minimal model. Assume that the full Fock--Goncharov conjecture holds for $\cV$ and that there exists a theta function $\tau \in R_1(D')$ such that $\nu^{\Phi}_{\seed}(\tau)$ lies in a linear subset of $\Trop_{\Z}(\cV^\vee)$. If $\Delta_{\nu^{\Phi}_{\seed}}(D',\tau)$ is positive then for every seed $\seed \in \orT$ we have that $\mathfrak{r}_{\seed^\vee}(\Delta_{\mathrm{BL}}(D')-\nu^{\Phi}(\tau))= \Delta_{\nu^{\Phi}_{\seed}}(D',\tau) $.
In particular, for any other seed $\seed'\in \orT $ we have that
\[
\Delta_{\nu_{\seed'}}(D', \tau )= \Trop_{\R}\lrp{\mu^{\cV^\vee}_{\seed,\seed'}}\lrp{\Delta_{\nu_{\seed}}(D', \tau)}.
\] 
\end{theorem}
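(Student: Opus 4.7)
The plan is to realize both sides of the claimed equality as the same intrinsic set of $\tau$-shifted points---viewed as a broken line convex hull on one side and as an ordinary closed convex hull on the other---and then conclude coincidence by invoking the positivity hypothesis through \thref{thm:mainCMN}. The wall-crossing identity then follows from the commutative triangle $\mathfrak{r}_{\seed'^\vee} = \Trop_{\R}\lrp{\mu^{\cV^\vee}_{\seed,\seed'}} \circ \mathfrak{r}_{\seed^\vee}$.

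First I would use the graded theta basis hypothesis (implicit in the definition of $\Delta_{\mathrm{BL}}(D')$) to simplify both sides. For each $k \geq 1$ set
\[
S_k := \lrc{{\bf q} \in \Trop_{\Z}(\cV^\vee) \mid \tf^{\cV}_{\bf q} \in R_k(D')}.
\]
Any $f \in R_k(D')$ expands in the theta basis as $f = \sum a_{\bf q}\tf^{\cV}_{\bf q}$ with every ${\bf q}$ satisfying $a_{\bf q}\neq 0$ already in $S_k$, so $\NewtT(f) \subseteq \bconv(S_k)$; conversely each ${\bf q} \in S_k$ appears as $\NewtT(\tf^{\cV}_{\bf q})$. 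Hence $\Delta_{\mathrm{BL}}(D') = \bconv\lrp{\bigcup_{k \geq 1} \tfrac{1}{k} S_k}$. Dually, since the theta basis is adapted to $\nu^{\Phi}_{\seed}$ by \thref{g_is_val} and since $\tau$ is a theta function whose label lies in a linear subset (so that $\nu^{\Phi}_{\seed}(f/\tau^k) = \nu^{\Phi}_{\seed}(f) - k\nu^{\Phi}_{\seed}(\tau)$ is well defined), the set $\lrc{\nu^{\Phi}_{\seed}(f/\tau^k)/k \mid f \in R_k(D'), k \geq 1}$ coincides with $\mathfrak{r}_{\seed^\vee}\lrp{\bigcup_{k} \tfrac{1}{k} S_k} - \nu^{\Phi}_{\seed}(\tau)$. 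Writing $T$ for this common shifted set in $\Trop_{\R}(\cV^\vee_{\seed^\vee})$, this gives $\Delta_{\nu^{\Phi}_{\seed}}(D',\tau) = \overline{\conv}(T)$, and since $\mathfrak{r}_{\seed^\vee}$ commutes with taking broken line convex hulls (broken lines and scattering diagrams are intrinsic to $\Trop_{\R}(\cV^\vee)$), also $\mathfrak{r}_{\seed^\vee}\lrp{\Delta_{\mathrm{BL}}(D') - \nu^{\Phi}(\tau)} = \bconv(T)$.

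The core of the proof therefore reduces to the identity $\bconv(T) = \overline{\conv}(T)$, and this is where the main technical obstacle lies. The inclusion $\bconv(T) \subseteq \overline{\conv}(T)$ comes directly from the positivity hypothesis: by \thref{thm:mainCMN}, $\overline{\conv}(T) = \Delta_{\nu^{\Phi}_{\seed}}(D',\tau)$ is broken line convex, and since it contains $T$ it must contain $\bconv(T)$. The reverse inclusion requires the fact that broken line convex sets are closed under ordinary convex combinations of rational points---equivalently, that every straight segment between two rational points of a broken line convex set lies in the set. This follows because such a straight segment is a limit of (non-generic) broken line segments with bending at each crossed wall, and these limit points lie in the set by closedness; it is essentially one of the ingredients of the proof of \thref{thm:mainCMN} in \cite{CMNcpt}, and is the step that demands the most care.

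Finally, for the wall-crossing identity, I apply the intrinsic equality just established to both seeds $\seed$ and $\seed'$ together with the commutative triangle $\mathfrak{r}_{\seed'^\vee} = \Trop_{\R}\lrp{\mu^{\cV^\vee}_{\seed,\seed'}} \circ \mathfrak{r}_{\seed^\vee}$. Since $\Delta_{\mathrm{BL}}(D') - \nu^{\Phi}(\tau)$ is a broken line convex, hence positive, intrinsic object, its image $\Delta_{\nu_{\seed'}}(D',\tau)$ is automatically positive, and
\[
\Delta_{\nu_{\seed'}}(D',\tau) = \mathfrak{r}_{\seed'^\vee}\lrp{\Delta_{\mathrm{BL}}(D') - \nu^{\Phi}(\tau)} = \Trop_{\R}\lrp{\mu^{\cV^\vee}_{\seed,\seed'}}\lrp{\Delta_{\nu_{\seed}}(D',\tau)},
\]
as claimed.
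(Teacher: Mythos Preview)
Your approach is correct and essentially the same as the paper's: reduce to showing $\bconv(T)=\overline{\conv}(T)$ for the shifted set $T$, get one inclusion from positivity via \thref{thm:mainCMN}, the other from straight segments being broken line segments, and then read off the wall-crossing identity from the commutative triangle for $\mathfrak{r}_{\seed^\vee}$.

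One simplification worth noting: you flag the inclusion $\overline{\conv}(T)\subseteq\bconv(T)$ as ``the step that demands the most care'' and argue by approximating straight segments as limits of bending broken lines. This is unnecessary. A straight line segment already \emph{is} a broken line segment (take the trivial term at every wall; bending is optional in \thref{def:genbroken}), so $\conv(T)\subseteq\bconv(T)$ directly, and since broken line convex sets are closed by definition, $\overline{\conv}(T)\subseteq\bconv(T)$. The paper treats this as the easy direction; the substantive input is the other inclusion, which genuinely uses the positivity hypothesis through \thref{thm:mainCMN}. Your preliminary reduction via the sets $S_k$ and the graded theta basis is fine but also not needed---the paper works directly with the set $S=\bigcup_{k\geq 1}\{\nu_{\seed}(f)/k-\nu_{\seed}(\tau)\mid f\in R_k(D')\setminus\{0\}\}$.
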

\begin{proof}
It is enough to treat the case $V= \cV$. We consider the broken line convex hull of 
\[
S=\bigcup_{k\geq 1}\lrc{\dfrac{\nu_{\seed}(f)}{k}-\nu_{\seed}(\tau)\mid f\in R_k(D') \setminus\{0\}} 
\]
in $\Trop_{\R}(\cV^{\vee}_{\seed^\vee})$. 
Since all line segments of $\Trop_{\R}(\cV^{\vee}_{\seed^\vee})$ can be thought of as a segment of a broken line and $\Delta_{\nu_{\seed}}(D', \tau)$ is closed we have that $ \Delta_{\nu_{\seed}}(D', \tau)\subseteq \bconv(S)$. 
By \thref{thm:mainCMN} $\Delta_{\nu_{\seed}}(D', \tau)$ is broken line convex. Since $S\subset \Delta_{\nu_{\seed}}(D', \tau)$ we have the reverse inclusion. 
The last statement follows from the fact that broken line convex sets are preserved by $\Trop_{\R}(\mu^{\cV^{\vee}}_k)$. 
\end{proof}

There is an analogous result for line bundles fitting the framework of \S\ref{sec:universal_torsors}.

\begin{definition}
\label{def:intrinsic_lb}
Let $Y$ be a projective variety such that $\text{Pic}(Y)$ is free of finite rank. Assume $(V, \Phi)$ is a scheme with a cluster structure of type $\cA$ and that $V \subset \UT_Y$ is a partial minimal model with enough theta functions. Let $(p, H)$ have the Picard property (see \thref{k_and_pic}). The {\bf{intrinsic Newton--Okounkov body} associated to a class $[ \lb ]\in \text{Pic}(Y)\cong H^*$} is
\eqn{
\Delta_{\mathrm{BL}}(\lb):= \bconv\Bigg( \bigcup_{k\geq 1} \Bigg(\bigcup_{f \in R_k(\lb)} \frac{1}{k} \NewtT(f) \Bigg)  \Bigg)\subseteq \Trop_{\R}(\cA^{\vee}).
}
\end{definition}

In this case we have the following theorem whose proof is completely analogous to the proof of \thref{thm:intrinsic}. Moreover, it uses the fact that $\nu_{\seed}(\lb)$ is a positive set, as shown in \thref{thm:k_and_pic}.

\begin{theorem}
\thlabel{thm:intrinsic_lb}
Keep the assumptions of Definition \ref{def:intrinsic_lb}.
For every seed $\seed\in \orT$ we have that $\Delta_{\nu^{\Phi}_{\seed}}(\lb)=\mathfrak{r}_{\seed^\vee}(\Delta_{\mathrm{BL}}(\lb))$. In particular, for every $\seed' \in \orT $ we have that
\[
\Delta_{\nu^\Phi_{\seed'}}(\lb )= \lrp{\mu^{\cV^\vee}_{\seed^\vee, \seed'^\vee}}^T(\Delta_{\nu^\Phi_{\seed}}(\lb)).\] 
\end{theorem}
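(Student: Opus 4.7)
The plan is to mimic the proof of Theorem \ref{thm:intrinsic}, with two adaptations: positivity of $\Delta_{\nu^{\Phi}_{\seed}}(\lb)$ is now a theorem rather than a hypothesis, being supplied by Theorem \ref{thm:k_and_pic}; and no shift by a section is needed since the Newton--Okounkov body here is attached intrinsically to $\lb$ rather than to a pair $(D',\tau)$. Fix a seed $\seed$ and work throughout in the vector space $\Trop_{\R}(\cA^{\vee}_{\seed^\vee})$ via the identification $\mathfrak{r}_{\seed^\vee}$; as in the proof of Theorem \ref{thm:intrinsic}, it suffices to treat the case $V=\cA$.

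I would first handle the easy inclusion $\Delta_{\nu^{\Phi}_{\seed}}(\lb)\subseteq\mathfrak{r}_{\seed^\vee}(\Delta_{\mathrm{BL}}(\lb))$. Expanding any $f\in R_k(\lb)\setminus\{0\}$ in the theta basis as $f=\sum_{\bf v}a_{\bf v}\tf^{V}_{\bf v}$, the value $\nu^{\Phi}_{\seed}(f)$ is the $<_{\seed}$-minimum of the image under $\mathfrak{r}_{\seed^\vee}$ of the tropical support $\{{\bf v}\mid a_{\bf v}\neq 0\}$, and hence lies in $\mathfrak{r}_{\seed^\vee}(\NewtT(f))$. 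Since every straight segment in a vector space is also a (degenerate) broken line segment, the usual closed convex hull of a set is contained in the broken line convex hull of that set; taking closures gives the desired inclusion.

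For the reverse inclusion, Theorem \ref{thm:k_and_pic} asserts that $\Delta_{\nu^{\Phi}_{\seed}}(\lb)$ is a positive subset of $\Trop_{\R}(\cA^{\vee}_{\seed^\vee})$, hence broken line convex by Theorem \ref{thm:mainCMN}. Because $V\subset\UT_Y$ has enough theta functions, Lemma \ref{lem:basis_of_tf} guarantees that $R_k(\lb)$ admits a theta basis; therefore any $\tf^{V}_{\bf v}$ appearing with nonzero coefficient in a section $f\in R_k(\lb)$ itself belongs to $R_k(\lb)$. This yields $\mathfrak{r}_{\seed^\vee}({\bf v})/k=\nu^{\Phi}_{\seed}(\tf^{V}_{\bf v})/k\in\Delta_{\nu^{\Phi}_{\seed}}(\lb)$ for each such $\bf v$, and broken line convexity of $\Delta_{\nu^{\Phi}_{\seed}}(\lb)$ promotes this to $\mathfrak{r}_{\seed^\vee}\bigl(\tfrac{1}{k}\NewtT(f)\bigr)\subseteq\Delta_{\nu^{\Phi}_{\seed}}(\lb)$. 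Taking unions over $f$ and $k$ and again invoking broken line convexity gives $\mathfrak{r}_{\seed^\vee}(\Delta_{\mathrm{BL}}(\lb))\subseteq\Delta_{\nu^{\Phi}_{\seed}}(\lb)$. The final wall-crossing identity is immediate from the first assertion together with the factorization $\mathfrak{r}_{\seed'^\vee}\circ\mathfrak{r}_{\seed^\vee}^{-1}=\Trop_{\R}(\mu^{\cV^\vee}_{\seed^\vee,\seed'^\vee})$.

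The main obstacle is the reverse inclusion: one needs not merely that the tropical exponents of each $f$ land in $\Delta_{\nu^{\Phi}_{\seed}}(\lb)$, but that their \emph{broken line} convex hull does. This depends crucially on combining two ingredients established earlier in the paper--the theta basis for $R_k(\lb)$ arising from the "enough theta functions" hypothesis, which lets one access the individual tropical exponents, and the broken line convexity of the body itself, which comes from the positivity statement in Theorem \ref{thm:k_and_pic} via the main result of \cite{CMNcpt} recorded as Theorem \ref{thm:mainCMN}.
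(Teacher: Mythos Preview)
Your argument is correct and follows the same route as the paper: invoke positivity of $\Delta_{\nu^{\Phi}_{\seed}}(\lb)$ from \thref{thm:k_and_pic}, convert this to broken line convexity via \thref{thm:mainCMN}, and then run the two-inclusion argument exactly as in the proof of \thref{thm:intrinsic}. The paper's own proof is a single sentence deferring to \thref{thm:intrinsic}; you have simply made explicit the role of the graded theta basis (Lemma~\ref{lem:basis_of_tf}) in showing that every tropical exponent of $f\in R_k(\lb)$ already lies in $\Delta_{\nu^{\Phi}_{\seed}}(\lb)$, which is a detail the paper leaves implicit.
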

\begin{proof}
We showed in \thref{thm:k_and_pic} that $\Delta_{\nu_{\seed}}(\lb)$ is a positive set. The proof of this result is completely analogous to the proof of \thref{thm:intrinsic}.
\end{proof}

In either situation (divisors or line bundles) we are of course free to compute the intrinsic Newton--Okounkov body as a usual Newton--Okounkov body in any vector space realization of $\Trop_{\R}(\cV^{\vee})$.
However, the intrinsic definition has certain advantages as we now explain.
For simplicity, from now on we concentrate on line bundles as in \thref{thm:intrinsic_lb}; the reader can make the appropriate changes for the case of divisors as in \thref{thm:intrinsic}.
It is often the case that $\Delta_{\mathrm{BL}}(\lb) = \bconv \Big( \bigcup_{k=1}^\ell \frac{1}{k} \nu^{\Phi}\lrp{R_k(\lb)} \Big)$ for some finite $\ell$, meaning in these cases the infinite union reduces to finite union.
Consider such an instance and let $\ell_{\seed}$ be the smallest integer such that $\Delta_{\nu^{\Phi}_{\seed}}(\lb)=\conv \Big( \bigcup_{k=1}^{\ell_{\seed}} \frac{1}{k} \nu^{\Phi}_{\seed}\lrp{R_k(\lb)} \Big)$.  
Then the corresponding $\ell$ for the intrinsic Newton--Okounkov body is at most $\min_{\seed}\lrc{\ell_{\seed}}$.
Moreover, we can give conditions indicating when $\ell$ has been attained. 
We will start with a condition that, after adopting a slightly different perspective on theta functions, becomes tautological.\footnote{This perspective is essentially the {\it{jagged path}} description of theta functions rather than the broken line description.  See for example \cite[Section~3]{GS12}.}  
We will then adapt this condition to give a sufficient criterion that is more likely to be known for a given minimal model (and a known line bundle or Weil divisor).

\begin{proposition}\thlabel{taut}
Let $\lb$ be as in \thref{thm:intrinsic_lb}. Suppose there exists a positive integer $\ell$ such that for all $h>\ell$, each theta function $\tf^V_r$ in $R_h(\lb)$ appears as a summand (with non-zero coefficient) of some product $\tf^V_p \tf^V_q$, where $\tf^V_p \in R_i(\lb)$ and $\tf^V_q \in R_j(\lb)$  for some positive integers $i$ and $j$ with $i+j =h$. 
Then \eqn{
\Delta_{\mathrm{BL}}(\lb) = \bconv\Bigg( \bigcup_{k=1}^{\ell} \Bigg(\bigcup_{f \in R_k(\lb)} \frac{1}{k} \NewtT(f) \Bigg)  \Bigg) .
}
\end{proposition}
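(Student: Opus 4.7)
Let me abbreviate the right-hand side as $\bconv(T)$, where $T := \bigcup_{k=1}^{\ell}\bigcup_{f\in R_k(\lb)} \tfrac{1}{k}\NewtT(f) \subseteq \Trop_{\R}(\cA^\vee)$. The inclusion $\bconv(T) \subseteq \Delta_{\mathrm{BL}}(\lb)$ is immediate, because $\Delta_{\mathrm{BL}}(\lb)$ is a broken line convex set containing $T$. For the reverse inclusion, the plan is to show by strong induction on $k$ that $\tfrac{1}{k}\NewtT(f) \subseteq \bconv(T)$ for every $k\geq 1$ and every $f\in R_k(\lb)$; the base cases $k\leq \ell$ hold by the very definition of $T$.

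For the inductive step, fix $k>\ell$ and $f\in R_k(\lb)$. I would first invoke the hypothesis that $V\subset \UT_Y$ has enough theta functions, together with Lemma~\ref{lem:basis_of_tf}, to expand $f=\sum_r a_r\tf^V_r$ in the theta basis of $R_k(\lb)$; in particular each $\tf^V_r$ with $a_r\neq 0$ itself lies in $R_k(\lb)$. For any such $r$, the hypothesis of the proposition supplies $\tf^V_p\in R_i(\lb)$ and $\tf^V_q\in R_j(\lb)$ with $i+j=k$ and $\alpha(p,q,r)\neq 0$. Since $\NewtT(\tf^V_p)=\{p\}$ and $\NewtT(\tf^V_q)=\{q\}$, applying the inductive hypothesis to $\tf^V_p$ and $\tf^V_q$ places $p\in i\cdot\bconv(T)(\Z)$ and $q\in j\cdot\bconv(T)(\Z)$.

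To close the step I would pivot to positivity. By \thref{thm:mainCMN}, $\bconv(T)$ is positive, and a short direct verification from Definition~\ref{def:positive_set} shows that any positive-integer dilate of a positive set is again positive, hence broken line convex by \thref{thm:mainCMN} again. Positivity of $\bconv(T)$, applied with scales $(i,j)$ and the structure constant $\alpha(p,q,r)\neq 0$, then forces $r\in k\cdot\bconv(T)(\Z)$, so $\supp(f)\subseteq k\cdot\bconv(T)$. The broken line convexity of the dilate $k\cdot\bconv(T)$ yields $\NewtT(f)=\bconv(\supp(f))\subseteq k\cdot\bconv(T)$, and scaling by $1/k$ completes the induction.

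The main subtle point I anticipate is the compatibility between broken line convexity and positive-integer scaling: a priori it is not obvious that a dilate of a broken line convex set should be broken line convex, since walls and broken lines in the scattering diagram are not translation-invariant. The clean workaround is precisely the detour through positivity, where scaling behaves trivially at the level of the definition and can be transported back to broken line convexity via \thref{thm:mainCMN}. A secondary but essential technical point is that the inductive argument is applied summand-by-summand in a theta-basis expansion of $f$, which is exactly why ``enough theta functions'' enters at the start of the step.
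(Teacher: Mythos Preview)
Your proof is correct, and it takes a route that is close in spirit to the paper's but differs in the key technical tool invoked. The paper argues geometrically: given $\alpha(p,q,r)\neq 0$ with $\tf^V_p\in R_i(\lb)$, $\tf^V_q\in R_j(\lb)$, it cites the explicit construction from \cite[\S4]{CMNcpt} to produce a broken line segment from $\tfrac{p}{i}$ to $\tfrac{q}{j}$ passing through $\tfrac{r}{h}$, which places $\tfrac{r}{h}$ directly inside any broken line convex set containing the two endpoints. Iterating this over $h>\ell$ gives the result. You instead use the abstract equivalence of \thref{thm:mainCMN}: knowing $\bconv(T)$ is broken line convex, hence positive, you read off $r\in k\cdot\bconv(T)(\Z)$ straight from Definition~\ref{def:positive_set} applied with scales $(i,j)$. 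Your detour through ``positive-integer dilates of positive sets are positive'' is then exactly what is needed to transfer this back to a broken line convexity statement for $k\cdot\bconv(T)$ and absorb the full $\NewtT(f)$.

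What each approach buys: the paper's argument is more pictorial and yields an explicit broken line segment witnessing the containment, which is useful conceptually and in examples (as in \S\ref{sec:Gr36}). Your argument is logically more self-contained in that it uses only the statement of \thref{thm:mainCMN}, not the internals of \cite[\S4]{CMNcpt}, and it makes explicit the compatibility of positivity with integer scaling---a point the paper's proof uses implicitly when it asserts $\bconv\big(\bigcup_{f\in R_h}\tfrac{1}{h}\NewtT(f)\big)=\bconv\big(\tfrac{r}{h}:\tf^V_r\in R_h\big)$. Your flagging of this as the ``main subtle point'' is apt.
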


\begin{proof}
This is an immediate consequence of results in \cite{CMNcpt}.  We adopt the terminology and conventions of {\it loc. cit.} for this proof.  In particular, we allow non-generic broken lines (see Remark \ref{rem:non-generic_bl}).

Since the structure constant $\alpha(p,q,r)$ is non-zero, there exists a pair of broken lines $\lrp{\gamma_1,\gamma_2}$ with $I(\gamma_1) = p $, $I(\gamma_2) = q $, $\gamma_1(0)=\gamma_2(0) = r$, and $F(\gamma_1)+ F(\gamma_2) = r$.
Then the construction of \cite[\S4]{CMNcpt} yields a broken line segment from $\frac{p}{i}$ to $\frac{q}{j}$ passing through $\frac{r}{h}$. 
As a consequence, we have 
\eqn{\frac{r}{h} \in \bconv\Bigg( \bigcup_{k=1}^{\max(i,j)} \Bigg(\bigcup_{f \in R_k(\lb)} \frac{1}{k} \NewtT(f) \Bigg)  \Bigg) . }
By hypothesis, $R_k(\lb)$ has a basis of theta functions for all $k$, so 
\eqn{
\bconv \Bigg(\bigcup_{f \in R_h(\lb)} \frac{1}{h} \NewtT(f) \Bigg) =  \bconv \lrp{ \frac{r}{h} \mid \tf^V_r \in R_h(\lb)} .
}
We have just seen that each such $\frac{r}{h}$ is contained in 
\eqn{\bconv\Bigg( \bigcup_{k=1}^{h-1} \Bigg(\bigcup_{f \in R_h(\lb)} \frac{1}{h} \NewtT(f) \Bigg)  \Bigg), }
so 
\eqn{
\bconv \Bigg(\bigcup_{f \in R_h(\lb)} \frac{1}{h} \NewtT(f) \Bigg) \subset  \bconv\Bigg( \bigcup_{k=1}^{h-1} \Bigg(\bigcup_{f \in R_k(\lb)} \frac{1}{k} \NewtT(f) \Bigg)  \Bigg).
}
As this holds for all $h>\ell$, we conclude that
\eqn{
\Delta_{\mathrm{BL}}(\lb) = \bconv\Bigg( \bigcup_{k=1}^{\ell} \Bigg(\bigcup_{f \in R_k(\lb)} \frac{1}{k} \NewtT(f) \Bigg)  \Bigg) .
}
\end{proof}

\begin{remark}
In dimension 2, Mandel \cite{Man16} showed that the assumption in \thref{taut} implies that $r=p+q$ in some seed. It is a very interesting problem to determine if this holds for higher dimensions. 
\end{remark}

Note that as we have (by assumption) a theta basis for $R(\lb)$, the condition of \thref{taut} is implied by the following condition:

\noindent
\begin{condition}\thlabel{condition_section ring}
There exists a positve integer $\ell$ such that for all $h>\ell$, the natural map $R_i (\lb) \otimes R_j(\lb) \to R_h (\lb)$ is surjective for some positive integers $i$ and $j$ with $i+j =h$.
\end{condition} 

\begin{remark}\label{rmk:borel weil bott}
The \thref{condition_section ring} is satisfied in our main class of examples coming from representation theory: recall the setting of \thref{exp:full_flag} where line bundles $\mathcal L_\lambda$ of the full flag variety $G/B$ are indexed by dominant weights $\lambda$.
By the Borel--Weil--Bott Theorem the graded pieces $R_i(\mathcal L_\lambda)$ of the section rings of these line bundles satisfy
\[
R_i(\mathcal L_\lambda)\cong  V(i\lambda)^*,
\]
where $V(i\lambda)$ is the irreducible $G$-representation of highest weight $i\lambda$ and $i\ge 0$. 
By work of Baur \cite{Baur_CartanComp} the tensor product $V(i\lambda)\otimes V(j\lambda)$ contains among its irreducible components the unique component of maximal weight, called Cartan component, which is $V((i+j)\lambda)$. 
Hence,
\[
R_i(\mathcal L_\lambda)\otimes R_j(\mathcal{L}_\lambda)\cong V(i\lambda)^*\otimes V(j\lambda)^*\twoheadrightarrow V((i+j)\lambda)^*\cong R_{i+j}(\mathcal L_\lambda).
\]
Although in \thref{exp:full_flag} we only treat the case of $SL_{n+1}(\Bbbk)$ it is worth noticing that the Borel--Weil(--Bott) Theorem holds for semisimple Lie groups and algebraic groups over $\Bbbk$ and Baur's result holds for irreducible representations of connected, simply-connected complex reductive groups.
Notice further that these observations also hold for partial flag varieties, \ie quotient $G/P$ by parabolic subgroups $P\subset G$ as the cohomology of an equivariant line bundles on $G/P$ is equal to the cohomology of its pullback along the natural projection $G/B\twoheadrightarrow G/P$. So the cohomology of the line bundle on $G/P$ can be calculated using the usual Borel--Weil(--Bott) Theorem for $G/B$, by the Leray spectral sequence.
\end{remark}

\section{The case of the Grassmannian}
\label{sec:NO_Grass}

We now consider in detail the case of the Grassmannians. 
Throughout this section we work over the complex numbers, fix two positive integers $k<n$ and let
\[
Y= \Grass_{n-k}(\C^n)
\]
be the corresponding Grassmannian. 
Let $\widetilde{Y} $ be the affine cone of $Y$ in its Pl\"ucker embedding $Y\hookrightarrow \mathbb{P}^{\binom{n}{n-k}-1}$ and $\lb_e$ be the bundle over $Y$ obtained by pullback of $\mathcal{O}(1)$ under this embedding.
By definition, the Pl\"ucker coordinates are a basis for $H^0(Y,\lb_e)$.
It is well known that $\Pic({Y})$ is free of rank one and $[\lb_e]$ is a generator.
Moreover, the universal torsor of $Y$ is 
\[
\UT_{Y}\cong \widetilde{Y}\setminus\{ 0\}
\]
and the action of $T_{\Pic({Y})^*}$ on $\UT_{Y}$ coincides with the diagonal action of $\C^*$. 
Pl\"ucker coordinates are denoted by $p_{J}$ where $J\in \binom{[n]}{n-k}$ is an $n-k$-element subset of $\{1, \dots , n\}$.
Working with cyclic intervals, we let $ D_i=\{ p_{[i+1, i+k]} =0 \}$ and consider the divisor 
\[
D=\bigcup_{i=1}^nD_{i} \subset Y.
\]
For any $i$ the line bundle $\mathcal{O}_Y(D_i)$ is isomorphic to $\lb_e$ and the Weil divisor $\sum_{i=1}^nD_i$ is anticanonical.
We let $\widetilde{D}_i \subset \UT_Y$ be the preimage of $D_i$ under the quotient map $\UT_Y\twoheadrightarrow Y$ and set $\widetilde{D}= \bigcup_{i=1}^n \widetilde{D}_i$.
The divisor $\sum_{i=1}^n\widetilde{D}_i$ is anticanonical.
It follows from the work of Scott \cite{Sco06} that the log Calabi--Yau variety $\UT_Y \setminus \widetilde{D}$ has a cluster structure of type $\cA$ which is skew-symmetric (that is, for its fixed data all $d_i=1$ and $N=N^\circ$) and such that the frozen variables are precisely the Pl\"ucker variables $\{ p_{[i+1,i+k]}\}_{i=1}^{n}$.
This cluster structure is given by an inclusion 
\[
\cA\hookrightarrow \UT_Y  \setminus \widetilde{D}.
\]
Since $\widetilde{D}$ is the locus in $\UT_{Y}$ where the frozen variables vanish,
we have that $ \cA \subset \UT_Y$ is a partial minimal model (see the example below Definition~\ref{def:cv_minimal_model}).
In \cite{MS16} (see also \cite{SW18}), the authors show that $\cA$ has a seed with a maximal green sequence so we can use \cite[Proposition 0.4]{GHKK} to conclude that the full Fock--Goncharov conjecture holds for $\cA$.
Proposition 9.4 in \cite{GHKK} together with \thref{lemm:enough_tf} imply that $\cA \subset \UT_Y$ is a partial minimal model with enough theta functions in the sense of Definition \ref{def:enough_tf}.
In the following subsection we exhibit a cluster ensemble lattice map $p^*$ for $\cA$ such that for $K := \ker(p^*)$, the pair $(p,K)$ has the Picard property in the sense of \thref{k_and_pic} with respect to $\cA \subset \UT_Y$. 
These considerations allow us to apply to all the results of \S\ref{sec:universal_torsors} and \S\ref{sec:NO_bodies}.
In particular, we can think of the Grassmannian as a minimal model for the quotient $\cA/T_K$.

\begin{remark}\label{rmk:open positroid}
The variety $Y \setminus D$ is usually called the open positroid variety. This variety can be endowed with a cluster structure of any of the kinds we consider in this paper: $\cA$, $\cX$, a quotient of $\cA$ or a fibre of $\cX$.
\end{remark}

\subsection{The Picard property}
\label{sec:Pic_property}
In this section we verify that the Picard property (\thref{k_and_pic}) holds for a certain choice of cluster ensemble map and sublattice. This condition is necessary in order to apply \thref{thm:k_and_pic} to the Grassmannian.  

We rely on background from \cite{RW} but recall important notions below.
For background on plabic graphs we refer the reader to \emph{loc. cit.}. 
Recall, that plabic graphs\footnote{To be precise, we are only interested in reduced plabic graphs with trip permutation $\pi_{k,n}$.} are combinatorial objects encoding those seeds whose associated $\cA$-cluster variables are Pl\"ucker coordinates.
To simplify the exposition we do not distinguish between a plabic graph and its associated seed.
Given an index set $J\in\binom{[n]}{n-k}$ we construct a Young diagram $\mu_J$ inside an $(n-k)\times k$ grid inside a rectangle.
Let $w_J$ be the path along edges of the grid from north east  to south west corner whose south steps are in $J$. 
Then $\mu_J$ is the Young diagram (inside the rectangle attached to the north west corner) whose south east border is $w_J$.
Among all plabic graphs there is a particularly symmetric one know as the {\bf rectangles plabic graph} $G_{\Yng(1)}:=G^{\rm rec}_{k,n}$.
The associated cluster variables are naturally indexed by {\it rectangular} Young diagrams (together with the {\it empty} rectangle, denoted by $\varnothing$). 
In what follows we focus on this plabic graph as the initial seed and denote by
\begin{equation}
\label{eq_seed}
\seed_{\Yng(1)}=(e_\varnothing)\cup(e_{i\times j}\mid  1\le i\le n-k, 1\le j\le k),    
\end{equation}
the induced basis of $N=N^\circ\cong \mathbb Z^{k(n-k)+1}$. 
Let $\{f_\varnothing\}\cup\{f_{i\times j}\mid  1\le i\le n-k, 1\le j\le k\}$ denote the corresponding basis of $M^\circ=M$.
We write $N_{\seed}$ respectively $M_\seed$ whenever we think of the lattices together with a choice of basis induced by a seed $\seed$. 
\begin{figure}
\centering
\begin{tikzpicture}[scale=.95]
\node[blue] at (-1.5,10) {$\tiny{\varnothing}$};
\draw[->] (-1.25,9.875) -- (-.25,9.125);
\draw[->,blue,dashed] (7,9.25) to [out=150,in=0] (-1.25,10.125);
\draw[->,blue,dashed] (-.25,4.75) to [out=155,in=-90] (-1.5,9.75);
\node at (0,9) {$\tiny{\yng(1)}$};
\node at (1.5,9) {$\tiny{\yng(2)}$};
\node at (3,9) {$\tiny{\yng(3)}$};
\node at (5,9) {$\tiny{\yng(4)}$};
\node[blue] at (7,9) {$\tiny{\yng(5)}$};
\draw[->] (.25,9) -- (1.125,9);
\draw[->] (1.875,9) -- (2.5,9);
\draw[->] (3.5,9) -- (4.375,9);
\draw[->] (5.625,9) -- (6.25,9);
\draw[->] (0,8.75) -- (0,8.25);
\draw[->] (1.5,8.75) -- (1.5,8.25);
\draw[->] (3,8.75) -- (3,8.25);
\draw[->] (5,8.75) -- (5,8.25);
\draw[->,blue,dashed] (7,8.75) -- (7,8.25);
\draw[<-] (.25,8.75) -- (1.25,8.25);
\draw[<-] (1.875,8.75) -- (2.75,8.25);
\draw[<-] (3.5,8.75) -- (4.375,8.25);
\draw[<-] (5.625,8.75) -- (6.5,8.25);

\node at (0,7.875) {$\tiny{\yng(1,1)}$};
\node at (1.5,7.875) {$\tiny{\yng(2,2)}$};
\node at (3,7.875) {$\tiny{\yng(3,3)}$};
\node at (5,7.875) {$\tiny{\yng(4,4)}$};
\node[blue] at (7,7.875) {$\tiny{\yng(5,5)}$};
\draw[->] (.25,7.875) -- (1.125,7.875);
\draw[->] (1.875,7.875) -- (2.5,7.875);
\draw[->] (3.5,7.875) -- (4.375,7.875);
\draw[->] (5.625,7.875) -- (6.25,7.875);

\draw[->] (0,7.5) -- (0,7);
\draw[->] (1.5,7.5) -- (1.5,7);
\draw[->] (3,7.5) -- (3,7);
\draw[->] (5,7.5) -- (5,7);
\draw[->,blue,dashed] (7,7.5) -- (7,7);
\draw[<-] (.25,7.5) -- (1.125,7);
\draw[<-] (1.875,7.5) -- (2.75,7);
\draw[<-] (3.5,7.5) -- (4.375,7);
\draw[<-] (5.625,7.5) -- (6.5,7);

\node at (0,6.5) {$\tiny{\yng(1,1,1)}$};
\node at (1.5,6.5) {$\tiny{\yng(2,2,2)}$};
\node at (3,6.5) {$\tiny{\yng(3,3,3)}$};
\node at (5,6.5) {$\tiny{\yng(4,4,4)}$};
\node[blue] at (7,6.5) {$\tiny{\yng(5,5,5)}$};
\draw[->] (.25,6.5) -- (1.125,6.5);
\draw[->] (1.875,6.5) -- (2.5,6.5);
\draw[->] (3.5,6.5) -- (4.375,6.5);
\draw[->] (5.625,6.5) -- (6.25,6.5);
\draw[->] (0,6) -- (0,5.375);
\draw[->] (1.5,6) -- (1.5,5.375);
\draw[->] (3,6) -- (3,5.375);
\draw[->] (5,6) -- (5,5.375);
\draw[->,blue,dashed] (7,6) -- (7,5.375);
\draw[<-] (.25,6) -- (1.125,5.375);
\draw[<-] (1.875,6) -- (2.75,5.375);
\draw[<-] (3.5,6) -- (4.375,5.375);
\draw[<-] (5.625,6) -- (6.5,5.375);

\node[blue] at (0,4.75) {$\tiny{\yng(1,1,1,1)}$};
\node[blue] at (1.5,4.75) {$\tiny{\yng(2,2,2,2)}$};
\node[blue] at (3,4.75) {$\tiny{\yng(3,3,3,3)}$};
\node[blue] at (5,4.75) {$\tiny{\yng(4,4,4,4)}$};
\node[blue] at (7,4.75) {$\tiny{\yng(5,5,5,5)}$};
\draw[->,blue,dashed] (.25,4.75) -- (1.125,4.75);
\draw[->,blue,dashed] (1.875,4.75) -- (2.5,4.75);
\draw[->,blue,dashed] (3.5,4.75) -- (4.375,4.75);
\draw[->,blue,dashed] (5.625,4.75) -- (6.25,4.75);
\end{tikzpicture}
\caption{The quiver of the plabic graph $G^{\rm rec}_{5,9}$ forming the initial seed for $\mathcal A\subset \widetilde{Y}=\widetilde{\text{Gr}}_{4}(\mathbb C^9)$ with the frozen arrows determining the cluster ensemble map $p^*$.}
\label{fig:quivGrec59}
\end{figure}

We start by defining a lattice map 
\[
\psi:N_{\seed_{\Yng(1)}} \to  M_{\seed_{\Yng(1)}}
\]
which is given with respect to the bases induced by $s_{\Yng(1)}$ as follows:
for $i\times j$ a mutable vertex and $a\times b$ with either $a=n-k$ or $b=k$ a frozen vertex we define
\begin{eqnarray*}
    e_{i\times j} &\mapsto& f_{(i-1)\times(j-1)} - f_{(i-1)\times j} + f_{i\times(j+1)} - f_{(i+1)\times (j+1)} + f_{(i+1)\times j} - f_{i\times (j-1)} \\
    e_{a\times b} &\mapsto& f_{a\times b} - f_{(a-1)\times b} + f_{(a-1)\times(b-1)} - f_{a\times (b-1)} \\
    e_{\varnothing} &\mapsto& f_{\varnothing} - f_{1\times k} + f_{1\times 1} - f_{(n-k)\times 1}
\end{eqnarray*}
with the convention that $f_{0\times j}=f_{i\times 0}=0$ whenever $i,j\not =0$ and $f_{0\times 0}=f_{\varnothing}$. 
We may present the map pictorially by recording the coefficient of the basis element $e_{i\times j}$ in the $i\times j$'th position of the grid (with an extra position $0\times 0$ representing  the vertex $\varnothing$). 
\begin{eqnarray}\label{eq:pictorial p*}
\begin{tikzpicture}[scale=.4]
\node at (-3,0){$e_{i\times j}$};
\draw[dashed,opacity=.5] (-1.5,.5) -- (1.5,.5);
\draw[dashed,opacity=.5] (-1.5,-0.5) -- (1.5,-0.5);
\draw[dashed,opacity=.5] (-0.5,-1.5) -- (-0.5,1.5);
\draw[dashed,opacity=.5] (0.5,-1.5) -- (0.5,1.5);
\node[opacity=.5] at (-1,-1) {\small $0$};
\node[opacity=.5] at (-1,0) {\small$0$};
\node[opacity=.5] at (-1,1) {\small$0$};
\node[opacity=.5] at (0,-1) {\small$0$};
\node at (0,0) {\small$1$};
\node[opacity=.5] at (0,1) {\small$0$};
\node[opacity=.5] at (1,-1) {\small$0$};
\node[opacity=.5] at (1,0) {\small$0$};
\node[opacity=.5] at (1,1) {\small$0$};

\node at (2.5,0) {$\mapsto$};

\begin{scope}[xshift=5cm]
\draw[dashed,opacity=.5] (-1.5,.5) -- (1.5,.5);
\draw[dashed,opacity=.5] (-1.5,-0.5) -- (1.5,-0.5);
\draw[dashed,opacity=.5] (-0.5,-1.5) -- (-0.5,1.5);
\draw[dashed,opacity=.5] (0.5,-1.5) -- (0.5,1.5);
\node[opacity=.5]  at (-1,-1) {\small$0$};
\node at (-1,0) {\small$-1$};
\node at (-1,1) {\small$1$};
\node at (0,-1) {\small$1$};
\node[opacity=.5]  at (0,0) {\small$0$};
\node at (0,1) {\small$-1$};
\node at (1,-1) {\small$-1$};
\node at (1,0) {\small$1$};
\node[opacity=.5]  at (1,1) {\small$0$};
\end{scope}

\begin{scope}[xshift=15cm]
\node at (-3,0){$e_{a\times b}$};
\draw[dashed,opacity=.5] (-1.5,.5) -- (1.5,.5);
\draw[dashed,opacity=.5] (-1.5,-0.5) -- (1.5,-0.5);
\draw[dashed,opacity=.5] (-0.5,-1.5) -- (-0.5,1.5);
\draw[dashed,opacity=.5] (0.5,-1.5) -- (0.5,1.5);
\node[opacity=.5] at (-1,-1) {\small $0$};
\node[opacity=.5] at (-1,0) {\small$0$};
\node[opacity=.5] at (-1,1) {\small$0$};
\node[opacity=.5] at (0,-1) {\small$0$};
\node at (0,0) {\small$1$};
\node[opacity=.5] at (0,1) {\small$0$};
\node[opacity=.5] at (1,-1) {\small$0$};
\node[opacity=.5] at (1,0) {\small$0$};
\node[opacity=.5] at (1,1) {\small$0$};

\node at (2.5,0) {$\mapsto$};

\begin{scope}[xshift=5cm]
\draw[dashed,opacity=.5] (-1.5,.5) -- (1.5,.5);
\draw[dashed,opacity=.5] (-1.5,-0.5) -- (1.5,-0.5);
\draw[dashed,opacity=.5] (-0.5,-1.5) -- (-0.5,1.5);
\draw[dashed,opacity=.5] (0.5,-1.5) -- (0.5,1.5);
\node[opacity=.5]  at (-1,-1) {\small$0$};
\node at (-1,0) {\small$-1$};
\node at (-1,1) {\small$1$};
\node[opacity=.5]  at (0,-1) {\small$0$};
\node at (0,0) {\small$1$};
\node at (0,1) {\small$-1$};
\node[opacity=.5]  at (1,-1) {\small$0$};
\node[opacity=.5]  at (1,0) {\small$0$};
\node[opacity=.5]  at (1,1) {\small$0$};
\end{scope}
\end{scope}
\end{tikzpicture}
\end{eqnarray}
All entries in the grid above \emph{not} corresponding to vertices in the particular case considered should simply be neglected.
A straightforward computation reveals the following

\begin{proposition}\thlabel{prop:-p* dual}
We have $\ker(\psi)=\langle{(1,1,\dots,1)}\rangle=K_{\seed_{\Yng(1)}}$ and $\psi(N_{\seed_{\Yng(1)}})={(1,1,\dots,1)}^\perp$. So, the induced map $\psi:N/K\to K^\perp$ is a lattice isomorphism. 
\end{proposition}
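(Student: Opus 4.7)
The proposition comprises three assertions: (i) $\ker(\psi) = K_{\seed_{\Yng(1)}} = \langle(1,\ldots,1)\rangle$; (ii) $\psi(N) = K^\perp$; and (iii) $\bar\psi: N/K \to K^\perp$ is a lattice isomorphism. Since $N/K$ and $K^\perp$ are saturated lattices of equal rank $k(n-k)$, assertion (iii) is equivalent to $\bar\psi$ being bijective on integral points and follows once (i) and (ii) are established.

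The containment directions of (i) and (ii) are direct reads of the pictorial description \eqref{eq:pictorial p*}. For $\psi(N) \subset K^\perp$, each of the three formulas defining $\psi$ has coefficient sum zero (either $+1-1+1-1+1-1$ for a mutable vertex, or $+1-1+1-1$ for a frozen vertex and for $\varnothing$), so $\psi(e_v) \in K^\perp$ for every basis element. For $\langle\sigma\rangle \subset \ker(\psi)$ with $\sigma := (1,\ldots,1)$, compute $\psi(\sigma)$ by collecting the coefficient of each $f_w$: for an interior $w$ the six contributions from mutable neighbors in the $3 \times 3$ block around $w$ cancel pairwise by the alternating signs of the mutable formula; for a boundary $w$ and for $w = \varnothing$ the cancellation additionally uses contributions from the frozen-vertex and $\varnothing$-vertex formulas, and a case analysis along the four sides and corners of the $(n-k) \times k$ rectangle confirms that everything cancels.

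The main remaining step is the reverse inclusion $K^\perp \subset \psi(N)$, which simultaneously gives surjectivity of $\bar\psi$ and, in view of the rank count, forces $\ker(\psi)$ to be exactly $\langle\sigma\rangle$. My plan is to exhibit preimages inductively. Ordering the frozen vertices so that $(a-1)\times(b-1)$, $(a-1)\times b$ and $a\times(b-1)$ precede $a\times b$, and placing $\varnothing$ first, the frozen formula $\psi(e_{a\times b}) = f_{a\times b} - f_{(a-1)\times b} + f_{(a-1)\times(b-1)} - f_{a\times(b-1)}$ is triangular with leading term $f_{a\times b}$, so every difference $f_{a\times b} - f_\varnothing$ for frozen $(a,b)$ lies in $\psi(N)$. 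For a mutable position $(i,j)$ the mutable formula has zero coefficient at $f_{i\times j}$; here one combines several mutable outputs with the frozen differences already produced to isolate the elements $f_{i\times j} - f_{i'\times j'}$ for neighboring mutable cells, thereby generating all of $K^\perp$.

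The main obstacle will be this surjectivity step for the mutable coordinates, since the mutable formula offers no direct triangular reduction and the argument has to invoke the global combinatorics of the rectangles quiver. A more computational alternative, potentially cleaner, is to take the bases $(e_v)_{v \neq \varnothing}$ of $N/K$ and $(f_w - f_\varnothing)_{w \neq \varnothing}$ of $K^\perp$ and evaluate the determinant of the resulting matrix of $\bar\psi$ directly; the sparsity of the pictorial formulas reduces this to a manageable row-column computation, and showing that this determinant equals $\pm 1$ would deliver all three assertions simultaneously.
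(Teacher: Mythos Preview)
The paper does not actually supply a proof of this proposition; it simply prefaces the statement with ``A straightforward computation reveals the following''. So there is no detailed argument in the paper against which to compare your proposal, and your outline already goes further than what the paper offers.

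On the substance, two remarks. First, your verification that each $\psi(e_v)$ has coefficient sum zero is only valid for \emph{interior} mutable vertices, where all six neighbours are present; at the boundary of the rectangle (in particular at $(1,1)$, where the convention $f_{0\times 0}=f_\varnothing$ kicks in) some of the six terms vanish and the naive count $+1-1+1-1+1-1$ no longer applies. You handle boundary effects carefully for the kernel containment but not for the image containment, and this case analysis is needed there as well.

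Second, your triangular argument for the frozen vertices does not work as written. In the frozen formula for $\psi(e_{a\times b})$ the neighbouring positions $(a-1)\times b$, $a\times(b-1)$, $(a-1)\times(b-1)$ are typically \emph{mutable} vertices (since ``frozen'' means $a=n-k$ or $b=k$), so the images of the frozen basis vectors are not expressed solely in terms of frozen $f$'s. You therefore cannot conclude that $f_{a\times b}-f_\varnothing$ lies in $\psi(N)$ by a triangular sweep through the frozen indices alone; the frozen and mutable problems are genuinely intertwined, not sequential as your plan suggests. You are right that this is where the work lies.

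Your second proposal --- writing the matrix of $\bar\psi$ in the bases $(e_v)_{v\neq\varnothing}$ and $(f_w-f_\varnothing)_{w\neq\varnothing}$ and checking that its determinant is $\pm 1$ --- is a legitimate route and is probably what the authors have in mind by ``straightforward computation''. An alternative hands-on approach is to build explicit preimages via telescoping sums such as $\sum_{j'\le j}\psi(e_{(n-k)\times j'})=f_{(n-k)\times j}-f_{(n-k-1)\times j}$ and $\sum_{i'\le i}\psi(e_{i'\times k})=f_{i\times k}-f_{i\times(k-1)}$, and then work inward row by row and column by column.
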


In fact, $\psi$ defines a cluster ensemble lattice map (Definition~\ref{def:p-star}), so we obtain
\begin{eqnarray}\label{eq:p-map Gr}
    p:\cA\to\cX, \quad \text{determined by }\quad  (p\vert_{\cA_{\seed_{\Yng(1)}}})^*=\psi.
\end{eqnarray}
There is a combinatorial way to obtain the map $\psi$ by introducing \emph{frozen arrows} to the quiver of the initial seed to \emph{close cycles} involving frozen vertices (see Figure~\ref{fig:quivGrec59}).
These arrows are used to determine the submatrix denoted by $*$ in \eqref{eq:Mp*}.

As a direct consequence of \eqref{eq:p-map Gr} and \thref{prop:-p* dual} we observe that the action of $T_K$ on $\cA$ coincides with the $\C^*$-action (of simultaneously scaling Pl\"ucker coordinates) on $\UT_Y$ restricted to $\cA$.
In particular:
\begin{corollary}
    The Picard property holds for $(p,K)$ with respect to $\mathcal A\hookrightarrow\UT_Y$.
\end{corollary}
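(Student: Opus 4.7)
The plan is to verify the two bullets of \thref{k_and_pic} in turn. For the rank condition, I would recall that $\mathrm{Pic}(Y)=\mathrm{Pic}(\mathrm{Gr}_{n-k}(\mathbb C^n))$ is free of rank one generated by $[\lb_e]$, so $\mathrm{Pic}(Y)^*$ has rank one; by \thref{prop:-p* dual} the lattice $K=\ker(\psi)=\langle(1,1,\dots,1)\rangle_{\seed_{\Yng(1)}}\subset N$ also has rank one. This establishes the first bullet.

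For the action condition, I would first describe both actions in explicit terms at the initial seed torus $\cA_{\seed_{\Yng(1)}}=T_{N^\circ}$. The $T_K$-action on $\cA$ is, by definition of a cluster action (see \S\ref{sec:quotients-fibres}), given on the seed torus by the subgroup inclusion $T_K\hookrightarrow T_{N^\circ}$ dual to $N^\circ\twoheadrightarrow K^*$; the weight under this action of a monomial $z^m\in \Bbbk[M^\circ]$ is the image of $m$ in $K^*$. Taking $k_0:=(1,1,\dots,1)=\sum_{v}e_v\in K$ as generator and using the dual bases $\{e_v\},\{f_v\}$ indexed by the vertices $v$ of $G^{\rm rec}_{k,n}$, the $\cA$-cluster variable $A_v=z^{f_v}$ has $T_K$-weight $\langle k_0,f_v\rangle=1$. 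On the other hand, the $T_{\mathrm{Pic}(Y)^*}\cong\mathbb C^*$-action on $\UT_Y$ (the standard scaling of Plücker coordinates coming from the $\mathbb Z$-grading of $\mathrm{Cox}(Y)$ by $\mathrm{Pic}(Y)$) assigns weight $1$ to each Plücker coordinate $p_J\in H^0(Y,\lb_e)$, in particular to every initial cluster variable $A_v$, since under the Scott inclusion $\cA\hookrightarrow \UT_Y$ the cluster variables attached to vertices of $G^{\rm rec}_{k,n}$ are precisely Plücker coordinates.

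Having matched the weights on the initial cluster variables, I would conclude that the two one-parameter group homomorphisms $\mathbb C^*\to \mathrm{Aut}(\cA)$ agree: both act on the seed torus $T_{N^\circ}$ by scaling each $A_v$ by the same scalar, so they coincide as algebraic group actions on $T_{N^\circ}$; and since $\cA$ is glued from seed tori by the birational cluster transformations $\mu^\cA_{\seed,\seed'}$, which (because $K=\ker p^*_2$) commute with the $T_K$-action, the restriction of the $\mathbb C^*$-action from $\UT_Y$ to $\cA$ commutes with these mutations as well. Thus the equality of the two actions on a single seed torus propagates to all of $\cA$, establishing the second bullet of \thref{k_and_pic}.

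The only subtle point—really the one thing that has to be checked rather than just cited—is the weight computation $\langle k_0,f_v\rangle=1$ together with the fact that under $\cA\hookrightarrow\UT_Y$ the initial cluster variables are Plücker coordinates carrying $\mathbb C^*$-weight $1$; everything else is bookkeeping on the definitions. I would therefore keep the exposition short, devoting most of it to the identification of weights, and then invoke the commutation of the $T_K$-action with cluster mutations (built into the construction of a cluster ensemble map) to extend the identification from the seed torus to $\cA$.
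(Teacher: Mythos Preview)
Your proposal is correct and follows essentially the same approach as the paper, which states (just before the corollary) that the coincidence of the $T_K$-action with the diagonal $\C^*$-action on Pl\"ucker coordinates is a direct consequence of \thref{prop:-p* dual} and the definition of $p$. You have simply spelled out the weight computation $\langle k_0,f_v\rangle=1$ and the propagation along mutations that the paper leaves implicit.
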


\subsection{Valuations and Newton--Okounkov bodies}
\label{sec:GHKK_and_RW}
This subsection is the core of our application to the Grassmannian. 
We show in Theorem~\ref{thm: val and gv} that certain Newton--Okounkov bodies as they appear in \thref{thm:k_and_pic} (see also Remark~\ref{rem:comparing_NO_bodies}) are unimodularly equivalent to Newton--Okounkov bodies of Rietsch--Williams. 
We first introduce the combinatorics that govern Rietsch--Williams' flow valuation and the ${\bf g}$-vector valuation in this case.

\subsubsection{The flow valuation}
Based on Postnokiv's \emph{boundary measurement map} for plabic networks \cite[\S11]{Pos06} Rietsch--Williams associate a {\bf flow valuation} \cite[Definition 8.1]{RW} to every plabic graph $G$ or more generally every seed $\seed$ making use of the $\cX$-type cluster structure on the Grassmannian
We denote it by 
\[
\val_\seed:\mathbb C(Y)\setminus \{0\}\to \mathbb Z^{(n-k)\times k}.
\]
The valuation is defined as the multidegree of the lowest degree summand (with respect a fixed graded lexicographic order) on Laurent polynomials in $\cX$ variables and then extended to rational functions in the natural way.
The lattice is of dimension $(n-k)k$ (as apposed to $(n-k)k+1$ which is the number of vertices), as the the variable corresponding to $\varnothing$ never appears (more details below in \S\ref{sec:NO_Grass_equal}).
Notice that it therefore coincides with our definition of a {\bf c}-vector valuation for cluster $\cX$ varieties (Corollary~\ref{cor:gv on midX}).
For $G=G_{\Yng(1)}$ we simply write $\val_G=\val_{\Yng(1)}$.
The flow valuation  with respect to the rectangles plabic graph can be computed in a particularly explicit way as Rietsch--Williams show in \cite[\S14]{RW}.
We briefly summarize some of their findings.

\begin{proposition}\cite[Proposition 14.4 and Figure 18]{RW}\thlabel{prop:val grec}
For $J\in\binom{[n]}{n-k}$, the valuation $\val_{\Yng(1)}(p_J)$ can be represented by a \emph{GT tableau} (defined as follows, see \cite[\S14]{RW}) of size $(n-k)\times k$ whose $(i\times j)^{\text{th}}$ entry represents the coefficient of the corresponding basis element.
The entries of the GT tableau are obtained as in four steps:
\begin{itemize}
    \item[\bf Step 1:] draw the Young diagram $\mu_J$ whose south border is the path $w_J$ associated to $J$ in the $(n-k)\times k$-rectangle; 
    \item[\bf Step 2:] draw another copy of $w_{J}$ shifted by {\it one step south} and {\it one step east} (this implies that some steps of the new path $w_J^1$ lie outside of the $(n-k)\times k$-rectangle);
    \item[\bf Step 3:] continue repeating Step 2 until the new copy of $w_J$ lies \emph{entirely} outside of the $(n-k)\times k$-rectangle;
    \item[\bf Step 4:] lastly, place an $i$ inside every box (that is part of the $(n-k)\times k$-rectangle) in between the paths $w_{J}^{i-1}$ and $w_{J}^i$.
\end{itemize}
All other boxes are filled with zeros.
\end{proposition}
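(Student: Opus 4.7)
The plan is to evaluate the flow valuation on Pl\"ucker coordinates directly, by unpacking Postnikov's boundary measurement description \cite{Pos06} of the $\cX$-cluster variables attached to the plabic graph $G_{\Yng(1)}$. By definition, $\val_{\Yng(1)}(p_J)$ is the multidegree of the lex-lowest monomial appearing in the Laurent expansion of $p_J$ in the face variables $\{X_{i\times j}\}$ of $G_{\Yng(1)}$. So the strategy is threefold: (i) express this Laurent expansion as a sum over flows (equivalently, almost perfect matchings) between the boundary sources and the boundary sinks determined by $J$; (ii) single out the unique flow $F_J^{\min}$ achieving the graded-lex minimum; (iii) show that its exponent vector is exactly the GT tableau produced by Steps 1--4.

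First I would recall that for $G_{\Yng(1)}$ the boundary measurement formula specialises to
$$p_J \;=\; \sum_{F} \prod_{i\times j} X_{i\times j}^{\,a_{i\times j}(F)},$$
where the sum runs over flows $F$ with source/sink sets prescribed by $J$ and $a_{i\times j}(F)$ is the winding number of $F$ around the face labelled $i\times j$ (the variable $X_\varnothing$ does not appear, consistent with the $(n-k)k$-dimensional target of $\val_{\Yng(1)}$). Step (ii) would identify $F_J^{\min}$ as the flow whose boundary trace is the path $w_J$ itself with no additional oriented cycles attached; concretely, it is the unique flow that ``hugs'' the south-east side of the Young diagram $\mu_J$. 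Step (iii) is then a bookkeeping check: a cell $i\times j$ is wound exactly $i$ times by $F_J^{\min}$ precisely when it lies strictly between the shifted paths $w_J^{i-1}$ and $w_J^i$, which is the very rule defining the GT tableau.

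The main obstacle will be step (ii), namely proving that $F_J^{\min}$ is a \emph{unique} minimiser under the graded lex order so that no cancellation occurs at the bottom term. The key observation is that any other contributing flow $F$ differs from $F_J^{\min}$ by the addition of a nonempty collection of oriented cycles in $G_{\Yng(1)}$; because the faces of $G_{\Yng(1)}$ form a planar $(n-k)\times k$ grid, each such cycle increases at least one winding number $a_{i\times j}$ strictly while decreasing none, and the total degree strictly increases. A careful argument along these lines, combined with the chosen grading conventions on the lex order (matching those fixed in \cite{RW}), shows that the monomial of $F_J^{\min}$ is strictly smaller than that of every other $F$. This is the point where the rectangular symmetry of $G_{\Yng(1)}$ does the essential work: it reduces the comparison of flows to a termwise comparison of lattice points in the $(n-k)\times k$ grid, and the nested family $\{w_J^i\}$ makes the resulting minimum transparent.
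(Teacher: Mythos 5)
The paper itself offers no proof of this proposition: it is quoted directly from \cite[Proposition 14.4 and Figure 18]{RW}, so the benchmark is Rietsch--Williams' own argument in \S14 of loc.\ cit., which, like your sketch, works with the flow model on the rectangles plabic graph. Your overall strategy --- expand $p_J$ as a flow polynomial via \cite{Pos06}, isolate the order-minimal flow, and compute its exponent vector --- is therefore the right one and essentially the route of \cite{RW}. But as written there are two genuine gaps. First, the minimality argument in your step (ii) is not correct as stated: for the perfect orientations used in \cite{RW} a flow is a collection of pairwise vertex-disjoint self-avoiding paths, and a competing flow is \emph{not} obtained from $F_J^{\min}$ by ``adding oriented cycles'' (for an acyclic orientation there are no cycles to add at all). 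Two flows with the same source and sink sets differ by the closed curves of their superposition, and passing from one to the other multiplies the monomial by the face variables enclosed by those curves --- a priori with exponents of both signs. What you actually need is that the weight of the distinguished flow \emph{divides} the weight of every other contributing flow (coordinatewise minimality), which is what guarantees both uniqueness of the lowest term (no cancellation) and independence of the particular refinement of the order; this requires the distributive-lattice/face-twist structure on flows with fixed boundary, or an explicit identification of the minimal flow as in \cite{RW}, not the planarity remark you give.

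Second, your step (iii) --- ``a cell $i\times j$ is wound exactly $i$ times by $F_J^{\min}$ precisely when it lies between $w_J^{i-1}$ and $w_J^i$'' --- is the entire content of the proposition and cannot be left as a bookkeeping remark. To carry it out you must (a) describe $F_J^{\min}$ explicitly in $G_{\Yng(1)}$ (it is the vertex-disjoint path system determined by the corners of $w_J$, not literally the single lattice path $w_J$), and (b) prove that, for each face $i\times j$, the number of paths of $F_J^{\min}$ having that face to their left (this count, in the conventions of \cite{RW} and \cite{Pos06}, is the exponent in the flow monomial --- it is not a winding number of the flow) equals the entry produced by Steps 1--4, i.e.\ the index of the strip between consecutive shifted copies of $w_J$. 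Until (a) and (b) are established, together with the divisibility statement above, the proposal does not yet prove the statement; once they are, it essentially reproduces the argument of \cite[\S14]{RW}.
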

Rietsch--Williams compute the Newton--Okounkov bodies associated to this valuation. In our notation they are of form $\Delta_{\val_{\Yng(1)}}(D_{n-k},p_{(n-k)\times k})$, where $p_{(n-k)\times k}=p_{[1,n-k]}$ is the Pl\"ucker coordinate (and hence section of $\mathcal L_e$) associated to the frozen vertex $(n-k)\times k$.

\begin{example}\label{exp:Grec6,13}
    The procedure of \thref{prop:val grec} is depicted in Figure~\ref{fig:val grec} for $J=\{3,4,7,9,11,12\}\subset [13]$. 
\end{example}

\begin{figure}
    \centering
\begin{tikzpicture}[scale=.4]
\node[left] at (-1,3) {$\val_{\Yng(1)}(p_J)=$};
\draw (0,0) -- (0,6) -- (5,6);
\draw[thick,magenta] (7,6) -- (5,6) -- (5,4) -- (3,4) -- (3,3) -- (2,3) -- (2,2) -- (1,2) -- (1,0) -- (0,0);
\node at (2,4.5) {$\mu_J$};
\draw (7,6) -- (7,0) -- (1,0);
\draw[magenta,opacity=.4,thick] (2,0) -- (2,-1) -- (1,-1);
\draw[magenta,opacity=.4,thick] (2,0) -- (2,1) -- (3,1) -- (3,2) -- (4,2) -- (4,3) -- (6,3) -- (6,5) -- (7,5);
\draw[magenta,opacity=.4,thick] (7,5) -- (8,5);
\draw[magenta,opacity=.4,thick] (2,-2) -- (3,-2) -- (3,0) -- (4,0) -- (4,1) -- (5,1) -- (5,2) -- (7,2) -- (7,4) -- (9,4);
\draw[magenta,opacity=.4,thick] (3,-3) -- (4,-3) -- (4,-1) -- (5,-1) -- (5,0) -- (6,0) -- (6,1) -- (8,1) -- (8,3) -- (10,3);
\draw[dashed,opacity=.5] (0,0) -- (3.5,-3.5);
\draw[dashed,opacity=.5] (7,6) -- (10.5,2.5);
\draw[opacity=.4,dashed] (5,6) -- (7,6);
\draw[opacity=.4,dashed] (5,5) -- (7,5);
\draw[opacity=.4,dashed] (5,4) -- (7,4);
\draw[opacity=.4,dashed] (3,3) -- (7,3);
\draw[opacity=.4,dashed] (2,2) -- (7,2);
\draw[opacity=.4,dashed] (1,1) -- (7,1);
\draw[opacity=.4,dashed] (6,6) -- (6,0);
\draw[opacity=.4,dashed] (5,5) -- (5,0);
\draw[opacity=.4,dashed] (4,4) -- (4,0);
\draw[opacity=.4,dashed] (3,3) -- (3,0);
\draw[opacity=.4,dashed] (2,2) -- (2,0);
\draw[opacity=.4,dashed] (1,1) -- (1,0); 
\node at (1.5,.5) {1};
\node at (1.5,1.5) {1};
\node at (2.5,.5) {2};
\node at (2.5,1.5) {1};
\node at (2.5,2.5) {1};
\node at (3.5,.5) {2};
\node at (3.5,1.5) {2};
\node at (3.5,2.5) {1};
\node at (3.5,3.5) {1};
\node at (4.5,.5) {3};
\node at (4.5,1.5) {2};
\node at (4.5,2.5) {2};
\node at (4.5,3.5) {1};
\node at (5.5,.5) {3};
\node at (5.5,1.5) {3};
\node at (5.5,2.5) {2};
\node at (5.5,3.5) {1};
\node at (5.5,4.5) {1};
\node at (5.5,5.5) {1};
\node at (6.5,.5) {4};
\node at (6.5,1.5) {3};
\node at (6.5,2.5) {2};
\node at (6.5,3.5) {2};
\node at (6.5,4.5) {2};
\node at (6.5,5.5) {1};

\node at (11.5,3) {${\longmapsto}$};
\node at (11.5,3.75) {\small $-\psi$};
 
\begin{scope}[xshift=14cm]
\draw (0,0) -- (0,6) -- (5,6) -- (5,4) -- (3,4) -- (3,3) -- (2,3) -- (2,2) -- (1,2) -- (1,0) -- (0,0);
\draw (5,6) -- (7,6) -- (7,0) -- (1,0);
\draw[opacity=.4,dashed] (0,6) -- (7,6);
\draw[opacity=.4,dashed] (0,5) -- (7,5);
\draw[opacity=.4,dashed] (0,4) -- (7,4);
\draw[opacity=.4,dashed] (0,3) -- (7,3);
\draw[opacity=.4,dashed] (0,2) -- (7,2);
\draw[opacity=.4,dashed] (0,1) -- (7,1);
\draw[opacity=.4,dashed] (6,6) -- (6,0);
\draw[opacity=.4,dashed] (5,6) -- (5,0);
\draw[opacity=.4,dashed] (4,6) -- (4,0);
\draw[opacity=.4,dashed] (3,6) -- (3,0);
\draw[opacity=.4,dashed] (2,6) -- (2,0);
\draw[opacity=.4,dashed] (1,6) -- (1,0); 
\node at (.5,.5) {$1$};
\node[opacity=.5] at (.5,1.5) {$0$};
\node[opacity=.5] at (.5,3.5) {$0$};
\node[opacity=.5] at (.5,4.5) {$0$};
\node[opacity=.5] at (.5,5.5) {$0$};
\node at (.5,2.5) {$-1$};
\node at (1.5,2.5) {$1$};
\node at (1.5,3.5) {$-1$};
\node[opacity=.5] at (1.5,.5) {$0$};
\node[opacity=.5] at (1.5,1.5) {$0$};
\node[opacity=.5] at (1.5,4.5) {$0$};
\node[opacity=.5] at (1.5,5.5) {$0$};
\node at (2.5,3.5) {$1$};
\node[opacity=.5] at (2.5,.5) {$0$};
\node[opacity=.5] at (2.5,1.5) {$0$};
\node[opacity=.5] at (2.5,2.5) {$0$};
\node[opacity=.5] at (2.5,5.5) {$0$};
\node at (2.5,4.5) {$-1$};
\node[opacity=.5] at (3.5,.5) {$0$};
\node[opacity=.5] at (3.5,1.5) {$0$};
\node[opacity=.5] at (3.5,2.5) {$0$};
\node[opacity=.5] at (3.5,3.5) {$0$};
\node[opacity=.5] at (3.5,4.5) {$0$};
\node[opacity=.5] at (3.5,5.5) {$0$};
\node at (4.5,4.5) {$1$};
\node[opacity=.5] at (4.5,.5) {$0$};
\node[opacity=.5] at (4.5,1.5) {$0$};
\node[opacity=.5] at (4.5,2.5) {$0$};
\node[opacity=.5] at (4.5,3.5) {$0$};
\node[opacity=.5] at (4.5,5.5) {$0$};
\node[opacity=.5] at (5.5,.5) {$0$};
\node[opacity=.5] at (5.5,1.5) {$0$};
\node[opacity=.5] at (5.5,2.5) {$0$};
\node[opacity=.5] at (5.5,3.5) {$0$};
\node[opacity=.5] at (5.5,4.5) {$0$};
\node[opacity=.5] at (5.5,5.5) {$0$};
\node at (6.5,.5) {$-1$};
\node[opacity=.5] at (6.5,1.5) {$0$};
\node[opacity=.5] at (6.5,2.5) {$0$};
\node[opacity=.5] at (6.5,3.5) {$0$};
\node[opacity=.5] at (6.5,4.5) {$0$};
\node[opacity=.5] at (6.5,5.5) {$0$};

\node[right] at (7.5,3) {$=\bar{\bf g}_{{\Yng(1)}}(p_{J})$};
\end{scope}
\end{tikzpicture}
  \caption{
  On the left: the pictorial representation of the GT tableau for $J=\{3,4,7,9,11,12\} \subset [13]$ (\thref{prop:val grec}).  The south steps of the path cutting out the Young diagram $\mu_J$ correspond to indices in $J$. 
  On the right, we depict its image under $-\psi$ which coincides with the ${\bf g}$-vector of $p_J$ up to homogenization, see \eqref{eq:g-vectors for Grec} and \eqref{eq:homogenized g vector op}.
 }
    \label{fig:val grec}
\end{figure}

\subsubsection{A combinatorial description of ${\bf g}$-vectors}\label{sec:g-vects}
In this subsection we consider the cluster variety $\cA^{\rm op}$ whose initial quiver is obtained by opposing the initial quiver for $\cA$. It is well known that $\cA$ and $\cA^{\rm op}$ are isomorphic (in general opposing the quiver gives rise to isomorphic cluster $ \cA$-varieties). 
We also have a partial minimal model $\cA^{\op} \hookrightarrow \UT_Y$.
We write $\seed_{\Yng(1)}^{\rm op}$ to denote the seed $\seed_{\Yng(1)}$ of equation \eqref{eq_seed} thought of as the initial seed for $\cA^{\rm op}$.
Notice that $-\psi$ determines a cluster ensemble map $p^{\rm op}:\cA^{\rm op} \to \cX^{\rm op}$
so that the Picard property holds for $(p^{\rm op},K)$ with respect to $\cA^{\rm op}\hookrightarrow \text{UT}_Y$.

In this setting an explicit combinatorial formula to compute {\bf g}-vectors of Pl\"ucker coordinates can be deduced from the categorification of the Grassmannian cluster algebra developed in \cite{JKS16,BKM16}.
We learned about it from Bernhard Keller in private email communication.
The below formula describes ${\bf g}$-vectors with respect to the seed $\seed_{\Yng(1)}^{\rm op}$ for the cluster variety $\cA^{\rm op}$ which we think of as another cluster structure on $\UT_{Y}$.

\begin{corollary}\thlabel{cor:gv Grec}
(Hook formula for {\bf g}-vectors)
Consider the seed $\seed_{\Yng(1)}^{\rm op}$ and $J \in \binom{[n]}{n-k}$. We let $i_1\times j_1,\dots ,i_s \times j_s $ be the rectangles corresponding to the turning points in the path $w_J$ that cuts out $\mu_J$ inside the $(n-k)\times k$-rectangle.
Then 
\begin{eqnarray}\label{eq:g-vectors for Grec}
{\bf g}_{\Yng(1)^{\rm op}}(p_J):={\bf g}^{\cA^{\rm op}}_{\seed_{\Yng(1)}^{\rm op}}(p_J)= \sum_{p=1}^{s}f_{i_{p}\times j_{p}}-f_{i_{p}\times j_{p+1}},
\end{eqnarray}
where we set $f_{i_s\times j_{s+1}}:=0$.
\end{corollary}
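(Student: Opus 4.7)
The plan is to deduce the hook formula by combining three ingredients available in the paper: the identification of the Rietsch--Williams flow valuation with the $\mathbf{c}$-vector valuation (observed just before the start of this subsection), the explicit description of the flow valuation of $p_J$ as a GT tableau (Proposition~\ref{prop:val grec}), and the comparison between $\mathbf{g}$- and $\mathbf{c}$-vectors via the tropicalized cluster ensemble map (Lemma~\ref{lem:cval_gval}). The content of Figure~\ref{fig:val grec} precisely illustrates the outcome of this computation.

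First I would set up the comparison. Reversing the quiver negates the extended exchange matrix, so the cluster ensemble lattice map for $p^{\op}:\cA^{\op}\to\cX^{\op}$ is $-\psi$. Since the Grassmannian cluster structure is skew-symmetric (all $d_i=1$), Lemma~\ref{lem:cval_gval} specialises to $\mathbf{g}_{\seed}\circ p^*=-\psi\circ\mathbf{c}_{\seed}$, and by the identification of $\val_{\Yng(1)}$ with the $\mathbf{c}$-vector valuation we get
\[
\mathbf{g}_{\Yng(1)^{\op}}(p_J)\;=\;-\psi\bigl(\val_{\Yng(1)}(p_J)\bigr)+c\,f_{\varnothing}
\]
for a scalar $c\in\mathbb{Z}$ capturing the $f_{\varnothing}$-component, which is invisible to the flow valuation. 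The scalar $c$ is then pinned down by the Picard property of $(p^{\op},K)$ together with Theorem~\ref{thm:k_and_pic}: $\mathbf{g}_{\Yng(1)^{\op}}(p_J)$ must lie in the slice $\Trop_{\mathbb R}(w_K)^{-1}([\lb_e])$, and pairing against the generator $(1,\dots,1)$ of $K$ (Proposition~\ref{prop:-p* dual}) forces a unique value of $c$.

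Next I would carry out the explicit computation of $-\psi$ on the GT tableau of Proposition~\ref{prop:val grec}. The pictorial description \eqref{eq:pictorial p*} shows that $\psi$ acts as a two-dimensional discrete ``curl'' operator: on a mutable $e_{i\times j}$ the six nonzero coefficients of $\psi(e_{i\times j})$ form an alternating pattern along the boundary of a $3\times 3$ window, and on frozen $e_{a\times b}$ the pattern is similar but truncated. Applying this to $\sum_{(i,j)}\alpha_{ij}e_{i\times j}$ with $\alpha_{ij}$ the GT entries is a discrete integration-by-parts: each diagonal strip on which the GT tableau is constant contributes zero to the interior, so all surviving terms are supported at cells adjacent to the shifted paths $w_J,w_J^1,w_J^2,\dots$ used to define the tableau. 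A further round of telescoping along each path $w_J^r$ wipes out every contribution except those at the corners of $w_J^r$, which are just the shifted copies of the turning points $i_p\times j_p$. Careful sign bookkeeping then shows that, after summing over all $r$, the contributions concentrate precisely at the turning points with signs $+f_{i_p\times j_p}$ and $-f_{i_p\times j_{p+1}}$, the convention $f_{i_s\times j_{s+1}}:=0$ accounting for the fact that the last shifted path exits the $(n-k)\times k$ rectangle before producing a subtracted term.

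The main obstacle is the combinatorial bookkeeping of Step~2. Because $\psi$ has distinct local formulas at interior, border-row, border-column and corner positions (the two separate cases of \eqref{eq:pictorial p*}, plus the $\varnothing$ case) and because the GT tableau has varying local increments near the rectangle's boundary, one has to organize a case-by-case analysis, matching the level sets of the GT tableau against the local signs of $\psi$, to verify that all of the massive cancellations really take place and that the residual signed contributions land exactly at the turning points of $w_J$. Once this is done, the last \emph{caveat} is simply to reconcile the resulting formula with the homogenization scalar $c$ from Step~1; this is a single linear equation in one variable and presents no further difficulty.
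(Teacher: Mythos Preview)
The paper does not actually prove this corollary: the text preceding it says the formula ``can be deduced from the categorification of the Grassmannian cluster algebra developed in \cite{JKS16,BKM16}'' and was communicated privately by Keller. It is used as external input; in particular, Theorem~\ref{thm: val and gv} is proved \emph{from} it by computing $-\psi(\val_{\Yng(1)}(p_J))$ explicitly and then recognising the answer as the right-hand side of \eqref{eq:g-vectors for Grec}. Your proposal therefore takes an entirely different route --- an internal proof via Lemma~\ref{lem:cval_gval} and that same explicit computation --- which would reverse the paper's logical order and remove the dependence on categorification.

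There is, however, a real gap in your Step~1. Lemma~\ref{lem:cval_gval} computes $\gv_\seed(p^*(f))$ for $f\in\cmid(\cX)$; a Pl\"ucker coordinate $p_J$ has $T_K$-weight~$1$ and is not a pullback along $p^*$, so the lemma does not apply to it directly. More subtly, $\val_{\Yng(1)}$ is identified with a $\cv$-valuation on $\cX_{\setminus\varnothing}$ for the \emph{original} fixed data, whereas you want the $\gv$-vector on $\cA^{\rm op}$; bridging these is not a single application of Lemma~\ref{lem:cval_gval}. This is exactly what Remark~\ref{rmk:val and cval} flags: the identification is an instance of that lemma only ``after non-trivial changes of cluster coordinates'' that the paper deliberately omits. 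Your claimed correction term is also wrong: the paper's computation gives $\gv_{\Yng(1)^{\rm op}}(p_J) = -\psi(\val_{\Yng(1)}(p_J)) + f_{(n-k)\times k}$, not $+\,c\,f_\varnothing$; the discrepancy comes from the normalisation by $p_{(n-k)\times k}$ in the definition \eqref{eq:homogenized g vector op} of $\bar\gv$, not from the suppressed $\varnothing$-index. Your Step~2 (the telescoping computation of $-\psi$ on the GT tableau) is, on the other hand, exactly what the paper carries out --- just in the proof of Theorem~\ref{thm: val and gv} rather than here.
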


\begin{example}
The 
Consider $n-k=4$, $n=9$, and $J=\{2,4,6,7\}$. We have that $\mu_{J}=\Yng(4,3,2,2)$ and by \thref{cor:gv Grec}
\[
{\bf g}_{\Yng(1)^{\rm op}}\lrp{p_{\Yngs(4,3,2,2)}}= f_{\Yngs(4)}- f_{\Yngs(3)}+f_{\Yngs(3,3)}-f_{\Yngs(2,2)}+f_{\Yngs(2,2,2,2)}.
\]
\end{example}

\subsubsection{Equality of the Newton--Okounkov bodies}\label{sec:NO_Grass_equal}
 The aim of this section is to identify the Newton--Okounkov bodies of flow valuations with Newton--Okounkov bodies of {\bf g}-vector valuations for $\cA^{\rm op}$. 
We use a particular cluster ensemble lattice map for the identification and work in the initial seed $\seed^{\rm op}_{\Yng(1)}$ (whose quiver is opposite to the quiver depicted in Figure~\ref{fig:quivGrec59} for $n=9,k=5$).

We think of the open positroid variety inside $Y=\text{Gr}_{n-k}(\C^n)$ as the quotient of the cluster variety $\cA^{\rm op}$ by the torus $T_{K}$.
We choose a section 
\[
\sigma: N/K \to N \quad \text{with image} \quad N  \cap f_{\varnothing}^\perp;
\]
that is, a coset $n\mod K$ is sent to its unique representative satisfying $\langle n,f_{\varnothing}\rangle=0$.
It is not hard to see that $\sigma$ induces an isomorphism between the rings of rational functions $\mathbb C(T_{M/\langle f_{\varnothing}\rangle})$ and $\mathbb C(T_{K^\perp})$ that commutes with cluster $\mathcal X$ mutation.
We use $\sigma$ to realize $\Trop_{\Z}(({\cX_{\bf 1}^\vee})_{\seed^\vee})=\Trop_{\Z}((\cA^{\rm op}/T_K)_{\seed^{\rm op}})= N/K$ inside $ \Trop_\Z(\cA^{\rm op}_{\seed^{\rm op}})=N =\Trop_\Z(\cX^\vee_{\seed^\vee})$ for every seed.
Moreover, the dual of $\sigma$ induces an isomorphism of lattices
\[
\sigma^*:M/\langle f_{\varnothing}\rangle \to K^\perp.
\]
Notice that $T_{K^\perp}=\pi^{-1}(\bf 1)$ where $\pi:T_M\to T_{K^*}$ is the restriction of $\cX\to T_{K^*}$ to a cluster chart.
As alluded to above, we obtain an isomorphism of cluster $\cX$-varieties
\[
\sigma^*:\cX_{\setminus \varnothing}\to \cX_{\bf 1},
\]
where $\cX_{\setminus \varnothing}$ is the $\cX$-variety associated with the initial data obtained by deleting the index $\varnothing$ upon realizing $M/\langle f_{\varnothing}\rangle$ as $\langle f_{i\times j}\mid 1\le i\le n-k,1\le j\le k\rangle \subset M$.
Given a seed $\seed$ we denote the corresponding seed of $\cX_{\setminus\varnothing}$ by $\seed_{\setminus\varnothing}$.
In particular, we have
$\Trop_{\Z}((\mathcal X^\vee_{\setminus\varnothing})_{\seed^\vee_{\setminus \varnothing}})=N_{\seed}\cap f_{\varnothing}^\perp$.
The flow valuation is defined on ring of rational functions on the positroid variety which coincides with
\[
\mathbb C(\cX_{\setminus \varnothing}) \cong \mathbb C(x_{i\times j}:1\le i\le n-k,1\le j\le k).
\]

The next result follows from the preceding discussion and Corollary~\ref{cor:gv on midX}.

\begin{proposition}\label{prop:flow is gv for X}
For every choice of seed $\seed$ the diagram commutes:
\[
\xymatrix{
\mathbb C(\cX_{\bf 1})\setminus \{0\} \ar[d]_{(\sigma^*)^*}\ar[r]^{\cv_{\seed}} & N/K\ar[d]^{\sigma}\\
\mathbb C(\cX_{\setminus \varnothing})\setminus \{0\} \ar[r]_{\val_{\seed}} & N_{\seed}\cap f_{\varnothing}^\perp.
}
\]
\end{proposition}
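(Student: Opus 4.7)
The proof plan is to combine three ingredients already assembled in the excerpt: the identification of the flow valuation with a ${\bf c}$-vector valuation, the fact that $\sigma^*:\cX_{\setminus\varnothing}\to\cX_{\bf 1}$ is an isomorphism of cluster $\cX$-varieties, and the fact that ${\bf c}$-vector valuations are adapted to theta bases with one-dimensional leaves.

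First, I would observe that Rietsch--Williams' definition of $\val_\seed$ as the multidegree of the lowest-order monomial (in a graded lexicographic order) of a Laurent polynomial in the $\cX$-cluster variables of $\cX_{\setminus\varnothing}$ is a concrete instance of the construction in Corollary~\ref{cor:gv on midX}. Namely, $\val_\seed$ agrees with the ${\bf c}$-vector valuation ${\bf c}_\seed^{\cX_{\setminus\varnothing}}$ on $\cmid(\cX_{\setminus\varnothing})$ once we identify $\Trop_\Z((\cX^\vee_{\setminus\varnothing})_{\seed^\vee_{\setminus\varnothing}})$ with $N_\seed\cap f_\varnothing^\perp$, and the total order refining the divisibility order on this lattice is chosen to agree with the chosen graded lexicographic order.

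Second, as discussed in the paragraph above the proposition, $\sigma^*$ is an isomorphism of cluster $\cX$-varieties and therefore carries the theta basis of $\cmid(\cX_{\bf 1})$ bijectively to the theta basis of $\cmid(\cX_{\setminus\varnothing})$. Under the identifications $\Trop_\Z((\cX^\vee_{\bf 1})_{\seed^\vee})=N/K$ and $\Trop_\Z((\cX^\vee_{\setminus\varnothing})_{\seed^\vee_{\setminus\varnothing}})=N_\seed\cap f_\varnothing^\perp$, the labelling is intertwined by $\sigma$; that is, for ${\bf n}\in\Theta(\cX_{\bf 1})$ with $\mathfrak{r}_{\seed^\vee}({\bf n})=\overline{n}=n+K\in N/K$,
\[
(\sigma^*)^*\bigl(\vartheta^{\cX_{\bf 1}}_{\bf n}\bigr)=\vartheta^{\cX_{\setminus\varnothing}}_{\sigma(n+K)}.
\]

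Third, since both $\cv_\seed$ and $\val_\seed$ are valuations with one-dimensional leaves for which the respective theta bases are adapted (Lemma~\ref{g_is_val} and Corollary~\ref{cor:gv on midX}), and since both sides of the claimed identity are valuations on $\mathbb C(\cX_{\bf 1})\setminus\{0\}$, it suffices to check commutativity on theta functions. For $\vartheta^{\cX_{\bf 1}}_{\bf n}$ as above, the top-right composition gives $\sigma(\cv_\seed(\vartheta^{\cX_{\bf 1}}_{\bf n}))=\sigma(n+K)$, while the bottom-left composition gives $\val_\seed\bigl(\vartheta^{\cX_{\setminus\varnothing}}_{\sigma(n+K)}\bigr)=\sigma(n+K)$ by the first and second steps. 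This proves the commutativity.

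The only mild obstacle is the bookkeeping of tropical-space identifications: one must verify that the $\sigma$ constructed at the lattice level is consistent with tropicalisation of the map $\sigma^*$ of varieties, and in particular that it is seed-independent. But this is exactly what the discussion preceding the proposition establishes, via the fact that $\sigma$ commutes with cluster $\cX$-mutation and that the quotient map $\varpi_H^t$ defining the labelling of theta functions on $\cX_{\bf 1}$ (see \S\ref{tf_fibre}) is compatible with the section $\sigma$.
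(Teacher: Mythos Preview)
Your proof is correct and follows the same route the paper indicates: the paper simply asserts that the proposition ``follows from the preceding discussion and Corollary~\ref{cor:gv on midX}'', and you have unpacked exactly those two ingredients---the identification of $\val_\seed$ with a ${\bf c}$-vector valuation and the fact that the cluster isomorphism $\sigma^*$ matches theta bases via $\sigma$---to verify commutativity on theta functions.
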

The flow valuation is a ${\bf c}$-vector valuation for the variety $\cX_{\setminus \varnothing}$ as both are defined by picking the lowest degree exponent of a Laurent polynomial with respect to the same order. That is:
\[
\val_{\seed}= \cv^{\cX_{\setminus \varnothing}}_{\seed\setminus\varnothing}.
\]
Alternatively, in light of Proposition \ref{prop:flow is gv for X} we may think of the flow valuation as a ${\bf c}$-vector valuation for $\cX_{\bf 1}$.
Our aim now is to identify the images of $\val_\seed$ with 
those of a $\bf g$-vector valuation for $\cA^{\rm op}$, or more precisely a ${\bf g}$-vector valuation for $\cA^{\rm op}/T_{K}$.
To avoid confusion we introduce the following notation
\begin{eqnarray}\label{eq:homogenized g vector op}
\bar {\bf g}_{\Yng(1)^{\rm op}}:R\setminus \{0\}\longrightarrow K^\perp\cong M_{\seed}/\langle f_{(n-k)\times k}\rangle 
\end{eqnarray}
defined for a homogeneous element $h\in R_q\setminus \{0\}$ by
\[
h\longmapsto 
{\bf g}_{\Yng(1)^{\rm op}}\lrp{\frac{h}{
p_{(n-k)\times k}^q}},
\]
where ${\bf g}_{\Yng(1)^{\rm op}}(p_{(n-k)\times k})=f_{(n-k)\times k}$.
Notice that $\bar {\bf g}_{\Yng(1)^{\rm op}}$ is the restriction of ${\bf g}_{\Yng(1)^{\rm op}}: \mathbb C(Y)\setminus \{0\}\to M$ to the section ring $R\hookrightarrow \mathbb C(Y)$ where the embedding is defined by $R_q\ni h\mapsto h/p_{(n-k)\times k}^q$.

\begin{theorem}\thlabel{thm: val and gv}
\begin{samepage}
For every $J\in \binom{[n]}{n-k}$ we have
\begin{eqnarray}
-\psi(\val_{\Yng(1)}(p_J))=\bar {\bf g}_{\Yng(1)^{\rm op}}(p_{J}). 
\end{eqnarray}
In particular, 
the Newton--Okounkov bodies $\Delta_{\val_{\Yng(1)}}(D_{n-k},p_{(n-k)\times k})$ and $\Delta_{\bar{\bf g}_{\Yng(1)^{\rm op}}}(D_{n-k},p_{(n-k)\times k})$ are unimodularly equivalent with lattice isomorphism given by $-\psi$. 
\end{samepage}
\end{theorem}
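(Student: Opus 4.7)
The plan is to reduce the theorem to the pointwise identity $-\psi(\val_{\Yng(1)}(p_J)) = \bar{\bf g}_{\Yng(1)^{\rm op}}(p_{J})$ for all $J\in\binom{[n]}{n-k}$. Two inputs legitimize this reduction: the Pl\"ucker coordinates generate $R = \bigoplus_k R_k(\lb_e)$ under the projectively normal Pl\"ucker embedding, and both valuations admit theta-adapted bases (Lemma~\ref{g_is_val} for $\bar{\bf g}_{\Yng(1)^{\rm op}}$, and Corollary~\ref{cor:gv on midX} together with Proposition~\ref{prop:flow is gv for X} for $\val_{\Yng(1)}$). Combined with the lattice isomorphism $-\psi\colon N/K\xrightarrow{\sim} K^\perp$ from Proposition~\ref{prop:-p* dual}, the pointwise identity on Pl\"ucker coordinates will transport one Newton--Okounkov polytope onto the other unimodularly, since the image semigroup of a theta-adapted valuation on a graded ring generated in degree $1$ is controlled by the values on degree-$1$ generators.

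For the pointwise identity itself, the route I would take exploits the cluster-ensemble framework of the paper. By Proposition~\ref{prop:flow is gv for X} the flow valuation agrees, up to the identification induced by $\sigma$, with the ${\bf c}$-vector valuation for $\cX_{\bf 1}$; by the construction in \S\ref{sec:g-vects} the cluster ensemble map $p^{\rm op}\colon\cA^{\rm op}\to\cX^{\rm op}$ has tropical lattice map $-\psi$. Because the Grassmannian cluster structure is skew-symmetric (all $d_i=1$), the commutative diagram of Lemma~\ref{lem:cval_gval} specializes to $(p^\vee)^T\circ i = p^*$, so the $\cv$-to-${\bf g}$ conversion along this cluster ensemble is exactly $-\psi$. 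The homogenization implicit in $\bar{\bf g}$ singles out the $T_K$-weight-zero slice and is compatible with the Picard property (\thref{k_and_pic}), which identifies the $T_K$-action on $\cA^{\rm op}$ with the $T_{\mathrm{Pic}(Y)^*}$-action on $\UT_Y$.

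The main obstacle is that Lemma~\ref{lem:cval_gval} is phrased for $\cmid(\cX)\to\cmid(\cA)$, whereas the Pl\"ucker coordinates live as $T_K$-equivariant rather than invariant functions. To bridge the gap I would pass to the ratio $p_J/p_{(n-k)\times k}^q$ for $p_J\in R_q$, converting $p_J$ into a regular function on $\cA^{\rm op}/T_K$ and, via $\chi$ and $\sigma$, on the fibre $\cX_{\bf 1}$. The $\cv$-to-${\bf g}$ correspondence then applies to these ratios, and the choice of $p_{(n-k)\times k}$, a frozen cluster variable and hence a monomial theta function, makes the lattice-level bookkeeping transparent: the displacement by $q\cdot{\bf g}_{\Yng(1)^{\rm op}}(p_{(n-k)\times k})=q\cdot f_{(n-k)\times k}$ is precisely the quotient by $\langle f_{(n-k)\times k}\rangle$ realizing the target lattice $K^\perp\cong M/\langle f_\varnothing\rangle$.

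As an independent and more pictorial verification, in the spirit of Figure~\ref{fig:val grec}, the identity can be checked directly by applying the operator $-\psi$ of \eqref{eq:pictorial p*} to the GT tableau from Proposition~\ref{prop:val grec}. Writing $\psi$ in pictorial form reveals it as a discrete-Laplacian-type difference operator on the $(n-k)\times k$ grid; interior contributions to the tableau telescope, leaving only nonzero values at the turning points of the path $w_J$, and the modified boundary form of $\psi$ at the frozen row and column produces precisely the endpoint term $-f_{i_s\times j_{s+1}}$ in the hook formula of Corollary~\ref{cor:gv Grec}. Carrying out this telescoping carefully at the frozen boundary is the only nontrivial bookkeeping, and it yields a self-contained combinatorial proof independent of the Lemma~\ref{lem:cval_gval} route.
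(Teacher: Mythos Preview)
Your second route—applying $-\psi$ pictorially to the GT tableau of Proposition~\ref{prop:val grec} and observing that the interior contributions telescope, leaving signed entries only at the corners of $w_J$ together with a residual $-f_{(n-k)\times k}$ at the southeast frozen vertex—is exactly the paper's proof. The paper carries this out by analyzing the image of each ``$i$-strip'' under $-\psi$, showing that contributions from adjacent strips cancel except along $w_J$ itself, and then treating the frozen boundary (in particular the $(n-k)\times k$ corner) separately; the output matches the hook formula of Corollary~\ref{cor:gv Grec} shifted by $-f_{(n-k)\times k}$, i.e.\ $\bar{\bf g}_{\Yng(1)^{\rm op}}(p_J)$.

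Your first route via Lemma~\ref{lem:cval_gval} is morally right but glosses over a genuine obstacle. The flow valuation is identified in Proposition~\ref{prop:flow is gv for X} with a $\cv$-valuation for $\cX_{\bf 1}$, whereas the target $\gv$-valuation lives on $\cA^{\rm op}$. The cluster ensemble map $p^{\rm op}$ you invoke goes from $\cA^{\rm op}$ to $\cX^{\rm op}$, not to $\cX$, so Lemma~\ref{lem:cval_gval} as written does not connect the two valuations you actually need. Bridging $\cX$ and $\cX^{\rm op}$—equivalently, reconciling the conventions of \cite{RW} with those of this paper—requires a nontrivial coordinate change. The paper acknowledges precisely this in Remark~\ref{rmk:val and cval} and deliberately opts for the direct combinatorial argument in a single seed to avoid that detour. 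So Route~1 is a legitimate strategy in principle, but it is not the shortcut you present it as; the coordinate-change work it hides is comparable in length to the telescoping computation.
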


\begin{proof}
We prove the claim in several steps. 
First, we need to describe $\val_{\Yng(1)^{\rm op}}(p_J)$. 
Fortunately, this is straightforward using \thref{prop:val grec}.
Let us analyze the image of the \emph{$i$-strip}, \emph{i.e.} the image of the elements of form $-ie_{a\times b}$ corresponding to a box in position $a\times b$ of the grid lying between the path $w_J^i$ and $w_J^{i-1}$. 
We deduce
\begin{center}
	\begin{tikzpicture}[scale=.6]
\draw[thick,teal] (1,3) -- (4,3) -- (4,6);
\draw[thick,magenta] (2,2) -- (5,2) -- (5,5);
\node at (4.5,5.5) {\tiny $\vdots$};
\node at (1.5,2.5) {\tiny $\cdots$};
\node at (2.5,2.5) {\tiny $-i$};
\node at (3.5,2.5) {\tiny $-i$};
\node at (4.5,2.5) {\tiny $-i$};
\node at (4.5,3.5) {\tiny $-i$};
\node at (4.5,4.5) {\tiny $-i$};
\draw[dashed,opacity=.5] (1,2.5) -- (1,4.5);
\draw[dashed,opacity=.5] (2,1.5) -- (2,5.5);
\draw[dashed,opacity=.5] (3,.5) -- (3,6.5);
\draw[dashed,opacity=.5] (4,.5) -- (4,6.5);
\draw[dashed,opacity=.5] (5,.5) -- (5,5.5);
\draw[dashed,opacity=.5] (6,.5) -- (6,4.5);
\draw[dashed,opacity=.5] (1.5,5) -- (5.5,5);
\draw[dashed,opacity=.5] (.5,4) -- (6.5,4);
\draw[dashed,opacity=.5] (.5,3) -- (6.5,3);
\draw[dashed,opacity=.5] (1.5,2) -- (6.5,2);
\draw[dashed,opacity=.5] (2.5,1) -- (6.5,1);
\node[above right,magenta] at (5,5) {\tiny $w_J^i$};
\node[above right,teal] at (4,6) {\tiny $w_J^{i-1}$};

\node at (8,3) {$\mapsto$};

\begin{scope}[xshift=9 cm]
\node at (.5,3.5) {\tiny $\cdots$};
\node at (1.5,2.5) {\tiny $i$};
\node at (2.5,1.5) {\tiny $-i$};
\node at (3.5,6.5) {\tiny $\vdots$};
\node at (4.5,5.5) {\tiny $i$};
\node at (5.5,4.5) {\tiny $-i$};
\draw[thick] (1,4) -- (3,4) -- (3,6);
\draw[thick,teal] (1,3) -- (4,3) -- (4,6);
\draw[thick,magenta] (2,2) -- (5,2) -- (5,5);
\draw[thick] (3,1) -- (6,1) -- (6,4);
\node at (1.5,3.5) {\tiny $-i$};
\node at (2.5,3.5) {\tiny $0$};
\node at (3.5,3.5) {\tiny $i$};
\node at (3.5,4.5) {\tiny $0$};
\node at (3.5,5.5) {\tiny $-i$};
\node at (2.5,2.5) {\tiny $i$};
\node at (3.5,2.5) {\tiny $0$};
\node at (4.5,2.5) {\tiny $-2i$};
\node at (4.5,3.5) {\tiny $0$};
\node at (4.5,4.5) {\tiny $i$};
\node at (3.5,1.5) {\tiny $0$};
\node at (4.5,1.5) {\tiny $0$};
\node at (5.5,1.5) {\tiny $i$};
\node at (5.5,2.5) {\tiny $0$};
\node at (5.5,3.5) {\tiny $0$};
\draw[dashed,opacity=.5] (1,2.5) -- (1,4.5);
\draw[dashed,opacity=.5] (2,1.5) -- (2,5.5);
\draw[dashed,opacity=.5] (3,.5) -- (3,6.5);
\draw[dashed,opacity=.5] (4,.5) -- (4,6.5);
\draw[dashed,opacity=.5] (5,.5) -- (5,5.5);
\draw[dashed,opacity=.5] (6,.5) -- (6,4.5);
\draw[dashed,opacity=.5] (1.5,5) -- (5.5,5);
\draw[dashed,opacity=.5] (.5,4) -- (6.5,4);
\draw[dashed,opacity=.5] (.5,3) -- (6.5,3);
\draw[dashed,opacity=.5] (1.5,2) -- (6.5,2);
\draw[dashed,opacity=.5] (2.5,1) -- (6.5,1);
\end{scope}
\end{tikzpicture}
\end{center}
Notice that unless $i=1$ all non zero entries in the picture cancel with the images of the $(i-1)$- and the $(i+1)$-strips.
When $i=1$ however, the entry $i=1$ above the path  $w_J^{0}=w_J$ stays.
Hence, for every corner in $w_{J}$ corresponding to a south step followed by a west step $-\psi\left(\val_{\Yng(1)}(p_J)\right)$ has coefficient $1$ for $f_{a\times b}$ where $a\times b$ corresponds to the box whose south east corner coincides with this corner of $w_J$.
The case of a corner in $w_{J}$ corresponding to a west step followed by a south step is very similar, with the only difference that the signs change. 
In particular,
$-\psi\left(\val_{\Yng(1)}(p_J)\right)$ has coefficient $-1$ for $f_{a\times b}$ where $a\times b$ corresponds to the box whose south east corner is adjacent to this corner of $w_J$.

It is left to analyze the parts of $-\psi\left(\val_{\Yng(1)}(p_J)\right)$ corresponding to \emph{frozen} vertices. 
The arguments here are very similar, the only special case being the south east corner of the $(n-k)\times k$-rectangle.
Hence, we restrict our attention to this case and omit the others.

Consider the vertex in position $(n-k)\times k$ and assume in $\val_{\Yng(1)}(p_J)$ the corresponding entry is $i$.
Notice, that coefficient for the vertex $(n-k-1)\times (k-1)$ necessarily is $i-1$.
So, applying $-\psi$ we see that
\begin{center}
	\begin{tikzpicture}[scale=.6]
\draw (.5,0) -- (3,0) -- (3,2.5);
\node at (2.5,.5) {\tiny $-i$};
\node at (1.5,1.5) {\tiny $-i+1$};
\node at (1.5,.5) {\tiny $\cdots$};
\node at (2.5,1.5) {\tiny $\vdots$};

\draw[opacity=.5,dashed] (.5,1) -- (3,1);
\draw[opacity=.5,dashed] (.5,2) -- (3,2);
\draw[opacity=.5,dashed] (1,0) -- (1,2.5);
\draw[opacity=.5,dashed] (2,0) -- (2,2.5);

\node at (4,1.25) {$\mapsto$};

\begin{scope}[xshift=5cm]
\draw (.5,0) -- (3,0) -- (3,2.5);
\node at (2.5,.5) {\tiny $-1$};
\node at (1.5,1.5) {\tiny $-i$};
\node at (1.5,.5) {\tiny $1$};
\node at (2.5,1.5) {\tiny $1$};

\draw[opacity=.5,dashed] (.5,1) -- (3,1);
\draw[opacity=.5,dashed] (.5,2) -- (3,2);
\draw[opacity=.5,dashed] (1,0) -- (1,2.5);
\draw[opacity=.5,dashed] (2,0) -- (2,2.5);
\end{scope}
\end{tikzpicture} 
\end{center}
Observe that the entries $1$ and $-i$ cancel by similar arguments as above.
The only non-zero coefficient in this picture is $-1$ for $f_{(n-k)\times k}$.
Summarizing, we have
\begin{eqnarray*}
-\psi\left(\val_{\Yng(1)}(p_J)\right) &=& \sum_{\begin{smallmatrix}
  \text{south to west}\\
  \text{corners of }w_J
\end{smallmatrix} } f_{a'\times b'} -f_{(n-k)\times k}
-\sum_{\begin{smallmatrix}
  \text{west to south}\\
  \text{corners of }w_J
\end{smallmatrix} } f_{a\times b}\\
&\overset{\text{Equation~\eqref{eq:g-vectors for Grec}}}{=}& {\bf g}_{\Yng(1)^{\rm op}}(p_{J}) - f_{(n-k)\times k}=\bar{\bf g}_{\Yng(1)^{\rm op}}(p_J).
\end{eqnarray*}
This implies $-\psi(\Delta_{\val_{\Yng(1)}}(D_{n-k},p_{(n-k)\times k})=\Delta_{\bar{\bf g}_{\Yng(1)}^{\rm op}}(D_{n-k},p_{(n-k)\times k})$. 
\end{proof}

\begin{remark}\label{rmk:val and cval}
    The attentive reader might notice that the Theorem~\ref{thm: val and gv} and the discussion preceding it closely resemble Lemma~\ref{lem:cval_gval}.
    However, the difference in convention choices in \cite{RW} and the present paper yield the necessity of a non-trivial change of coordinates.
    To avoid lengthening the exposition even more we decided to give the result in a single seed but allude to the fact that Theorem~\ref{thm: val and gv} indeed is an instance of Lemma~\ref{lem:cval_gval} (after non-trivial changes of cluster coordinates).
    After making the appropriate change of coordinate one can show that the map $-\psi$ may be described as the tropicalization of a cluster ensemble map. In particular, Theorem~\ref{thm: val and gv} can be extended to all seeds.
\end{remark}

\subsection{The intrinsic Newton--Okounkov body for Grassmannians}
\label{sec:Grass_intrinsic}
As before, let $\lb_e$ be the bundle over $\text{Gr}_{n-k}(\C^n)$ obtained by pullback of $\mathcal{O}(1)$ under the Pl\"ucker embedding $\text{Gr}_{n-k}(\C^n)\hookrightarrow \mathbb{P}^{\binom{n}{k}-1}$. Recall the definition of the intrinsic Newton--Okounkov body from Definition~\ref{def:intrinsic_lb}.

\begin{corollary}\label{cor:intrinsicNO grassmannian}
Consider the partial minimal model $\cA^{\rm op}\hookrightarrow \UT_{Y} $, the minimal model $\cA^{\op}/T_K \hookrightarrow Y$ and the map ${\bf g}:\mathbb{B}_{\tf}(\cA^{\op})\to \Trop_{\Z}((\cA^{\op})^\vee)$ of \eqref{eq:nu_seed_free}. Then
\eqn{
\Delta_{\mathrm{BL}}(\lb_e) = \bconv\Bigg( \lrc{ \gv \lrp{p_J} \mid J \in \binom{[n]}{n-k}} \Bigg).
}
\end{corollary}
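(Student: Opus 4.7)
The plan is to combine \thref{taut} with two ingredients specific to the Grassmannian: the fact that each Pl\"ucker coordinate is a theta function, and the projective normality of the Pl\"ucker embedding. First I would verify that every Pl\"ucker coordinate $p_J$ is itself a theta function for the cluster structure $\cA^{\op}\hookrightarrow \UT_Y$. The frozen variables $p_{[i+1,i+k]}$ are theta functions tautologically, while every other $p_J$ is a cluster variable in some seed of the Scott cluster structure, and cluster monomials agree with theta functions by \cite[Theorem~4.9]{GHKK}. Hence $p_J = \tf^{\cA^{\op}}_{{\bf g}(p_J)}$, and consequently $\NewtT(p_J) = \{{\bf g}(p_J)\}$. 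This already gives the inclusion
\[
\bconv\lrc{ {\bf g}(p_J) \mid J \in \tbinom{[n]}{n-k}} \subseteq \Delta_{\mathrm{BL}}(\lb_e)
\]
straight from the definition of the intrinsic Newton--Okounkov body.

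For the reverse inclusion I would invoke \thref{taut} via \thref{condition_section ring} with $\ell=1$. Since $\lb_e$ is the pullback of $\mathcal{O}(1)$ under the Pl\"ucker embedding and the image $Y\hookrightarrow \mathbb{P}^{\binom{n}{k}-1}$ is projectively normal, the section ring $R(\lb_e)=\bigoplus_{k\geq 0}H^0(Y,\lb_e^{\otimes k})$ coincides with the homogeneous coordinate ring of $Y$ in its Pl\"ucker embedding. This ring is classically generated in degree one by the Pl\"ucker coordinates, so the multiplication map $R_1(\lb_e)^{\otimes k}\to R_k(\lb_e)$ is surjective for all $k\geq 2$. \thref{condition_section ring} is therefore satisfied and \thref{taut} yields
\[
\Delta_{\mathrm{BL}}(\lb_e) = \bconv\Bigg(\bigcup_{f\in R_1(\lb_e)}\NewtT(f)\Bigg).
\]

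To finish, I would observe that any $f\in R_1(\lb_e)$ expands in the Pl\"ucker basis as $f=\sum_J c_J\, p_J = \sum_J c_J\, \tf^{\cA^{\op}}_{{\bf g}(p_J)}$, which is already its expansion in the theta basis. Hence
\[
\NewtT(f) = \bconv\lrc{{\bf g}(p_J) \mid c_J\neq 0} \subseteq \bconv\lrc{{\bf g}(p_J)\mid J\in \tbinom{[n]}{n-k}},
\]
and taking the broken line convex hull over all $f\in R_1(\lb_e)$ yields the reverse inclusion, completing the proof.

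The only nontrivial point is the first step: identifying Pl\"ucker coordinates with theta functions and $\nu_{\seed^{\op}_{\Yng(1)}}(p_J)$ with ${\bf g}(p_J)$ consistently in every seed. Once this bookkeeping is settled (and it is standard given that cluster monomials are theta functions with ${\bf g}$-vector equal to their tropical label), the rest of the argument is a clean application of the general machinery of \S\ref{sec:intrinsic_NOB} combined with classical projective normality of the Grassmannian.
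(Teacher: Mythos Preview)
Your proof is correct and takes a genuinely different route from the paper's argument.

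The paper proceeds by first invoking the result of \cite[\S16.1]{RW} that, for the rectangles seed, the flow-valuation Newton--Okounkov body is already the ordinary convex hull of the images of the Pl\"ucker coordinates. It then transports this via \thref{thm: val and gv} to the $\gv$-vector side, uses Theorem~\ref{NO_bodies_are_positive} to conclude that this convex hull is positive (hence broken line convex), shifts by $\gv_{\Yng(1)^{\op}}(p_{(n-k)\times k})$ via Remark~\ref{rem:comparing_NO_bodies}, and finally appeals to seed-independence. Your argument bypasses all of this: you go straight to \thref{taut} with $\ell=1$, using only projective normality of the Pl\"ucker embedding and the fact that Pl\"ucker coordinates are cluster variables (hence theta functions). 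This is more self-contained, as it avoids importing the Rietsch--Williams computation and the comparison \thref{thm: val and gv} altogether. What the paper's approach buys, on the other hand, is a stronger intermediate statement: in the specific seed $\seed_{\Yng(1)}^{\op}$ the ordinary convex hull of the $\gv$-vectors already equals the broken line convex hull, which is not visible from your argument.
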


\begin{proof}
The Newton--Okounkov polytope for the flow valuation with respect to $\seed_{\Yng(1)}$ is the convex hull of the images of Pl\"ucker coordinates (see \cite[\S16.1]{RW}).
So by Theorem~\ref{thm: val and gv} the same is true for the Newton--Okounkov body $\Delta_{\bar{\gv}_{\Yng(1)^{\op}}}(D_{n-k},p_{(n-k)\times k})$.
By Theorem~\ref{NO_bodies_are_positive},  $\Delta_{\bar{\gv}_{\Yng(1)^{\op}}}(D_{n-k},p_{(n-k)\times k})$ is positive, hence it is broken line convex.
Therefore, the broken line convex hull of the set $\lrc{ \bar{\gv}_{\Yng(1)^{\op}} \lrp{p_J} \mid J \in \binom{[n]}{n-k}} $ in the lattice  $\Trop_\R({\cA^{\op}/T_{K}}^{\vee}_{\seed_{\Yng(1)^{\op}}})$  
coincides with its convex hull.
We take into account Remark~\ref{rem:comparing_NO_bodies} to get that $ \Delta_{\gv_{\Yng(1)^{\op}}}(\lb_e) = \Delta_{\bar{\gv}_{\Yng(1)^{\op}}}(D_{n-k},p_{(n-k)\times k}) + \gv_{\Yng(1)^{\op}}(p_{(n-k)\times k}) $.
This implies that $ \Delta_{\gv_{\Yng(1)^{\op}}}(\lb_e) $ is the broken line convex hull of the set $\lrc{ \gv_{\Yng(1)^{\op}} \lrp{p_J} \mid J \in \binom{[n]}{n-k}} $.
Being a broken line convex set is independent of the choice of seed, the result follows.
\end{proof}

\begin{remark}
In the proof of Corollary~\ref{cor:intrinsicNO grassmannian}, we implicitly use the $\cA$ cluster structure to view the intrinsic Newton--Okounkov body $\Delta_{\mathrm{BL}}(\lb_e)$ as the broken line convex hull of tropical points indexing Pl\"ucker coordinates.
We could alternatively use the $\cX$ cluster structure as Rietsch--Williams do, and define theta functions with the corresponding $\cX$ scattering diagram.
By identifying the Rietsch--Williams valuation with the $\cv$-vector valuation (Corollary~\ref{cor:gv on midX}) and noting that there is a cluster ensemble automorphism of the open positroid variety (see \cite[Theorem~7.1, Corollary~5.11]{MullSp}, \cite[Theorem~7.3, Proposition~7.4]{RW}),
we can apply Lemma~\ref{lem:cval_gval} to give a completely analogous statement to Corollary~\ref{cor:intrinsicNO grassmannian} which uses the Rietsch--Williams valuation rather than the $\gv$-vector valuation.
In fact, in \S\ref{sec:Gr36} we present an example of an explicit computation related to the intrinsic Newton--Okounkov body $\Delta_{\mathrm{BL}}(\lb_e)$ defined via the $\Xnet$ scattering diagram.
\end{remark}

\subsubsection{Example}\label{sec:Gr36}
In this subsection, we will give an example of the intrinsic Newton--Okounkov body for the case of $\Grass_3\lrp{\C^6}$ and compare this to a Newton--Okounkov body of \cite{RW}. 
In particular, in \cite[\S9]{RW}, Rietsch--Williams discuss a non-integral vertex appearing in the Newton--Okounkov body $\Delta_{\val_{G}}(D_{3},p_{123})$ associated to the plabic graph $G$ of Figure~\ref{fig:3-6}.
We illustrate how this non-integral vertex in the usual Newton--Okounkov body framework corresponds to a point in the interior of a broken line segment in $\Delta_{\mathrm{BL}}(\lb_e)$ and thus is not a genuine vertex from the intrinsic Newton--Okounkov body perspective.
Here, to facilitate comparison with \cite{RW}, we will view the open positroid variety as $\Xnet_{\vb{1}}$ (up to codimension 2).
So, the scattering diagram we use to define $\Delta_{\mathrm{BL}}(\lb_e)$ in the subsection will be $\scat^{\Xnet_{\mathbf{1}}}_{\text{in},\seed_G}$ for a particular choice of initial seed $\seed_G$.  
The choice of seed is encoded by the plabic graph illustrated in Figure~\ref{fig:3-6}.

\begin{figure}[ht]
    \centering
	
\tikzexternaldisable
\begin{tikzpicture}[scale=.5]
\tikzset{->-/.style={decoration={
  markings,
  mark=at position #1 with {\arrow{>}}},postaction={decorate}}}
  \tikzset{-<-/.style={decoration={
  markings,
  mark=at position #1 with {\arrow{<}}},postaction={decorate}}}
   
\draw (0,0) circle [radius=5];  
 
\draw[-<-=.5] (-1,2) -- (1,2);
\draw[->-=.5] (1,2) -- (2.25,0);
\draw[->-=.5] (2.25,0) -- (1,-2);
\draw[->-=.5] (1,-2)-- (-1,-2);
\draw[-<-=.5] (-1,-2) -- (-2.25,0);
\draw[-<-=.5] (-2.25,0)-- (-1,2);
\draw[-<-=.5] (-1,2)  -- (-1,3.5);
\draw[->-=.5] (-1,3.5)-- (1,3.5);
\draw[->-=.5] (1,3.5) -- (1,2);
\draw[-<-=.5] (2.25,0) -- (3.5,-1);
\draw[->-=.7] (3.5,-1) -- (2.375,-3);
\draw[-<-=.5] (2.375,-3) -- (1,-2);
\draw[->-=.5] (-2.25,0) -- (-3.5,-1);
\draw[->-=.5] (-3.5,-1) -- (-2.375,-3);
\draw[-<-=.5] (-2.375,-3) -- (-1,-2);
\draw[->-=.5] (-1,4.9) -- (-1,3.5);
\draw[->-=.5] (1,4.9) -- (1,3.5);
\draw[->-=.5] (4.7,-1.75) -- (3.5,-1);
\draw[->-=.5] (2.375,-3) -- (3.35,-3.75);
\draw[->-=.5] (-3.5,-1) -- (-4.7,-1.75);
\draw[->-=.5] (-2.375,-3) -- (-3.35,-3.75);

\draw[fill] (1,3.5) circle [radius=.175];  
\draw[fill] (-1,2) circle [radius=.175];  
\draw[fill] (2.25,0) circle [radius=.175];  
\draw[fill] (2.375,-3) circle [radius=.175];  
\draw[fill] (-1,-2) circle [radius=.175];  
\draw[fill] (-3.5,-1) circle [radius=.175];  

\draw[fill, white] (-1,3.5) circle [radius=.175];  
\draw (-1,3.5) circle [radius=.175];  
\draw[fill, white] (1,2) circle [radius=.175];  
\draw (1,2) circle [radius=.175];  
\draw[fill, white] (3.5,-1) circle [radius=.175];  
\draw (3.5,-1) circle [radius=.175];
\draw[fill, white] (1,-2) circle [radius=.175];  
\draw (1,-2) circle [radius=.175];  
\draw[fill, white] (-2.25,0) circle [radius=.175];  
\draw (-2.25,0) circle [radius=.175];  
\draw[fill, white] (-2.375,-3) circle [radius=.175];  
\draw (-2.375,-3) circle [radius=.175];  

\node[above] at (-1,4.9) {1};
\node[above] at (1,4.9) {2};
\node[right] at (4.7,-1.75) {3};
\node[right] at (3.35,-3.75) {4};
\node[left] at (-4.7,-1.75) {6};
\node[left] at (-3.35,-3.75) {5};

\node at (0,0) {\scalebox{.3}{ $\yng(2,1)$}};
\node at (0,2.75) {\scalebox{.3}{$\yng(2)$}};
\node at (0,4.25) {\scalebox{.3}{$\yng(3)$}};
\node at (3,1.75) {\scalebox{.3}{$\yng(3,3)$}};
\node at (2.3,-1.5) {\scalebox{.3}{$\yng(3,3,2)$}};
\node at (3.7,-2.25) {\scalebox{.3}{$\yng(3,3,3)$}};
\node at (0,-3.5) {\scalebox{.3}{$\yng(2,2,2)$}};
\node at (-2.3,-1.5) {\scalebox{.3}{$\yng(1,1)$}};
\node at (-3.7,-2.25) {\scalebox{.3}{$\yng(1,1,1)$}};
\node at (-3,1.75) {{$\varnothing$}};

\begin{scope}[xshift=14cm]

\def\op{.4}

\draw[opacity=\op, name path = boundary] (0,0) circle [radius=5];  
 
\node [circle, draw=black, fill=black, inner sep=0pt, minimum size=5pt, opacity=\op] (1b) at (-1,2) {}; 
\node [circle, draw=black, fill=black, inner sep=0pt, minimum size=5pt, opacity=\op] (2b) at (1,3.5) {}; 
\node [circle, draw=black, fill=black, inner sep=0pt, minimum size=5pt, opacity=\op] (3b) at (2.25,0) {}; 
\node [circle, draw=black, fill=black, inner sep=0pt, minimum size=5pt, opacity=\op] (4b) at (2.375,-3) {}; 
\node [circle, draw=black, fill=black, inner sep=0pt, minimum size=5pt, opacity=\op] (5b) at (-1,-2) {}; 
\node [circle, draw=black, fill=black, inner sep=0pt, minimum size=5pt, opacity=\op] (6b) at (-3.5,-1) {}; 

\node [circle, draw=black, fill=white, inner sep=0pt, minimum size=5pt, opacity=\op] (1w) at (-1,3.5) {};
\node [circle, draw=black, fill=white, inner sep=0pt, minimum size=5pt, opacity=\op] (2w) at (1,2) {};
\node [circle, draw=black, fill=white, inner sep=0pt, minimum size=5pt, opacity=\op] (3w) at (3.5,-1) {};
\node [circle, draw=black, fill=white, inner sep=0pt, minimum size=5pt, opacity=\op] (4w) at (1,-2) {};
\node [circle, draw=black, fill=white, inner sep=0pt, minimum size=5pt, opacity=\op] (5w) at (-2.375,-3) {};
\node [circle, draw=black, fill=white, inner sep=0pt, minimum size=5pt, opacity=\op] (6w) at (-2.25,0) {};

\draw[color=gray, opacity=\op] (1b) -- (2w) ;
\draw[color=gray, opacity=\op] (2w) -- (3b) ;
\draw[color=gray, opacity=\op] (3b) -- (4w) ;
\draw[color=gray, opacity=\op] (4w) -- (5b) ;
\draw[color=gray, opacity=\op] (5b) -- (6w) ;
\draw[color=gray, opacity=\op] (6w) -- (1b) ;

\draw[color=gray, opacity=\op] (1b) -- (1w) ;
\draw[color=gray, opacity=\op] (1w) -- (2b) ;
\draw[color=gray, opacity=\op] (2b) -- (2w) ;
\draw[color=gray, opacity=\op] (3b) -- (3w);
\draw[color=gray, opacity=\op] (3w) -- (4b);
\draw[color=gray, opacity=\op] (4b) -- (4w);
\draw[color=gray, opacity=\op] (6w) -- (6b);
\draw[color=gray, opacity=\op] (6b) -- (5w);
\draw[color=gray, opacity=\op] (5w) -- (5b);

\path [name path = 1p] (1w) -- (-1,5);
\path [name intersections={of=boundary and 1p, by = 1e}];
\path [name path = 2p] (2w) -- (1,5);
\path [name intersections={of=boundary and 2p, by = 2e}];
\path [name path = 3p] (3b) --++ (2.5,-2);
\path [name intersections={of=boundary and 3p, by = 3e}];
\path [name path = 4p] (4b) --++ (1.25,-1);
\path [name intersections={of=boundary and 4p, by = 4e}];
\path [name path = 5p] (5w) --++ (-1.25,-1);
\path [name intersections={of=boundary and 5p, by = 5e}];
\path [name path = 6p] (6b) --++ (-1.25,-1);
\path [name intersections={of=boundary and 6p, by = 6e}];

\draw[opacity=\op] (1w) -- (1e) node [pos=1, above,opacity=1] {$1$};
\draw[opacity=\op] (2b) -- (2e) node [pos=1, above,opacity=1] {$2$};
\draw[opacity=\op] (3w) -- (3e) node [pos=1, right,opacity=1] {$3$};
\draw[opacity=\op] (4b) -- (4e) node [pos=1, right,opacity=1] {$4$};
\draw[opacity=\op] (5w) -- (5e) node [pos=1, left,opacity=1] {$5$};
\draw[opacity=\op] (6b) -- (6e) node [pos=1, left,opacity=1] {$6$};

\node (246) at (0,0) {\footnotesize $246$};
\node (256) at (0,2.75) {\footnotesize $256$};
\node (156) at (0,4.25) {\footnotesize $156$};
\node (126) at (3,1.75) {\footnotesize $126$};
\node (124) at (2.3,-1.5) {\footnotesize $124$};
\node (123) at (3.8,-2.35) {\footnotesize $123$};
\node (234) at (0,-4) { \footnotesize $234$};
\node (346) at (-2.3,-1.5) {\footnotesize $346$};
\node (345) at (-3.8,-2.35) {\footnotesize $345$};
\node (456) at (-3,1.75) {\footnotesize $456$};

\draw [->] (246) -- (456);
\draw [->] (246) -- (126);
\draw [->] (246) -- (234);

\draw [->] (256) -- (246);
\draw [->] (124) -- (246);
\draw [->] (346) -- (246);

\draw [->] (256) -- (156);

\draw [->]  (2.5,-1.8)  -- (3.35,-2.25) ;

\draw [->] (-2.5,-1.8)  -- (-3.35,-2.25) ;

\draw [->] (456) -- (346);
\draw [->] (234) -- (346);
\draw [->] (456) -- (256);
\draw [->] (126) -- (256);
\draw [->] (126) -- (124);
\draw [->] (234) -- (124);

\end{scope}

\end{tikzpicture}

\tikzexternalenable

    \caption{A plabic graph $G$ for $\Grass_3\lrp{\C^6}$ for which $\Delta_{\val_{G}}(D_{3},p_{123})$ has non-integral vertex (see \cite[\S9]{RW}). On the left, the labels are in terms of Young diagrams. On the right, we display the quiver and label faces by Pl\"ucker coordinates.}
    \label{fig:3-6}
\end{figure}

Recall that the Young diagrams in Figure~\ref{fig:3-6} label the network parameters used in flow polynomial expressions (see \cite[Equation (6.3)]{RW}).
The $\cA$-cluster determined by trips in the plabic graph $G$ consists of the Pl\"ucker coordinates whose indices are given in Figure~\ref{fig:3-6} (see \cite[Definition 3.5]{RW}).

According to \cite[\S9]{RW}, a non-integral vertex in the Newton--Okounkov polytope comes from half the valuation of the flow polynomial for the element $ f = (p_{124} p_{356} - p_{123} p_{456}) / {p_{123}^2}$.
They compute $\frac{1}{2}\val_G(f)$ and express its entries in tabular form (see \cite[Table~3]{RW}) as we have reproduced in Table~\ref{table}.
The function $p_{123}^2\, f$ is one of the two $\cA$ cluster variables that are not Pl\"ucker coordinates, see {\it e.g.} \cite[Eq.(4), p.42]{Sco06}. 
It is obtained by mutation at $\Yng(2,1)$.

\begin{table}
\begin{center}
\captionsetup{type=table}
\begin{tabular}{|C|C|C|C|C|C|C|C|C|} 
\hline
  \vphantom{\Yng(1,1,1,1)} \Yng(3,3,3)  & \Yng(3,3,2)  & \Yng(2,2,2)  & \Yng(1,1,1)  & \Yng(3,3)  & \Yng(2,1)  & \Yng(1,1)  & \Yng(3)  & \Yng(2)\\
\hline
     \vphantom{\Yng(1,1,1)_{\Yng(1,1,1)}}\frac{3}{2} &     \frac{3}{2} & 1 &   \frac{1}{2} & 1 & \frac{1}{2} &\frac{1}{2} &\frac{1}{2} &\frac{1}{2} \\
\hline
\end{tabular} 
\captionof{table}{The rational vertex $\frac{1}{2}\val_G(f)$\label{table}}
\end{center}
\end{table}

Note we can re-interpret the expression for $f$ as the expansion of a product of theta functions.
All Pl\"ucker coordinates are $\cA$ cluster variables, and all $\cA$ cluster monomials are theta functions.
Then
\eqn{p_{124}\, p_{356} = 
p_{123}^2\, f + p_{123}\, p_{456},}
and the right hand side is a sum of two theta functions. 
This means there are only two balanced pairs of broken lines contributing to the product.
We will see that the pair with no bending corresponds to the summand $ p_{123}\, p_{456}$,
while the other involves a maximal bend at an initial wall.
(Since the bend is at an initial wall, we are able to see the relevant broken line segment without constructing the consistent scattering diagram.)

We interpret the Rietsch--Williams valuation as being valued in $ \Trop_{\Z}((\cX_{\mathbf{1}})^{\vee})$ (see Proposition~\ref{prop:flow is gv for X}) and consider broken lines in the associated $\cX$ cluster scattering diagram.
The choice of seed identifies $\Trop_{\Z}((\cX_{\mathbf{1}})^\vee)$ with $ N^\vee / {\lra{(1,1,\dots, 1)}}$ and we draw the scattering in $\lrp{ N^\vee / {\lra{(1,1,\dots, 1)}}} \otimes \R$.

We will use Figure \ref{fig:3-6} to define the fixed data and the seed data for the cluster structure. 
The initial scattering diagram for the $\Xnet$ variety is 
\[ \mathfrak{D}^{\cX_{}}_{\text{in},\seed_G}= \lrc{   \left( (v_{\mu})^{\perp}, \  1+ z^{e_{\mu}}\right) \mid \mu \in \{ \Yng(2), \Yng(2,1), \Yng(3,3,2), \Yng(1,1) \}}.\]
To get initial scattering diagram for the fibre over $\mathbf{1}$ we take the quotient of the support $\mathfrak{D}^{\Xnet}_{\text{in},\seed_G}$ by $\left(\R \cdot (1,1,\dots, 1) \right)$. (Observe that $(v_{\mu})^{\perp}$ is invariant under translations by $\R \cdot (1,1,\dots, 1)$.) 

\[ \mathfrak{D}^{\cX_{\mathbf{1}}}_{\text{in},\seed_G}= \lrc{   \left( (v_{\mu})^{\perp}/ \left(\R \cdot (1,1,\dots, 1) \right) , \  1+ z^{e_{\mu}}\right) \mid \mu \in \{ \Yng(2), \Yng(2,1), \Yng(3,3,2), \Yng(1,1) \}}\]

All pertinent valuations may be found in \cite[Table 3]{bossinger2019full}.
We record them here using the ordering of Table~\ref{table}. We choose the representative whose coefficient of $e_\varnothing$ is $0$ and do not record this entry.
\eqn{\val_{G}({p}_{124}) = (1,0,0,0,0,0,0,0,0)}
\eqn{\val_{G}({p}_{356}) = (2,2,2,1,2,1,1,1,1)}
\eqn{\val_{G}({p}_{123}) = (0,0,0,0,0,0,0,0,0) }
\eqn{\val_{G}({p}_{456}) = (3,2,2,1,2,1,1,1,1) }
So, we have $\val_{G}({p}_{124}) + \val_G( p_{356} ) = \val_{G}( p_{123} \, p_{456}  )$.
The summand $p_{123} \, p_{456} $ in the product $p_{124} \, p_{356}$ corresponds to the straight broken line segment from $ \val_{G}({p}_{356})$ to $\val_G( p_{124} )$, whose midpoint is $\frac{1}{2}  \val_{G}( p_{123} \, p_{456}  )$. 

The bending wall for the other broken line segment is $\lrp{(v_{\Yngs(3,3,2)})^{\perp}, 1+z^{e_{\Yngs(3,3,2)}}}$.
Note that $v_{\Yngs(3,3,2)}= f_{\Yngs(3,3,3)} + f_{\Yngs(2,1)}-f_{\Yngs(3,3)} - f_{\Yngs(2,2,2)}$, and $\frac{1}{2}\val(f) = \frac{1}{2}\val(p_{123}^2\, f)$ is perpendicular to this vector.
So, $\frac{1}{2}\val(p_{123}^2\, f)$ lies in the support of this wall.
We will see that there is a broken line segment from $\val_{G}(p_{356})$ to $\val_{G}(p_{124})$ passing through $\frac{1}{2}\val(f)$ and bending maximally here, as depicted in Figure~\ref{fig:maxbend}.

\begin{figure}[ht]
 \centering
    \begin{tikzpicture}
    \draw[->] (-2,0) -- (1,0) node[anchor=west]{$\lrp{(v_{\Yngs(3,3,2)})^{\perp}, 1+z^{e_{\Yngs(3,3,2)}}}$};
    \filldraw [gray] (0,0) circle (2pt);
    \filldraw [purple] (-1.5,0) circle (1pt) node[anchor=south east]{$\frac{1}{2}\val_G(f)$};
    \draw[purple] (-1.5,0) -- node[anchor=south east]{$ \ell_2$} (0,1);
    \filldraw [purple]  (0,1) circle (1pt) node[anchor=south]{$\val_G(p_{124})$};
    \draw[purple] (-1.5,0) --node[anchor=north east]{$\ell_1$} (0,-1);
    \filldraw [purple]  (0,-1) circle (1pt) node[anchor=north]{$\val_G(p_{356})$};
    \end{tikzpicture}
 \caption{Rational point obtained from broken line bending maximally at an initial wall. In this broken line segment, $\ell_1$ and $\ell_2$ will take equal time, corresponding to the summand $p_{123}^2\, f$ in the product $p_{124} \, p_{356}$.
\label{fig:maxbend}} 
\end{figure}

Recall that the exponent vector of the decoration monomial along $\ell_i$ is the negative of the velocity vector there. 
Traveling along $\ell_1$, this velocity vector is positively proportional to 
\[\frac{1}{2}\val_G(f) - \val_G(p_{356}) = 
-\left(\frac{1}{2}, \frac{1}{2}, 1, \frac{1}{2},  1, \frac{1}{2} , \frac{1}{2}, \frac{1}{2}, \frac{1}{2}\right).\] 
We can take a broken line with exponent vector $ v_1=(1,1,2,1,2,1,1,1,1) $ along $\ell_1$.
The possible bendings after crossing the wall correspond to summands of
\eqn{z^{v_1}\lrp{1+z^{e_{\Yngs(3,3,2)}}}^{-\lra{v_1,v_{\Yngs(3,3,2)}}}=z^{v_1}\lrp{1+z^{e_{\Yngs(3,3,2)}}}^{2}.}
The maximal bending corresponds to the summand $z^{v_1+ 2 e_{\Yngs(3,3,2)}} = z^{(1,3,2,1,2,1,1,1,1)}$. Let us call $v_2 := (1,3,2,1,2,1,1,1,1)$.
Then observe that $\frac{1}{2}\val_G(f) - \frac{1}{2}v_2 = \val_G(p_{124})$. 
So, we have a broken line segment $\gamma$ traveling from $\val_G(p_{356})$ to $\frac{1}{2}\val_G(f)$ with decoration monomial $z^{v_1}$, bending maximally and continuing to $\val_G(p_{124})$ with decoration monomial $z^{v_2}$. Precisely $\frac{1}{2}$ a unit of time is spent in each straight segment.
From this perspective, $\frac{1}{2}\val_G(f)$ is not a genuine vertex; $\frac{1}{2}\val_G(f)$ is in the relative interior of the support of $\gamma$, and the endpoints of $\gamma$ are in the Newton--Okounkov body.

\footnotesize

\end{document}